\DeclareSymbolFont{AMSb}{U}{msb}{m}{n}
\newtheoremstyle{pineapple}%
  {1em}{1em}%
  {\itshape}{}%
  {\bfseries}{. ---}
  {0.5em}{}
\newtheoremstyle{durian}%
  {1em}{1em}%
  {}{}%
  {\bfseries}{. ---}
  {0.5em}{}
\def\swappedhead#1#2#3{%
  \thmnumber{\@upn{\the\thm@headfont#2\@ifnotempty{#1}{.~}}}%
  \thmname{#1}%
  \thmnote{ {\the\thm@notefont(#3)}}}
\newcommand*\rel@kern[1]{\kern#1\dimexpr\macc@kerna}
\newcommand*\widebar[1]{%
  \begingroup
  \def\mathaccent##1##2{%
    \rel@kern{0.8}%
    \overline{\rel@kern{-0.8}\macc@nucleus\rel@kern{0.2}}%
    \rel@kern{-0.2}%
  }%
  \macc@depth\@ne
  \let\math@bgroup\@empty \let\math@egroup\macc@set@skewchar
  \mathsurround\z@ \frozen@everymath{\mathgroup\macc@group\relax}%
  \macc@set@skewchar\relax
  \let\mathaccentV\macc@nested@a
  \macc@nested@a\relax111{#1}%
  \endgroup
}
\def\@sect#1#2#3#4#5#6[#7]#8{%
  \edef\@toclevel{\ifnum#2=\@m 0\else\number#2\fi}%
  \ifnum #2>\c@secnumdepth \let\@secnumber\@empty
  \else \@xp\let\@xp\@secnumber\csname the#1\endcsname\fi
  \@tempskipa #5\relax
  \ifnum #2>\c@secnumdepth
    \let\@svsec\@empty
  \else
    \refstepcounter{#1}%
    \edef\@secnumpunct{%
      \ifdim\@tempskipa>\z@ 
        \@ifnotempty{#8}{.~}%
      \else
        \@ifempty{#8}{.}{.~}%
      \fi
    }%
    \@ifempty{#8}{%
      \ifnum #2=\tw@ \def\@secnumfont{\bfseries}\fi}{}%
    \protected@edef\@svsec{%
      \ifnum#2<\@m
        \@ifundefined{#1name}{}{%
          \ignorespaces\csname #1name\endcsname\space
        }%
      \fi
      \@seccntformat{#1}%
    }%
  \fi
  \ifdim \@tempskipa>\z@ 
    \begingroup #6\relax
    \@hangfrom{\hskip #3\relax\@svsec}{\interlinepenalty\@M #8\par}%
    \endgroup
    \ifnum#2>\@m \else \@tocwrite{#1}{#8}\fi
  \else
  \def\@svsechd{#6\hskip #3\@svsec
    \@ifnotempty{#8}{\ignorespaces#8\unskip
       \@addpunct.}%
    \ifnum#2>\@m \else \@tocwrite{#1}{#8}\fi
  }%
  \fi
  \global\@nobreaktrue
  \@xsect{#5}}
\def\@seccntformat#1{%
  \protect\textup{\protect\@secnumfont
    \ifnum\pdfstrcmp{subsection}{#1}=0 \bfseries\fi
    \csname the#1\endcsname
    \protect\@secnumpunct
  }%
}
\theoremstyle{pineapple}
\newtheorem{IntroTheorem}{Theorem}
\newtheorem*{IntroTheorem*}{Theorem}
\newtheorem{IntroConjecture}[IntroTheorem]{Conjecture}
\newtheorem{Theorem}[subsection]{Theorem}
\newtheorem{Lemma}[subsection]{Lemma}
\newtheorem{Proposition}[subsection]{Proposition}
\newtheorem{Corollary}[subsection]{Corollary}
\theoremstyle{durian}
\newtheorem{Example}[subsection]{Example}
\tikzset{
  symbol/.style={
    draw=none,
    every to/.append style={
      edge node={node [sloped, allow upside down, auto=false]{$#1$}}}
  }
}
\setlist[1]{labelindent=\parindent}
\setlist[1]{labelsep=0.5em}
\setlist[enumerate,1]{label={\upshape (\roman*)}, ref={\upshape (\roman*)}}
\newcommand{\leqnomode}{\tagsleft@true\let\veqno\@@leqno}
\newcommand{\reqnomode}{\tagsleft@false\let\veqno\@@eqno}
\tikzset{>={Straight Barb[length=2pt,width=4pt]}, commutative diagrams/arrow style=tikz}
\let\c@equation\c@subsection
\DeclareMathOperator{\res}{res}
\DeclareMathOperator{\ev}{ev}
\DeclarePairedDelimiter{\abs}{\lvert}{\rvert}
\DeclarePairedDelimiter{\norm}{\lVert}{\rVert}
\DeclareMathOperator{\Fr}{Fr}
\DeclareMathOperator{\id}{id}
\DeclareMathOperator{\Spec}{Spec}
\DeclareMathOperator{\Sym}{Sym}
\DeclareMathOperator{\Aut}{Aut}
\DeclareMathOperator{\image}{image}
\DeclareMathOperator{\pr}{pr}
\DeclareMathOperator{\Pic}{Pic}
\DeclareMathOperator{\mult}{mult}
\newcommand{\precdot}{\prec\mathrel{\mkern-5mu}\mathrel{\cdot}}
\newcommand{\preceqdot}{\mathrel{\mathpalette\pr@ceqd@t\relax}}
\newcommand{\pr@ceqd@t}[2]{%
  \begingroup
  \sbox\z@{$#1\prec$}\sbox\tw@{$#1\preceq$}%
  \dimen@=\dimexpr\ht\tw@-\ht\z@\relax
  {\preceq}%
  \mkern-5mu
  \raisebox{\dimen@}{$\m@th#1\cdot$}%
  \endgroup
}
\newcommand*{\coloneqq}{\mathrel{\rlap{%
           \raisebox{0.3ex}{$\m@th\cdot$}}%
           \raisebox{-0.3ex}{$\m@th\cdot$}}%
           =}
\newcommand{\eqqcolon}{=%
           \mathrel{\rlap{%
           \raisebox{0.3ex}{$\m@th\cdot$}}%
           \raisebox{-0.3ex}{$\m@th\cdot$}}}
\newcommand{\punct}[1]{\makebox[0pt][l]{\,#1}} 
\newcommand{\parref}[1]{{\bf\ref{#1}}}
\DeclareMathOperator{\rank}{rank}
\DeclareMathOperator{\Hom}{Hom}
\DeclareMathOperator{\codim}{codim}
\newcommand{\kk}{\mathbf{k}}
\newcommand{\sO}{\mathcal{O}}
\newcommand{\qaticci}{(q;\mathbf{a})\operatorname{\bf\!-tics}}
\newcommand{\hrefSP}[1]{\href{https://stacks.math.columbia.edu/tag/#1}{#1}}
\newcommand{\citeSP}[1]{\cite[\textbf{\hrefSP{#1}}]{stacks-project}}
\newcommand{\citeForms}[1]{\cite[\href{https://arxiv.org/pdf/2301.09929.pdf\#subsection.#1}{\textbf{#1}}]{qbic-forms}}
\newcommand{\citeFano}[1]{\cite[\href{https://arxiv.org/pdf/2307.06160.pdf\#subsection.#1}{\textbf{#1}}]{fano-schemes}}
\newcommand{\citeThesis}[1]{\cite[\href{https://arxiv.org/pdf/2205.05273.pdf\#subsection.#1}{\textbf{#1}}]{thesis}}
\newcommand{\smallbullet}{} 
\DeclareRobustCommand\smallbullet{%
  \mathord{\mathpalette\smallbullet@{0.75}}%
}
\newcommand{\smallbullet@}[2]{%
  \vcenter{\hbox{\scalebox{#2}{$\m@th#1\bullet$}}}%
}
\newcommand{\subsectiondash}[1]{\subsection{#1}\textbf{---}\;}
\newcommand{\PP}{\mathbf{P}}
\newcommand{\Types}{\mathbf{Prfl}}
\title[Geometry of \((q;a)\)-tic hypersurfaces]{Profiles, linear spaces, and unirationality of complete intersections}
\author{Raymond Cheng}
\address{SB MATH CAG \\
  EPFL \\
  Station 8 \\
  1015 Lausanne \\
  Switzerland
}
\email{raymond.cheng@epfl.ch}
\begin{document}
\begin{abstract}
Complete intersections may be unexpectedly simple over fields of positive characteristic: for instance, they may be unirational despite being of general type. One explanation is given by \emph{profiles}, structure that tracks the special shape of polynomials, refining the degree. The aim of this work is to show that complete intersections with small profile should be considered simple by generalizing two classical results on low degree complete intersections: First, the basic geometry of Fano schemes associated with complete intersections depends only on the profile, so that complete intersections with small profile contain many linear spaces. Second, a general complete intersection is unirational once its dimension is sufficiently large compared to its profile.
\end{abstract}
\maketitle
\setcounter{tocdepth}{1}

\thispagestyle{empty}
\section*{Introduction}
High degree hypersurfaces and complete intersections in projective space are,
by most measures, geometrically complicated. For instance, unlike the
simplest of algebraic varieties \cite{Kollar:Simplest}, high degree
hypersurfaces over the complex numbers contain very few---if any!---rational
subvarieties: see \cite{Ein,Voisin,Pacienza, RY:Clemens}. New phenomena appear,
however, in positive characteristic \(p > 0\), complicating the
correlation between degree and geometric complexity. The Fermat hypersurface
\[
X \coloneqq \{x_0^{q+1} + \cdots + x_n^{q+1} = 0\} \subset \PP^n
\]
of degree \(q + 1\), with \(q = p^e\) an integer power, is an exemplar:
Many properties of \(X\) are reminiscent of those typically expected in
quadric hypersurfaces: it is projectively self-dual and its Gauss map is a
homeomorphism \cite{Wallace, KP:Gauss}; its smooth hyperplane sections have
constant moduli \cite{Beauville}; it has many linear spaces \cite{fano-schemes};
and, if \(n \geq 3\), it is even unirational \cite{Shioda:Unirationality}!
Traditionally, these curiosities are explained \emph{ad hoc} by identifying the
equation of \(X\) with a Hermitian form for the quadratic extension
\(\mathbf{F}_{q^2} \mid \mathbf{F}_q\), as in \cite{BC:Hermitian, Hefez,
Shimada:Lattices}.

In contrast, I view the quadratic nature of \(X\) as the confluence between the
almost linear nature of \(q\)-powers in characteristic \(p\) and the special
form of the equation of \(X\). This perspective places this Fermat
hypersurface, a historically isolated example, in a broad context and suggests
a systematic way of identifying related phenomena. The approach taken here is
as follows:

A \emph{profile} with respect to \(q\) is an integer polynomial
\(a(t) = a_0 + a_1 t + \cdots + a_m t^m \in \mathbf{Z}_{\geq 0}[t]\)
subject to a technical condition, given in \parref{qatics-profiles}, which is
satisfied, for example, if \(a_j \leq q - 1\) for each \(j = 0,\ldots,m\). A
polynomial with coefficients a field \(\kk\) of characteristic \(p\) is said to
have profile \(a(t)\) with respect to \(q\)---or simply is called a
\emph{\((q;a)\)-tic polynomial}---if it is of the form
\[
f(x_0,\ldots,x_n) =
\sum\nolimits_{i \in I} \prod\nolimits_{j = 0}^m f_{i,j}(x_0,\ldots,x_n)^{q^j} \in
\kk[x_0,\ldots,x_n]
\]
where each \(f_{i,j}\) is a homogeneous polynomial of degree \(a_j\). The form
of this expression is invariant under linear changes of variables, so
\((q;a)\)-tic polynomials span a canonical linear subspace within the space of
polynomials of degree \(a(q) = a_0 + a_1 q + \cdots + a_m q^m\). The vanishing
locus in \(\PP^n\) of a \((q;a)\)-tic polynomial is a \emph{\((q;a)\)-tic
hypersurface}. The Fermat hypersurface above is an example with profile
\(a(t) = 1 + t\) with respect to \(q\). The purpose of this works is to develop
the following principle:
\emph{\((q;a)\)-tic hypersurfaces behave as if they were hypersurfaces of
degree \(a(1 + \varepsilon) \approx a_0 + a_1 + \cdots + a_m\) for some small
real number \(\varepsilon > 0\).}

One aspect of this principle is that hypersurfaces sharing the same profile
should be treated as a single family. Concretely, one looks for properties of
\((q;a)\)-tic hypersurfaces which may be expressed in terms of the profile
\(a\), ideally independently of, or at least uniformly in, the prime power
\(q\). A simple illustration of this is found in the geometry of
linear spaces:

\begin{IntroTheorem}\label{intro-fano}
Let \(a = \sum\nolimits_{j \geq 0} a_jt^j\) be a profile such that
\(a\neq 2t^m\). The Fano scheme \(\mathbf{F}_r(X)\) of \(r\)-planes in a
\((q;a)\)-tic hypersurface \(X \subset \PP^n\) is cut out of the Grassmannian
by a section of a vector bundle of rank
\[
(r+1)(n-r) - \delta(n,a,r)
\;\;\text{where}\;\;
\delta(n,a,r) \coloneqq
(r+1)(n-r) -
\prod\nolimits_{j \geq 0} \binom{a_j + r}{r}.
\]
If \(\delta(n,a,r) < 0\), then \(\mathbf{F}_r(X)\) is empty for \(X\)
general. If \(\delta(n,a,r) \geq 0\), then \(\mathbf{F}_r(X)\) is
\begin{enumerate}
\item\label{intro-fano.nonempty}
nonempty for every \(X\);
\item\label{intro-fano.dimension}
irreducible of dimension \(\delta(n,a,r)\) for \(X\) general;
\item\label{intro-fano.smooth}
smooth of dimension \(\delta(n,a,r)\) for \(X\) general when the profile
has constant term \(a_0 \neq 0\);
and
\item\label{intro-fano.connected}
connected for every \(X\) when \(\delta(n,a,r) > 0\).
\end{enumerate}
If \(\mathbf{F}_r(X)\) is of dimension \(\delta(n,a,r)\), then its
dualizing sheaf is given by a power of the Pl\"ucker line bundle:
\[
\omega_{\mathbf{F}_r(X)} \cong
\sO_{\mathbf{F}_r(X)}(\gamma(a,r,q) - n - 1)
\;\;\text{where}\;\;
\gamma(a,r,q) \coloneqq
\frac{a(q)}{r+1} \cdot \prod\nolimits_{j \geq 0} \binom{a_j + r}{r}.
\]
\end{IntroTheorem}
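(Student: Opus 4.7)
The plan is to realize the Fano scheme $\mathbf{F}_r(X)$ as the zero locus of a distinguished section of an auxiliary vector bundle on the Grassmannian $\Gr(r+1,n+1)$, and then deduce every claim by standard bundle techniques. Let $\mathcal{S}$ denote the tautological subbundle of rank $r+1$. The central construction is a vector bundle $\mathcal{E}_{a,r}$ whose fiber over $[\Lambda]$ is the space of $(q;a)$-tic polynomials on $\Lambda \cong \PP^r$; I expect to realize $\mathcal{E}_{a,r}$ as the image of the natural Frobenius-twisted multiplication map out of $\bigotimes_{j \geq 0} \bigl(\Sym^{a_j}\mathcal{S}^\vee\bigr)^{(q^j)}$, where $(q^j)$ denotes the $j$-fold iterate of the $q$-Frobenius twist. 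Restriction of the defining polynomial $f$ then gives a section $s_f \in H^0(\Gr, \mathcal{E}_{a,r})$ with $\mathbf{F}_r(X) = Z(s_f)$; subtracting $\rank \mathcal{E}_{a,r}$ from $\dim \Gr = (r+1)(n-r)$ yields the expected dimension $\delta(n,a,r)$.

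The main technical obstacle is proving that $\mathcal{E}_{a,r}$ has constant rank $\prod_{j \geq 0} \binom{a_j+r}{r}$, which reduces to the linear independence of the products of Frobenius-twisted monomial forms of profile $a$. I anticipate that the technical profile hypothesis of \parref{qatics-profiles} encodes a base-$q$ digit nonoverlap condition on the monomial exponents, which guarantees a unique decomposition of such products; the excluded case $a = 2t^m$ is precisely the degenerate situation where a $(q;a)$-tic polynomial is the $q^m$-th power of a quadratic form, so that the Fano scheme degenerates to that of an ordinary quadric---explaining, in particular, why part \ref{intro-fano.connected} fails there due to the two-component isotropic phenomenon. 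I expect this step to be essentially the content of prior work on the linear algebra of $(q;a)$-tic forms.

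Granting the bundle description, the enumerated properties follow from standard machinery. For \ref{intro-fano.nonempty}: every $X$ yields a section whose zero class equals $c_{\mathrm{top}}(\mathcal{E}_{a,r}) \in \CH^*(\Gr)$, and this is nonzero whenever $\delta \geq 0$ by a Chern-class positivity argument on the Pl\"ucker-filtered form of $\mathcal{E}_{a,r}$. For \ref{intro-fano.dimension} and \ref{intro-fano.smooth}: the bundle is a quotient of the globally generated trivial bundle coming from the full space of $(q;a)$-tics on $\PP^n$, so a Kleiman--Bertini-type argument yields the expected dimension and irreducibility for general $X$; when $a_0 \neq 0$, the unconstrained degree-$a_0$ factor supplies classical (non-Frobenius-trivial) deformations needed for a positive-characteristic generic smoothness theorem. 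For \ref{intro-fano.connected}: when $\delta > 0$, Fulton--Hansen/Sommese connectedness for sections of a globally generated bundle on the simply connected Grassmannian applies uniformly in $X$.

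Finally, the dualizing sheaf is computed by adjunction: $\omega_{\mathbf{F}_r(X)} \cong (\omega_{\Gr} \otimes \det \mathcal{E}_{a,r})|_{\mathbf{F}_r(X)}$, and $\omega_{\Gr} \cong \sO(-n-1)$ in the Pl\"ucker polarization. To evaluate $\det \mathcal{E}_{a,r}$, I combine the standard identity $\det \Sym^{a_j} \mathcal{S}^\vee \cong \sO\bigl(a_j\binom{a_j+r}{r}/(r+1)\bigr)$, the fact that the $j$-fold $q$-Frobenius twist multiplies first Chern classes by $q^j$, and the iterated tensor-product rule $\det\bigl(\bigotimes_j A_j\bigr) = \bigotimes_j (\det A_j)^{\prod_{k \neq j} \rank A_k}$. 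These combine to give
\[
  \det \mathcal{E}_{a,r}
  \cong \sO\!\left(
    \frac{\prod_k \binom{a_k+r}{r}}{r+1} \sum_{j \geq 0} a_j q^j
  \right)
  = \sO(\gamma(a,r,q)),
\]
and the dualizing sheaf formula follows.
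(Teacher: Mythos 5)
Your framework and your endgame coincide with the paper's: realizing \(\mathbf{F}_r(X)\) as the zero locus of the restricted tensor, a section of \(\mathrm{S}^a(\mathcal{S}^\vee) = \bigotimes_{j\geq 0}\Sym^{a_j}(\mathcal{S}^\vee)^{[j]}\), is exactly \parref{fano-equations}, and your dualizing-sheaf computation (adjunction, \(\omega_{\mathbf{G}} \cong \sO(-n-1)\), the iterated determinant rule, \(\det\Sym^{a_j}\mathcal{S}^\vee \cong \sO\bigl(a_j\binom{a_j+r}{r}/(r+1)\bigr)\), twists scaling \(c_1\) by \(q^j\)) is exactly \parref{fano-canonical}. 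But what you flag as the "main technical obstacle" is no obstacle: by the very definition of a profile in \parref{qatics-profiles}, the multiplication map is injective for \emph{every} finite-dimensional vector space, in particular fibrewise on \(\mathcal{S}\), so the constant rank \(\prod_{j\geq 0}\binom{a_j+r}{r}\) is built in (your guessed digit-nonoverlap criterion is the content of \parref{qatic-injective-mult}, but it is never needed at this step). The genuine gap is in parts \ref{intro-fano.nonempty}--\ref{intro-fano.connected}, where the "standard machinery" you invoke does not exist in characteristic \(p\).

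Kleiman--Bertini and generic smoothness fail here, and this failure is the whole point of the paper: for a nonreduced profile (\(a_0=0\)) every member of the linear system is everywhere nonsmooth even though \(\mathrm{S}^a(\mathcal{S}^\vee)\) is globally generated by \(\mathrm{S}^a(V^\vee)\), so no transversality theorem quotable off the shelf can give \ref{intro-fano.dimension} or \ref{intro-fano.smooth}; even the dimension statement requires proving that \(\pr_2\colon \mathbf{Inc}_r \to \qaticci_{\PP V}\) is \emph{dominant}, which is false in the quadric-type cases (the real reason \(a = 2t^m\) is excluded, via \(\delta\) versus \(\delta_-\)) and so cannot follow from generalities. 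Your remark that the \(a_0\)-factor supplies classical deformations is the correct intuition --- in the paper the tangent map \(\rho_{U,\boldsymbol{\alpha}}\) of \parref{fano-tangent-map} differentiates only through the \(j=0\) tensor factor --- but it is an intuition, not an argument. What the paper actually proves, and what your proposal is missing entirely, is the codimension bound \(\codim(Z_r \subset \mathbf{Inc}_r) \geq \delta_-(n,\mathbf{a},r)+1\) for the non-smooth locus of \(\pr_2\) (\parref{fano-nonsmooth-locus} through \parref{fano-main-codimension-estimate}), obtained by stratifying the hyperplanes containing \(\operatorname{image}(\rho_{U,\boldsymbol{\alpha}})\) by the rank strata \(\mathbf{H}_k\) and a concavity analysis in \(k\), plus an induction on \(r\) (\parref{fano-general-varphi}) to excise the contribution of \(\Delta_{r-1}\). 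Your connectedness step fails as stated for a concrete reason: \(\mathrm{S}^a(\mathcal{S}^\vee)\) is nef but not ample --- on a pencil of \((r+1)\)-planes through a fixed \(r\)-plane one has \(\mathcal{S}^\vee\rvert_{\PP^1} \cong \sO^{\oplus r}\oplus\sO(1)\), so the bundle acquires trivial summands for \(r\geq 1\) --- and Sommese/Fulton--Hansen connectedness does not apply to non-ample bundles; the paper instead gets \ref{intro-fano.connected} from the same codimension bound (\(\geq 2\) when \(\delta_- > 0\)) via Stein factorization and Zariski--Nagata purity of the branch locus (\parref{fano-theorem-proof}). Finally, \ref{intro-fano.nonempty} needs no Chern-class positivity (which would demand an actual Schubert computation as in Debarre--Manivel, and a blanket claim "nonzero whenever \(\delta\geq 0\)" is false in the excluded cases): it is immediate from properness of \(\mathbf{Inc}_r\) once \(\pr_2\) is dominant --- but dominance is again exactly the missing codimension estimate, which is the heart of the theorem.
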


This is a summary of \parref{fano-equations}, \parref{fano-theorem}, and
\parref{fano-canonical} in the case of a hypersurface, all of which are
formulated and proven without the restriction \(a \neq 2t^m\) and 
more generally for \((q;\mathbf{a})\)-tic complete intersections, vanishing loci
of a regular sequence of \((q;a)\)-tic polynomials with \(a\) ranging over a
multi-set of profiles \(\mathbf{a}\). These generalize classical results on
Fano schemes of complete intersections and, once the techniques developed for
handling \((q;a)\)-tic equations are developed in
\S\S\parref{section-profiles}--\parref{section-qatic}, their proofs essentially
follow the strategy from the classical case found in \cite[\S2]{DM} and
\cite[\S V.4]{Kollar:Book}.

A second aspect of this principle is, of course, that \((q;a)\)-tic
hypersurfaces whose profile is small compared to its dimension should be
geometrically simple, regardless of the prime power \(q\). Theorem
\parref{intro-fano} is some evidence in this direction since the dimension of
the Fano scheme depends only on \(a\) and not on \(q\). To explain a second
result in this direction, recall classical results of Morin and Predonzan
from \cite{Morin, Predonzan} which show that a general complete intersection in
\(\PP^n\) is unirational whenever its total degree \(d\) is much smaller than
\(n\); see \cite[pp.44--46]{Roth} for a classical source, but also \cite{PS}
for a succinct exposition in modern language and \cite{Ramero} for an improved
bound. An analogue of this for \((q;\mathbf{a})\)-tic complete intersections
is:

\begin{IntroTheorem}\label{intro-unirationality}
Given a multi-profile \(\mathbf{a}\), there exists an integer
\(n_0 \coloneqq n_0(\mathbf{a})\), depending only on \(\mathbf{a}\), such that
for all \(n \geq n_0\), a general \((q;\mathbf{a})\)-tic complete intersection
in \(\PP^n\) is unirational.
\end{IntroTheorem}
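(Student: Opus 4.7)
The plan is to adapt the Morin--Predonzan strategy for unirationality of low-degree complete intersections, substituting profiles for degree, with Theorem~\ref{intro-fano} supplying the initial linear space. I would induct on the multi-profile $\mathbf{a}$ under a suitable well-ordering---for concreteness, the lexicographic pair $(\max_{a \in \mathbf{a}} \deg a,\ \sum_{a \in \mathbf{a}} a(1))$. The base case is when each profile reduces to a single linear form: by the freshman's dream identity $\sum h_i^{q^e} = (\sum h_i)^{q^e}$ in characteristic $p$, any equation of profile $t^e$ is a $q^e$-th power of a linear form, so the corresponding hypersurface is set-theoretically a hyperplane, and $X$ is then a linear subspace, hence rational.

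For the inductive step, let $c = \codim X$ and write $X = V(f_1, \ldots, f_c) \subset \PP^n$. Choose $n_0(\mathbf{a})$ large enough that Theorem~\ref{intro-fano} yields an $r$-plane $L \subset X$ with $r + c \geq n_0(\mathbf{a}')$, where $\mathbf{a}'$ is the reduced multi-profile described below. Parameterize the $(r+c)$-planes $\Lambda \supset L$ by a Grassmannian $B$, which is rational; on the universal plane over $B$, form the residual family $\mathcal{Y} \to B$ whose fibre over $\Lambda$ is the linkage residual of $L$ inside the complete intersection $X \cap \Lambda \subset \Lambda$. Concretely, expressing $f_i|_\Lambda = \sum_j h_{ij}\ell_j$ with $\ell_1, \ldots, \ell_c$ cutting out $L$ in $\Lambda$, the residual has a presentation determined by the matrix $(h_{ij})$. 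The tautological map $\mathcal{Y} \dashrightarrow X$ is dominant, since every $x \in X$ lies on the plane $\langle L, x\rangle$.

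The central obstacle---and the genuinely new content relative to the classical argument---is to show that for generic $\Lambda$, the residual $Y_\Lambda$ is a general $(q;\mathbf{a}')$-tic complete intersection, where $\mathbf{a}'$ is obtained from $\mathbf{a}$ by reducing one profile via $a \mapsto a - 1$ (when $a_0 \geq 1$) or $a \mapsto a/t$ (using Frobenius, when $a_0 = 0$). Dividing an arbitrary $(q;a)$-tic polynomial by a linear form does not preserve the profile structure, so this requires a structural lemma: the subspace of $(q;a)$-tic polynomials vanishing on a fixed linear subspace $L$ should admit a canonical splitting that, after residuation, furnishes a $(q;a-1)$-tic quotient. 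I expect this to follow by working in coordinates adapted to $L$ and exploiting the explicit decomposition of $(q;a)$-tic polynomials developed in the paper's profile machinery, reducing the claim to a linear-algebraic statement about the profile-respecting summands of the ideal of $L$.

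Once the profile reduction is established, the generic fibre of $\mathcal{Y} \to B$ is unirational over $\kappa(B)$ by the inductive hypothesis on $\mathbf{a}'$, applicable because $r + c \geq n_0(\mathbf{a}')$ by construction. Rationality of $B$ together with unirationality of the generic fibre produces a unirational $\mathcal{Y}$, and dominance of $\mathcal{Y} \dashrightarrow X$ then gives unirationality of $X$. The main obstacle, as noted, is the profile-reduction step; the remaining steps are adaptations of the classical Morin--Predonzan geometry, with Theorem~\ref{intro-fano} providing the existence of $L$ and the generic irreducibility and dimension control needed for the residual family's fibres.
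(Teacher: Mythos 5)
Your inductive engine is precisely the Morin--Predonzan projection/linkage step, and the structural lemma you defer---that the residual of \(L\) in \(X \cap \Lambda\) inherits a \((q;\mathbf{a}')\)-tic structure with \(\mathbf{a}'\) obtained by \(a \mapsto a-1\) or \(a \mapsto a/t\)---is false, not merely unproven. The paper flags exactly this failure and gives a counterexample in \parref{families-not-strict-transform}: for the smooth \(q\)-bic surface \(X = \{x_0^q x_1 + x_0 x_1^q + x_2^q x_3 + x_2 x_3^q = 0\} \subset \PP^3\), of profile \(a = 1 + t\), projection from the line \(\{x_0 = x_2 = 0\} \subset X\) produces residual plane curves of degree \(q\) whose general member is \(C = \{y_0^q + y_1 y_2^{q-1} = 0\}\). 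Your profile-reduction lemma would force \(C\) to have profile \(a - 1 = t\), i.e.\ to be cut out by a \(q\)-th power of a linear form; in fact \(y_0^q + y_1y_2^{q-1}\) lies in no proper subspace of \(\Sym^q(\kk^{\oplus 3})\) of the form \(\mathrm{S}^b(\kk^{\oplus 3})\) with \(b \in \Types\). The mechanism of failure is that vanishing of \(f = \sum_i g_i h_i^q\) on \(L\) need not be witnessed summand-by-summand: it can be carried by the Frobenius-twisted factors \(h_i^q\) (as with the term \(x_0^q x_1\) above), and dividing such a term by a linear form produces monomials like \(x_0^{q-1}x_1\) that no profile subspace captures. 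So the coordinate-adapted splitting of the ideal of \(L\) that your plan rests on does not exist, and the induction never gets started. (Secondarily, your \((r+c)\)-plane linkage variant with \(f_i\rvert_\Lambda = \sum_j h_{ij}\ell_j\) is already off in the classical setting for \(c \geq 2\): that residual is generally not a complete intersection of multi-degree \((d_1-1,\ldots,d_c-1)\); the classical construction keeps \(L\) a hyperplane in an \((r+1)\)-plane and divides each equation by a single linear form.)

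The paper avoids division altogether by generalizing the Clemens--Griffiths/Murre construction for cubics instead: restricting a \((q;a)\)-tic equation to a pointed line and expanding profile-by-profile (\parref{tlines-explicit-expansion}) yields coefficient equations \(f_{a,b}\) of bi-profile \((a-b,b)\); the scheme of \emph{penultimate tangents}---lines meeting a chosen \((q;a)\)-tic hypersurface through \(X\) with multiplicity \(\geq a(q)-1\) at points of an \(r\)-plane \(P \subseteq X\)---is cut out by a sub-collection of these coefficients and therefore carries a \((q;\mathbf{a}_1 \setminus (a,a-1))\)-tic structure by construction (\parref{tlines-lower-degrees}), while the residual-point map \(\mathcal{X}' \dashrightarrow \mathcal{X}\) is dominant once \(r \geq r_0(\mathbf{a})\) (\parref{tlines-residual-point}, \parref{unirational-res-dominant}). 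Note also that the multi-profile produced by one such step, \(\mathbf{a}_1 \setminus (a_0, a_0-1)\) with \(\mathbf{a}_1 = (b : 0 \prec b \preceq a,\ a \in \mathbf{a})\), is a far \emph{larger} multi-set than \(\mathbf{a}\), so your lexicographic pair \((\max_a \deg_t a, \sum_a a(1))\) would not decrease along the induction that actually works; the paper requires the four-component invariant of \parref{unirational-poset-degrees} on the poset \((\Pi, \preceq^\Pi)\) with its three cover relations (Frobenius descent, removal of linear equations, penultimate tangents), and must run the induction for generic \emph{families} over the incidence correspondence, i.e.\ for all multi-profiles and all complete intersections simultaneously (\parref{unirational-in-families}), rather than hypersurface by hypersurface.
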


This is proven in \S\parref{section-unirational} via an inductive argument,
using the constructions developed in
\S\S\parref{section-families}--\parref{section-residual}.  A similar result,
formulated and proven in a different language, seems to appear in
\cite{Shimada:Unirationality}; see also the related result in
\cite{Shimada:CI}.  The integer \(n_0(\mathbf{a})\) can be computed for small
\(\mathbf{a}\): see \parref{unirationality-estimates}. The result is already
interesting when \(\mathbf{a} = (d)\) consists of a single constant polynomial,
this is a statement about unirationality of hypersurfaces of degree \(d\), in
which case the integer \(n_0(d)\) appearing here is on the order of
\(2^{d2^d}\), significantly improving past constructions: see the companion
paper \cite{bounds}.

Briefly, the unirationality construction of Morin and Predonzan goes as
follows: given a general complete intersection \(X \subset \PP^n\) of
multi-degree \(\mathbf{d} = (d_1,\ldots,d_c)\), projection away from a general
\(r\)-plane in \(X\) yields a fibration \(\widetilde{X} \to \PP^{n-r-1}\) whose
generic fibre \(X'\) is itself a complete intersection of multi-degree
\(\mathbf{d}' = (d_1 - 1, \ldots, d_c - 1)\) in a \(\PP^{r+1}\). With an
appropriate choice of \(r\) and \(n\) depending on \(\mathbf{d}\), it is
possible to find a base change of \(X'\) that carries a large linear space,
allowing the argument to proceed inductively. This strategy does not adapt
in a straightforward manner for \((q;\mathbf{a})\)-tic complete intersections
because the generic fibre of the projection \(\widetilde{X} \to \PP^{n-r-1}\)
does not seem to have structure that is captured by the theory of profiles: see
\parref{families-not-strict-transform} for an explicit example.

Instead, Theorem \parref{intro-unirationality} is established with a
generalization of an old unirationality construction for cubic hypersurfaces
found in \cite[Appendix B]{CG} and \cite[\S2]{Murre} which is based on the
following observation: Given a hypersurface \(X \subset \PP^n\) of degree
\(d\), the space of \emph{penultimate tangents}
\[
X' = \{
(x,[\ell]) : 
\ell \subset \PP^n\;\text{a line intersecting \(X\) at \(x\) with multiplicity \(\geq d-1\)}
\}
\]
is generically a family of complete intersections of multi-degree
\(\mathbf{d}' = (d-2,d-3,\ldots,1)\) over \(X\). Moreover, for \(X\) general,
there is a dominant rational map \(X' \dashrightarrow X\) sending
\((x,[\ell])\) to the residual point of intersection \(x' = X \cap \ell - (d-1)x\).
By restricting \(X'\) to a sufficiently large and general linear space in
\(X\), unirationality can be established by induction on a poset consisting of
all multi-degrees. In particular, it is essential to prove the result for all
complete intersections.

Much effort is made to perform the constructions in
\S\S\parref{section-families}--\parref{section-unirational} globally, so that
they work well in families, albeit introducing additional technicalities. The
hope is to eventually make the generality conditions on \(X\) in Theorems
\parref{intro-fano} and \parref{intro-unirationality} effective \(n\)
sufficiently large. In characteristic \(0\), the works \cite{HMP, BR} show that
the Fano schemes \(\mathbf{F}_r(X)\) of \emph{every} smooth hypersurface \(X
\subset \PP^n\) of degree \(d\) is irreducible of its expected dimension once
\(n \geq 2\binom{d+r-1}{r} + r\). This may then be used in a refinement of the
Morin construction to show that \emph{every} smooth hypersurface of degree
\(d\) is unirational once \(n \geq 2^{d!}\). While it is possible that these
results may be extended to characteristics \(p > d\), Theorem
\parref{intro-fano} shows that these statements cannot hold verbatim when \(p
\leq d\) wherein there exists a nontrivial profile \(a\) with \(a(p) = d\): see
\parref{qatics-ordering-properties}\ref{qatics-ordering-properties.degrees}. An
optimistic guess would be that the appearance of these canonical sub-linear
systems is the only issue which arises, suggesting the following:

\begin{IntroConjecture}\label{intro-fano-conjecture}
Let \(a(t) = a_0 + \cdots + a_m t^m \in \mathbf{Z}_{\geq 0}[t]\) with
\(a_0 \neq 0\) and \(r \in \mathbf{Z}_{\geq 1}\). Then there exists
an integer \(n_0(a,r)\) such that for every prime
\(p > \max_i \{ a_i\}\), every
\(n \geq n_0(a,r)\), and every smooth \((p;a)\)-tic hypersurface \(X \subset \PP^n\),
the Fano scheme \(\mathbf{F}_r(X)\) is irreducible
of dimension \(\delta(n,a,r)\).
\end{IntroConjecture}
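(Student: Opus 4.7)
The plan is to follow the strategy of Harris--Mazur--Pandharipande and Beheshti--Riedl from \cite{HMP, BR}, adapting their Koszul-cohomological tangent-space bound to the profiled setting. Let $U \subset \Types_{p;a}^n$ denote the open locus of smooth $(p;a)$-tic hypersurfaces and form the universal incidence variety
\[
\mathcal{I} \coloneqq \{ ([X], [\Lambda]) : \Lambda \subset X \} \subset U \times \Gr(r+1,n+1).
\]
By Theorem \parref{intro-fano}, the projection onto $\Gr(r+1,n+1)$ is surjective with fibres of codimension $\prod_{j \geq 0} \binom{a_j + r}{r}$ in the ambient linear system, so $\mathcal{I}$ is irreducible of dimension $\dim U + \delta(n,a,r)$. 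It therefore suffices to show that for $n \geq n_0(a,r)$ and every $[X] \in U$, the Fano scheme $\mathbf{F}_r(X)$ has dimension exactly $\delta(n,a,r)$ and is smooth at every point: together with the connectedness statement \ref{intro-fano.connected}, this yields irreducibility.

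Toward the dimension bound, stratify $U$ by the upper-semicontinuous invariant $[X] \mapsto \dim \mathbf{F}_r(X)$ via
\[
Z_k \coloneqq \{ [X] \in U : \dim \mathbf{F}_r(X) \geq \delta(n,a,r) + k \}, \quad k \geq 1,
\]
and aim to prove $Z_k = \emptyset$ once $n \geq n_0(a,r)$. Equivalently, for every smooth $X$ and every $[\Lambda] \in \mathbf{F}_r(X)$, the Zariski tangent space $H^0(\Lambda, N_{\Lambda/X})$ should have dimension $\delta(n,a,r)$. As in the classical setting, this tangent space appears as the kernel of a Jacobian-evaluation map, and the desired bound reduces to a Koszul cohomology calculation on $\Lambda$ with the Jacobian ideal of the defining polynomial, invoking that this ideal is $(d-1)$-regular when $X$ is smooth.

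The principal step is to formulate and prove a profile analogue of this Koszul bound. Here the hypothesis $p \geq \max_j a_j$ is essential: it guarantees that in any expression $f = \sum_i \prod_j f_{i,j}^{p^j}$ no factor $f_{i,j}$ is itself a Frobenius power, so every partial derivative $\partial f/\partial x_k$ is genuinely a $(p;a')$-tic polynomial, with $a'$ obtained from $a$ by decrementing the constant term, rather than vanishing identically modulo $p$. The natural substitute for the Jacobian ideal is then the ideal generated by these partials inside the canonical sub-linear system of $(p;a')$-tic polynomials, and the relevant Koszul complex must be set up to respect the profile filtration developed earlier in the paper.

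The main obstacle I anticipate is the regularity estimate for the profiled Jacobian system. In the classical setting, the decisive input is $(d-1)$-regularity of the Jacobian ideal of a smooth hypersurface, from which the HMP/BR tangent-space bound falls out almost directly. Here one needs an analogous regularity bound inside the $(p;a')$-tic sub-linear system with constants depending only on $a$ and $r$, uniformly in $n$ and in $p$ throughout the range $p \geq \max_j a_j$. Plausibly this can be extracted from the profile-cohomology framework of the earlier sections, but establishing it explicitly---and through it pinning down an effective $n_0(a,r)$---is where the substantive new work must be done.
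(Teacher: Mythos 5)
There is nothing in the paper to compare your argument against: the statement you were asked about is Conjecture~\parref{intro-fano-conjecture}, explicitly posed as an open problem, and the paper offers no proof of it (it only proves the generic statements of Theorem~\parref{intro-fano} via the incidence-correspondence estimates of \S\parref{section-fano}, and remarks that the characteristic-zero results of \cite{HMP, BR} cannot hold verbatim when \(p \leq d\)). Your proposal correctly identifies the expected line of attack---adapting the Harris--Mazur--Pandharipande/Beheshti--Riedl tangent-space method to the profiled setting---but as you yourself concede in the final paragraph, the decisive step, a regularity bound for the profiled Jacobian system inside \(\mathrm{S}^{a-1}(V^\vee)\) that is uniform in \(n\) and in \(p\), is left entirely unproven. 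That step is not a technical loose end; it is precisely the open content of the conjecture, so what you have written is a research plan rather than a proof.

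Beyond the acknowledged gap, the reduction itself overshoots in a way that would break even if the regularity estimate were available. You propose to show that \(\mathrm{H}^0(\Lambda, \mathcal{N}_{\Lambda/X})\) has dimension exactly \(\delta(n,a,r)\) at \emph{every} point of \(\mathbf{F}_r(X)\) for \emph{every} smooth \(X\), i.e.\ that the Fano scheme is everywhere smooth, and then to conclude irreducibility from connectedness. But this is strictly stronger than the conclusion of \cite{HMP, BR} even in characteristic zero: those papers prove irreducibility of the expected dimension, and smooth hypersurfaces can have Fano schemes that are singular (or non-reduced at points) while still of expected dimension, so a pointwise tangent-space bound of exactly \(\delta(n,a,r)\) is false in general. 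The classical arguments instead bound the dimension of the locus of planes with excess tangent space and use incidence/sweeping arguments; any profiled analogue would have to do the same. One smaller correction: the hypothesis \(p \geq \max_i\{a_i\}\) is there to make \(a\) a profile with respect to \(p\) in the sense of \parref{qatics-profiles} (injectivity of the multiplication map, guaranteed when the coefficients are small relative to \(p\)), not to prevent partials from vanishing---the partial derivatives automatically land in \(\mathrm{S}^{a-1}(V^\vee)\) because the \(q^j\)-power factors with \(j \geq 1\) differentiate to zero, exactly as in the construction of \(\rho_{U,\boldsymbol{\alpha}}\) in \parref{fano-tangent-map}.
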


Note that \(n_0(a,r)\) does not depend on \(p\), so that
Conjecture \parref{intro-fano-conjecture} predicts a uniform phenomenon for
varying characteristic. Unfortunately, this statement cannot be quite right for
higher prime powers \(q\), since there will be a profile with respect to \(p\)
with larger expected dimension. A result in this direction would be a first
step toward an effective statement for unirationality.

\smallskip
\noindent\textbf{Further questions. --- }%
This work is primarily focused on the linear projective geometry of
\((q;a)\)-tic hypersurfaces, and so many basic questions and properties remain
to be explored: automorphisms, moduli, singularities, and so forth. Three main
directions that seem the most interesting are:

First, the principle that \((q;a)\)-tic hypersurfaces behave as if they were of
degree \(a(1 + \varepsilon)\) is taken to be a rough qualitative heuristic, and
it would be interesting to make this more precise and quantitative. For
instance, the work in \cite{fano-schemes, qbic-threefolds} shows that the
geometry of lines in hypersurfaces of profile \(a(t) = 1 + t\), or
\emph{\(q\)-bic hypersurfaces}, is reminiscent of that of cubics.

Second, the theory developed here is \emph{extrinsic} in that
\((q;a)\)-tic-ness is a structure with which an object may be equipped with.
Might there be an intrinsic characterization? One instance occurs with
\(q\)-bic hypersurfaces, wherein \cite{KKPSSVW} show that they are
characterized amongst hypersurfaces of the same degree as those having the
smallest \(F\)-pure threshold. From a different direction, many examples
suggest that non-\(F\)-splitness is related to the special form of the defining
equations: see \cite{Saito, BLRT, KKPSSVW:Cubics, MW} for a few examples.

Third, the definition in \S\parref{section-qatic} of \((q;a)\)-tic polynomials
generalize to provide canonical linear systems in
\(\Gamma(Y,\mathcal{L}^{\otimes a(q)})\) for any scheme \(Y\) and
\(\mathcal{L} \in \Pic Y\). The situation
here is the case \((Y,\mathcal{L}) = (\PP^n, \sO_{\PP^n}(1))\),
and other cases that have occurred in the literature include Deligne--Lusztig
varieties \cite{DL} and the Frobenius incidence correspondence of
\cite{Shimada:Incidence}. It would be interesting to study the special
properties of the divisors and linear systems constructed in this fashion.

\smallskip
\noindent\textbf{Outline. --- }%
\S\parref{section-profiles} develops the combinatorics of profiles and
\S\parref{section-qatic} gives a definition of \((q;a)\)-tic hypersurfaces and
studies their basic properties. Fano schemes of \((q;\mathbf{a})\)-tic complete
intersections are studied in \S\parref{section-fano}. Families of
\((q;\mathbf{a})\)-tic complete intersections are defined in
\S\parref{section-families}. The penultimate tangent construction for families
is made in \S\parref{section-tlines} and the corresponding residual point map
is studied in \S\parref{section-residual}. Finally, the unirationality result
is proven in \S\parref{section-unirational}.

\smallskip
\noindent\textbf{Notation. --- }%
Throughout, \(\kk\) denotes an algebraically closed field of characteristic \(p
> 0\) and \(q \coloneqq p^e\) is a positive integer power of \(p\). Unless
otherwise stated, \(V\) denotes a \(\kk\)-vector space of dimension \(n+1\).
Write \(\Fr \colon \kk \to \kk\) for the \(q\)-power Frobenius morphism and,
for any \(\kk\)-vector space \(W\), let
\(W^{[1]} \coloneqq \kk \otimes_{\Fr, \kk} W\) be its Frobenius twist.
Set \(W^{[0]} \coloneqq W\) and, for each integer \(i \geq 1\), inductively
define \(W^{[i+1]} \coloneqq (W^{[i]})^{[1]}\). Schemes are all taken to be
over \(\kk\) and \(\PP V\) is the projective space of lines in \(V\).

\smallskip
\noindent\textbf{Acknowledgements. --- }%
Thanks to Jan Lange, Matthias Sch\"utt, and Noah Olander for their interest and
helpful conversations related to this work. I was supported by a Humboldt
Postdoctoral Research Fellowship during the completion of this work.

\section{Profiles}\label{section-profiles}
A profile is a discrete invariant associated with a polynomial in positive
characteristic \(p > 0\) which records its special shape with respect to a
positive integer power \(q = p^e\) of the characteristic. Profiles are subject
to a technical condition which ensures a certain uniqueness in writing
polynomials in their distinguished form: see \parref{qatic-injective-mult}.
While this hypothesis is not strictly necessary at this point, it often becomes
important in applications. This section defines profiles and studies their
combinatorics, especially various order relations amongst profiles.

\subsectiondash{Profiles}\label{qatics-profiles}
Given an integer polynomial \(a(t) \coloneqq a_0 + a_1 t + \cdots + a_m t^m\)
with nonnegative coefficients, consider the tensor functor defined by
\[
\mathrm{S}^a(V^\vee) \coloneqq
\bigotimes\nolimits^m_{j = 0} \Sym^{a_j}(V^\vee)^{[j]}.
\]
Identifying \(\Sym^{a_j}(V^\vee)^{[j]}\) with the \(\kk\)-linear subspace of
\(\Sym^{a_j q^j}(V^\vee)\) consisting of \(q^j\)-powers provides a
multiplication map
\[
\operatorname{mult} \colon
\mathrm{S}^a(V^\vee) \to
\Sym^{a(q)}(V^\vee).
\]
Call \(a(t)\) a \emph{profile} with respect to the prime power \(q\) if this
multiplication map is injective for all finite-dimensional \(\kk\)-vector
spaces \(V\). Denote by
\[
\Types \coloneqq
\{
a \in \mathbf{Z}_{\geq 0}[t] :
\mult \colon \mathrm{S}^a(V^\vee) \to \Sym^{a(q)}(V^\vee)
\;\text{is injective for all finite-dimensional \(V\)}
\}
\]
the set of profiles with respect to \(q\). Given \(a \in \Types\) and a vector
space \(V\), \(\mathrm{S}^a(V^\vee)\) will often tacitly be identified with its
image in \(\Sym^{a(q)}(V^\vee)\) under the injective multiplication map.

Whether or not a given element of \(\mathbf{Z}_{\geq 0}[t]\) is a profile
depends on \(q\): For instance, \(a(t) \coloneqq t + q\) is not a profile with
respect to \(q\) since, for a \(2\)-dimensional vector space
\(V^\vee = \kk\cdot u \oplus \kk \cdot v\), the elements \(u^{[1]} \otimes v^q\) and
\(v^{[1]} \otimes u^q\) are distinct in \(\mathrm{S}^a(V^\vee)\), but
have the same image in \(\Sym^{2q}(V^\vee)\). The same polynomial \(t + q\)
is, however, a profile with respect to \(q^k\) for any \(k \geq 2\). In what
follows, however, \(q\) remains fixed, the dependence of \(\Types\) on \(q\)
will be suppressed.

\subsectiondash{Partial orderings on profiles}\label{qatic-poset}
Endow \(\Types\) with two partial orderings as follows: First, viewing a
profile as a sequence of nonnegative integers provides the product ordering
\(\preceq\), where for profiles
\(a \coloneqq \sum\nolimits_{j \geq 0} a_j t^j\) and
\(b \coloneqq \sum\nolimits_{j \geq 0} b_j t^j\),
\[
a \preceq b
\;\;\iff\;\;
a_j \leq b_j\;\;\text{for all \(j \geq 0\)}.
\]
Second, identifying a \(a \in \Types\) with the space \(\mathrm{S}^a(V^\vee)\)
provides the containment ordering \(\sqsubseteq\), where \(a \sqsubseteq b\) if and only if
\(a(q) = b(q) \eqqcolon d\) and
\[
\mathrm{S}^a(V^\vee) \subseteq
\mathrm{S}^b(V^\vee) \subseteq
\Sym^d(V^\vee)
\;
\text{for all finite-dimensional \(\kk\)-vector spaces \(V\),}
\]
where the tensor functors are identified with their image under the multiplication
map.

\subsectiondash{Examples and properties}\label{qatics-ordering-properties}
Regarding the partial orderings \(\preceq\) and \(\sqsubseteq\) on \(\Types\):
\begin{enumerate}
\item\label{qatics-ordering-properties.degrees}
The product ordering is strictly monotonic for \emph{numerical degrees}: If
\(a \prec b\), then \(a(q) < b(q)\). The containment ordering, on the other
hand, splits \(\Types\) into connected components
\[
\Types_d \coloneqq \{a \in \Types : a(q) = d\}
\]
indexed by numerical degrees. Each \(\Types_d\) has a unique
\(\sqsubseteq\)-maximal element given by the constant profile
\(a_{d\text{-}\mathrm{max}}(t) = d\), and a unique \(\sqsubseteq\)-minimal
element given by
\(a_{d\text{-}\mathrm{min}}(t) = a_0 + a_1t + \cdots + a_mt^m\) with
coefficients \(0 \leq a_j \leq q - 1\) arising from the base \(q\) expansion
of \(d\).
\item\label{qatics-ordering-properties.coefficient-sums}
More interestingly, both orderings are strictly monotonous for coefficient
sums. This is clear for \(\preceq\). To see that \(a \sqsubset b\) implies
\(a(1) < b(1)\), observe that
\(\mathrm{S}^a(V^\vee) \subset \mathrm{S}^b(V^\vee)\) for varying vector spaces
\(V\) gives the dimensional inequality
\[
\prod\nolimits_{j \geq 0} \binom{a_j + n}{a_j} <
\prod\nolimits_{j \geq 0} \binom{b_j + n}{b_j}
\;\;\text{for all \(n \geq 0\)}.
\]
Using that \(\binom{n}{k} \sim \frac{n^k}{k!}\) for large \(n\) and fixed
\(k\), taking logarithms, and doing away with constants shows that
for all \(n \gg 0\),
\[
\sum\nolimits_{j \geq 0} a_j \cdot \log(a_j + n) <
\sum\nolimits_{j \geq 0} b_j \cdot \log(b_j + n).
\]
By taking \(n\) even larger, \(\log(n)\) may be made arbitrarily close to
\(\log(c + n)\) for any constant \(c\). Since there are only finitely many
constants \(a_j\) and \(b_j\), this implies that \(a(1) < b(1)\).
\item\label{qatics-ordering-properties.not-total}
Neither \(\preceq\) nor \(\sqsubseteq\) give total orderings on \(\Types\).
This remains the case for \(\sqsubseteq\) even upon restriction to a connected
component \(\Types_d\): for instance,
\[
a(t) \coloneqq t^2 + (q+1)
\;\;\text{and}\;\;
b(t) \coloneqq (q+1)t + 1
\]
are profiles of numerical degree \(d = q^2+q+1\) which are
\(\sqsubseteq\)-incomparable.

\item\label{qatics-ordering-properties.reduced}
Call \(b \in \Types\) \emph{nonreduced} if its constant term is zero: \(b(0) =
0\). Any profile \(a\) preceding a nonreduced profile \(b\) in either ordering
is also nonreduced.
\item\label{qatics-ordering-properties.monoid}
Both orderings are compatible with addition in the following sense:
For \(a, b, c \in \Types\) such that \(a + c\) and \(b + c\) are also
profiles,
\[
\text{if}\;
a \sqsubseteq b,\;
\text{then}\;
a + c \sqsubseteq b + c,
\]
and similarly for \(\preceq\). This is straightforward to see for \(\preceq\).
For \(\sqsubseteq\), let \(V\) be a \(\kk\)-vector space and let \(f \in
\mathrm{S}^{a+c}(V^\vee)\). Then there exists an expansion of the form
\[
f =
\sum\nolimits_{i \in I}
g_i h_i
\;\;\text{with}\;
g_i \in \mathrm{S}^a(V^\vee)
\;\text{and}\;
h_i \in \mathrm{S}^c(V^\vee).
\]
Viewing the \(g_i\) as elements of \(\mathrm{S}^b(V^\vee)\) shows that
\(f \in \mathrm{S}^{b+c}(V^\vee)\).
\end{enumerate}

A third ordering \(\rightsquigarrow\) on \(\Types\) obtained by combining the
two will be also be useful: For \(a, b \in \Types\),
\[
a \rightsquigarrow b
\iff
\text{there exists \(a' \in \mathbf{Z}_{\geq 0}[t]\) such that
\(a \preceq a' \sqsubseteq b\)}.
\]
Here, the relations \(\preceq\) and \(\sqsubseteq\) are extended to
\(\mathbf{Z}_{\geq 0}[t]\) in the natural way: \(a \preceq a'\) means that each
coefficient of \(a'\) is at least that
of \(a\), and \(a' \sqsubseteq b\) means that the image of
\(\mathrm{S}^{a'}(V^\vee)\) under the multiplication map is contained in
\(\mathrm{S}^b(V^\vee)\) for all finite-dimensional \(\kk\)-vector spaces \(V\).

\begin{Lemma}\label{qatics-ordering}
The relation \(\rightsquigarrow\) is a partial ordering on \(\Types\).
\end{Lemma}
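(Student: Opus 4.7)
The plan is to verify the three axioms of a partial order: reflexivity, transitivity, and antisymmetry. Reflexivity is immediate with witness \(a' = a\).

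For transitivity, given \(a \preceq a' \sqsubseteq b\) and \(b \preceq b' \sqsubseteq c\), I would take as witness \(c' \coloneqq a' + (b' - b) \in \mathbf{Z}_{\geq 0}[t]\), which makes sense because \(b \preceq b'\) ensures that \(b' - b\) has nonnegative coefficients. The relation \(a \preceq c'\) holds coefficientwise. For \(c' \sqsubseteq c\), the degrees match since \(c'(q) = c(q)\) follows from \(a'(q) = b(q)\) and \(b'(q) = c(q)\), which are forced by the two \(\sqsubseteq\)-relations (any nonzero image must sit in a single symmetric power). For the subspace containment, I would factor a typical element in the image of \(\mathrm{S}^{c'}(V^\vee)\) as a sum of products \(g h\) with \(g\) coming from the image of \(\mathrm{S}^{a'}(V^\vee)\) and \(h\) from the image of \(\mathrm{S}^{b' - b}(V^\vee)\), using the expansion from \parref{qatics-ordering-properties}\ref{qatics-ordering-properties.monoid}; then \(a' \sqsubseteq b\) places \(g\) in the image of \(\mathrm{S}^b(V^\vee)\), so \(g h\) lies in \(\mathrm{S}^b(V^\vee) \cdot \mathrm{S}^{b'-b}(V^\vee) \subseteq \mathrm{S}^{b'}(V^\vee)\), and \(b' \sqsubseteq c\) finally places this in \(\mathrm{S}^c(V^\vee)\).

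For antisymmetry, assume \(a \rightsquigarrow b\) via \(a'\) and \(b \rightsquigarrow a\) via \(b'\). Chaining the degree equalities forced by the containments yields \(a(q) \leq a'(q) = b(q) \leq b'(q) = a(q)\), so all four terms coincide. Since \(a \preceq a'\) share a common value at \(q \geq 1\) and their difference has nonnegative integer coefficients, \(a = a'\); likewise \(b = b'\). Consequently \(a \sqsubseteq b\) and \(b \sqsubseteq a\) as profiles, reducing the claim to antisymmetry of \(\sqsubseteq\) on \(\Types\).

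The main obstacle is this last reduction: showing that equal sub-functors \(\mathrm{S}^a(V^\vee) = \mathrm{S}^b(V^\vee)\) for every \(V\) force \(a = b\) as polynomials. I would argue it by combining the dimension identity \(\prod_j \binom{a_j + n}{a_j} = \prod_j \binom{b_j + n}{b_j}\), whose factorization as a polynomial in \(n\) recovers the multiset of nonzero coefficients from its roots, with the equality \(a(q) = b(q)\) and the profile condition, which together should pin down the position of each coefficient uniquely.
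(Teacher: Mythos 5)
Your reflexivity and transitivity arguments coincide with the paper's proof: the paper takes exactly the witness \(a'' \coloneqq a' + (b' - b)\), notes \(a \preceq a''\) because \(b \preceq b'\), and justifies \(a'' \sqsubseteq b' \sqsubseteq c\) by the additivity property \parref{qatics-ordering-properties}\ref{qatics-ordering-properties.monoid} extended to \(\mathbf{Z}_{\geq 0}[t]\); your factorization of elements in the image of \(\mathrm{S}^{c'}(V^\vee)\) into sums of products \(gh\) is precisely the proof of that property, so this part is correct and essentially identical. Your reduction of antisymmetry of \(\rightsquigarrow\) to antisymmetry of \(\sqsubseteq\)---chaining \(a(q) \leq a'(q) = b(q) \leq b'(q) = a(q)\) and using that \(a \preceq a'\) with \(a(q) = a'(q)\) forces \(a = a'\), by the strict monotonicity of \parref{qatics-ordering-properties}\ref{qatics-ordering-properties.degrees}---is also sound, and is what the paper leaves implicit when it says antisymmetry ``follows directly from the corresponding properties of \(\preceq\) and \(\sqsubseteq\)''.

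The gap is in your final step, where you attempt to prove antisymmetry of \(\sqsubseteq\) itself: the claim that the multiset of nonzero coefficients together with the equality \(a(q) = b(q)\) and the profile condition pins down the positions of the coefficients is false. Take \(q = 2\), \(a = 2 + t^3\), and \(b = t + 2t^2\). Both are profiles by \parref{qatic-injective-mult}\ref{qatic-injective-mult.poset}, since the value sets of \(c \mapsto c(2)\) on \(\{0 \preceq c \preceq a\}\) and \(\{0 \preceq c \preceq b\}\), namely \(\{0,1,2,8,9,10\}\) and \(\{0,2,4,6,8,10\}\), have no repetitions; they share the coefficient multiset \(\{1,2\}\), the numerical degree \(a(2) = b(2) = 10\), and even the dimension function \(\prod_{j}\binom{a_j+n}{a_j} = (n+1)\binom{n+2}{2}\); yet \(a \neq b\). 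So every invariant your argument retains fails to separate distinct profiles, and any proof must use the hypothesis that the \emph{subspaces} coincide, not merely their dimensions and the value at \(q\). A natural repair: take \(\dim_\kk V = 2\); the basis computation in the proof of \parref{qatic-injective-mult} shows that the image of \(\mathrm{S}^a(V^\vee)\) is spanned by the monomials \(u^{a(q)-c(q)}v^{c(q)}\) with \(0 \preceq c \preceq a\), so equality of subspaces forces equality of the exponent sets \(\{c(q) : 0 \preceq c \preceq a\} = \{c(q) : 0 \preceq c \preceq b\}\)---a datum which does distinguish the pair above. Beware that this set alone is also insufficient: for instance \((q-1) + t\) and the constant profile \(2q-1\) both yield \(\{0,1,\ldots,2q-1\}\), so it must be used in tandem with the dimension data. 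Alternatively, and in keeping with the paper, you may simply cite antisymmetry of \(\sqsubseteq\) as established in \parref{qatic-poset}, where the containment relation is set up as a partial ordering by identifying \(a \in \Types\) with the functor \(\mathrm{S}^a(V^\vee)\); the paper's own proof of the lemma does exactly this and proves nothing more, so your proposal goes beyond it---but the extra argument, as written, does not close.
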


\begin{proof}
Reflexivity and antisymmetry follow directly from the corresponding properties
of \(\preceq\) and \(\sqsubseteq\). For transitivity, consider \(a,b,c \in \Types\)
such that \(a \rightsquigarrow b\) and \(b \rightsquigarrow c\).
By definition, this means that there are \(a', b' \in \mathbf{Z}_{\geq 0}[t]\)
satisfying
\[
a \preceq a' \sqsubseteq b
\;\;\text{and}\;\;
b \preceq b' \sqsubseteq c.
\]
Consider \(a'' \coloneqq a' + (b' - b)\). Since \(b \preceq b'\),
each coefficient of \(b' - b\) is nonnegative, and so \(a \preceq a''\).
Compatibility with addition from
\parref{qatics-ordering-properties}\ref{qatics-ordering-properties.monoid}
holds more generally for \(\sqsubseteq\) on \(\mathbf{Z}_{\geq 0}[t]\), and it
implies that \(a'' \sqsubseteq  b'\). Transitivity of \(\sqsubseteq\) then
gives \(a'' \sqsubseteq  c\), showing that \(a''\) witnesses
the relation \(a \rightsquigarrow c\).
\end{proof}

These partial orderings, \(\preceq\) in particular, make it simple to
formulate a criterion for when a nonnegative integer polynomial
\(a \in \mathbf{Z}_{\geq 0}[t]\) is a profile with respect to \(q\):

\begin{Lemma}\label{qatic-injective-mult}
Given \(a \in \mathbf{Z}_{\geq 0}[t]\), the following conditions are equivalent:
\begin{enumerate}
\item\label{qatic-injective-mult.mult}
\(\mult \colon \mathrm{S}^a(V) \to \Sym^{a(q)}(V)\) is injective
for every finite-dimensional \(\kk\)-vector space \(V\);
\item\label{qatic-injective-mult.two-dim}
\(\mult \colon \mathrm{S}^a(V) \to \Sym^{a(q)}(V)\) is injective for a
\(2\)-dimensional \(\kk\)-vector space \(V\); and
\item\label{qatic-injective-mult.poset}
the function \(\{b \in \mathbf{Z}_{\geq 0}[t]: 0 \preceq b \preceq a\} \to \mathbf{Z}\)
given by \(b \mapsto b(q)\) is injective.
\end{enumerate}
\end{Lemma}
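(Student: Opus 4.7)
The plan is to compare the multiplication map on monomial bases: for any basis $\{x_0,\ldots,x_n\}$ of $V^\vee$, the tensor products
\[
\bigotimes\nolimits_{j=0}^m x_{I_j}^{[j]},
\qquad I_j = (I_{j,0},\ldots,I_{j,n}),\; |I_j| = a_j,
\]
form a $\kk$-basis of $\mathrm{S}^a(V^\vee)$, and each such basis element maps under $\mult$ to the honest monomial $\prod_i x_i^{\sum_j I_{j,i} q^j}$ in $\Sym^{a(q)}(V^\vee)$. Since monomials are linearly independent in the target, $\mult$ is injective if and only if it is injective on monomials. So the proof will reduce everything to the combinatorial question of when the exponent map $(I_j)_j \mapsto \bigl(\sum_j I_{j,i} q^j\bigr)_i$ is injective.

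Of the three implications, \ref{qatic-injective-mult.mult}$\Rightarrow$\ref{qatic-injective-mult.two-dim} is trivial. For \ref{qatic-injective-mult.two-dim}$\Rightarrow$\ref{qatic-injective-mult.poset}, I take $V^\vee = \kk u \oplus \kk v$, so that the monomials above are indexed by tuples $(k_j)$ with $0 \leq k_j \leq a_j$ and are sent to $u^{\sum_j k_j q^j} v^{a(q) - \sum_j k_j q^j}$. Setting $b \coloneqq \sum_j k_j t^j$, this runs over exactly the polynomials with $0 \preceq b \preceq a$, and the image monomial is determined by $b(q)$. Injectivity of $\mult$ thus translates word-for-word into injectivity of $b \mapsto b(q)$ on $\{b : 0 \preceq b \preceq a\}$.

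For \ref{qatic-injective-mult.poset}$\Rightarrow$\ref{qatic-injective-mult.mult}, given any $V$ and two monomials with multi-indices $(I_j)$ and $(I'_j)$ mapping to the same element of $\Sym^{a(q)}(V^\vee)$, the equality of monomials forces $\sum_j I_{j,i} q^j = \sum_j I'_{j,i} q^j$ for every $i$. Fixing $i$ and setting $b \coloneqq \sum_j I_{j,i} t^j$ and $b' \coloneqq \sum_j I'_{j,i} t^j$, the constraint $|I_j| = |I'_j| = a_j$ ensures $0 \preceq b, b' \preceq a$, so \ref{qatic-injective-mult.poset} yields $I_{j,i} = I'_{j,i}$ for all $j$; running over $i$ gives $I_j = I'_j$ for all $j$. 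Thus $\mult$ is injective on monomials, hence injective.

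There is no real obstacle here beyond setting up notation; the content is simply to recognize that, in the monomial basis, $\mult$ is essentially the ``read off each coordinate'' map and that condition \ref{qatic-injective-mult.poset} is precisely the statement that this map is injective per coordinate. The mildly subtle point, which I would make explicit, is that injectivity of the full $\kk$-linear map reduces to injectivity on monomials because the image monomials lie in a basis of the target, so no linear-algebraic cancellation can occur.
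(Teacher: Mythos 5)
Your proof is correct, and while your first two implications coincide with the paper's, your argument for \ref{qatic-injective-mult.poset} \(\Rightarrow\) \ref{qatic-injective-mult.mult} takes a genuinely different route. The paper argues by contradiction from a counterexample \(V\) of minimal dimension: it splits \(V = U \oplus \kk \cdot v\), expands a kernel element uniquely along powers of \(v\), uses \ref{qatic-injective-mult.poset} to force each graded piece of the expansion to vanish separately, and then descends non-injectivity to \(\mathrm{S}^{a-b}(U)\) for some \(b \prec a\) (multiplying back up to \(\mathrm{S}^a(U)\)) to contradict minimality. You instead observe that \(\mult\) carries the tensor-monomial basis of \(\mathrm{S}^a(V^\vee)\) into the monomial basis of \(\Sym^{a(q)}(V^\vee)\), each with coefficient \(1\), so injectivity of the linear map is equivalent to injectivity of the exponent map \((I_j)_j \mapsto \bigl(\sum_j I_{j,i}q^j\bigr)_i\); equality of target monomials is checked coordinatewise, and each coordinate \(i\) produces \(b, b'\) with \(0 \preceq b, b' \preceq a\) (since \(I_{j,i} \leq a_j\)) and \(b(q) = b'(q)\), so \ref{qatic-injective-mult.poset} applies directly. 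This eliminates the induction on \(\dim V\) and makes the implication purely combinatorial. The one point requiring care is exactly the one you flag: the reduction from linear injectivity to injectivity on monomial indices, which is valid because the image monomials lie in a basis of the target, so two distinct source basis elements with equal image yield a nonzero kernel element, and distinct images are automatically linearly independent. Your route is arguably the more transparent of the two, at the modest cost of fixing coordinates throughout, whereas the paper's splitting argument is coordinate-light but longer.
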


\begin{proof}
\ref{qatic-injective-mult.mult} \(\Rightarrow\) \ref{qatic-injective-mult.two-dim}
is clear. For
\ref{qatic-injective-mult.two-dim} \(\Rightarrow\) \ref{qatic-injective-mult.poset},
choose a basis \(V = \kk \cdot u \oplus \kk \cdot v\).
Writing \(a = a_0 + a_1 t + \cdots + a_m t^m\), a basis for \(\mathrm{S}^a(V)\)
is given by
\begin{align*}
\mathrm{S}^a(\kk \cdot u \oplus \kk \cdot v) & =
\bigotimes\nolimits_{j = 0}^m \Sym^{a_j}(\kk \cdot u \oplus \kk \cdot v)^{[j]} =
\bigotimes\nolimits_{j = 0}^m
  \Big(\bigoplus\nolimits_{b_j = 0}^{a_j} \kk \cdot (u^{a_j - b_j} v^{b_j})^{[j]}\Big) \\
& =
\bigoplus\nolimits_{b_0 = 0}^{a_0} \cdots
\bigoplus\nolimits_{b_m = 0}^{a_m}
  \kk \cdot
  (u^{a_0 - b_0} v^{b_0}) \otimes
  (u^{a_1 -  b_1}v^{b_1})^{[1]} \otimes
  \cdots \otimes
  (u^{a_m - b_m} v^{b_m})^{[m]}.
\end{align*}
Multiplication sends the displayed basis element to \(u^{a(q) - b(q)}v^{b(q)}\)
where \(b \coloneqq b_0 + b_1 t + \cdots + b_m t^m\), so injectivity of
\(\mult \colon \mathrm{S}^a(V) \to \Sym^{a(q)}(V)\) is, in fact, equivalent to
injectivity of the function \(b \mapsto b(q)\).

Finally, for \ref{qatic-injective-mult.poset} \(\Rightarrow\)
\ref{qatic-injective-mult.mult}, suppose for sake of contradiction that there
is a \(V\) for which \(\mult \colon \mathrm{S}^a(V) \to \Sym^{a(q)}(V)\) is
not injective; choose such a \(V\) of minimal dimension and a nonzero element
\(\alpha \in \mathrm{S}^a(V)\) in the kernel. The previous argument shows that
\(\dim_{\kk} V \geq 2\), making it possible to choose a nonzero vector \(v \in
V\), a complementary subspace \(0 \neq U \subseteq V\), and a
splitting \(V = U \oplus \kk \cdot v\). Writing
\[
\Sym^{a_j}(V) = \Sym^{a_j}(U \oplus \kk \cdot v) =
\bigoplus\nolimits_{b_j = 0}^{a_j} \Sym^{a_j - b_j}(U) \cdot v^{b_j}
\]
provides \(\alpha\) with a unique expansion of the form
\[
\alpha
= C \cdot v^{a(q)} +
\sum\nolimits_{0 \preceq b \prec a}\Big(\sum\nolimits_{i \in I_b}
  (\beta_{i,0} \cdot v^{b_0}) \otimes
  (\beta_{i,1} \cdot v^{b_1})^{[1]} \otimes
  \cdots \otimes
  (\beta_{i,m} \cdot v^{b_m})^{[m]}\Big)
\]
for some \(C \in \kk\) and some \(\beta_{i,j} \in \Sym^{a_j - b_j}(U)\) for
each \(i \in I_b\) and \(0 \leq j \leq m\). Since \(\alpha \in \ker(\mult)\),
\[
0
= \mult(\alpha)
= C \cdot v^{a(q)} + \sum\nolimits_{0 \preceq b \prec a}\Big(\sum\nolimits_{i \in I_b}
  \beta_{i,0}\beta_{i,1}^{[1]} \cdots \beta_{i,m}^{[m]} \Big) \cdot v^{b(q)}.
\]
The hypothesis that
\(\{b \in \mathbf{Z}_{\geq 0}[t]: 0 \preceq b \preceq a\} \to \mathbf{Z} \colon b \mapsto b(q)\)
is injective implies that \(C = 0\) and that each parenthesized term must
vanish. Since \(\alpha\) is nonzero, some \(\beta_{i,j}\) is nonzero, and so
\(\mult \colon \mathrm{S}^{a - b}(U) \to \Sym^{a(q) - b(q)}(U)\) is not
injective for some \(b \prec a\). Multiplying by any element in
\(\mathrm{S}^b(U)\) with nonzero image in \(\Sym^{b(q)}(U)\) then implies
that \(\mult \colon \mathrm{S}^a(U) \to \Sym^{a(q)}(U)\) is also not injective.
This contradicts the minimality of \(V\), completing the proof.
\end{proof}

A simple but very useful consequence of this characterization used tacitly
throughout---in \parref{fano-pointed-line-equations} for instance---is the
following

\begin{Corollary}\label{profiles-less-than}
Let \(a, b \in \mathbf{Z}_{\geq 0}[t]\) with \(a \preceq b\). If \(b \in \Types\),
then also \(a \in \Types\). \qed
\end{Corollary}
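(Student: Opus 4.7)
The plan is to apply the equivalence established in Lemma \parref{qatic-injective-mult} and reduce the claim to an obvious set-theoretic inclusion. More precisely, by the characterization \ref{qatic-injective-mult.poset} of that lemma, membership in \(\Types\) is equivalent to injectivity of the evaluation map
\[
\{c \in \mathbf{Z}_{\geq 0}[t] : 0 \preceq c \preceq b\} \to \mathbf{Z},
\quad c \mapsto c(q).
\]
Since \(a \preceq b\), transitivity of the product order gives the inclusion
\[
\{c \in \mathbf{Z}_{\geq 0}[t] : 0 \preceq c \preceq a\} \subseteq
\{c \in \mathbf{Z}_{\geq 0}[t] : 0 \preceq c \preceq b\}.
\]
Hence the restriction of \(c \mapsto c(q)\) from the larger set to the smaller one remains injective, and applying Lemma \parref{qatic-injective-mult} in the reverse direction concludes that \(a \in \Types\).

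There is no genuine obstacle here: the content is packaged entirely in the poset-theoretic reformulation \ref{qatic-injective-mult.poset}, which trivializes the restriction of \(\Types\)-membership to \(\preceq\)-predecessors. The proof is a one-line appeal to that lemma.
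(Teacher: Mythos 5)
Your proof is correct and is exactly the argument the paper intends: the corollary is stated with \(\qedsymbol\) because it follows immediately from the characterization \parref{qatic-injective-mult}\ref{qatic-injective-mult.poset}, via the inclusion of intervals \(\{c : 0 \preceq c \preceq a\} \subseteq \{c : 0 \preceq c \preceq b\}\) that you spell out. Nothing to add.
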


\section{\texorpdfstring{\((q;\mathbf{a})\)}{(q;a)}-tic schemes}\label{section-qatic}
Each \(a \in \Types\) determines a canonical linear system
\[
\Gamma(Y,\mathcal{L}^{\otimes a})
\coloneqq
\image(
\mult \colon
\mathrm{S}^a\Gamma(Y,\mathcal{L}) \to
\Gamma(Y,\mathcal{L}^{\otimes a(q)}))
\]
for every \(\kk\)-scheme \(Y\) and \(\mathcal{L} \in \Pic Y\); note that
for general \((Y,\mathcal{L})\), the displayed multiplication map may not be
injective. Divisors in the corresponding linear series tend to be special
compared to the general divisor in the complete linear series associated with
\(\mathcal{L}^{\otimes a(q)}\) due to the special form of its equations. One
way to access these special properties is by carrying as additional structure
a lift of its defining section to the tensor product space
\(\mathrm{S}^a\Gamma(Y,\mathcal{L})\). This article is concerned primarily with
the case of projective \(n\)-space \(Y = \PP V\) and
\(\mathcal{L} = \sO_{\PP V}(1)\), and this section describes some of the basic
properties of complete intersections of such hypersurfaces.

\subsectiondash{Definitions}\label{qatics-definition}
Let \(a \in \Types\) be a profile.
\begin{itemize}
\item A \emph{\((q;a)\)-tic tensor} is an element
\(\alpha \in \mathrm{S}^a(V^\vee)\).
\item A \emph{\((q;a)\)-tic polynomial} is the image
\(f_\alpha \coloneqq \mult(\alpha)\) of a \((q;a)\)-tic tensor under
multiplication.
\item A \emph{\((q;a)\)-tic hypersurface} is the zero locus
\(X_\alpha \coloneqq \mathrm{V}(f_\alpha)\)
in \(\PP V\) of a nonzero \((q;a)\)-tic polynomial.
\end{itemize}
More generally, given a \emph{multi-profile}
\(\mathbf{a}\)---a finite multi-set consisting of elements of \(\Types\)---let
\[
\mathrm{S}^{\mathbf{a}}(V^\vee) \coloneqq
\bigoplus\nolimits_{a \in \mathbf{a}} \mathrm{S}^a(V^\vee)
\]
be the space of \emph{\((q;\mathbf{a})\)-tic tensors}. Given such a tensor
\(\boldsymbol{\alpha}\), the vanishing locus
\(X_{\boldsymbol{\alpha}} \coloneqq \mathrm{V}(f_\alpha : \alpha \in \boldsymbol{\alpha}) \subseteq \PP V\)
of its associated polynomials is called a
\emph{\((q;\mathbf{a})\)-tic scheme}; when the sequence of defining polynomials
form a regular sequence, \(X_{\boldsymbol{\alpha}}\) is called a
\emph{\((q;\mathbf{a})\)-tic complete intersection}.

\subsectiondash{Examples}\label{qatics-examples}
Given a profile \(a = a_0 + a_1 t + \cdots + a_m t^m\), a \((q;a)\)-tic
polynomial is of the form
\[
f = \sum\nolimits_{i \in I} g_{i,0} \cdot (g_{i,1})^q \cdots (g_{i,m})^{q^m}
\]
where \(g_{i,j}\) is a homogeneous polynomial of degree \(a_j\). Some simple
instances include:
\begin{enumerate}
\item\label{qatics-examples.constant}
The constant polynomial \(a(t) = d\) is a profile for any prime power \(q\),
and \((q;a)\)-tic polynomial is synonymous for degree \(d\) polynomial. In
particular, if \(a(t) = 1\) is a \emph{linear} profile, then \((q;a)\)-tic
polynomials are linear polynomials.
\item\label{qatics-examples.nonreduced}
If a profile \(a(t) = a_1t + \cdots + a_mt^m\) has zero constant term, so it is
\emph{nonreduced}, then any \((q;a)\)-tic polynomial is a
\(\kk\)-linear combination of \(q\)-power monomials, and its associated
hypersurface is geometrically nonreduced.
\item\label{qatics-examples.qbic}
\(a(t) = 1 + t + \cdots + t^m\) is a profile for any \(q\), and
\((q;a)\)-tic polynomials are of the form
\[
f(x_0,\ldots,x_n) =
\sum\nolimits_{i_0,\ldots,i_m = 0}^n c_{i_0,i_1,\ldots,i_m} \cdot
\prod\nolimits_{j = 0}^m (x_{i_j})^{q^j}
\;\;
\text{for scalars \(c_{i_0,i_1,\ldots,i_m} \in \kk\)}.
\]
In particular, when \(a(t) = 1 + t\), these are the \(q\)-bic hypersurfaces
from \citeFano{1.6}, and the underlying \((q;a)\)-tic tensor is but a \(q\)-bic
form in the sense of \citeForms{1.1}.
\item\label{qatics-examples.cubic}
\(a(t) = 2 + t\) is a profile whenever \(q \neq 2\), and \((q;a)\)-tic polynomials
are of the form
\[
f(x_0,\ldots,x_n) =
\sum\nolimits_{0 \leq i \leq j \leq n}
\sum\nolimits_{k = 0}^n c_{ijk} \cdot x_i x_j x_k^q
\;\;
\text{for scalars \(c_{ijk} \in \kk\)}.
\]
\item\label{qatics-examples.fermat}
The Fermat polynomial \(x_0^d + \cdots + x_n^d\) of degree \(d\) is
a \((q;a)\)-tic polynomial for any profile \(a\) of numerical degree
\(a(q) = d\). This provides a cheap proof of the fact that the general
\((q;a)\)-tic hypersurface is smooth whenever its numerical degree is coprime
to the characteristic \(p\); see \parref{qatic-general-is-smooth} for another
argument in the general case.
\end{enumerate}

A simple but extremely useful observation is that a \((q;\mathbf{a})\)-tic
structure is preserved upon passing to linear sections. The following statement
follows easily from functoriality of tensor functors:

\begin{Lemma}\label{qatic-linear-sections}
Let \(X \subseteq \PP V\) be a \((q;\mathbf{a})\)-tic scheme associated to a
tensor \(\boldsymbol{\alpha}\). If \(\PP U \subseteq \PP V\) is a linear
subspace, then \(X \cap \PP U\) is the \((q;\mathbf{a})\)-tic scheme
in \(\PP U\) associated with the tensor \(\boldsymbol{\alpha}\rvert_U\).
\qed
\end{Lemma}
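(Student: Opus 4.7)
The plan is to reduce the statement to the naturality of the construction $V \mapsto \mathrm{S}^a(V^\vee)$ together with the multiplication map, and then observe that vanishing loci of polynomials restrict well to linear subspaces. Since a $(q;\mathbf{a})$-tic scheme is cut out by the individual $(q;a)$-tic polynomials for $a \in \mathbf{a}$ and taking linear sections commutes with taking scheme-theoretic intersections, I may assume $\mathbf{a} = \{a\}$ consists of a single profile.

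First I would set up the restriction map in the tensor construction. The inclusion $U \hookrightarrow V$ dualizes to a surjection $V^\vee \twoheadrightarrow U^\vee$, which for each $j \geq 0$ induces a surjection $\mathrm{Sym}^{a_j}(V^\vee) \twoheadrightarrow \mathrm{Sym}^{a_j}(U^\vee)$, and after applying the Frobenius twist $(-)^{[j]}$, a surjection $\mathrm{Sym}^{a_j}(V^\vee)^{[j]} \twoheadrightarrow \mathrm{Sym}^{a_j}(U^\vee)^{[j]}$. Tensoring these together over $j = 0, \ldots, m$ produces the restriction map $\mathrm{S}^a(V^\vee) \to \mathrm{S}^a(U^\vee)$, and by definition $\alpha \mapsto \alpha\rvert_U$.

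Next, I would verify that the multiplication map is natural in the vector space, i.e., that the square
\[
\begin{tikzcd}
\mathrm{S}^a(V^\vee) \ar[r, "\mult"] \ar[d] & \Sym^{a(q)}(V^\vee) \ar[d] \\
\mathrm{S}^a(U^\vee) \ar[r, "\mult"] & \Sym^{a(q)}(U^\vee)
\end{tikzcd}
\]
commutes, where the right vertical arrow is the natural restriction. This is immediate from the fact that $\mult$ is induced termwise by the ring multiplication on $\Sym^\smallbullet(V^\vee)$ and $\Sym^\smallbullet(U^\vee)$, and ring surjections commute with multiplication. Commutativity of this square gives $f_{\alpha\rvert_U} = f_\alpha \rvert_U$ as elements of $\Sym^{a(q)}(U^\vee)$, i.e., as polynomial functions on $\PP U$.

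Finally, the zero locus of $f_\alpha\rvert_U$ in $\PP U$ is, by definition, the scheme-theoretic intersection $X_\alpha \cap \PP U$. Hence $X_\alpha \cap \PP U = X_{\alpha\rvert_U}$ as $(q;a)$-tic schemes in $\PP U$. There is no real obstacle here beyond bookkeeping; the only point that requires a moment's care is ensuring that the Frobenius twist commutes with restriction, which holds since $(-)^{[j]} = \kk \otimes_{\Fr^j,\kk} (-)$ is functorial and applied to a surjection of $\kk$-vector spaces, remains a surjection.
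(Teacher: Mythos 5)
Your proposal is correct and matches the paper's intended argument: the paper states this lemma without proof, remarking only that it "follows easily from functoriality of tensor functors," and your write-up is precisely that functoriality argument made explicit (restriction induced by $V^\vee \twoheadrightarrow U^\vee$, naturality of $\mult$, functoriality of the Frobenius twist, and compatibility of vanishing loci with linear sections). Nothing is missing; you have simply filled in the bookkeeping the author chose to omit.
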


\subsectiondash{\texorpdfstring{\((q;a)\)}{(q;a)}-tic Veronese}\label{qatic-veronese}
The linear system \(\mathrm{S}^a(V^\vee)\) of \((q;a)\)-tic polynomials, being
a tensor product of Frobenius twists of the complete linear systems
\(\Gamma(\PP V, \sO_{\PP V}(a_j))\), is base point free and so defines a
morphism
\[
\mathrm{Ver}_a \colon \PP V \to
\PP(\mathrm{S}^a(V^\vee)^\vee)
\]
called the \emph{\((q;a)\)-tic Veronese morphism}. This description shows that
\(\mathrm{Ver}_a\) canonically factors as
\[
\mathrm{Ver}_a \colon
\PP V \stackrel{(\mathrm{Ver}_{a_j})}{\longrightarrow}
\prod\nolimits_{j \geq 0} \PP(\Sym^{a_j}(V^\vee)^\vee) \stackrel{\prod \Fr^j}{\longrightarrow}
\prod\nolimits_{j \geq 0} \PP(\Sym^{a_j}(V^\vee)^{\vee,[j]}) \stackrel{\mathrm{Seg}}{\longrightarrow}
\PP(\mathrm{S}^a(V^\vee)^\vee)
\]
where the first map is the tuple whose \(j\)-th factor is the
\(a_j\)-th Veronese embedding; the second map is a product of powers of the
\(q\)-power \(\kk\)-linear Frobenius morphism, with \(\Fr^j\) acting on the
\(j\)-th factors; and the third map is the Segre embedding. This gives the
first statement of:

\begin{Lemma}\label{qatic-linear-system}
The \((q;a)\)-tic Veronese morphism
\(\mathrm{Ver}_a \colon \PP V \to \PP(\mathrm{S}^a(V^\vee)^\vee)\) is
universally injective. Furthermore, if \(a\) is reduced, then
\(\mathrm{Ver}_a\) is a closed immersion.
\end{Lemma}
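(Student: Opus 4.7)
The plan is to exploit the three-step factorization of \(\mathrm{Ver}_a\) displayed just above the statement: the tuple of Veronese embeddings, the product of Frobenii \(\prod \Fr^j\), and the Segre embedding. Both claims of the lemma reduce to checking suitable properties of each factor.

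For the universal injectivity claim, I will verify that each factor is universally injective. The Segre embedding is a closed immersion; each relative Frobenius \(\Fr^j\) is a universal homeomorphism, hence universally injective, and so is the product \(\prod \Fr^j\); and for the tuple \((\mathrm{Ver}_{a_j})_j\), the assumption \(a \neq 0\) forces some \(a_{j_0} \geq 1\), whence the corresponding \(\mathrm{Ver}_{a_{j_0}}\) is a closed immersion. Because this single component is already injective on \(K\)-points for every field extension \(K/\kk\), so is the whole tuple. Composing three universally injective morphisms then yields the first claim.

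For the closed-immersion claim, the crucial input is that \(\Fr^0 = \id\). When \(a_0 \geq 1\), the composition of the first two steps,
\[
\PP V \xrightarrow{(\Fr^j \circ \mathrm{Ver}_{a_j})_j} \prod\nolimits_{j \geq 0} \PP(\Sym^{a_j}(V^\vee)^{\vee,[j]}),
\]
retains \(\mathrm{Ver}_{a_0}\) as its \(j=0\) component, which is a closed immersion. This component alone makes the tuple map a monomorphism, since composing it with the \(j=0\) projection recovers \(\mathrm{Ver}_{a_0}\). Because \(\PP V\) is proper over \(\kk\) while the target is separated, the tuple map is also proper, and a proper monomorphism of schemes is a closed immersion. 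Post-composing with the Segre embedding, itself a closed immersion, concludes the argument.

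The main conceptual obstacle is isolating the role of reducedness. In the nonreduced case \(a_0 = 0\), every factor \(\Fr^j \circ \mathrm{Ver}_{a_j}\) with \(a_j > 0\) involves a nontrivial Frobenius twist, which is only a universal homeomorphism onto its image; no component of the tuple is then a closed immersion, and the proper-monomorphism argument fails. Reducedness is exactly the condition that keeps the \(j=0\) component clear of a Frobenius twist, preserving its closed-immersion character.
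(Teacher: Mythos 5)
Your proposal is correct and follows essentially the same route as the paper: both use the three-step factorization through the tuple of Veronese embeddings, the product of Frobenii, and the Segre embedding, reading off universal injectivity factor by factor, and both obtain the closed-immersion statement from the observation that \(\Fr^0 = \id\) makes the \(j = 0\) component of the composite the \(a_0\)-th Veronese embedding. Your proper-monomorphism argument is merely a more explicit justification of the paper's one-line claim that the tuple map is an isomorphism onto its image because its \(0\)-th projection is.
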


\begin{proof}
Assume \(a\) is reduced, meaning its constant term satisfies \(a_0 \neq 0\),
and consider the factorization from \parref{qatic-veronese}. Since the Segre
embedding is a closed immersion, it suffices to show that
\[
\big(\prod\nolimits_{j \geq 0} \Fr^j\big) \circ
\big(\mathrm{Ver}_{a_j}\big)_{j \geq 0} \colon 
\PP V \to \prod\nolimits_{j \geq 0} \PP(\Sym^{a_j}(V^\vee)^{\vee,[j]})
\]
is an isomorphism onto its image. But this clear since projection onto the
\(0\)-th factor provides the \(a_0\)-th Veronese embedding of \(\PP V\), which
is an isomorphism onto its image whenever \(a_0 \neq 0\).
\end{proof}

\subsectiondash{Example}\label{qatic-twisted-qbic}
Consider \(a = t^r(1  + t^m)\) for integers \(r \geq 0\) and \(m > 0\), and
\(\PP V \cong \PP^1\). Then the \(a\)-th Veronese is the morphism
\(\PP^1 \to \PP^3\) given by
\[
\mathrm{Ver}_{t^r(1 + t^m)}(x:y) =
(x^{q^r(1 + q^m)}: (x^{q^m} y)^{q^r} : (x y^{q^m})^{q^r} : y^{q^r(1+q^m)}).
\]
When \(r = 0\), then \(\mathrm{Ver}_{1 + t^m} \colon \PP^1 \to \PP^3\) is an
isomorphism onto its image, providing what might be viewed as a generalization
of the twisted cubic, which may be obtained by taking \(q = 2\) and \(m = 1\).

\subsectiondash{Parameter space}\label{qatic-parameter-space}
Let \(\mathbf{a}\) be a multi-profile. A parameter space for
\((q;\mathbf{a})\)-tic schemes in \(\PP V\) is given by the multi-projective
space
\[
\qaticci_{\PP V} \coloneqq
\prod\nolimits_{a \in \mathbf{a}} \PP\mathrm{S}^a(V^\vee).
\]
The tautological line subbundles come together to form a tautological
\((q;\mathbf{a})\)-tic tensor
\[
\boldsymbol{\alpha}_{\mathrm{taut}} \colon
\bigoplus\nolimits_{a \in \mathbf{a}} \pr_a^*\sO_{\PP\mathrm{S}^a(V^\vee)}(-1) \to
\sO_{\qaticci_{\PP V}} \otimes \mathrm{S}^{\mathbf{a}}(V^\vee)
\]
which cuts out a tautological family \(\mathcal{X}_{\mathrm{taut}}\) in the
product \(\PP V \times \qaticci_{\PP V}\).

Standard arguments show that the second projection
\(\mathcal{X}_{\mathrm{taut}} \to \qaticci_{\PP V}\) is dominant if and only if
\(\#\mathbf{a} \leq n\). In this case, as usual, a property is said to hold
for a \emph{general} \((q;\mathbf{a})\)-tic scheme in \(\PP V\) if it holds for
each fibre of \(\mathcal{X}_{\mathrm{taut}}\) over a nonempty open subset of
\(\qaticci_{\PP V}\). In the following, call the multi-profile \(\mathbf{a}\)
\emph{reduced} if every \(a \in \mathbf{a}\) is reduced:

\begin{Proposition}\label{qatic-general-is-smooth}
If the multi-profile \(\mathbf{a}\) is reduced and \(\#\mathbf{a} \leq n\),
then a general \((q;\mathbf{a})\)-tic scheme in \(\PP V\) is smooth of
dimension \(n - \#\mathbf{a}\).
\end{Proposition}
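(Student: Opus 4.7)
The plan is to induct on $c \coloneqq \#\mathbf{a}$, cutting $\PP V$ by one $(q;a)$-tic hypersurface at a time and applying the standard Bertini theorem at each step to the image of the preceding intersection under the appropriate Veronese. Fix an enumeration $\mathbf{a} = \{a_1, \ldots, a_c\}$ and, for $0 \leq i \leq c$, let $T_i \coloneqq \prod_{j=1}^{i} \PP\mathrm{S}^{a_j}(V^\vee)$ together with its tautological family whose fibre over $\boldsymbol{\alpha} = (\alpha_1, \ldots, \alpha_i)$ is $Y^{(i)}_{\boldsymbol{\alpha}} \coloneqq X_{\alpha_1} \cap \cdots \cap X_{\alpha_i} \subseteq \PP V$. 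The inductive claim is that there is a nonempty open $U_i \subseteq T_i$ over which $Y^{(i)}_{\boldsymbol{\alpha}}$ is smooth of pure dimension $n - i$; the case $i = 0$ is trivial as $Y^{(0)} = \PP V$.

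For the inductive step, fix $\boldsymbol{\alpha} \in U_i$ so that $Y^{(i)}_{\boldsymbol{\alpha}}$ is smooth of dimension $n - i \geq 1$ by induction and the hypothesis $c \leq n$. Since $a_{i+1}$ is reduced, Lemma \parref{qatic-linear-system} says that the $(q;a_{i+1})$-tic Veronese $\mathrm{Ver}_{a_{i+1}} \colon \PP V \to \PP(\mathrm{S}^{a_{i+1}}(V^\vee)^\vee)$ is a closed immersion, which realizes $Y^{(i)}_{\boldsymbol{\alpha}}$ as a smooth closed subvariety of the target; by definition, $Y^{(i)}_{\boldsymbol{\alpha}} \cap X_{\alpha_{i+1}}$ is then the preimage of the hyperplane section cut out by $\alpha_{i+1}$. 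The standard Bertini theorem for general hyperplane sections of a smooth projective variety, valid in any characteristic, produces a nonempty open $W_{\boldsymbol{\alpha}} \subseteq \PP\mathrm{S}^{a_{i+1}}(V^\vee)$ over which $Y^{(i)}_{\boldsymbol{\alpha}} \cap X_{\alpha_{i+1}}$ is smooth of pure dimension $n - i - 1$. To globalize, form the relative singular-locus incidence variety
\[
\mathcal{Z}_i \coloneqq \bigl\{(\boldsymbol{\alpha}, \alpha_{i+1}, x) \in U_i \times \PP\mathrm{S}^{a_{i+1}}(V^\vee) \times \PP V : x \in \Sing(Y^{(i)}_{\boldsymbol{\alpha}} \cap X_{\alpha_{i+1}})\bigr\}
\]
and take $U_{i+1}$ to be the complement in $U_i \times \PP\mathrm{S}^{a_{i+1}}(V^\vee)$ of the closure of the image of $\mathcal{Z}_i$; the fibrewise Bertini statement shows that $U_{i+1}$ is dense.

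The step expected to require the most care is the Bertini application, which requires that for general $\alpha_{i+1}$ the hyperplane section of $\mathrm{Ver}_{a_{i+1}}(Y^{(i)}_{\boldsymbol{\alpha}})$ is both proper (so the intersection has codimension one) and smooth. Both conditions are standard: since $\mathrm{Ver}_{a_{i+1}}$ is a closed immersion, $Y^{(i)}_{\boldsymbol{\alpha}}$ embeds as a positive-dimensional subvariety of $\PP(\mathrm{S}^{a_{i+1}}(V^\vee)^\vee)$, so spans a linear subspace of dimension at least $n - i \geq 1$, on which generic hyperplane sections are proper; and Bertini for the full linear system $|\sO(1)|$ on a smooth embedded projective variety holds characteristic-free. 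Iterating $c$ times yields the desired open subset of $\qaticci_{\PP V}$ over which the $(q;\mathbf{a})$-tic scheme is smooth of dimension $n - c$.
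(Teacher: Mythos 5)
Your proposal is correct and is essentially the paper's argument: both convert \((q;a)\)-tic sections into hyperplane sections via the \((q;a)\)-tic Veronese closed immersion of \parref{qatic-linear-system}---which is exactly where reducedness of \(\mathbf{a}\) enters---and then apply the characteristic-free Bertini theorem for hyperplane sections of a smooth embedded variety, iterating \(\#\mathbf{a}\) times. The only difference is bookkeeping: the paper gets openness in \(\qaticci_{\PP V}\) in one stroke, observing that the locus of non-smooth members is closed as the image of the nonsmooth locus of the proper morphism \(\mathcal{X}_{\mathrm{taut}} \to \qaticci_{\PP V}\), so that a single smooth complete intersection suffices, whereas you re-derive this openness at each inductive step via the incidence variety \(\mathcal{Z}_i\).
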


\begin{proof}
Let \(\Delta \subseteq \qaticci_{\PP V}\) be the subset parameterizing
\((q;\mathbf{a})\)-tic schemes which are not smooth of dimension
\(n - \#\mathbf{a}\). This is closed subset as it is supported on the
image of the nonsmooth locus of the proper morphism
\(\mathcal{X}_{\mathrm{taut}} \to \qaticci_{\PP V}\). Thus it suffices to
show that there exists a single smooth \((q;\mathbf{a})\)-tic complete
intersection. This follows from Bertini's theorem as given in \citeSP{0FD6},
where the hypotheses are satisfied thanks to \parref{qatic-veronese} and
\parref{qatic-linear-system}.
\end{proof}

It would be interesting to study the discriminant locus \(\Delta\). For
\(q\)-bic hypersurfaces as in
\parref{qatics-examples}\ref{qatics-examples.qbic}, \(\Delta\) is cut out by a
determinantal equation, similar to the case of quadrics, and carries an
intricate stratification corresponding to singularity types of the
corresponding hypersurfaces: see \cite[\S6]{qbic-forms} for details. It would
also be interesting to compute numerical invariants of \(\Delta\), such as its
multi-degree, as is done in \cite{Benoist:CI-I} in the classical case.

\section{Linear spaces}\label{section-fano}
One of the most apparent special properties of a \((q;\mathbf{a})\)-tic
complete intersection \(X \subseteq \PP V\) is that it
contains many more linear spaces than would be expected given its numerical
degree. From the point of view of this article, this is because its
\emph{Fano schemes} \(\mathbf{F}_r(X)\)---the Hilbert schemes parameterizing
\(r\)-planes contained in \(X\)---inherit a \((q;\mathbf{a})\)-tic structure,
resulting in a smaller set of defining equations in the Grassmannian
\(\mathbf{G} \coloneqq \mathbf{G}(r+1,V)\) of \(r\)-planes in \(\PP V\).
Writing \(\mathcal{S}\) for the tautological subbundle of rank \(r+1\) on
\(\mathbf{G}\), this explicitly means the following:

\begin{Lemma}\label{fano-equations}
Let \(X \subseteq \PP V\) be a \((q;\mathbf{a})\)-tic scheme. Its
Fano scheme \(\mathbf{F}_r(X)\) of \(r\)-planes is cut out of the Grassmannian
\(\mathbf{G}\) by a section of
\(\mathrm{S}^{\mathbf{a}}(\mathcal{S}^\vee) \coloneqq
\bigoplus\nolimits_{a \in \mathbf{a}} \mathrm{S}^a(\mathcal{S}^\vee)\).
\end{Lemma}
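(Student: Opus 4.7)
The plan is to globalize Lemma \parref{qatic-linear-sections} over the Grassmannian. Given a point \([W] \in \mathbf{G}\) corresponding to a subspace \(W \subseteq V\), the \(r\)-plane \(\PP W\) lies in \(X\) if and only if each polynomial \(f_\alpha\), for \(\alpha \in \boldsymbol{\alpha}\), restricts to zero on \(\PP W\). Since every \(a \in \mathbf{a}\) is a profile, the multiplication map \(\mathrm{S}^a(W^\vee) \to \Sym^{a(q)}(W^\vee)\) is injective by definition, so this vanishing is equivalent to the restricted tensor \(\alpha\rvert_W \in \mathrm{S}^a(W^\vee)\) itself being zero.

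To package these conditions into a single section, I would apply the tensor functor \(\mathrm{S}^a(-)\) to the tautological surjection \(V^\vee \otimes \sO_{\mathbf{G}} \twoheadrightarrow \mathcal{S}^\vee\). This yields a surjection \(\mathrm{S}^a(V^\vee) \otimes \sO_{\mathbf{G}} \twoheadrightarrow \mathrm{S}^a(\mathcal{S}^\vee)\), so each \(\alpha \in \mathrm{S}^a(V^\vee)\) produces a global section of \(\mathrm{S}^a(\mathcal{S}^\vee)\) whose fibre at \([W]\) is exactly the restriction \(\alpha\rvert_W\). Summing over the multi-profile gives a section \(s_{\boldsymbol{\alpha}}\) of \(\mathrm{S}^{\mathbf{a}}(\mathcal{S}^\vee)\) whose zero locus agrees set-theoretically with \(\mathbf{F}_r(X)\) by the first paragraph.

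The remaining task is to promote this to an equality of subschemes of \(\mathbf{G}\). For this I would compare with the classical section of \(\bigoplus_{a \in \mathbf{a}} \Sym^{a(q)}(\mathcal{S}^\vee)\) assembled from the polynomials \(f_\alpha\) themselves, whose zero locus is the Fano scheme with its Hilbert scheme structure via the usual argument over the universal \(r\)-plane \(\PP(\mathcal{S}) \subseteq \mathbf{G} \times \PP V\), as in \cite[\S V.4]{Kollar:Book}. The natural map \(\mathrm{S}^a(\mathcal{S}^\vee) \to \Sym^{a(q)}(\mathcal{S}^\vee)\) is fibrewise injective by the profile condition applied at each \([W]\), hence realizes the source as a sub-bundle of the target; by construction the image of \(s_{\boldsymbol{\alpha}}\) in \(\bigoplus_a \Sym^{a(q)}(\mathcal{S}^\vee)\) is precisely the classical section. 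Since the zero locus of a section of a sub-bundle coincides scheme-theoretically with the zero locus of its image in the ambient bundle, the two scheme structures on \(\mathbf{F}_r(X)\) agree.

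The main point, and the only feature specific to the \((q;\mathbf{a})\)-tic setting, is thus that \(\mathrm{S}^a(\mathcal{S}^\vee) \hookrightarrow \Sym^{a(q)}(\mathcal{S}^\vee)\) is an inclusion of vector bundles rather than a mere map of coherent sheaves; this reduces directly to the injectivity built into the definition of a profile, and requires no further combinatorial input beyond \parref{qatics-profiles}.
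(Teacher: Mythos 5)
Your proposal is correct and takes essentially the same route as the paper: injectivity of the multiplication maps converts vanishing of the defining polynomials on \(\PP W\) into vanishing of the restricted tensor, and the section is the image of \(\boldsymbol{\alpha}\) under \(\mathrm{S}^{\mathbf{a}}\) applied to the tautological quotient \(V^\vee \otimes \sO_{\mathbf{G}} \to \mathcal{S}^\vee\), exactly as in the paper's one-line composition \(\sO_{\mathbf{G}} \to \sO_{\mathbf{G}} \otimes \mathrm{S}^{\mathbf{a}}(V^\vee) \to \mathrm{S}^{\mathbf{a}}(\mathcal{S}^\vee)\). Your third paragraph, which pins down the scheme structure by comparing with the classical section of \(\bigoplus_{a \in \mathbf{a}} \Sym^{a(q)}(\mathcal{S}^\vee)\) through the fibrewise-injective, hence locally split, inclusion \(\mathrm{S}^a(\mathcal{S}^\vee) \hookrightarrow \Sym^{a(q)}(\mathcal{S}^\vee)\), is a correct additional verification of a point the paper leaves tacit, not a different argument.
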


\begin{proof}
Injectivity of the multiplication maps in the definition of profiles in
\parref{qatics-profiles} implies that the polynomials defining \(X\) vanish on
an \(r\)-plane \(\PP U \subseteq \PP V\) if and only if the corresponding
\((q;\mathbf{a})\)-tic tensor
\(\boldsymbol{\alpha} \in \mathrm{S}^{\mathbf{a}}(V^\vee)\)
vanishes along the restriction \(V^\vee \to U^\vee\). Thus
\(\mathbf{F}_r(X)\) is cut out by the section
\[
\boldsymbol{\alpha}\rvert_{\mathcal{S}} \colon
\sO_{\mathbf{G}} \to
\sO_{\mathbf{G}} \otimes \mathrm{S}^{\mathbf{a}}(V^\vee) \to
\mathrm{S}^{\mathbf{a}}(\mathcal{S}^\vee).
\qedhere
\]
\end{proof}


This provides a lower bound on the dimension of the Fano scheme
\(\mathbf{F}_r(X)\) of \(r\)-planes associated with a \((q;\mathbf{a})\)-tic
scheme \(X \subseteq \PP V\) in an \(n\)-dimensional projective space: If
\(\mathbf{F}_r(X) \neq \varnothing\), then 
\[
\dim \mathbf{F}_r(X) \geq
\delta(n,\mathbf{a},r) \coloneqq
(r+1)(n-r) -
\sum\nolimits_{a \in \mathbf{a}} \prod\nolimits_{j \geq 0} \binom{a_j + r}{r}
\;\;\text{where}\;
a = \sum\nolimits_{j \geq 0} a_j t^j.
\]
Notably, the \emph{expected dimension} \(\delta(n,\mathbf{a},r)\) of the Fano
scheme, depends on \(\mathbf{a}\) but not on \(q\); equivalently, this means
that the expected dimension of \(\mathbf{F}_r(X)\) does not directly depend on
the degree of \(X\)!

The main result of this section is that \(\mathbf{F}_r(X)\) is of its expected
dimension for a general \((q;\mathbf{a})\)-tic scheme \(X\), and will be
furthermore smooth provided that the multi-profile \(\mathbf{a}\) is reduced.
As is well-known, the statement requires a slight modification when \(X\)
is a quadric, in which case the dimension estimate \(\delta(n,\mathbf{a},r)\)
is too big when \(r \approx \frac{1}{2}n\). To give a uniform
statement, define
\[
\delta_-(n,\mathbf{a},r) \coloneqq
\min\{\delta(n,\mathbf{a},r), n - 2r - \#\mathbf{a}\}.
\]
One may verify that, other than in the case \(\mathbf{a} = (2t^k) \cup \mathbf{a}'\)
where each \(a \in \mathbf{a}'\) is of the form \(t^m\),
\(\delta(n,\mathbf{a},r) \geq 0\) if and only if
\(\delta_-(n,\mathbf{a},r) \geq 0\),
and similarly for \(> 0\). The statement is now the following:

\begin{Theorem}\label{fano-theorem}
Let \(X \subseteq \PP V\) be a \((q;\mathbf{a})\)-tic scheme. If
\begin{enumerate}
\item\label{fano-theorem.empty}
\(\delta_-(n,\mathbf{a},r) < 0\), then \(\mathbf{F}_r(X)\) is empty for
general \(X\);
\item\label{fano-theorem.smooth}
\(\delta_-(n,\mathbf{a},r) \geq 0\), then \(\mathbf{F}_r(X)\) is
nonempty and has dimension \(\delta(n,\mathbf{a},r)\) for general \(X\);
and
\item\label{fano-theorem.connected}
\(\delta_-(n,\mathbf{a},r) > 0\), then \(\mathbf{F}_r(X)\) is connected
for all \(X\).
\end{enumerate}
Furthermore, if \(\mathbf{a}\) is reduced, then \(\mathbf{F}_r(X)\) is also
smooth for general \(X\) when \(\delta_-(n,\mathbf{a},r) \geq 0\).
\end{Theorem}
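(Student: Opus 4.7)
My plan is to follow the classical strategy (à la \cite[\S2]{DM} or \cite[\S V.4]{Kollar:Book}) via the incidence correspondence
\[
\mathbf{I} \coloneqq \{(\boldsymbol{\alpha}, [\Lambda]) \in \qaticci_{\PP V} \times \mathbf{G} : \boldsymbol{\alpha}\rvert_\Lambda = 0\}
\]
equipped with its two projections $\pi_1$ and $\pi_2$. By \parref{fano-equations}, the fiber of $\pi_1$ over $[\boldsymbol{\alpha}]$ is $\mathbf{F}_r(X_{\boldsymbol{\alpha}})$, so everything reduces to studying $\pi_1$. The second projection $\pi_2$ realizes $\mathbf{I}$ as a product of projective subbundles: the fiber over $[\Lambda]$ is the product over $a \in \mathbf{a}$ of the projectivizations of $\ker(\mathrm{S}^a(V^\vee) \to \mathrm{S}^a(\Lambda^\vee))$, and injectivity of multiplication coming from the profile hypothesis identifies each quotient with a space of rank $\prod_j \binom{a_j+r}{r}$. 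Thus $\mathbf{I}$ is smooth and irreducible of dimension $\dim \qaticci_{\PP V} + \delta(n,\mathbf{a},r)$.

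From this one reads off (i) and (ii) almost mechanically. Part (i): if $\delta_-(n,\mathbf{a},r) < 0$, then (away from the genuine quadric exception encoded in the definition of $\delta_-$) also $\delta(n,\mathbf{a},r) < 0$, so $\dim \mathbf{I} < \dim \qaticci_{\PP V}$ and $\pi_1$ cannot be dominant. Part (ii): it suffices to exhibit a single $(X_0, \Lambda_0) \in \mathbf{I}$ at which $\mathbf{F}_r(X_0)$ has dimension $\delta(n,\mathbf{a},r)$, since then properness of $\pi_1$ combined with upper semicontinuity of fiber dimension forces $\pi_1$ surjective and the general fiber of dimension exactly $\delta$. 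I would take $X_0$ to be a Fermat-type $(q;\mathbf{a})$-tic scheme (a monomial variant handles the nonreduced case) and $\Lambda_0$ a standard coordinate $r$-plane, computing $\dim_{\Lambda_0} \mathbf{F}_r(X_0)$ from the explicit Jacobian of the induced section of $\mathrm{S}^{\mathbf{a}}(\mathcal{S}^\vee)$.

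When $\mathbf{a}$ is reduced, smoothness of $\mathbf{F}_r(X)$ for general $X$ with $\delta \geq 0$ is equivalent to generic smoothness of $\pi_1$, which, since $\mathbf{I}$ is smooth, amounts to the single transversality check that $d\pi_1$ at a judiciously chosen $(X_0, \Lambda_0)$ is surjective. The key input here is \parref{qatic-linear-system}: when $a$ is reduced, the $(q;a)$-tic Veronese is a closed immersion, which is exactly what is needed to produce, from the normal bundle sequence of $\mathbf{I}$ in $\qaticci_{\PP V} \times \mathbf{G}$, enough first-order deformations of $X_0$ to cut the expected codimension in $\mathbf{G}$. This is the most delicate step: generic smoothness is not automatic in characteristic $p$, and the reduced assumption on $\mathbf{a}$ is precisely what ensures that the Frobenius twists appearing in $\mathrm{S}^a(\mathcal{S}^\vee)$ do not collapse the differential.

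Part (iii) is of a different character, being about zero loci of sections rather than dominance. The bundle $\mathrm{S}^{\mathbf{a}}(\mathcal{S}^\vee)$ is ample on $\mathbf{G}$: the tautological quotient $\mathcal{S}^\vee$ is ample, its symmetric powers and Frobenius twists remain ample, tensor products and direct sums of ample bundles are ample. Connectedness of the zero locus when $\delta_- > 0$ then follows from the Fulton--Lazarsfeld connectedness theorem for sections of ample vector bundles on a smooth projective simply connected variety, the relevant positive-characteristic version being applicable since the Grassmannian is simply connected and rational. The residual quadric-like case, where $\delta$ and $\delta_-$ differ, must be handled separately by reducing to the classical description of the Fano scheme of maximal linear subspaces of a quadric hypersurface.
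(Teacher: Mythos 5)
Your proposal reproduces the paper's incidence correspondence and its dimension count (this matches \parref{fano-incidence-correspondence}), and your treatment of \ref{fano-theorem.empty} is the paper's. But two of the remaining three steps have genuine gaps. The most clear-cut is \ref{fano-theorem.connected}: the bundle \(\mathrm{S}^{\mathbf{a}}(\mathcal{S}^\vee)\) is \emph{not} ample on \(\mathbf{G}(r+1,V)\) once \(r \geq 1\). Restrict to a line in the Grassmannian, i.e.\ a pencil \(\{U : U_0 \subset U \subset U_1\}\) with \(\dim U_0 = r\) and \(\dim U_1 = r+2\): then \(\mathcal{S}\rvert_\ell \cong \sO_\ell^{\oplus r} \oplus \sO_\ell(-1)\), so \(\mathcal{S}^\vee\rvert_\ell \cong \sO_\ell^{\oplus r} \oplus \sO_\ell(1)\) has trivial summands, and hence so does \(\mathrm{S}^a(\mathcal{S}^\vee)\rvert_\ell\) for every profile \(a\); the bundle is globally generated and nef but not ample, and symmetric powers, Frobenius twists and tensor products do not repair these degenerate directions. (Incidentally, \(\mathcal{S}^\vee\) is the dual of the tautological \emph{sub}bundle, not the tautological quotient; neither is ample.) So Fulton--Lazarsfeld is simply unavailable, and the paper proves connectedness differently: Stein-factorize \(\pr_2 \colon \mathbf{Inc}_r \to \qaticci_{\PP V}\) and apply Zariski--Nagata purity of the branch locus, using \(\codim(Z_r \subset \mathbf{Inc}_r) \geq \delta_-(n,\mathbf{a},r) + 1 \geq 2\) from \parref{fano-main-codimension-estimate} to force the finite part to be \'etale, hence trivial since the multi-projective parameter space is simply connected.

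The smoothness step has a characteristic-\(p\) gap that you half-acknowledge but do not repair. Surjectivity of \(d\pi_1\) at one judicious point makes \(\pi_1\) smooth on a dense open subset of the irreducible \(\mathbf{I}\), so the \emph{general fibre is smooth at some of its points} --- but a general fibre of dimension \(\delta > 0\) may still meet the non-smooth locus \(Z_r\), which a priori has codimension as small as \(1\) in \(\mathbf{I}\) and can dominate \(\qaticci_{\PP V}\). What must be shown is that \(\pr_2(Z_r)\) is a proper closed subset, and the paper spends the bulk of \S\parref{section-fano} on exactly this: the stratified estimate \parref{fano-H}--\parref{fano-main-codimension-estimate} bounding \(\codim Z_r\), the key Lemma \parref{fano-general-varphi} showing that away from \(\pr_2^{-1}(\Delta_{r-1})\) only the deepest stratum \(\mathbf{H}_r\) contributes (yielding codimension \(\geq \delta + 1\) there), and an induction on \(r\) with Bertini (\parref{qatic-general-is-smooth}) as base case. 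Nothing in your proposal bounds \(Z_r\). Finally, your schema for \ref{fano-theorem.smooth} is sound as far as it goes (one point of \(\mathbf{I}\) with local fibre dimension \(\delta\) gives dominance, surjectivity by properness, and via semicontinuity the fibre-dimension jumping locus automatically has image of codimension \(\geq 2\)), but it hinges entirely on the unverified Fermat computation: in characteristic \(p\) Fermat hypersurfaces are precisely the examples carrying unexpectedly many linear spaces, so one must check that the local dimension at the chosen coordinate plane equals the \((q;\mathbf{a})\)-expected \(\delta\) and no more; this is nontrivial for arbitrary multi-profiles, and the paper's \(Z_r\)-estimate is designed so that no explicit example is ever needed for \(r \geq 1\).
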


The proof of \parref{fano-theorem} occupies the bulk of this section: see, in
particular, \parref{fano-theorem-proof} where the intervening statements are
put together to complete the argument. Begin with two reductions:

\subsectiondash{Reductions}\label{fano-reductions}
First, it suffices to treat the case \(\mathbf{a}\) is reduced. If \(\mathbf{a}\)
is nonreduced, let \(\mathbf{a}'\) be the reduced multi-profile obtained by
maximally dividing out powers of \(t\) from each profile in \(\mathbf{a}\).
Given a \((q;\mathbf{a})\)-tic scheme \(X\), there is a canonical
\((q;\mathbf{a}')\)-tic scheme \(X'\) obtained by taking \(q\)-power roots of
the appropriate equations. This comes with a closed immersion \(X' \to X\)
which is furthermore a universal homeomorphism. This provides a universal
homeomorphism \(\mathbf{F}_r(X') \to \mathbf{F}_r(X)\) and so topological
properties of the two are the same.

Second, we may assume that \(1 \notin \mathbf{a}\). Otherwise, amongst
the defining equations of a \((q;\mathbf{a})\)-tic
scheme \(X \subseteq \PP V = \PP^n\) is a linear one. Eliminating that allows
us to view \(X \subseteq \PP^{n-1}\). Writing
\(\mathbf{a} = (1) \cup \mathbf{a}'\), a direct computation shows that
\(\delta_-(n,\mathbf{a},r) = \delta_-(n-1,\mathbf{a}',r)\), and so it suffices
to prove \parref{fano-theorem} for \(X\) viewed as a \((q;\mathbf{a}')\)-tic
scheme in \(\PP^{n-1}\).

\medskip
From now on, assume that the profile \(\mathbf{a}\) is reduced and
\(1 \notin \mathbf{a}\). The argument looks to lower bound the codimension of
the singular locus \(Z_r\) of the second projection from the incidence
correspondence
\[
\mathbf{Inc}_r \coloneqq
\mathbf{Inc}_{V,r,\mathbf{a}} \coloneqq
\big\{
  ([U], [\boldsymbol{\alpha}]) \in \mathbf{G}(r+1,V) \times \qaticci_{\PP V} :
  \PP U \subseteq X_{\boldsymbol{\alpha}}
\big\}
\]
to the parameter space of \((q;\mathbf{a})\)-tic tensors from
\parref{qatic-parameter-space}. Observe that
\(\pr_1 \colon \mathbf{Inc}_r \to \mathbf{G}(r+1,V)\) is surjective,
with the fibre over a point \([U]\) a multi-projective space given by
\[
\mathbf{Inc}_{r,[U]} =
\prod\nolimits_{a \in \mathbf{a}} \PP\mathrm{S}^a(V^\vee)_U
\]
where
\(\mathrm{S}^a(V^\vee)_U \coloneqq \ker(\mathrm{S}^a(V^\vee) \to \mathrm{S}^a`(U^\vee))\)
is the kernel of the restriction map. This has codimension
\(\sum\nolimits_{a \in \mathbf{a}} \prod_{j \geq 0} \binom{r + a_j}{r}\) in
the product and so gives:

\begin{Lemma}\label{fano-incidence-correspondence}
The incidence correspondence
\(\mathbf{Inc}_r\) is irreducible, proper, and smooth of dimension
\[
\pushQED{\qed}
\dim\mathbf{Inc}_r = \delta(n,\mathbf{a},r) + \dim\,\qaticci_{\PP V}.
\qedhere
\popQED
\]
\end{Lemma}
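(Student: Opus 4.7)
The plan is to exhibit $\mathbf{Inc}_r$ as an iterated projective bundle over the Grassmannian $\mathbf{G}$, from which irreducibility, properness, smoothness, and the dimension formula all drop out at once. By \parref{fano-equations}, a point $[U] \in \mathbf{G}$ lies over a \((q;\mathbf{a})\)-tic tensor $\boldsymbol{\alpha}$ if and only if each component $\alpha$ of $\boldsymbol{\alpha}$ restricts to zero on $U^\vee$; equivalently, $\alpha \in \mathrm{S}^a(V^\vee)_U$. The fiber description of $\pr_1$ recalled just before the statement already packages $\mathbf{Inc}_r$ as the family of products of these kernels over $\mathbf{G}$.

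To globalize, I would observe that the dual of the tautological inclusion $\mathcal{S} \hookrightarrow V \otimes \sO_{\mathbf{G}}$ is surjective, and since the functor $\mathrm{S}^a$ is built from symmetric powers, Frobenius twists, and tensor products---each of which preserves surjections of locally free sheaves---one obtains a short exact sequence
\[
0 \to \mathcal{K}^a \to \mathrm{S}^a(V^\vee) \otimes \sO_{\mathbf{G}} \to \mathrm{S}^a(\mathcal{S}^\vee) \to 0
\]
of vector bundles on $\mathbf{G}$, whose fiber at $[U]$ recovers the kernel exact sequence $0 \to \mathrm{S}^a(V^\vee)_U \to \mathrm{S}^a(V^\vee) \to \mathrm{S}^a(U^\vee) \to 0$. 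Enumerating $\mathbf{a} = \{a_1,\ldots,a_c\}$, the universal property of the projectivizations then identifies
\[
\mathbf{Inc}_r \cong \PP(\mathcal{K}^{a_1}) \times_{\mathbf{G}} \PP(\mathcal{K}^{a_2}) \times_{\mathbf{G}} \cdots \times_{\mathbf{G}} \PP(\mathcal{K}^{a_c}).
\]
Each factor is smooth and proper over $\mathbf{G}$ with geometrically integral fibers, and $\mathbf{G}$ itself is smooth, proper, and irreducible, so these properties transfer to the iterated projective bundle $\mathbf{Inc}_r$.

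The dimension is then routine bookkeeping: $\dim \mathbf{G} = (r+1)(n-r)$; the relative dimension of each $\PP(\mathcal{K}^a)$ equals $\prod_{j \geq 0} \binom{a_j+n}{a_j} - \prod_{j \geq 0} \binom{a_j+r}{a_j} - 1$; and subtracting $\dim \qaticci_{\PP V} = \sum_{a \in \mathbf{a}} \bigl(\prod_{j \geq 0} \binom{a_j+n}{a_j} - 1\bigr)$ gives
\[
\dim \mathbf{Inc}_r - \dim \qaticci_{\PP V} = (r+1)(n-r) - \sum\nolimits_{a \in \mathbf{a}} \prod\nolimits_{j \geq 0} \binom{a_j+r}{a_j} = \delta(n,\mathbf{a},r),
\]
as claimed. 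No step here poses a real obstacle; the only point deserving verification is that $\mathrm{S}^a$ carries a surjection of locally free sheaves to a surjection, which is immediate from its construction as an iterated tensor product of Frobenius twists of symmetric powers.
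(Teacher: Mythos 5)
Your proof is correct and takes essentially the same route as the paper: there, the lemma is deduced from the immediately preceding observation that \(\pr_1\) exhibits \(\mathbf{Inc}_r\) as fibred over the Grassmannian in multi-projective spaces \(\prod_{a \in \mathbf{a}} \PP\mathrm{S}^a(V^\vee)_U\) of constant codimension \(\sum_{a \in \mathbf{a}} \prod_{j \geq 0}\binom{a_j + r}{r}\). Your only addition is to make the iterated projective-bundle structure explicit via the kernel bundles \(\mathcal{K}^a\) of the surjections \(\mathrm{S}^a(V^\vee) \otimes \sO_{\mathbf{G}} \to \mathrm{S}^a(\mathcal{S}^\vee)\), a harmless globalization of what the paper leaves implicit, and your dimension bookkeeping matches.
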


The first task is to explicitly describe the closed subset
\(Z_r \subseteq \mathbf{Inc}_r\) on which
\(\pr_2 \colon \mathbf{Inc}_r \to \qaticci_{\PP V}\) is
not smooth of expected dimension \(\delta(n,\mathbf{a},r)\): Let \(\PP U\) be
any \(r\)-plane contained in \(X \coloneqq X_{\boldsymbol{\alpha}}\). Write
\(\mathbf{a} = (a_1,\ldots,a_c)\) as a \(c\)-tuple of profiles
\(a_i = \sum\nolimits_{j \geq 0} a_{i,j} t^j\), and let
\(\alpha_i \in \mathrm{S}^{a_i}(V^\vee)\) be the components of the
\((q;\mathbf{a})\)-tic tensor \(\boldsymbol{\alpha}\) defining \(X\). Consider
the map
\begin{align*}
\rho_{U,f_{\boldsymbol{\alpha}}} \colon
  U^\vee \otimes (V/U) & \to
  \bigoplus\nolimits_{i = 1}^c \mathrm{H}^0(\PP U, \sO_{\PP U}(a_i(q))) \\
  \xi \otimes \bar{v} & \mapsto
  (
  \xi \cdot \partial_v f_{\alpha_1}\rvert_{\PP U},
  \ldots,
  \xi \cdot \partial_v f_{\alpha_c}\rvert_{\PP U}
  )
\end{align*}
which takes a pure tensor \(\xi \otimes \bar{v}\) to the \(c\)-tuple of
polynomials whose \(i\)-th entry is \(\xi\) times the directional derivative
\(\partial_v f_{\alpha_i}\) of the \(i\)-th equation of \(X\) along any lift
\(v \in V\) of \(\bar{v}\), all then restricted to \(\PP U\).
Since first-order derivatives act linearly through \(q\)-powers,
\(\rho_{U,f_{\boldsymbol{\alpha}}}\) in fact takes values within the subspace
of \((q;\mathbf{a})\)-tic polynomials. More precisely:

\begin{Lemma}\label{fano-tangent-map}
There exists a linear map
\(\rho_{U,\boldsymbol{\alpha}} \colon U^\vee \otimes (V/U) \to \mathrm{S}^{\mathbf{a}}(U^\vee)\)
which factors \(\rho_{U,f_{\boldsymbol{\alpha}}}\) through the multipication map
\(\mathrm{mult} \colon \mathrm{S}^{\mathbf{a}}(U^\vee) \to \bigoplus\nolimits_{i = 1}^c \mathrm{H}^0(\PP U, \sO_{\PP U}(a_i(q)))\).
\end{Lemma}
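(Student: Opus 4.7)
The plan is to define $\rho_{U,\boldsymbol{\alpha}}$ componentwise, directly on the tensor side, exploiting the fact that in characteristic $p$ directional derivatives vanish on $q$-th powers. Write $\mathbf{a} = (a_1,\ldots,a_c)$ and $\alpha_i \in \mathrm{S}^{a_i}(V^\vee)$, and recall the decomposition
\[
\mathrm{S}^{a_i}(V^\vee) = \Sym^{a_{i,0}}(V^\vee) \otimes \Sym^{a_{i,1}}(V^\vee)^{[1]} \otimes \cdots \otimes \Sym^{a_{i,m}}(V^\vee)^{[m]}.
\]
By the reduction that $\mathbf{a}$ is reduced, $a_{i,0} \geq 1$, so $a_i - 1 \coloneqq (a_{i,0} - 1) + a_{i,1}t + \cdots$ lies in $\mathbf{Z}_{\geq 0}[t]$ (and is a profile, by \parref{profiles-less-than}). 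Let $D_v \colon \mathrm{S}^{a_i}(V^\vee) \to \mathrm{S}^{a_i-1}(V^\vee)$ be the linear operator applying the directional derivative $\partial_v$ on the zeroth tensor factor and the identity on the remaining Frobenius-twisted factors. I would then define
\[
\rho_{U,\alpha_i}(\xi \otimes \bar v) \coloneqq \xi \cdot (D_v(\alpha_i))\rvert_U,
\]
where restriction is taken on all tensor factors and multiplication by $\xi \in U^\vee$ acts on the first factor, landing in $\mathrm{S}^{a_i}(U^\vee)$.

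Next, I would show the formula descends to $U^\vee \otimes (V/U)$, i.e.\ vanishes for $v \in U$. When $v \in U$, the operator $\partial_v$ commutes with restriction $V^\vee \to U^\vee$, so $(D_v(\alpha_i))\rvert_U = D_v(\alpha_i\rvert_U)$. Because $\PP U \subseteq X_{\alpha_i}$, the polynomial $f_{\alpha_i}\rvert_U$ vanishes, and the defining injectivity of multiplication for the profile $a_i$ (see \parref{qatics-profiles}) promotes this to the tensorial vanishing $\alpha_i\rvert_U = 0$. Thus $\rho_{U,\alpha_i}(\xi \otimes v) = 0$ for $v \in U$.

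For the factorization $\mathrm{mult} \circ \rho_{U,\alpha_i} = \rho_{U, f_{\alpha_i}}$, fix any decomposition $\alpha_i = \sum_k g_{k,0} \otimes g_{k,1}^{[1]} \otimes \cdots \otimes g_{k,m}^{[m]}$. The Leibniz rule combined with $\partial_v(g^{q^j}) = q^j g^{q^j-1} \partial_v g = 0$ for $j \geq 1$ (since $q^j$ is a power of $p$) reduces the derivative to its contribution from the zeroth factor:
\[
\partial_v f_{\alpha_i} = \sum\nolimits_k (\partial_v g_{k,0}) \cdot g_{k,1}^q \cdots g_{k,m}^{q^m} = \mathrm{mult}(D_v(\alpha_i)).
\]
Since $\mathrm{mult}$ commutes with restriction and with multiplication by $\xi$, this gives $\mathrm{mult}(\rho_{U,\alpha_i}(\xi \otimes \bar v)) = \xi \cdot \partial_v f_{\alpha_i}\rvert_U$, which is exactly the $i$-th component of $\rho_{U,f_{\boldsymbol{\alpha}}}(\xi \otimes \bar v)$.

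The conceptual heart, and the only step requiring real input, is the passage from the geometric condition $f_{\alpha_i}\rvert_U = 0$ to the tensorial condition $\alpha_i\rvert_U = 0$ used in establishing well-definedness modulo $U$: this is precisely the injectivity characterizing profiles. Everything else is a direct manipulation using the Leibniz rule and the vanishing of derivatives on Frobenius powers.
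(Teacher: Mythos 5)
Your proposal is correct and follows essentially the same route as the paper: the paper also defines \(\rho_{U,\boldsymbol{\alpha}}\) by letting \(\xi \cdot \partial_v\) act on the zeroth tensor factor of each \(\alpha_i\) and restricting to \(U\), with the factorization \(\rho_{U,f_{\boldsymbol{\alpha}}} = \mathrm{mult} \circ \rho_{U,\boldsymbol{\alpha}}\) following because first-order derivatives annihilate \(q^j\)-th powers for \(j \geq 1\). Your explicit verification that the formula descends to \(U^\vee \otimes (V/U)\) via the tensorial vanishing \(\alpha_i\rvert_U = 0\) (injectivity of \(\mathrm{mult}\), as in \parref{fano-equations}) is a point the paper leaves implicit, but it is an elaboration of the same argument rather than a different one.
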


\begin{proof}
The map \(\rho_{U,\boldsymbol{\alpha}}\) is that which simply acts on the first
component of each tensor \(\alpha_i\). Explicitly, the pure tensor
\(\xi \otimes \bar{v}\) is mapped to the \(c\)-tuple with \(i\)-th term
\[
\xi \cdot \partial_v \alpha_i \rvert_{\PP U} =
\sum\nolimits_k
  (\xi \cdot \partial_v \alpha_{i,0,k}) \otimes
  (\alpha_{i,1,k})^{[1]} \otimes
  \cdots \otimes
  (\alpha_{i,m_i,k})^{[m_i]}\rvert_{\PP U}
\]
where each \(\alpha_{i,j,k} \in \Sym^{a_{i,j}}(V^\vee)\). The preceding
comments imply that
\(\rho_{U,f_{\boldsymbol{\alpha}}} = \operatorname{mult} \circ \rho_{U,\boldsymbol{\alpha}}\).
\end{proof}

If \(\PP U\) were contained in the smooth locus of \(X\), then the tangent
space to \(\mathbf{F}_r(X)\) at the point \([\PP U]\) is given by the space
of sections the normal bundle of \(\PP U \subseteq X\). The normal bundle
sequence, \parref{fano-tangent-map}, and injectivity of \(\mult\) together
given canonical identifications
\[
\mathrm{H}^0(\PP U, \mathcal{N}_{\PP U/X}) \cong
\ker\rho_{U,f_{\boldsymbol{\alpha}}} \cong
\ker\rho_{U,\boldsymbol{\alpha}}.
\]
Therefore \(\mathbf{F}_r(X)\) is smooth of dimension
\(\delta(n,\mathbf{a},r)\) at the point \([\PP U]\) if and only if the map
\(\rho_{U,\boldsymbol{\alpha}}\) is surjective. This property characterizes the
complement of \(Z_r\) in general:

\begin{Lemma}\label{fano-nonsmooth-locus}
\(\pr_2 \colon \mathbf{Inc}_r \to \qaticci_{\PP V}\) is smooth of relative
dimension \(\delta(n,\mathbf{a},r)\) at a point \(([U],[\boldsymbol{\alpha}])\)
if and only if
\(\rho_{U,\boldsymbol{\alpha}} \colon U^\vee \otimes (V/U) \to \mathrm{S}^{\mathbf{a}}(U^\vee)\)
is surjective. In particular,
\[
Z_r =
\big\{([U], [\boldsymbol{\alpha}]) \in \mathbf{Inc}_r :
\rho_{U,\boldsymbol{\alpha}} \colon U^\vee \otimes (V/U) \to \mathrm{S}^{\mathbf{a}}(U^\vee)
\;\text{is \emph{not} surjective}
\big\}.
\]
\end{Lemma}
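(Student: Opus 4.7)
The plan is to deduce the lemma from two facts: first, that smoothness of $\pr_2$ at a point is controlled by the fibre since both source and target of $\pr_2$ are smooth; and second, that the differential of the section cutting out $\mathbf{F}_r(X)$ inside $\mathbf{G}$ is exactly the map $\rho_{U,\boldsymbol{\alpha}}$. Concretely, \parref{fano-incidence-correspondence} gives that $\mathbf{Inc}_r$ is smooth of dimension $\delta(n,\mathbf{a},r) + \dim \qaticci_{\PP V}$, and the target $\qaticci_{\PP V}$ is a product of projective spaces, hence smooth. For such a morphism, smoothness of relative dimension $\delta(n,\mathbf{a},r)$ at $([U],[\boldsymbol{\alpha}])$ is equivalent to smoothness of the scheme-theoretic fibre $\mathbf{F}_r(X_{\boldsymbol{\alpha}})$ at $[U]$ of dimension $\delta(n,\mathbf{a},r)$. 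So the task reduces to a pointwise computation inside $\mathbf{G}$.

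Next I would identify the Zariski tangent space. By \parref{fano-equations}, $\mathbf{F}_r(X_{\boldsymbol{\alpha}}) \subseteq \mathbf{G}$ is cut out by the section $s_{\boldsymbol{\alpha}} \coloneqq \boldsymbol{\alpha}\rvert_{\mathcal{S}}$ of the rank $\sum_{a \in \mathbf{a}} \prod_{j \geq 0}\binom{a_j+r}{r}$ bundle $\mathrm{S}^{\mathbf{a}}(\mathcal{S}^\vee)$. At the point $[U]$, the section $s_{\boldsymbol{\alpha}}$ vanishes, so the usual zero-section calculus produces a vertical derivative
\[
d s_{\boldsymbol{\alpha}}\rvert_{[U]} \colon T_{[U]}\mathbf{G} \cong U^\vee \otimes (V/U) \longrightarrow \mathrm{S}^{\mathbf{a}}(U^\vee)
\]
whose kernel is the Zariski tangent space $T_{[U]}\mathbf{F}_r(X_{\boldsymbol{\alpha}})$. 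The key step is to verify that this differential coincides with the map $\rho_{U,\boldsymbol{\alpha}}$ from \parref{fano-tangent-map}. Unwinding the deformation of $\PP U$ along a pure tensor $\xi \otimes \bar v \in U^\vee \otimes (V/U)$ and applying it to a simple tensor $\alpha_i = \alpha_{i,0} \otimes \alpha_{i,1}^{[1]} \otimes \cdots \otimes \alpha_{i,m_i}^{[m_i]}$, only the non-Frobenius-twisted factor $\alpha_{i,0}$ contributes to the derivative since $q$-power Frobenius has vanishing first-order derivative, and one finds
\[
d s_{\boldsymbol{\alpha}}\rvert_{[U]}(\xi \otimes \bar v) = \sum_k \big(\xi \cdot \partial_v \alpha_{i,0,k}\big) \otimes \alpha_{i,1,k}^{[1]} \otimes \cdots \otimes \alpha_{i,m_i,k}^{[m_i]}\big\rvert_{\PP U} = \rho_{U,\boldsymbol{\alpha}}(\xi \otimes \bar v).
\]

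Finally, since $\mathbf{G}$ is smooth of dimension $(r+1)(n-r)$ and the cutting bundle has rank $\sum_{a \in \mathbf{a}}\prod_{j\geq 0}\binom{a_j+r}{r}$, the scheme $\mathbf{F}_r(X_{\boldsymbol{\alpha}})$ has local dimension at least $\delta(n,\mathbf{a},r)$ at $[U]$, while its Zariski tangent space there has dimension $(r+1)(n-r) - \rank \rho_{U,\boldsymbol{\alpha}}$. Smoothness of the expected dimension $\delta(n,\mathbf{a},r)$ at $[U]$ is therefore equivalent to $\rho_{U,\boldsymbol{\alpha}}$ being surjective, which by the first paragraph proves the desired characterization of $Z_r$. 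The one point requiring care is verifying the identification $d s_{\boldsymbol{\alpha}}\rvert_{[U]} = \rho_{U,\boldsymbol{\alpha}}$, but this is a direct computation in coordinates using that Frobenius twists have trivial first-order deformation, as in \parref{fano-tangent-map}.
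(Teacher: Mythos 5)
Your proposal is correct, and it reaches the criterion by a genuinely different, though closely related, route. The paper never passes to the scheme-theoretic fibre: it works relatively on \(\mathbf{Inc}_r\), using the description of the \(\pr_1\)-fibres inside \(\qaticci_{\PP V}\) to produce the exact sequence
\(0 \to \mathcal{T}_{\pr_1} \to \mathcal{T}_{\qaticci_{\PP V}} \to \mathrm{S}^{\mathbf{a}}(U^\vee) \to 0\)
at the point, so that smoothness of \(\pr_2\) becomes surjectivity of the induced map \(U^\vee \otimes (V/U) \to \mathrm{S}^{\mathbf{a}}(U^\vee)\), which it then identifies with \(-\rho_{U,\boldsymbol{\alpha}}\) by deforming the \emph{pair}: it asks which first-order deformations \(X_{\boldsymbol{\alpha}+\epsilon\boldsymbol{\beta}}\) contain \(\PP(U[\epsilon\xi\cdot\bar{v}])\) and solves for \(\boldsymbol{\beta}\rvert_U\). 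You instead reduce to the fibre \(\mathbf{F}_r(X_{\boldsymbol{\alpha}})\) at \([U]\) --- legitimate since \(\mathbf{Inc}_r\) is smooth by \parref{fano-incidence-correspondence} and the target is a smooth multi-projective space, so the equivalence follows either from \(\ker d\pr_2 = T_{[U]}\mathbf{F}_r(X_{\boldsymbol{\alpha}})\) and a tangent-space dimension count, or from miracle flatness (Cohen--Macaulay source, regular target, fibre of expected dimension) plus ``flat with smooth fibre implies smooth'' --- and then compute the vertical derivative of the defining section \(\boldsymbol{\alpha}\rvert_{\mathcal{S}}\) from \parref{fano-equations}. The computational heart is identical in both arguments: at first order only the untwisted \(j = 0\) tensor factor contributes because Frobenius twists annihilate derivatives, which is exactly the content of \parref{fano-tangent-map}; the sign discrepancy between your \(ds_{\boldsymbol{\alpha}}\rvert_{[U]} = \rho_{U,\boldsymbol{\alpha}}\) and the paper's \(\boldsymbol{\beta}\rvert_U = -\rho_{U,\boldsymbol{\alpha}}(\xi \otimes \bar{v})\) is immaterial for surjectivity. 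One point worth noting in favour of your framing: the paper's normal-bundle identification \(\mathrm{H}^0(\PP U, \mathcal{N}_{\PP U/X}) \cong \ker\rho_{U,\boldsymbol{\alpha}}\) preceding the lemma is only available when \(\PP U\) lies in the smooth locus of \(X\), which is precisely why the paper resorts to the relative argument; your zero-locus computation of \(T_{[U]}\mathbf{F}_r(X_{\boldsymbol{\alpha}})\) sidesteps this entirely and is valid for arbitrary \(X\). What the paper's relative formulation buys in exchange is that the tangent map is packaged as a map of bundles over \(\mathbf{Inc}_r\), the ambient object of all the subsequent codimension estimates in \parref{fano-H}--\parref{fano-main-codimension-estimate}, but your pointwise statement suffices for the lemma as stated.
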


\begin{proof}
Both \(\mathbf{Inc}_r\) and \(\qaticci_{\PP V}\) are smooth, so \(\pr_2\) is
smooth of relative dimension \(\delta(n,\mathbf{a},r)\) at
\(([U],[\boldsymbol{\alpha}])\) if and only if the map
\(\mathcal{T}_{\mathbf{Inc}_r} \to \pr_2^*\mathcal{T}_{\qaticci_{\PP V}}\)
on tangent bundles is surjective there. The discussion above
\parref{fano-incidence-correspondence} shows that the fibre of \(\pr_1\)
over \([U]\) maps isomorphically via \(\pr_2\) to a multi-projective
subspace in the parameter space corresponding to the vector space \(\mathrm{S}^{\mathbf{a}}(V^\vee)_U\):
\[
\pr_1^{-1}([U]) \cong
\prod\nolimits_{i = 1}^c \PP\big(\ker(\mathrm{S}^{a_i}(V^\vee) \to \mathrm{S}^{a_i}(U^\vee))\big) \subseteq
\prod\nolimits_{i = 1}^c \PP\mathrm{S}^{a_i}(V^\vee)
= \qaticci_{\PP V}
\]
Restricting the tangent map to the subbundle
\(\mathcal{T}_{\pr_1} \subseteq \mathcal{T}_{\mathbf{Inc}_r}\) and taking
fibres thus gives a sequence
\[
0 \to
\mathcal{T}_{\pr_1, ([U],[\boldsymbol{\alpha}])} \to
\mathcal{T}_{\qaticci_{\PP V}, [\boldsymbol{\alpha}]} \to
\mathrm{S}^{\mathbf{a}}(U^\vee) \to
0.
\]
Combined with the isomorphism
\(\pr_1^*\mathcal{T}_{\mathbf{G}} \cong \mathcal{T}_{\mathbf{Inc}_r}/\mathcal{T}_{\pr_1}\)
from the tangent bundle sequence of \(\pr_1\), this means that the tangent
map of \(\pr_2\) at \(([U],[\boldsymbol{\alpha}])\) induces a map
\begin{equation}\label{fano-nonsmooth-locus.map}
U^\vee \otimes (V/U) \cong
\mathcal{T}_{\mathbf{Inc}_r,([U],[\boldsymbol{\alpha}])}/\mathcal{T}_{\pr_1,([U],[\boldsymbol{\alpha}])} \to
\mathcal{T}_{\qaticci_{\PP V},[\boldsymbol{\alpha}]}/\mathcal{T}_{\pr_1,([U],[\boldsymbol{\alpha}])} \cong
\mathrm{S}^{\mathbf{a}}(U^\vee)
\tag{\(\star\)}
\end{equation}
and \(\pr_2\) is smooth of the expected dimension at
\(([U],[\boldsymbol{\alpha}])\) if and only if this map is surjective.

Identify this map via deformation theory: A pure tensor \(\xi \otimes \bar{v}\)
in \(U^\vee \otimes (V/U) \cong \mathcal{T}_{\mathbf{G},[U]}\) corresponds to
the first-order deformation of \(U \subseteq V\) given by the
\(\kk[\epsilon]\)-submodule
\[
U[\epsilon\xi \cdot \bar{v}] \coloneqq
\langle u + \epsilon\xi(u) \cdot v : u \in U\rangle
=
\{ u_1 + \epsilon(\xi(u_1) \cdot v + u_2) : u_1, u_2 \in U\}
\subseteq V \otimes_\kk \kk[\epsilon].
\]
Its preimage in \(\mathcal{T}_{\mathbf{Inc}_r,([U],[\boldsymbol{\alpha}])}\)
classifies first-order deformations
\(X_{\boldsymbol{\alpha} + \epsilon\boldsymbol{\beta}}\), where
\(\boldsymbol{\beta} \in \mathrm{S}^{\mathbf{a}}(V^\vee)\), which contain
\(\PP(U[\epsilon \xi \cdot \bar{v}])\). The tangent map
\(\mathcal{T}_{\mathbf{Inc}_r} \to \mathcal{T}_{\qaticci_{\PP V}}\)
is the forgetful map which extracts the \((q;\mathbf{a})\)-tic tensor
\(\boldsymbol{\beta}\) parameterizing the first-order deformation of
\(X_{\boldsymbol{\alpha}}\). The map \eqref{fano-nonsmooth-locus.map} thus
acts as \(\xi \otimes \bar{v} \mapsto \boldsymbol{\beta}\rvert_U\) for any
choice of such \(\boldsymbol{\beta}\). To express
\(\boldsymbol{\beta}\) in terms of \(\boldsymbol{\alpha}\), observe that
the condition that \(X_{\boldsymbol{\alpha} + \epsilon\boldsymbol{\beta}}\)
contains \(\PP(U[\epsilon\xi \cdot \bar{v}])\) means that, for all
\(u_1,u_2 \in U\),
\[
(\boldsymbol{\alpha} + \epsilon\boldsymbol{\beta})(u_1 + \epsilon(\xi(u_1) \cdot v + u_2))
= \boldsymbol{\alpha}(u_1 + \epsilon(\xi(u_1) \cdot v + u_2)) + \epsilon\boldsymbol{\beta}(u_1) = 0.
\]
Writing
\(\alpha_i = \sum_k
  \alpha_{i, 0, k} \otimes
  (\alpha_{i, 1, k})^{[1]} \otimes \cdots \otimes
  (\alpha_{i, m_i, k})^{[m_i]}\)
for each component of \(\boldsymbol{\alpha}\),
\[
\alpha_{i,j,k}^{[j]}(u_1 + \epsilon(\xi(u_1) \cdot v + u_2)) =
\begin{dcases*}
\alpha_{i,j,k}(u_1)^{q^j} & if \(j > 0\), and \\
\alpha_{i,0,k}(u_1) + \epsilon (\xi(u) \cdot \partial_v \alpha_{i,0,k}(u) + \partial_{u_2}\alpha_{i,0,k}(u)) & if \(j = 0\).
\end{dcases*}
\]
Expanding the tensor then gives
\[
\alpha_i(u_1 + \epsilon(\xi(u_1) \cdot v + u_2)) =
\alpha_i(u_1) + \epsilon(\xi(u_1) \cdot \partial_v\alpha_i(u_1) + \partial_{u_2}\alpha_i(u_1)).
\]
That \(\boldsymbol{\alpha}\rvert_U = 0\) means that the first term vanishes:
\(\alpha_i(u_1) = 0\) for all \(u_1 \in U\). A directional derivative of
a polynomial vanishing on \(U\) in a direction in \(U\) remains vanishing on
\(U\), so \(\partial_{u_2}\alpha_i(u_1) = 0\) for all \(u_1,u_2 \in U\). Put
together, this gives the result since
\[
\boldsymbol{\beta}(u) =
-\xi(u) \cdot \partial_v\boldsymbol{\alpha}(u) =
-\rho_{U,\boldsymbol{\alpha}}(\xi \otimes \bar{v})(u)
\;\;\text{for all \(u \in U\)}.
\qedhere
\]
\end{proof}

This description of \(Z_r\) is homogeneous in \([U]\), making it possible to
fix a subspace \(U\) and to simply estimate the codimension of the inclusion of
fibres \(Z_{r,[U]} \subset \mathbf{Inc}_{r,[U]}\). Proceed by additionally
parameterizing a bound on the image of \(\rho_{U,\boldsymbol{\alpha}}\):
Let \(\mathbf{H}\) be the projective space of hyperplanes in
\(\mathrm{S}^{\mathbf{a}}(U^\vee)\), and consider the correspondence
\[
Z_{r,[U]}' \coloneqq
\big\{([\boldsymbol{\alpha}],[\varphi]) \in \mathbf{Inc}_{r,[U]} \times \mathbf{H} :
\varphi \circ \rho_{U,\boldsymbol{\alpha}} \colon U^\vee \otimes (V/U) \to \kk
\;\text{is zero}
\big\}
\]
parameterizing \(X_{\boldsymbol{\alpha}}\) containing \(\PP U\) and a
hyperplane in \(\mathrm{S}^{\mathbf{a}}(U^\vee)\) containing the image of
\(\rho_{U,\boldsymbol{\alpha}}\). Projection to \(\mathbf{Inc}_{r,[U]}\)
maps this onto \(Z_{r,[U]}\). The fibre of projection to \(\mathbf{H}\) over
a point \(\varphi \colon \mathrm{S}^{\mathbf{a}}(U^\vee) \to \kk\) is a
product of projective spaces on the kernel of the map
\[
\Phi \colon \mathrm{S}^{\mathbf{a}}(V^\vee)_U  \to \Hom(U^\vee \otimes (V/U), \kk)
\quad\quad
\boldsymbol{\alpha} \mapsto \varphi \circ \rho_{U,\boldsymbol{\alpha}}.
\]
Writing \(\mathbf{a} - 1 = (a_1 - 1, \ldots, a_c - 1)\), the following gives a
stratification of the image \(Z_{r,[U]}'\) in \(\mathbf{H}\) over which the
fibres have the same dimension:

\begin{Lemma}\label{fano-H}
Let
\(\mu \colon \mathrm{S}^{\mathbf{a}-1}(U^\vee) \to \Hom(U^\vee, \mathrm{S}^{\mathbf{a}}(U^\vee))\)
be the map adjoint to multiplication, and set
\[
\mathbf{H}_k \coloneqq
\big\{
[\varphi] \in \mathbf{H} :
\rank\big(
  \varphi_* \circ \mu \colon
  \mathrm{S}^{\mathbf{a} - 1}(U^\vee) \to
  \Hom(U^\vee,\kk)\big) = k+1
\big\}\;\text{for each}\; 0 \leq k \leq r.
\]
Then \(Z_{r,[U]}' \times_{\mathbf{H}} \mathbf{H}_k\) is of
codimension \((k+1)(n-r)\) in \(\mathbf{Inc}_{r,[U]} \times \mathbf{H}_k\).
\end{Lemma}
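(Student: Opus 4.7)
The plan is to reduce the codimension statement to a rank computation for the linear map $\Phi \colon \mathrm{S}^{\mathbf{a}}(V^\vee)_U \to \Hom(U^\vee \otimes (V/U), \kk)$. For each $[\varphi] \in \mathbf{H}_k$, the fibre of $Z'_{r,[U]} \to \mathbf{H}$ is the subvariety of $\mathbf{Inc}_{r,[U]}$ cut out by the vanishing of $\Phi(\boldsymbol{\alpha})$, and thus has codimension $\rank \Phi$ in $\mathbf{Inc}_{r,[U]}$. It therefore suffices to establish $\rank \Phi = (k+1)(n-r)$ uniformly on $\mathbf{H}_k$, after which the fibre dimension formula gives the claimed codimension in $\mathbf{Inc}_{r,[U]} \times \mathbf{H}_k$.

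The central idea is a factorization of $\Phi$ through $\Hom(V/U, \mathrm{S}^{\mathbf{a}-1}(U^\vee))$. Because $\boldsymbol{\alpha}\rvert_U = 0$, the directional derivative
\[
\partial\boldsymbol{\alpha}\rvert_U \colon V/U \to \mathrm{S}^{\mathbf{a}-1}(U^\vee), \quad \bar{v} \mapsto \partial_v\boldsymbol{\alpha}\rvert_U,
\]
descends through the quotient by $U$. Rewriting $\rho_{U,\boldsymbol{\alpha}}(\xi \otimes \bar v) = \xi \cdot \partial_v\boldsymbol{\alpha}\rvert_U = \mu(\partial_v\boldsymbol{\alpha}\rvert_U)(\xi)$ and then applying $\varphi$ yields
\[
\Phi(\boldsymbol{\alpha}) = (\varphi_* \circ \mu) \circ \partial\boldsymbol{\alpha}\rvert_U \quad \in \Hom(V/U, U),
\]
so $\Phi$ factors as $\boldsymbol{\alpha} \mapsto \partial\boldsymbol{\alpha}\rvert_U$ followed by postcomposition with $\varphi_* \circ \mu$.

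The key step, which I expect to be the main obstacle, is showing that $\partial \cdot\rvert_U$ is surjective. I would fix a splitting $V = U \oplus W$, so that $V/U \cong W$ and any target element has the form $\psi = \sum_l \eta_l \otimes \gamma_l \in W^\vee \otimes \mathrm{S}^{\mathbf{a}-1}(U^\vee)$. Lifting each $\gamma_l$ through the surjective restriction map $\mathrm{S}^{\mathbf{a}-1}(V^\vee) \to \mathrm{S}^{\mathbf{a}-1}(U^\vee)$ to some $\beta_l$ and setting $\boldsymbol{\alpha} \coloneqq \sum_l \eta_l \cdot \beta_l$---where each $\eta_l \in W^\vee \subseteq V^\vee$ multiplies into the constant-term factor of each profile component---gives $\boldsymbol{\alpha}\rvert_U = 0$ since $\eta_l\rvert_U = 0$. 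The Leibniz rule, combined again with $\eta_l\rvert_U = 0$, then yields $\partial_v \boldsymbol{\alpha}\rvert_U = \sum_l \eta_l(v) \cdot \gamma_l = \psi(v)$ for $v \in W$. The crucial point is that first-order derivatives act linearly through $q^j$-powers for $j \geq 1$, so only the constant-term factor is affected, reducing the construction to the classical surjectivity of polynomial restriction.

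Granting surjectivity, $\rank \Phi$ equals the rank of postcomposition with $\varphi_* \circ \mu \colon \mathrm{S}^{\mathbf{a}-1}(U^\vee) \to U$, whose image is $\Hom(V/U, \image(\varphi_* \circ \mu))$, of dimension $\dim(V/U) \cdot \rank(\varphi_* \circ \mu) = (n-r)(k+1)$ on $\mathbf{H}_k$ by definition. Constancy of $\rank \Phi$ on $\mathbf{H}_k$ then delivers the desired codimension $(k+1)(n-r)$ in $\mathbf{Inc}_{r,[U]} \times \mathbf{H}_k$.
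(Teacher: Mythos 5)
Your proposal is correct and takes essentially the same route as the paper's proof: you factor \(\Phi\) through \(\Hom(V/U, \mathrm{S}^{\mathbf{a}-1}(U^\vee))\) via \(\boldsymbol{\alpha} \mapsto \partial\boldsymbol{\alpha}\rvert_U\) followed by postcomposition with \(\varphi_* \circ \mu\), prove surjectivity of the first map by the same explicit lift (your \(\sum_l \eta_l \cdot \beta_l\) is the paper's \(\sum_i \xi_i \cdot \boldsymbol{\beta}(\bar{v}_i)\)), and conclude \(\rank \Phi = (k+1)(n-r)\) on \(\mathbf{H}_k\) exactly as the paper does. No gaps.
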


\begin{proof}
Identify the assignment
\(\boldsymbol{\alpha} \mapsto \rho_{U,\boldsymbol{\alpha}}\) as the composition
of linear maps
\[
\rho_{U,-} \colon
\mathrm{S}^{\mathbf{a}}(V^\vee)_U \longrightarrow
\Hom(V/U, \mathrm{S}^{\mathbf{a}-1}(U^\vee)) \stackrel{\mu_*}{\longrightarrow}
\Hom(V/U, \Hom(U^\vee,\mathrm{S}^{\mathbf{a}}(U^\vee)))
\]
where the first arrow sends \(\boldsymbol{\alpha}\) to the linear map \(\bar{v}
\mapsto \partial_v \boldsymbol{\alpha}\rvert_U\), notation as in the proof of
\parref{fano-tangent-map}. Observe also that the first map here is surjective,
since a lift of a given
\(\bar{\boldsymbol{\beta}} \in \Hom(V/U, \mathrm{S}^{\mathbf{a}-1}(U^\vee))\) is
\[
\sum\nolimits_{j = r+1}^n \xi_i \cdot \boldsymbol{\beta}(\bar{v}_i)
\in \mathrm{S}^{\mathbf{a}}(V^\vee)_U
\]
where the \(\bar{v}_i\) form a basis of \(V/U\) with dual coordinate \(\xi_i\),
and \(\boldsymbol{\beta}(\bar{v}_i) \in \mathrm{S}^{\mathbf{a} - 1}(V^\vee)\)
is any lift of its barred counterpart. Together, these observations imply that
\begin{align*}
\rank(\Phi) & =
\rank(\varphi_* \circ \mu_* \colon \Hom(V/U, \mathrm{S}^{\mathbf{a}-1}(U^\vee)) \to \Hom(V/U, \Hom(U^\vee,\kk))) \\
& = \rank(\varphi_* \circ \mu \colon \mathrm{S}^{\mathbf{a}-1}(U^\vee) \to \Hom(U^\vee,\kk)) \cdot \dim_\kk(V/U).
\end{align*}
This means that the fibre of \(Z_{r,[U]}'\) over
\([\varphi] \in \mathbf{H}_k\) is of codimension \((k+1)(n-r)\) in
\(\mathbf{Inc}_{r,[U]} \times \{[\varphi]\}\), and this yields the statement.
\end{proof}

To relate this with the codimension of \(Z_{r,[U]}\) in \(\mathbf{Inc}_{r,[U]}\),
it remains to bound the dimension of \(\mathbf{H}_k\):

\begin{Lemma}\label{fano-strata-codimension}
\(\displaystyle
\dim\mathbf{H}_k \leq
(k+1)(r-k) + 
\sum\nolimits_{i = 1}^c\prod\nolimits_{j = 0}^{m_i} \binom{k + a_{i,j}}{k} - 1
\).
\end{Lemma}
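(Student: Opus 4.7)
The plan is to stratify \(\mathbf{H}_k\) according to the image subspace of \(\varphi_* \circ \mu\). Assigning \([\varphi] \in \mathbf{H}_k\) to \(W \coloneqq \image(\varphi_* \circ \mu) \subseteq U\)---a \((k+1)\)-dimensional subspace, since \(\rank(\varphi_* \circ \mu) = k+1\) on \(\mathbf{H}_k\)---defines a morphism \(\pi \colon \mathbf{H}_k \to \mathbf{G}(k+1, U)\). Since \(\dim U = r+1\), the Grassmannian has dimension \((k+1)(r-k)\), which recovers the first term in the claimed bound; it remains to show that each fibre of \(\pi\) has dimension at most \(\sum_i \prod_j \binom{k+a_{i,j}}{k} - 1\).

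For the fibre over a fixed \(W\), let \(W^\perp \subseteq U^\vee\) denote the annihilator. Unwinding the definition of \(\mu\) gives \((\varphi_* \circ \mu)(\beta)(\xi) = \varphi(\xi \cdot \beta)\), so \(\image(\varphi_* \circ \mu) \subseteq W\) if and only if \(\varphi\) vanishes on the subspace \(W^\perp \cdot \mathrm{S}^{\mathbf{a}-1}(U^\vee) \subseteq \mathrm{S}^{\mathbf{a}}(U^\vee)\). Thus \(\pi^{-1}(W)\) sits inside the linear subspace \(\PP(Q_W^\vee) \subseteq \mathbf{H}\) with \(Q_W \coloneqq \mathrm{S}^{\mathbf{a}}(U^\vee)/W^\perp \cdot \mathrm{S}^{\mathbf{a}-1}(U^\vee)\). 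The canonical surjection \(U^\vee \twoheadrightarrow W^\vee\) induces \(\mathrm{S}^{\mathbf{a}}(U^\vee) \twoheadrightarrow \mathrm{S}^{\mathbf{a}}(W^\vee)\), which factors through \(Q_W\); directly from the definition of the profile tensor functor, the target has dimension \(\sum_i \prod_j \binom{k+a_{i,j}}{k}\), exactly the desired bound.

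The main obstacle is then establishing that the induced surjection \(Q_W \twoheadrightarrow \mathrm{S}^{\mathbf{a}}(W^\vee)\) is actually an isomorphism, i.e., that \(W^\perp \cdot \mathrm{S}^{\mathbf{a}-1}(U^\vee)\) equals the full kernel of the restriction map to \(W\). The subtlety is that the kernel of \(\mathrm{S}^{\mathbf{a}}(U^\vee) \twoheadrightarrow \mathrm{S}^{\mathbf{a}}(W^\vee)\) a priori includes contributions from each Frobenius-twisted factor \(\Sym^{a_{i,j}}(U^\vee)^{[j]}\), whereas multiplication by \(W^\perp\) only touches the non-twisted factor \(\Sym^{a_{i,0}}(U^\vee)\). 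I would close this gap by using the precise rank condition \(\rank(\varphi_* \circ \mu) = k+1\) (rather than merely \(\leq k+1\)): iterating the image analysis across the Frobenius-twisted components should force additional vanishings of \(\varphi\) on \((W^\perp)^{[j]}\)-generated subspaces, accounting for the remaining kernel. Once that identification is in hand, \(\dim \pi^{-1}(W) \leq \dim \PP \mathrm{S}^{\mathbf{a}}(W^\vee) - 1\), and summing with the Grassmannian contribution finishes the bound.
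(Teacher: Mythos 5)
Your overall route is the same as the paper's: the proof of \parref{fano-strata-codimension} also fibres (the closure of) \(\mathbf{H}_k\) over \(\mathbf{G}(k+1,U)\), characterizes the fibre over \(U_0 = W\) as the functionals vanishing on the image of \((U/U_0)^\vee \otimes \mathrm{S}^{\mathbf{a}-1}(U^\vee) \to \mathrm{S}^{\mathbf{a}}(U^\vee)\), and then disposes of what you call the main obstacle in a single sentence, asserting that the cokernel of this multiplication map is \(\mathrm{S}^{\mathbf{a}}(U_0^\vee)\). So you have isolated precisely the point the paper takes for granted. For constant profiles that identification is the classical fact that the degree-\(d\) part of the ideal generated by \((U/U_0)^\vee\) equals the kernel of restriction, which is why the argument recovers Debarre--Manivel; but since multiplication (like the differentiation producing \(\mathrm{S}^{\mathbf{a}-1}\)) touches only the untwisted factor, the true cokernel is \(\bigoplus_i \Sym^{a_{i,0}}(U_0^\vee) \otimes \bigotimes_{j \geq 1} \Sym^{a_{i,j}}(U^\vee)^{[j]}\), of dimension \(\sum_i \binom{a_{i,0}+k}{k} \prod_{j \geq 1} \binom{a_{i,j}+r}{r}\).

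The gap you flag is therefore genuine, but your proposed repair cannot close it. First, \(Q_W\) does not depend on \(\varphi\), so the asserted isomorphism \(Q_W \cong \mathrm{S}^{\mathbf{a}}(W^\vee)\) is false outright, by the dimension count above, as soon as some \(a_{i,j} \neq 0\) with \(j \geq 1\) and \(k < r\); what your plan really needs is that every \([\varphi] \in \mathbf{H}_k\) over \(W\) kills the whole kernel of restriction, i.e., the extra vanishing on the twisted side. The exact rank condition does not force this: the rank constrains the image of \(\varphi_* \circ \mu\), never its kernel. Concretely, take \(\mathbf{a} = (1+t)\), \(\dim_\kk U = 2\) (so \(r = 1\)) and \(k = 0\), with \(x_0, x_1\) a basis of \(U^\vee\) and \(e_0, e_1\) the dual basis of \(U\). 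Then \(\mathrm{S}^{\mathbf{a}-1}(U^\vee) = (U^\vee)^{[1]}\), and \(\varphi \colon U^\vee \otimes (U^\vee)^{[1]} \to \kk\) is just a \(2 \times 2\) matrix with \(\varphi_* \circ \mu\) the matrix itself. The functional with \(\varphi(x_0 \otimes x_0^{[1]}) = \varphi(x_0 \otimes x_1^{[1]}) = 1\) and \(\varphi(x_1 \otimes -) = 0\) has rank exactly \(1\) and image exactly \(W = \langle e_0 \rangle\), yet \(\varphi(x_0 \otimes x_1^{[1]}) = 1 \neq 0\) although \(x_0 \otimes x_1^{[1]}\) restricts to zero on \(W\). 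Here the fibre of your \(\pi\) over \(W\) is the full \(\PP^1\) of functionals killing \(x_1 \otimes (U^\vee)^{[1]}\), not the point \(\PP(\mathrm{S}^{\mathbf{a}}(W^\vee)^\vee)\); and indeed \(\mathbf{H}_0\), the locus of rank-one matrices in \(\mathbf{H} = \PP^3\), is a quadric surface of dimension \(2\), exceeding the bound \(1\) claimed by the Lemma. So not only does the rank-iteration idea fail: the inequality you are trying to prove (and with it the paper's one-sentence cokernel claim) appears to be incorrect whenever a profile has a nonzero twisted coefficient and \(k < r\), and what this fibration honestly yields is only the weaker bound \(\dim \mathbf{H}_k \leq (k+1)(r-k) + \sum_i \binom{a_{i,0}+k}{k} \prod_{j \geq 1} \binom{a_{i,j}+r}{r} - 1\). (For \(k = r\) the statement is trivially true, and that is the only stratum used in \parref{fano-general-varphi}; the strata with \(k < r\) enter through the minimum in \parref{fano-main-codimension-estimate}, so the discrepancy propagates there.)
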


\begin{proof}
The image of \(\varphi_* \circ \mu\) is contained in the \((k+1)\)-dimensional
subspace \(\Hom(U_0^\vee,\kk)\) if and only if
\(\varphi \colon \mathrm{S}^{\mathbf{a}}(U^\vee) \to \kk\) vanishes on the
image of the multiplication map
\[
(U/U_0)^\vee \otimes \mathrm{S}^{\mathbf{a}-1}(U^\vee) \to
\mathrm{S}^{\mathbf{a}}(U^\vee).
\]
The cokernel of this map is isomorphic to \(\mathrm{S}^{\mathbf{a}}(U_0^\vee)\),
and any such \(\varphi\) is determined by its values thereon. In other words,
the closure \(\mathbf{H}_k\) admits a surjection from the projective bundle on
\(\mathrm{S}^{\mathbf{a}}(-)\) applied to the dual tautological bundle on the
Grassmannian \(\mathbf{G}(k+1,U)\), yielding the dimension bound.
\end{proof}

\begin{Proposition}\label{fano-main-codimension-estimate}
\(\displaystyle
\codim(Z_r \subset \mathbf{Inc}_r) \geq
\delta_-(n,\mathbf{a},r) + 1
\).
\end{Proposition}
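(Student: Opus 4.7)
My plan is to exploit the $\GL(V)$-equivariance of $Z_r$ to reduce to a single fibre of $\pr_1 \colon \mathbf{Inc}_r \to \mathbf{G}(r+1,V)$, and then to extract a codimension bound stratum by stratum from the correspondence $Z_{r,[U]}'$ built in the discussion preceding \parref{fano-H}. Concretely, the first step is to observe that the description of $Z_r$ in \parref{fano-nonsmooth-locus} is $\GL(V)$-equivariant under the natural action, so that since $\GL(V)$ acts transitively on $\mathbf{G}(r+1,V)$ the codimension of $Z_r$ in $\mathbf{Inc}_r$ coincides with that of the fibre inclusion $Z_{r,[U]} \subset \mathbf{Inc}_{r,[U]}$ over any fixed $[U]$.

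Recognising $Z_{r,[U]}$ as the image in $\mathbf{Inc}_{r,[U]}$ of $Z_{r,[U]}'$ and stratifying along $\mathbf{H} = \bigcup_{k=0}^r \mathbf{H}_k$, I would apply \parref{fano-H} and \parref{fano-strata-codimension} to obtain
\[
\codim\bigl(Z_{r,[U]} \subset \mathbf{Inc}_{r,[U]}\bigr) \;\geq\; \min_{0 \leq k \leq r}\, D_k,
\]
where $D_k \coloneqq (k+1)(n-r) - (k+1)(r-k) - \sum_{i=1}^c \prod_{j \geq 0} \binom{k + a_{i,j}}{k} + 1$. A direct computation identifies the endpoint values $D_0 = n - 2r - c + 1$ and $D_r = \delta(n,\mathbf{a},r) + 1$, whence $\min(D_0, D_r) = \delta_-(n,\mathbf{a},r) + 1$ directly from the definition of $\delta_-$.

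The main obstacle is the combinatorial claim that $\min_k D_k$ is always attained at an endpoint. The plan is to argue via discrete concavity: setting $g(k) \coloneqq (k+1)(n-2r+k)$ and $P_i(k) \coloneqq \prod_j \binom{k+a_{i,j}}{k}$, one has $\Delta^2 g \equiv 2$, so concavity of $D_k$ reduces to the inequality $\sum_i \Delta^2 P_i(k) \geq 2$ for all $k \geq 0$. The reductions in \parref{fano-reductions} that $\mathbf{a}$ is reduced and $1 \notin \mathbf{a}$ force each $a_i$ to satisfy $a_i(1) \geq 2$, making each $P_i$ a polynomial of degree $\geq 2$ in $k$ with positive second difference. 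A short explicit calculation then shows that $\Delta^2 P_i(k) \geq 2$ for $k \geq 0$ except in the single instance $a_i = 2$, where $\Delta^2 P_i \equiv 1$. Consequently $D_k$ is concave, hence minimised at an endpoint, except in the degenerate case $\mathbf{a} = (2)$. That case I would dispatch directly: when $\delta_-(n,\mathbf{a},r) < 0$ the statement is vacuous since $\codim \geq 0$, and otherwise the requirement $n - 2r - 1 \geq 0$ forces $D_k$ to be strictly increasing on $\{0,\ldots,r\}$, again placing the minimum at $k = 0$ with $\min_k D_k = D_0 = \delta_-(n,\mathbf{a},r) + 1$.
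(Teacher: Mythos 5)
Your proposal is correct and takes essentially the same route as the paper's proof: reduce by homogeneity to a fixed fibre \(Z_{r,[U]} \subset \mathbf{Inc}_{r,[U]}\), bound its codimension stratum by stratum over \(\mathbf{H} = \bigcup_{k} \mathbf{H}_k\) using \parref{fano-H} and \parref{fano-strata-codimension}, and place the minimum of the resulting function of \(k\) at an endpoint via concavity, with \(\mathbf{a} = (2)\) handled separately as an increasing function. Your use of discrete second differences in place of the paper's real second derivatives, and the explicit vacuity remark when \(\delta_-(n,\mathbf{a},r) < 0\), are minor refinements of the same argument.
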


\begin{proof}
It suffices to show that the corresponding codimension estimate holds for each
fibre \(Z_{r,[U]}\) and \(\mathbf{Inc}_{r,[U]}\) over points
\([U] \in \mathbf{G}(r+1,V)\). Since \(Z_{r,[U]}\) is the image of
\(Z_{r,[U]}'\) under
the first projection,
\begin{align*}
\codim(Z_{r,[U]} \subset \mathbf{Inc}_{r,[U]})
& \geq \min_{0 \leq k \leq r}
  \codim(Z_{r,[U]}' \times_{\mathbf{H}} \mathbf{H}_k \subset
         \mathbf{Inc}_{r,[U]} \times \mathbf{H}_k)
- \dim \mathbf{H}_k \\
& \geq \min_{0 \leq k \leq r}
  (k+1)(n+k-2r)
- \sum\nolimits_{i = 1}^c \prod\nolimits_{j = 0}^{m_i} \binom{k+a_{i,j}}{k} + 1
\end{align*}
where the second inequality follows from \parref{fano-H} and
\parref{fano-strata-codimension}. Now view the rightmost quantity as a
polynomial \(k\); it is the difference between a quadratic polynomial and one
of degree
\[
\norm{\mathbf{a}}_\infty \coloneqq
\max\{\abs{a_i} \coloneqq a_{i,0} + a_{i,1} + \cdots + a_{i,m_i} : 1 \leq i \leq c\},
\]
the maximal coefficient sum of the profiles \(a_i \in \mathbf{a}\). The
identity \(\binom{k+d+1}{k} = \frac{k + d + 1}{d+1}\binom{k+d}{k}\) easily
implies that the all derivatives of
\(\sum\nolimits_{i = 1}^c \prod\nolimits_{j = 0}^{m_i} \binom{k + a_{i,j}}{k}\)
with respect to \(k\) are increasing in the parameters \(a_{i,j} \geq 0\).
Explicitly computing second derivatives for the multi-profiles \((3)\), \((1 +
t)\), and \((2,2)\) implies that whenever \(\norm{\mathbf{a}}_\infty \geq 3\)
or \(\norm{\mathbf{a}}_\infty = 2\) and \(\mathbf{a} \neq (2)\), the function
in the minimum is concave for \(k \geq 0\). When \(\mathbf{a} = (2)\),
the function is an increasing. In all cases, this means that the
minimum is achieved at the endpoints, either when \(k = 0\) or \(k = r\), so
\[
\codim(Z_{r,[U]} \subset \mathbf{Inc}_{r,[U]})
\geq \min\{n - 2r - c, \delta(n,\mathbf{a},r)\} + 1
= \delta_-(n,\mathbf{a},r) + 1.
\qedhere
\]
\end{proof}

Non-smooth points of \(\mathbf{F}_{r-1}(X)\) often contribute to non-smooth
points of \(\mathbf{F}_r(X)\), producing components of \(Z_r\) that are too
large. Writing \(\Delta_r\) for the image of \(Z_r\) under
\(\pr_2 \colon \mathbf{Inc}_r \to \qaticci_{\PP V}\), the following
statement says that members of
\(Z_r^\circ \coloneqq Z_r \setminus \pr_2^{-1}(\Delta_{r-1})\) are
parameterized by the piece of highest codimension from \parref{fano-H}:

\begin{Lemma}\label{fano-general-varphi}
Let \(([U],[\boldsymbol{\alpha}]) \in Z_r^\circ\).
If \([\varphi] \in \mathbf{H}\) is such that
\(\varphi \circ \rho_{U,\boldsymbol{\alpha}} = 0\), then
\([\varphi] \in \mathbf{H}_r\).
\end{Lemma}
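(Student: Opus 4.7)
The plan is to argue by contradiction. Suppose \([\varphi] \notin \mathbf{H}_r\), so that \(\varphi_* \circ \mu\) has rank strictly less than \(r+1 = \dim U\). Unpacking the definition of \(\mu\) as the adjoint of multiplication, this amounts to producing a nonzero \(\eta \in U^\vee\) with \(\varphi(\eta \cdot \gamma) = 0\) for all \(\gamma \in \mathrm{S}^{\mathbf{a}-1}(U^\vee)\). Set \(U_0 \coloneqq \ker(\eta) \subset U\), a hyperplane. Then \(\eta \cdot \mathrm{S}^{\mathbf{a}-1}(U^\vee)\) is exactly the kernel of the restriction map \(\mathrm{S}^{\mathbf{a}}(U^\vee) \to \mathrm{S}^{\mathbf{a}}(U_0^\vee)\), so \(\varphi\) factors as \(\varphi = \bar\varphi \circ \mathrm{res}\) for some nonzero linear functional \(\bar\varphi\) on \(\mathrm{S}^{\mathbf{a}}(U_0^\vee)\).

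The central step is to show that \(\bar\varphi \circ \rho_{U_0, \boldsymbol{\alpha}}\) is identically zero on \(U_0^\vee \otimes (V/U_0)\). Using the decomposition \(V/U_0 \cong (V/U) \oplus (U/U_0)\), this splits into two verifications. First, on \(U_0^\vee \otimes (V/U)\), compatibility of directional derivatives with restriction gives \(\mathrm{res}(\rho_{U, \boldsymbol{\alpha}}(\tilde\xi \otimes \bar v)) = \rho_{U_0, \boldsymbol{\alpha}}(\xi \otimes \bar v_0)\) whenever \(\tilde\xi \in U^\vee\) lifts \(\xi \in U_0^\vee\) and \(\bar v_0 \in V/U_0\) is the image of \(\bar v \in V/U\); the hypothesis \(\varphi \circ \rho_{U, \boldsymbol{\alpha}} = 0\) combined with \(\varphi = \bar\varphi \circ \mathrm{res}\) then forces the vanishing. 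Second, on \(U_0^\vee \otimes (U/U_0)\), the computation at the end of the proof of \parref{fano-nonsmooth-locus} shows that \(\partial_u \boldsymbol{\alpha}\rvert_U = 0\) for any \(u \in U\), so \(\partial_u \boldsymbol{\alpha}\rvert_{U_0} = 0\) also, yielding \(\rho_{U_0, \boldsymbol{\alpha}}(\xi \otimes \bar u) = 0\) directly.

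Finally, the hypothesis \(([U], [\boldsymbol{\alpha}]) \in Z_r^\circ\) means \([\boldsymbol{\alpha}] \notin \Delta_{r-1}\), so for the hyperplane \(\PP U_0 \subseteq \PP U \subseteq X_{\boldsymbol{\alpha}}\) the point \(([U_0], [\boldsymbol{\alpha}]) \notin Z_{r-1}\). By \parref{fano-nonsmooth-locus}, the map \(\rho_{U_0, \boldsymbol{\alpha}}\) is surjective. Consequently \(\bar\varphi = 0\), contradicting \(\varphi \neq 0\).

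The main obstacle is bookkeeping: cleanly translating the failure \([\varphi] \notin \mathbf{H}_r\) into the existence of the distinguished linear form \(\eta \in U^\vee\) with the appropriate annihilation property, and then carefully tracking how the restriction along \(0 \to \kk\cdot\eta \to U^\vee \to U_0^\vee \to 0\) interacts with directional derivatives in the two types of directions \(V/U\) and \(U/U_0\). Once the identification is set up, each of the two vanishing statements is immediate from a formal property already recorded in \parref{fano-nonsmooth-locus}.
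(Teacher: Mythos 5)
Your proposal is, up to logical packaging, the paper's own proof run as a contradiction instead of a contrapositive: the paper likewise converts the failure of \([\varphi] \in \mathbf{H}_r\) into a hyperplane \(U_0 \subset U\) (your \(U_0 = \ker(\eta)\) is its choice of a hyperplane \(\Hom(U_0^\vee,\kk)\) containing the image of \(\varphi_* \circ \mu\)), descends \(\varphi\) to a nonzero functional on \(\mathrm{S}^{\mathbf{a}}(U_0^\vee)\), factors \(\rho_{U_0,\boldsymbol{\alpha}}\) through \(U_0^\vee \otimes (V/U)\) using \(\partial_u \boldsymbol{\alpha}\rvert_U = 0\) for \(u \in U\)---exactly your two-case check along \(V/U_0 \cong (V/U) \oplus (U/U_0)\)---identifies that factored map as a restriction of \(\rho_{U,\boldsymbol{\alpha}}\), and concludes \(([U_0],[\boldsymbol{\alpha}]) \in Z_{r-1}\) via \parref{fano-nonsmooth-locus}, contradicting membership in \(Z_r^\circ\). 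Your final steps (surjectivity of \(\rho_{U_0,\boldsymbol{\alpha}}\) from \([\boldsymbol{\alpha}] \notin \Delta_{r-1}\), forcing \(\bar\varphi = 0\)) are the same mechanism stated in the opposite direction.

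One caution about the step you dismiss as bookkeeping. The assertion that \(\eta \cdot \mathrm{S}^{\mathbf{a}-1}(U^\vee)\) is \emph{exactly} the kernel of the restriction \(\mathrm{S}^{\mathbf{a}}(U^\vee) \to \mathrm{S}^{\mathbf{a}}(U_0^\vee)\) is literally true only for constant profiles: the multiplication underlying \(\mu\) inserts \(\eta\) into the untwisted \(\Sym^{a_0}\)-factor, whereas the kernel of restriction also contains elements in which \(\eta\) occurs only in Frobenius-twisted slots. For instance, with \(a = 1 + t\) the element \(\xi \otimes \eta^{[1]}\), corresponding to the polynomial \(\xi\eta^q\) with \(\xi \notin \kk\cdot\eta\), restricts to zero on \(U_0\) but does not lie in \(\eta \cdot \mathrm{S}^{t}(U^\vee)\); so the rank hypothesis gives vanishing of \(\varphi\) on a subspace that is in general strictly smaller than \(\ker(\res)\), and the factorization \(\varphi = \bar\varphi \circ \res\) requires more than what you have established. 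You should know, however, that this is not a deviation from the paper: its proof descends \(\varphi\) to the cokernel of \((U/U_0)^\vee \otimes \mathrm{S}^{\mathbf{a}-1}(U^\vee) \to \mathrm{S}^{\mathbf{a}}(U^\vee)\) and invokes the identification of that cokernel with \(\mathrm{S}^{\mathbf{a}}(U_0^\vee)\) from \parref{fano-strata-codimension}, which is precisely the claim you made explicit. So your write-up reproduces the paper's argument faithfully, including at its most delicate point; to make either version airtight for nonconstant profiles one would need to show that \(\varphi\) also annihilates the twisted-slot part of \(\ker(\res)\) at points of \(Z_r^\circ\), or otherwise rework the descent.
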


\begin{proof}
Let \(([U], [\boldsymbol{\alpha}]) \in Z_r\) and choose
\([\varphi] \in \mathbf{H}_k\) such that
\(\varphi \circ \rho_{U,\boldsymbol{\alpha}} = 0\). If \(k < r\), then
by its definition from \parref{fano-H}, this means that
\(\varphi_* \circ \mu \colon \mathrm{S}^{\mathbf{a}-1}(U^\vee) \to \Hom(U^\vee, \kk)\)
is not surjective; choose a hyperplane \(\Hom(U_0^\vee, \kk)\) containing the
image. As in \parref{fano-strata-codimension}, this means that
\(\varphi\) vanishes on the image of the multiplication map
\[
(U/U_0)^\vee \otimes \mathrm{S}^{\mathbf{a}-1}(U^\vee) \to \mathrm{S}^{\mathbf{a}}(U^\vee)
\]
and that \(\varphi\) descends to a nonzero linear functional
\(\varphi_0 \colon \mathrm{S}^{\mathbf{a}}(U_0^\vee) \to \kk\) on the cokernel.
Consider now the point \(([U_0],[\boldsymbol{\alpha}]) \in \mathbf{Inc}_{r-1}\)
and the corresponding tangent map
\(
\rho_{U_0, \boldsymbol{\alpha}} \colon U_0^\vee \otimes (V/U_0) \to \mathrm{S}^{\boldsymbol{a}}(U_0^\vee)
\). Since \(X_{\boldsymbol{\alpha}}\) contains \(\PP U\), the tensor
\(\partial_u \boldsymbol{\alpha}\) for \(u \in U\) lifting a basis of \(U/U_0\)
vanishes on \(\PP U \supset \PP U_0\), and so \(\rho_{U_0, \boldsymbol{\alpha}}\)
factors through the map
\[
U_0^\vee \otimes (V/U) \to \mathrm{S}^{\boldsymbol{a}}(U_0^\vee).
\]
But this map is but a restriction of \(\rho_{U,\boldsymbol{\alpha}}\), and so
its image is contained in the kernel of \(\varphi_0\). Thus
\(\rho_{U_0,\boldsymbol{\alpha}}\) is not surjective, so 
\(([U_0], [\boldsymbol{\alpha}]) \in Z_{r-1}\) by \parref{fano-nonsmooth-locus},
meaning that
\(([U], [\boldsymbol{\alpha}]) \in \pr_2^{-1}(\Delta_{r-1})\).
\end{proof}

\subsectiondash{}\label{fano-theorem-proof}
It remains to put everything together to prove \parref{fano-theorem}:

For \ref{fano-theorem.empty}, if \(\delta_-(n,\mathbf{a},r) < 0\) and
\(\mathbf{a} \neq (2)\), then
\(\pr_2 \colon \mathbf{Inc}_r \to \qaticci_{\PP V}\) cannot be dominant by the
dimension computation in \parref{fano-incidence-correspondence}, and so
\(\mathbf{F}_r(X) = \varnothing\) for general \(X\). The case \(\mathbf{a} = (2)\)
is well-known.

For \ref{fano-theorem.smooth}, if \(\delta_-(n,\mathbf{a},r) \geq 0\), then it
follows from \parref{fano-general-varphi} that the intersection of
\(Z_r \setminus \pr_2^{-1}(\Delta_{r-1})\) with \(\mathbf{Inc}_{r,[U]}\) is
contained in the image of \(Z_{r,[U]}' \times_{\mathbf{H}} \mathbf{H}_r\)
for each \([U] \in \mathbf{G}(r+1,V)\). The argument of
\parref{fano-main-codimension-estimate} then implies that the codimension of
its closure in \(\mathbf{Inc}_r\) is at least \(\delta_-(n,\mathbf{a},r) + 1\),
and so
\[
\dim\big(\overline{Z_r \setminus \pr_2^{-1}(\Delta_{r-1})}\big)
\leq \dim\mathbf{Inc}_r - \delta_-(n,\mathbf{a},r) - 1
\leq \dim\,\qaticci_{\PP V} - 1.
\]
Therefore \(\Delta_r \setminus \Delta_{r-1}\) is not all
of \(\qaticci_{\PP V}\). Induction on \(r\)---the base case with \(r = 0\)
is the statement \parref{qatic-general-is-smooth} that the general
\((q;\mathbf{a})\)-tic scheme is smooth---then shows that \(\Delta_r\) is a
proper closed subset of \(\qaticci_{\PP V}\), meaning that \(\mathbf{F}_r(X)\)
is smooth of the expected dimension for general \(X\) by
\parref{fano-nonsmooth-locus}.

For \ref{fano-theorem.connected}, when \(\delta_-(n, \mathbf{a}, r) > 0\),
consider the Stein factorization
\[
\pr_2 \colon \mathbf{Inc}_r \to \mathbf{Inc}_r' \to \qaticci_{\PP V}
\]
of the second projection. Then \(Z_r\) contains the preimage of the branch
locus of \(\mathbf{Inc}_r' \to \qaticci_{\PP V}\). Since \(Z_r\) has
codimension at least \(2\) in \(\mathbf{Inc}_r\) by
\parref{fano-main-codimension-estimate}, purity of the branch locus, as in
\citeSP{0BMB}, implies that \(\mathbf{Inc}_r' \to \qaticci_{\PP V}\)
is finite \'etale; the target is a multi-projective space and so it is simply
connected, thus this is an isomorphism, and properties of the Stein
factorization imply that \(\pr_2\) has connected fibres. In other words,
\(\mathbf{F}_r(X)\) is connected for every \(X\).
\qed

\medskip
When \(\mathbf{F}_r(X)\) is of its expected dimension
\(\delta(n,\mathbf{a},r)\), \parref{fano-equations} shows that it is cut out in
\(\mathbf{G}\) by a regular section of
\(\mathrm{S}^{\mathbf{a}}(\mathcal{S}^\vee)\). Various simple numerical invariants
of the Fano scheme may then be determined via Schubert calculus, as is done
in \cite[\S\S3--4]{DM} for classical complete intersections, and
\cite[1.13--1.15]{fano-schemes} and 
\cite[1.11--1.13]{qbic-threefolds} in the \(q\)-bic case. For now, record
the fact that the dualizing sheaf \(\omega_{\mathbf{F}_r(X)}\) is a power
of the Pl\"ucker line bundle \(\sO_{\mathbf{F}_r(X)}(1)\):

\begin{Proposition}\label{fano-canonical}
If \(X \subseteq \PP^n\) is a \((q;\mathbf{a})\)-tic scheme such that
\(\dim\mathbf{F}_r(X) = \delta(n,\mathbf{a},r)\), then
\[
\omega_{\mathbf{F}_r(X)} \cong
\sO_{\mathbf{F}_r(X)}(\gamma(\mathbf{a},r,q) - n - 1)
\;\;\text{where}\;\;
\gamma(\mathbf{a},r,q) \coloneqq
\frac{1}{r+1}\sum\nolimits_{a \in \mathbf{a}} a(q) \cdot \prod\nolimits_{j \geq 0} \binom{a_j + r}{r}.
\]
\end{Proposition}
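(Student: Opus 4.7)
The plan is to compute \(\omega_{\mathbf{F}_r(X)}\) by adjunction from the Grassmannian realization of \parref{fano-equations}. Since the hypothesis \(\dim\mathbf{F}_r(X) = \delta(n,\mathbf{a},r) = \dim\mathbf{G} - \rank\mathrm{S}^{\mathbf{a}}(\mathcal{S}^\vee)\) forces the defining section to be regular, adjunction yields
\[
\omega_{\mathbf{F}_r(X)} \cong
\omega_{\mathbf{G}}\rvert_{\mathbf{F}_r(X)} \otimes
\det\mathrm{S}^{\mathbf{a}}(\mathcal{S}^\vee)\rvert_{\mathbf{F}_r(X)}.
\]
Together with the classical facts \(\omega_{\mathbf{G}} \cong \sO_{\mathbf{G}}(-n-1)\) and \(\sO_{\mathbf{G}}(1) = \det\mathcal{S}^\vee\), this reduces the statement to showing \(\det\mathrm{S}^{\mathbf{a}}(\mathcal{S}^\vee) \cong \sO_{\mathbf{G}}(\gamma(\mathbf{a},r,q))\).

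For the Chern-class computation I would invoke the splitting principle and assemble three ingredients. First, for a rank \(s\) bundle \(E\), summing Chern roots over weights of \(\Sym^k\) and using symmetry yields \(c_1(\Sym^k E) = \tfrac{k}{s}\binom{s+k-1}{k}\,c_1(E)\). Second, the Frobenius twist \((\,-\,)^{[j]}\) on a vector bundle over the \(\kk\)-scheme \(\mathbf{G}\) coincides with pullback along the \(j\)-th iterate of the absolute Frobenius---as one sees from the identification of \(\Sym^{a_j}(\mathcal{S}^\vee)^{[j]}\) with the image of the \(q^j\)-power map into \(\Sym^{a_j q^j}(\mathcal{S}^\vee)\)---and therefore multiplies \(c_1\) by \(q^j\). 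Third, a tensor product \(\bigotimes_j E_j\) of bundles of ranks \(r_j\) has \(c_1\bigl(\bigotimes_j E_j\bigr) = \sum_j \bigl(\prod_{i \neq j} r_i\bigr)\,c_1(E_j)\).

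Specializing to \(s = r+1\), \(k = a_j\), and \(r_j = \binom{a_j+r}{r}\), the cofactors from the tensor product combine cleanly with the denominators of the symmetric-power formula, while the Frobenius exponents \(q^j\) assemble with the coefficients \(a_j\) into the numerical degree \(a(q) = \sum_j a_j q^j\), yielding
\[
c_1\bigl(\mathrm{S}^a(\mathcal{S}^\vee)\bigr) = \frac{a(q)}{r+1}\prod\nolimits_{j \geq 0}\binom{a_j+r}{r} \cdot c_1(\mathcal{S}^\vee).
\]
Summing over \(a \in \mathbf{a}\) produces \(c_1(\mathrm{S}^{\mathbf{a}}(\mathcal{S}^\vee)) = \gamma(\mathbf{a},r,q)\cdot c_1(\mathcal{S}^\vee)\), which is what was needed. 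The main subtlety will be interpreting the Frobenius twist so that its action on \(c_1\) is scaling by \(q^j\); once that point is fixed, the rest is routine bookkeeping of combinatorial factors.
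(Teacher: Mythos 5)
Your proposal is correct and takes essentially the same route as the paper: duality/adjunction for the regular section of \parref{fano-equations} gives \(\omega_{\mathbf{F}_r(X)} \cong \omega_{\mathbf{G}}\rvert_{\mathbf{F}_r(X)} \otimes \det\mathrm{S}^{\mathbf{a}}(\mathcal{S}^\vee)\), after which the determinant is evaluated using \(\det\Sym^{a_j}(\mathcal{S}^\vee) \cong \sO_{\mathbf{G}}(1)^{\otimes\binom{a_j+r}{r+1}}\), the tensor-product cofactor ranks \(\prod_{k \neq j}\binom{a_k+r}{r}\), and the factor \(q^j\) coming from the Frobenius twist. Your splitting-principle \(c_1\) bookkeeping is just the paper's line-bundle determinant identities transcribed into Chern classes, with identical combinatorial inputs, so the two arguments coincide in substance.
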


\begin{proof}
Duality theory, as in \citeSP{0AU3}, shows that the dualizing sheaf is given in
this case by
\(
\omega_{\mathbf{F}_r(X)} \cong
\omega_{\mathbf{G}}\rvert_{\mathbf{F}_r(X)} \otimes
\det\mathrm{S}^{\mathbf{a}}(\mathcal{S}^\vee)
\).
Tensor product formulae show that, for a profile
\(a = \sum\nolimits_{j \geq 0} a_j t^j\), 
\[
\det\mathrm{S}^a(\mathcal{S}^\vee)
= \det\big(\bigotimes\nolimits_{j \geq 0}
  \Sym^{a_j}(\mathcal{S}^\vee)^{[j]}\big)
\cong \bigotimes\nolimits_{j \geq 0}
  \det\big(
    \Sym^{a_j}(\mathcal{S}^\vee)\big)^{\otimes q^j \prod\nolimits_{k \geq 0} \binom{a_k + r}{r}/\binom{a_j + r}{r}}.
\]
Combined with fact that \(\det\Sym^{a_j}(\mathcal{S}^\vee)\) is the
\(\binom{a_j + r}{r + 1}\)-th power of \(\sO_{\mathbf{F}_r(X)}(1)\) gives
the result.
\end{proof}

\subsectiondash{\(r\)-planes through a point}\label{fano-planes-through-point}
Let \(X \subseteq \PP V\) be a \((q;\mathbf{a})\)-tic scheme and consider the
scheme
\[
\mathbf{F}_r(X,x)
\coloneqq \{[\PP U] \in \mathbf{F}_r(X) : x \in \PP U \subseteq X\}
\]
parameterizing \(r\)-planes in \(X\) through a given closed point \(x\). As
usual, this may be canonically identified as the Fano scheme of
\((r-1)\)-planes of a scheme \(X_{1,x} \subseteq \PP(V/L)\), where \(L\) is the
\(1\)-dimensional space underlying \(x\) and
\(X_{1,x} \coloneqq \mathbf{F}_1(X,x)\) is the scheme of lines in \(X\) through
\(x\). A classical fact, see \cite[\S2]{HRS} for example, is that if \(X
\subseteq \PP V\) is a scheme defined by equations of multi-degree \(\mathbf{d}
= (d_1,d_2,\ldots,d_c)\), then \(X_{1,x} \subseteq \PP(V/L)\) is defined by
equations of multi-degree
\[
\mathbf{d}_1 \coloneqq
(
  d_1, d_1 - 1, \ldots, 2, 1;\,
  d_2, d_2 - 1, \ldots, 2, 1;
  \ldots;\,
  d_c, d_c - 1, \ldots, 2, 1).
\]
A generalization of this to \((q;\mathbf{a})\)-tic schemes is as follows:

\begin{Proposition}\label{fano-pointed-line-equations}
Let \(X \subseteq \PP V \cong \PP^n\) be a \((q;\mathbf{a})\)-tic scheme.
For every closed point \(x \in X\), the scheme \(X_{1,x} = \mathbf{F}_1(X,x)\)
of lines in \(X\) through \(x\) is a \((q;\mathbf{a}_1)\)-tic scheme in
\(\PP^{n-1}\), where
\[
\mathbf{a}_1 \coloneqq
(b \in \Types : 0 \prec b \preceq a \;\text{with}\; a \in \mathbf{a}).
\]
\end{Proposition}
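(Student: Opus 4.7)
The plan is to pick a representative $L = \kk u_0 \subseteq V$ of $x$, choose a complement $V = L \oplus W$ so that $W \cong V/L$, and parametrize lines in $\PP V$ through $x$ by $[\bar v] \in \PP W$ via $(s:t) \mapsto [s u_0 + t v]$, where $v \in W$ is the unique lift of $\bar v$. The goal is to show that, for each $a \in \mathbf{a}$, the defining tensor $\alpha \in \mathrm{S}^a(V^\vee)$ canonically decomposes as a direct sum of $(q;b)$-tic tensors on $W$ indexed by $0 \preceq b \preceq a$, and that these cut out $X_{1,x}$.

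For the decomposition, the splitting $V^\vee = L^\vee \oplus W^\vee$ and the binomial expansion $\Sym^{a_j}(V^\vee)^{[j]} = \bigoplus_{b_j=0}^{a_j} \Sym^{a_j - b_j}(L^\vee)^{[j]} \otimes \Sym^{b_j}(W^\vee)^{[j]}$, taken in tensor over $j$ and reindexed by $b = \sum_j b_j t^j$, produce a canonical decomposition
\[
\mathrm{S}^a(V^\vee) \cong \bigoplus_{0 \preceq b \preceq a} M_b \otimes \mathrm{S}^b(W^\vee),
\]
where $M_b \coloneqq \bigotimes_j \Sym^{a_j - b_j}(L^\vee)^{[j]}$ is a $1$-dimensional $\kk$-vector space trivialized by $u_0$. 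Each such $b$ is again a profile by Corollary \parref{profiles-less-than}, so decomposing $\alpha$ yields a canonical collection of tensors $\tilde F_b \in \mathrm{S}^b(W^\vee)$, and each $F_b \coloneqq \mult(\tilde F_b)$ is a genuine $(q;b)$-tic polynomial on $W$.

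To match these with geometry, I would expand $f_\alpha$ along the line $\{[s u_0 + t v]\}$. For each pure summand $\alpha_{0,k} \otimes \cdots \otimes \alpha_{m,k}^{[m]}$ of $\alpha$, additivity of Frobenius gives
\[
\alpha_{j,k}(s u_0 + t v)^{q^j} = \sum_{b_j=0}^{a_j} s^{q^j(a_j - b_j)} t^{q^j b_j}\, p_{j,k,b_j}(v)^{q^j},
\]
where $p_{j,k,b_j}(v) \in \Sym^{b_j}(W^\vee)$ is the component of $\alpha_{j,k}$ in the previous splitting with its $L^\vee$-factor trivialized by $u_0$. Multiplying over $j$ and collecting monomials by tuples $(b_0, \ldots, b_m)$---which is well-posed because $b \mapsto b(q)$ is injective on $\{b : 0 \preceq b \preceq a\}$ by Lemma \parref{qatic-injective-mult}---yields
\[
f_\alpha(s u_0 + t v) = \sum_{0 \preceq b \preceq a} s^{a(q) - b(q)} t^{b(q)}\, F_b(v).
\]
The line lies in $X_\alpha$ precisely when every coefficient vanishes, i.e.~$F_b(v) = 0$ for $0 \preceq b \preceq a$; the $b = 0$ equation reduces to $f_\alpha(u_0) = 0$ and holds by hypothesis. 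Aggregating over $a \in \mathbf{a}$ exhibits $X_{1,x} \subset \PP W$ as the common zero locus of the $(q;b)$-tic polynomials $F_b$ for $b \in \mathbf{a}_1$, proving the claim set-theoretically.

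The main technical subtlety I expect is confirming that this pointwise recipe produces the correct Hilbert-scheme structure on $X_{1,x}$, not merely the right underlying closed subset. This can be handled by running the entire construction globally on the rank-$2$ subbundle $\mathcal{S}_x \coloneqq L \otimes \sO_{\PP W} \oplus \sO_{\PP W}(-1) \subset V \otimes \sO_{\PP W}$ over $\PP W$, whose projectivization is the universal line through $x$: applying Lemma \parref{qatic-linear-sections} to $\alpha\rvert_{\mathcal{S}_x}$ delivers the $\tilde F_b$ as global sections of appropriate line bundles on $\PP W$, and their simultaneous zero locus is $\mathbf{F}_1(X,x)$ endowed with its Fano-scheme structure.
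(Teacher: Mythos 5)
Your proof is correct and is essentially the paper's own argument: your expansion of \(f_\alpha(su_0 + tv)\) with coefficients \(F_b(v)\), grouped unambiguously thanks to the injectivity of \(b \mapsto b(q)\) from \parref{qatic-injective-mult} (with \parref{profiles-less-than} guaranteeing each \(b\) is a profile), is exactly the specialization at the fixed point \(x\) of the paper's Lemma \parref{tlines-explicit-expansion}, which the paper proves with the base point \(\mathbf{x}\) varying over the chart \(\mathrm{D}(x_n)\) so as to describe all of \(X_1\) at once---your one-dimensional trivialized factors \(M_b\) are what remains of its bi-profile-\((a-b,b)\) coefficients once \(\mathbf{x}\) is frozen. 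The only adjustment needed is in your final paragraph: the scheme structure is secured by restricting the section of \(\mathrm{S}^{\mathbf{a}}(\mathcal{S}^\vee)\) from Lemma \parref{fano-equations} to the sub-Grassmannian \(\PP(V/L) \cong \PP W\) of lines through \(x\), where the tautological bundle \(\mathcal{S}\) restricts to your \(\mathcal{S}_x\)---so the lemma to cite there is \parref{fano-equations}, not \parref{qatic-linear-sections}, which concerns linear sections \(X \cap \PP U\) of \(X\) itself rather than restriction of the tensor to a family of lines.
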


\begin{proof}
It is illustrative to work slightly more globally and to describe the scheme
\[
X_1 \coloneqq
\{(x,[\ell]) \in X \times \mathbf{F}_1(X) : x \in \ell \subseteq X\}
\]
of pointed lines locally relative to \(X\). Choose projective coordinates
\(\mathbf{x} \coloneqq (x_0:\cdots:x_n)\) and defining \((q;a)\)-tic equations
\(X = \mathrm{V}(f_a : a \in \mathbf{a})\). Over the standard affine open
subscheme \(\mathrm{D}(x_n)\), the space of pointed lines in \(\PP^n\)
restricts to a trivial \(\PP^{n-1}\)-bundle, and fibre coordinates
\(\mathbf{y} \coloneqq (y_0:\cdots:y_{n-1})\) may be chosen so that the point
\((\mathbf{x}, \mathbf{y}) \in \mathrm{D}(x_n) \times \PP^{n-1}\) represents
the line \(\ell\) parameterized by
\[
\varphi \colon
\PP^1 \to \PP^n \colon
(\xi:\eta) \mapsto
\xi\mathbf{x} + \eta\mathbf{y} \coloneqq
(\xi x_0 + \eta y_0 : \cdots : \xi x_{n-1} + \eta y_{n-1} : \xi x_n).
\]
Then \(\ell \subseteq X\) if and only if
\(\varphi^*(f_a) = f_a(\xi\mathbf{x} + \eta\mathbf{y}) = 0\) for each
\(a \in \mathbf{a}\). View \(f_a(\xi\mathbf{x} + \eta\mathbf{y})\) as a
polynomial in the auxiliary variables \((\xi:\eta)\); its coefficients are
polynomials \(f_{a,b}(\mathbf{x};\mathbf{y})\) which provide the equations for
\(X_1\) in \(\mathrm{D}(x_n) \times \PP^{n-1}\), and have the following
form:

\begin{Lemma}\label{tlines-explicit-expansion}
Let \(f_a(x_0,\ldots,x_n)\) be a \((q;a)\)-tic polynomial. Then there is a
unique expansion
\[
f_a(\xi x_0 + \eta y_0, \ldots, \xi x_{n-1} + \eta y_{n-1}, \xi x_n)
= \sum\nolimits_{0 \preceq b \preceq a}
f_{a,b}(x_0,\ldots,x_n; y_0,\ldots,y_{n-1}) \cdot \xi^{a(q) - b(q)}\eta^{b(q)}
\]
where the polynomials \(f_{a,b}(x_0,\ldots,x_n;y_0,\ldots,y_{n-1})\) are
homogeneous of bi-profile \((a-b,b)\).
\end{Lemma}

\begin{proof}
When the profile \(a = a_0\) is a constant, this is classical: simply group
terms with respect to the monomials in \(\xi\) and \(\eta\). For a general
profile \(a = a_0 + a_1t + \cdots + a_mt^m\), note that
\(f_a(\xi\mathbf{x} + \eta\mathbf{y})\) is a sum of polynomials of the form
\[
\prod\nolimits_{j = 0}^m f_{a_j}(\xi\mathbf{x} + \eta\mathbf{y})^{q^j} =
\prod\nolimits_{j = 0}^m\Big(
  \sum\nolimits_{b_j = 0}^{a_j} f_{a_j,b_j}(\mathbf{x};\mathbf{y}) \cdot \xi^{a_j-b_j}\eta^{b_j}
  \Big)^{q^j}
\]
where the \(f_{a_j}\) are homogeneous of degree \(a_j\), and \(f_{a_j,b_j}\)
is bihomogeneous of bidegree \((a_j-b_j,b_j)\). The product expands to
a sum of terms of the form
\[
\Big(\prod\nolimits_{j = 0}^m f_{a_j,b_j}(\mathbf{x};\mathbf{y})^{q^j}\Big) \cdot
\xi^{a(q) - b(q)} \eta^{b(q)}
\]
which is a polynomial with profile \(b \coloneqq b_0 + b_1t + \cdots + b_mt^m\)
in \(\mathbf{y}\), and profile \(a - b\) in \(\mathbf{x}\). Injectivity
of the multiplication maps implies via \parref{qatic-injective-mult} that any
coefficient of \(\xi^{a(q) - b(q)} \eta^{b(q)}\) is of bi-profile
\((a - b, b)\), from which the result follows.
\end{proof}

To complete the proof of \parref{fano-pointed-line-equations}, observe that
the bi-profile \((a,0)\) terms in \parref{tlines-explicit-expansion} are
\(f_{a,0}(\mathbf{x},\mathbf{y}) = f_a(\mathbf{x})\) simply the
equations of \(X\). Therefore, over \(X \cap \mathrm{D}(x_n)\), the polynomials
\(f_{a,b}(\mathbf{x};\mathbf{y})\) with \(0 \prec b \preceq a\) and \(a \in \mathbf{a}\)
present \(X_1\) as a \((q;\mathbf{a}_1)\)-tic scheme in a projective \((n-1)\)-space.
\end{proof}

Writing \(\mathbf{F}_r(X,x) \cong \mathbf{F}_{r-1}(X_{1,x})\) and combining
\parref{fano-pointed-line-equations} with \parref{fano-equations} and
\parref{fano-theorem} shows that a \((q;\mathbf{a})\)-tic scheme is covered by
\(r\)-planes as soon as \(\delta_-(n-1,\mathbf{a}_1,r-1) \geq 0\). A simple
criterion for \(\delta(n-1,\mathbf{a}_1,r-1) \geq 0\) may be obtained by
using the identity
\[
1 + \sum\nolimits_{0 \prec b \preceq a} \prod\nolimits_{j \geq 0} \binom{b_j + r - 1}{r - 1}
=
\prod\nolimits_{j \geq 0} \Big(\sum\nolimits_{b_j = 0}^{a_j} \binom{b_j + r - 1}{r - 1}\Big)
= \prod\nolimits_{j \geq 0} \binom{a_j + r}{r}.
\]

\begin{Corollary}\label{fano-covered-in-planes}
Let \(X \subseteq \PP V\) be a \((q;\mathbf{a})\)-tic scheme. If
\[
n \geq \max\big\{2r-1 + \#\mathbf{a}_1, r +
\frac{1}{r}
\sum\nolimits_{a \in \mathbf{a}}
\prod\nolimits_{j \geq 0} \binom{a_j + r}{r} -
\frac{1}{r}\#\mathbf{a}
\big\},
\]
then \(X\) is covered by \(r\)-planes.
\qed
\end{Corollary}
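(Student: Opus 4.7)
The plan is to fix an arbitrary closed point $x \in X$ and to produce at least one $r$-plane of $X$ through $x$. Via the identification $\mathbf{F}_r(X,x) \cong \mathbf{F}_{r-1}(X_{1,x})$ recalled at the start of \parref{fano-planes-through-point}, this reduces to showing that $\mathbf{F}_{r-1}(X_{1,x})$ is nonempty. The structural input is \parref{fano-pointed-line-equations}, which exhibits $X_{1,x}$ as a $(q;\mathbf{a}_1)$-tic scheme inside $\PP(V/L) \cong \PP^{n-1}$. Applying to $X_{1,x}$, with parameter $r-1$, the ``nonempty for every $X$'' form of \parref{fano-theorem}, it therefore suffices to verify the single numerical inequality $\delta_-(n-1,\mathbf{a}_1,r-1) \geq 0$.

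The rest of the argument then translates this inequality into the two bounds on $n$ appearing in the hypothesis. By definition, $\delta_-(n-1,\mathbf{a}_1,r-1) \geq 0$ splits as
\[
(n-1) - 2(r-1) - \#\mathbf{a}_1 \geq 0
\quad\text{and}\quad
\delta(n-1,\mathbf{a}_1,r-1) \geq 0.
\]
The first rearranges immediately to $n \geq 2r - 1 + \#\mathbf{a}_1$, matching the first entry of the maximum. For the second, I would expand
\[
\delta(n-1,\mathbf{a}_1,r-1) = r(n-r) - \sum\nolimits_{b \in \mathbf{a}_1}\prod\nolimits_{j \geq 0}\binom{b_j + r - 1}{r - 1}
\]
and apply the identity displayed in \parref{fano-covered-in-planes} in the rewritten form
\[
\sum\nolimits_{0 \prec b \preceq a}\prod\nolimits_{j \geq 0}\binom{b_j + r - 1}{r - 1} = \prod\nolimits_{j \geq 0}\binom{a_j + r}{r} - 1,
\]
summed over $a \in \mathbf{a}$. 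Solving the resulting inequality for $n$ then recovers exactly the second entry of the maximum.

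The argument is therefore almost entirely bookkeeping, with the genuine mathematical content residing in the ingredients already established: \parref{fano-theorem} for nonemptiness of the Fano scheme of $X_{1,x}$, \parref{fano-pointed-line-equations} for the $(q;\mathbf{a}_1)$-tic structure on $X_{1,x}$, and the combinatorial identity above. The only point requiring some care is to invoke the ``nonempty for every $X$'' rather than the ``nonempty for general $X$'' half of \parref{fano-theorem}, since $x$ ranges over all closed points of all $(q;\mathbf{a})$-tic schemes $X$; beyond that, no serious obstacle is anticipated.
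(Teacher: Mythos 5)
Your proposal is correct and follows essentially the same route as the paper, which likewise reduces via \(\mathbf{F}_r(X,x) \cong \mathbf{F}_{r-1}(X_{1,x})\) to nonemptiness of the Fano scheme of the \((q;\mathbf{a}_1)\)-tic scheme \(X_{1,x} \subseteq \PP^{n-1}\) from \parref{fano-pointed-line-equations}, invokes \parref{fano-theorem} once \(\delta_-(n-1,\mathbf{a}_1,r-1) \geq 0\), and converts that inequality into the stated bound on \(n\) using the displayed binomial identity. The one precision worth recording is that the ``nonempty for every \(X\)'' statement is not literally a half of \parref{fano-theorem}, which asserts nonemptiness only for general \(X\); it is obtained by combining it with \parref{fano-equations} and the properness and irreducibility of the incidence correspondence from \parref{fano-incidence-correspondence}, so that dominance of \(\pr_2\) upgrades to surjectivity---which is why the paper cites \parref{fano-equations} alongside \parref{fano-theorem} in its one-line justification.
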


A more global version of \parref{fano-pointed-line-equations} will be given
in \S\parref{section-tlines} and will feature in the unirationality
construction in Theorem \parref{intro-unirationality}. The next section
prepares for this by clarifying what a family of \((q;\mathbf{a})\)-tic schemes
ought to be and by developing some tools for manipulating such families.

\section{Families}\label{section-families}
Consider a family \(\mathcal{X} \to S\) of complete intersections in a
projective bundle \(\pi \colon \PP\mathcal{V} \to S\) over the field \(\kk\)
which is cut out by a regular section
\(\sigma \colon \sO_{\PP\mathcal{V}} \to \mathcal{E}\) of a finite locally free
\(\sO_{\PP\mathcal{V}}\)-module. What additional structure
should be required to elevate
\(\mathcal{X}\) into a family of \((q;\mathbf{a})\)-tic complete intersections?
As a minimum, each fibre
\(\mathcal{X}_s\) ought to be a \((q;\mathbf{a})\)-tic complete intersection in
\(\PP\mathcal{V}_s\), meaning as in \parref{qatics-definition} that the
equations \(\sigma_s\) are induced by a \((q;\mathbf{a})\)-tic tensor
\(\boldsymbol{\alpha}_s\). This tensor is a crucial part of the structure that
defines a \((q;\mathbf{a})\)-tic complete intersection, so one ought to ask
that the \(\boldsymbol{\alpha}_s\) vary continuously across the family. The
following examples illustrate some of the subtleties involved:

\begin{Example}\label{families-varying-tensor-structure}
Fix a nonconstant profile
\(a \coloneqq a_0 + a_1t + \cdots + a_mt^m \in \Types\), let
\(S \coloneqq \mathbf{A}^2\) be the affine plane with coordinates
\((s_1,s_2)\), and choose homogeneous polynomials
\(f_1,f_2,g \in \sO_S[x_0,\ldots,x_n]\), where \(f_1\) and
\(f_2\) are general coprime \((q;a)\)-tic polynomials and where \(g\) is simply
general of degree \(d \coloneqq a(q)\). Consider the closed subscheme of
\(\PP^n_S\) defined by
the section
\[
\sigma = (f_1 + s_1 g, f_2 + s_2 g)^\vee \colon
\sO_{\PP^n_S} \to
\sO_{\PP^n_S}(d)^{\oplus 2}.
\]
Setting \(\mathbf{a} \coloneqq (d, a_0 + a_1t + \cdots + a_mt^m)\), then
\(\mathcal{X}\) is a family of codimension \(2\)
complete intersections in \(\PP^n_S\) with the property that each fibre is
a \((q;\mathbf{a})\)-tic complete intersection, but there is no continuously
varying family of \((q;\mathbf{a})\)-tic tensor defining \(\mathcal{X}\) in
a neighbourhood of the origin of \(S\).
\end{Example}

\begin{proof}
That each fibre is a \((q;\mathbf{a})\)-tic complete intersection is
straightforward: At a point \(s\) where \(s_i \neq 0\),
\(\mathcal{X}_s\) is cut out by the equations 
\(f_i + s_ig = s_1f_2 - s_2f_1 = 0\). Over the origin, \(\mathcal{X}_0\) is cut
out by \(f_1 = f_2 = 0\), and either choice of \(f_i\) being considered as a
\((q;a)\)-tic equation suffices. Suppose now that \(U \subseteq S\) is a
neighbourhood of \(0\) over which \(\mathcal{X}\rvert_U\) is defined in
\(\PP^n_U\) by a family of \((q;\mathbf{a})\)-tic tensors
\(\boldsymbol{\alpha}(s)\). The \((q;a)\)-tic component of the tensor takes the
form
\[
\boldsymbol{\alpha}(s)_a = xf_1 + yf_2
\;\text{where}\; x, y \in \Gamma(U, \sO_S)
\;\text{satisfies}\;
xs_1 + ys_2 = 0.
\]
Thus \(y\) is divisible by \(s_1\) and \(x\) is divisible by \(s_2\), and so
\(\boldsymbol{\alpha}(s)_a\) vanishes at the origin, contradicting the
assumption that it provides the \((q;a)\)-tic equation of \(\mathcal{X}\)
over all of \(U\).
\end{proof}

Excising the origin from the base ensures that a tensor does glue together to
become a section of a non-split vector bundle:

\begin{Example}\label{families-varying-tensor-structure-section}
Take the base \(S \coloneqq \mathbf{A}^2 \setminus \{0\}\) and continue with
the example \(\mathcal{X} \subset \PP^n_S\) of
\parref{families-varying-tensor-structure}. Consider the standard affine
open cover given by the complement \(U_i \coloneqq \mathrm{D}(s_i)\) of the
\(s_i\)-axis for \(i = 1,2\). Then
\[
\mathcal{X}\rvert_{U_i} = \mathrm{V}(f_i + s_ig, s_1f_2 - s_2f_1) \subset \PP^n_{U_i}
\]
is a presentation of \(\mathcal{X}\) as a \((q;\mathbf{a})\)-tic complete
intersection over \(U_i\). This presentation globalizes over \(S\) in the
following sense: Write \(\PP^n_S = \PP(\sO_S \otimes V)\) for a vector space
\(V\), and consider for \(i = 1, 2\) the map of locally free
\(\sO_{U_i}\)-modules
\[
\boldsymbol{\alpha}_i = (f_i + s_ig, s_1f_2 - s_2f_1)^\vee \colon
\sO_{U_i} \to
\big(\sO_{U_i} \otimes \Sym^d(V^\vee)\big) \oplus
\big(\sO_{U_i} \otimes \mathrm{S}^a(V^\vee)\big).
\]
These glue via the automorphism
on the intersection \(U_{1,2} \coloneqq U_1 \cap U_2\) given by
\[
\varphi_{1,2} \coloneqq
\begin{pmatrix}
s_2/s_1 & 1/s_1 \\
0 &  1
\end{pmatrix}
\in \Aut\Big(
\big(\sO_{U_{1,2}} \otimes \Sym^d(V^\vee)\big) \oplus
\big(\sO_{U_{1,2}} \otimes \mathrm{S}^a(V^\vee)\big)\Big).
\]
Thus there is a global section
\(\boldsymbol{\alpha} \colon \sO_S \to \mathcal{A}\)
such that \(\boldsymbol{\alpha}\rvert_{U_i} = \boldsymbol{\alpha}_i\), where
\(\mathcal{A}\) fits in an extension
\[
0 \to
\sO_S \otimes \Sym^d(V^\vee) \to
\mathcal{A} \to
\sO_S \otimes \mathrm{S}^a(V^\vee) \to
0.
\]
This tensor underlies \(\mathcal{X} \subset \PP^n_S\) in the sense that the
section \(\sigma\) defining \(\mathcal{X}\) from
\parref{families-varying-tensor-structure} factors as
\[
\sigma = \varepsilon \circ \pi^*\boldsymbol{\alpha} \colon
\sO_{\PP^n_S} \to \pi^*\mathcal{A} \to \sO_{\PP^n_S}(d)^{\oplus 2}
\]
where \(\varepsilon\) is the surjective morphism of \(\sO_{\PP^n_S}\)-modules locally
induced by the maps
\[
\pi_*\varepsilon\rvert_{U_1} =
\begin{pmatrix}
1 & 0 \\ s_2/s_1 & 1/s_1
\end{pmatrix}
\;\;\text{and}\;\;
\pi_*\varepsilon\rvert_{U_2} =
\begin{pmatrix}
s_1/s_2 & -1/s_2 \\
1 & 0
\end{pmatrix}
\]
between 
\(
(\sO_{U_i} \otimes_\kk \Sym^d(V^\vee)) \oplus
(\sO_{U_i} \otimes_\kk \mathrm{S}^a(V^\vee))
\to \sO_{U_i} \otimes_{\kk} \Sym^d(V^\vee)^{\oplus 2}\). Moreover, there is a
change of fibre coordinates of \(\PP^n_S\) in which \(\varepsilon\) is in
fact locally induced by the evaluation map of \(\pi^*\pi_*\sO_{\PP^n_S}(d)\).
Despite this, the extension in which \(\mathcal{A}\) fits is nontrivial:
Otherwise, projecting
\(\boldsymbol{\alpha}\) to
the \(\Sym^d\)-component provides a polynomial \(h\) cutting out \(\mathcal{X}\)
along with the equation \(s_1f_2 - s_2f_1\). Comparing generators of the ideal of
\(\mathcal{X}\) shows that
\(h = x(f_1 + s_1g) + y(f_2 + s_2g)\) for some
\[
x, y \in \Gamma(S,\sO_S) = \kk[s_1,s_2]
\;\text{satisfying}\; xs_1 + ys_2 \in \Gamma(S,\sO_S)^\times = \kk^\times.
\]
This is impossible, and so the extension is non-split. \qed
\end{Example}

Toward a definition of a family of \((q;\mathbf{a})\)-tic schemes, suppose one
is given a section of the form
\[
\boldsymbol{\alpha} \colon
\sO_S \to
\bigoplus\nolimits_{a \in \mathbf{a}} \mathrm{S}^a(\mathcal{V}^\vee).
\]
Adjunction along \(\pi \colon \PP\mathcal{V} \to S\) together with the relative
evaluation maps induce a canonical map
\[
\sigma \coloneqq \varepsilon \circ \pi^*\boldsymbol{\alpha} \colon
\sO_{\PP\mathcal{V}} \to
\bigoplus\nolimits_{a \in \mathbf{a}} \pi^*\mathrm{S}^a(\mathcal{V}^\vee) \to
\bigoplus\nolimits_{a \in \mathbf{a}} \sO_\pi(a(q))
\]
whose zero locus \(\mathcal{X}\) ought to be called a family of
\((q;\mathbf{a})\)-tic schemes over \(S\). However, there are families like
those in \parref{families-varying-tensor-structure-section} whose defining
section takes values in a vector bundle which is only locally of this form.
Additional data is therefore necessary to globalize this construction. The
solution taken here is to require a map \(\varepsilon\) globalizing the
evaluation map.

\subsectiondash{Definitions}\label{families-complete-intersections}
A \emph{family of \((q;\mathbf{a})\)-tic tensors} valued in a finite locally
free \(\sO_S\)-module \(\mathcal{V}\) is a section
\(\boldsymbol{\alpha} \colon \sO_S \to \mathcal{A}\), where \(\mathcal{A}\)
is locally on \(S\) isomorphic to
\[
\bigoplus\nolimits_{a \in \mathbf{a}} \mathrm{S}^a(\mathcal{V}^\vee).
\]
A \emph{family of \((q;\mathbf{a})\)-tic schemes} in a projective bundle
\(\pi \colon \PP\mathcal{V} \to S\) consists of the data of
\begin{itemize}
\item
a closed subscheme \(\mathcal{X} \subseteq \PP\mathcal{V}\) cut out by a
section \(\sigma \colon \sO_{\PP\mathcal{V}} \to \mathcal{E}\) of a vector bundle;
\item
a family of \((q;\mathbf{a})\)-tic tensors
\(\boldsymbol{\alpha} \colon \sO_S \to \mathcal{A}\); and
\item
a surjective morphism \(\varepsilon \colon \pi^*\mathcal{A} \to \mathcal{E}\) of
\(\sO_{\PP\mathcal{V}}\)-modules.
\end{itemize}
These data are subject to the conditions that:
\begin{enumerate}
\item\label{families-complete-intersections.factorize}
there is a factorization
\(\sigma = \varepsilon \circ \pi^*\boldsymbol{\alpha} \colon
\sO_{\PP\mathcal{V}} \to \pi^*\mathcal{A} \to \mathcal{E}\); and
\item\label{families-complete-intersections.eta}
there exists an open cover \(S = \bigcup\nolimits_{i \in I} U_i\) on which
the morphism \(\varepsilon \colon \pi^*\mathcal{A} \to \mathcal{E}\) is
isomorphic to the canonical map induced by evaluation along \(\pi\):
\[
\ev_\pi \colon
\bigoplus\nolimits_{a \in \mathbf{a}} \pi^*\mathrm{S}^a(\mathcal{V}^\vee) \to
\bigoplus\nolimits_{a \in \mathbf{a}} \sO_\pi(a(q)).
\]
\end{enumerate}
When \(\sigma\) is a regular section making \(\mathcal{X} \to S\) fibrewise a
complete intersection, furthermore call the triple
\((\mathcal{X}, \boldsymbol{\alpha}, \varepsilon)\) a \emph{family of
\((q;\mathbf{a})\)-tic complete intersections}. The family
\(\mathcal{X} \to S\) is often referred to as the family of
\((q;\mathbf{a})\)-tic schemes or complete intersections, leaving the
\(\boldsymbol{\alpha}\) and \(\varepsilon\) implicit.

\subsectiondash{Example}\label{families-blowups}
Since \((q;\mathbf{a})\)-tic structures are inherited upon passing to linear
sections by \parref{qatic-linear-sections}, families of linear sections of
\((q;\mathbf{a})\)-tic schemes provide a source of non-trivial examples.
To give a simple illustration, consider linear projection of a projective space
\(\PP V\) centred along a subspace \(\PP U\). This provides a rational map to
\(\PP(V/U)\) which is resolved on the blowup
\(b \colon \widetilde{\PP} V \to \PP V\) centred along \(\PP U\); as usual,
\(b\) exhibits \(\widetilde{\PP} V\) as the projective
bundle over \(\PP(V/U)\) whose underlying vector bundle \(\mathcal{V}\)
canonically arises via the diagram of short exact sequences
\[
\begin{tikzcd}
0 \rar
& \sO_{\PP(V/U)} \otimes U \rar \dar[equal]
& \mathcal{V} \rar \dar
& \sO_{\PP(V/U)}(-1) \rar \dar
& 0 \\
0 \rar
& \sO_{\PP(V/U)} \otimes U \rar
& \sO_{\PP(V/U)} \otimes V \rar
& \sO_{\PP(V/U)} \otimes V/U \rar
& 0\punct{.}
\end{tikzcd}
\]
If \(X \subseteq \PP V\) is a \((q;\mathbf{a})\)-tic scheme not contained in
\(\PP U\), then its \emph{total transform} \(\mathcal{X} \coloneqq b^{-1}(X)\)
in \(\widetilde{\PP} V\) is a family of \((q;\mathbf{a})\)-tic schemes over
\(S \coloneqq \PP(V/U)\): Dualizing the inclusion \(\mathcal{V} \subseteq
\sO_{\PP(V/U)} \otimes V\) and applying the tensor functor
\(\mathrm{S}^{\mathbf{a}}\) provides a restriction map
\[
\sO_{\PP(V/U)} \otimes \mathrm{S}^{\mathbf{a}}(V^\vee) \to
\mathrm{S}^{\mathbf{a}}(\mathcal{V}^\vee).
\]
Mapping a \((q;\mathbf{a})\)-tic tensor defining \(X\) along this provides
a family of \((q;\mathbf{a})\)-tic tensors \(\boldsymbol{\alpha}\) defining
\(\mathcal{X}\).

\subsectiondash{}\label{families-not-strict-transform}
If, furthermore, \(\PP U \subseteq X\), then its \emph{strict transform}
\(\widetilde{X}\) in \(\widetilde{\PP} V\) generally does \emph{not} appear to
carry useful additional structure from the point of view of this article. For
instance, consider the smooth \(q\)-bic surface
\[
X \coloneqq
\set{
(x_0:x_1:x_2:x_3) \in \PP^3 :
x_0^q x_1 + x_0 x_1^q + x_2^q x_3 + x_2 x_3^q = 0
}.
\]
Projection from the line
\(\PP U = (0:x_1:0:x_3)\) exhibits the strict transform \(\widetilde{X}\) as a
family of degree \(q\) plane curves, the general fibre of which is isomorphic
to
\[
C \coloneqq \set{(y_0:y_1:y_2) \in \PP^2 : y_0^q + y_1y_2^{q-1} = 0},
\]
see \citeThesis{2.5.3} for instance. 
In general, this equation does not belong to any proper subspace of
\(\Sym^q(\kk^{\oplus 3})\) of the form \(\mathrm{S}^a(\kk^{\oplus 3})\) for
\(a \in \Types\) of numerical degree \(q\). This perhaps suggests
that the class of \((q;\mathbf{a})\)-tic schemes is not sufficiently flexible.
It would be useful to develop methods to handle a larger class of schemes that
includes irreducible components of \((q;\mathbf{a})\)-tic schemes.

\subsectiondash{Transition functions}\label{families-transition-functions}
Injectivity of multiplication maps associated with profiles from
\parref{qatics-profiles} together with condition
\parref{families-complete-intersections}\ref{families-complete-intersections.eta}
implies that the map
\(\pi_*\varepsilon \colon \mathcal{A} \to \pi_*\mathcal{E}\) is an injection
which locally on \(S\) is isomorphic to the inclusion of \((q;a)\)-tic
polynomials amongst all polynomials of degree \(a(q)\), with \(a\) ranging over
profiles in \(\mathbf{a}\). This endows \(\mathcal{A}\) with some
additional structure: Choose an open covering
\(S = \bigcup\nolimits_{i \in I} U_i\) and trivializations of \(\mathcal{A}\)
and \(\pi_*\mathcal{E}\) so that \(\pi_*\varepsilon\rvert_{U_i}\) is the
identified with the inclusion
\[
\bigoplus\nolimits_{a \in \mathbf{a}} \mathrm{S}^a(\mathcal{V}^\vee)\rvert_{U_i} \subseteq
\bigoplus\nolimits_{a \in \mathbf{a}} \Sym^{a(q)}(\mathcal{V}^\vee)\rvert_{U_i}.
\]
For each pair of indices \(i,j \in I\), let \(\psi_{i,j}\) and \(\varphi_{i,j}\)
be the transition functions of \(\mathcal{A}\) and \(\pi_*\mathcal{E}\) with
these trivializations over \(U_{i,j} \coloneqq U_i \cap U_j\). Since
\(\mathcal{E}\) is locally a sum of the \(\sO_\pi(a(q))\), the
\(\varphi_{i,j}\) are induced by multiplication of polynomials; in particular,
for \(a, b \in \mathbf{a}\), the \((a,b)\)-component
\[
(\varphi_{i,j})_{a,b} \colon
\Sym^{a(q)}(\mathcal{V}^\vee)\rvert_{U_{i,j}} \to
\Sym^{b(q)}(\mathcal{V}^\vee)\rvert_{U_{i,j}}
\]
is induced by multiplication by a polynomial \(f_{i,j,a,b}\)
of degree \(b(q) - a(q)\). Since \(\mathcal{A}\) is identified as a subbundle
of \(\pi_*\mathcal{E}\), this means that the map
\[
(\psi_{i,j})_{a,b} \colon
\mathrm{S}^a(\mathcal{V}^\vee)\rvert_{U_{i,j}} \to
\mathrm{S}^b(\mathcal{V}^\vee)\rvert_{U_{i,j}}
\]
is also induced by multiplication by the same polynomial \(f_{i,j,a,b}\).
Keeping track of profiles shows that whether or not \((\psi_{i,j})_{a,b}\) must
vanish is related to the composite partial ordering \(\rightsquigarrow\) from
\parref{qatics-ordering}:

\begin{Lemma}\label{families-vanishing-transition-functions}
If  \((\psi_{i,j})_{a,b} \neq 0\), then \(a \rightsquigarrow b\).
\qed
\end{Lemma}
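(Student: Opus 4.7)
By \parref{families-transition-functions}, the component \((\varphi_{i,j})_{a,b}\) of the transition of \(\pi_*\mathcal{E}\) is multiplication by a polynomial \(f \coloneqq f_{i,j,a,b}\) of degree \(b(q) - a(q)\), and \((\psi_{i,j})_{a,b}\) is its restriction to the subbundle \(\mathrm{S}^a(\mathcal{V}^\vee) \subseteq \Sym^{a(q)}(\mathcal{V}^\vee)\). The nonvanishing of this restriction furnishes a closed point \(s \in U_{i,j}\) at which \(f_s\) is a nonzero element of \(\Sym^{b(q) - a(q)}(V^\vee)\) for \(V \coloneqq \mathcal{V}_s\), and compatibility of \((\psi_{i,j})_{a,b}\) with the subbundle structure translates into the containment
\[
f_s \cdot \mathrm{S}^a(V^\vee) \subseteq \mathrm{S}^b(V^\vee)
\]
of subspaces of \(\Sym^\bullet(V^\vee)\) under the injective multiplication maps. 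The lemma reduces to the combinatorial assertion that, given a nonzero \(f \in \Sym^{b(q) - a(q)}(V^\vee)\) with \(f \cdot \mathrm{S}^a(V^\vee) \subseteq \mathrm{S}^b(V^\vee)\), we have \(a \rightsquigarrow b\).

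Following \parref{qatic-injective-mult}, I would then reduce to two variables: pick a 2-dimensional subspace \(U \subseteq V\) on which \(f\rvert_U\) is nonzero---possible since \(f \neq 0\)---and fix a basis \(u, v\) of \(U^\vee\). Writing out the basis \(\{u^{a(q) - c(q)} v^{c(q)} : 0 \preceq c \preceq a\}\) of \(\mathrm{S}^a(U^\vee)\) and its analogue for \(b\), expanding \(f\rvert_U = \sum_{i \in E} \lambda_i u^{N-i} v^i\) with \(N \coloneqq b(q) - a(q)\) and \(E \neq \varnothing\), and invoking linear independence of the basis monomials of \(\mathrm{S}^b(U^\vee)\)---itself a consequence of \(b\) being a profile---the hypothesis unpacks into the combinatorial condition
\[
E + \mathcal{D}(a) \subseteq \mathcal{D}(b), \quad\text{where}\quad \mathcal{D}(c) \coloneqq \{c'(q) : 0 \preceq c' \preceq c\}.
\]

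The heart of the proof is then to exhibit \(a' \in \mathbf{Z}_{\geq 0}[t]\) with \(a \preceq a'\) and \(a' \sqsubseteq b\). I would seek such \(a'\) of the form \(a + c^\star\) where \(c^\star \in \mathbf{Z}_{\geq 0}[t]\) satisfies \(c^\star(q) = N\) and \(\mathcal{D}(a) + \mathcal{D}(c^\star) \subseteq \mathcal{D}(b)\): the first gives \(a'(q) = b(q)\), and the additive decomposition \(\mathcal{D}(a + c^\star) = \mathcal{D}(a) + \mathcal{D}(c^\star)\) of the box ordering combined with the second gives \(\mathcal{D}(a') \subseteq \mathcal{D}(b)\), which expresses \(a' \sqsubseteq b\) in 2 variables and extends to all \(V\) by the formalism of \parref{qatic-injective-mult}. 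The candidate \(c^\star\) would be built from the profile property of \(b\): for each \(i \in E\) and \(c \preceq a\), there is a unique \(d_{c,i} \preceq b\) with \(d_{c,i}(q) = c(q) + i\), and \(c^\star\) would emerge as a coefficient-wise extremum among the differences \(d_{c,i} - c\) that simultaneously captures \(E\) and attains the numerical degree \(N\).

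The main obstacle I anticipate is exactly this joint constraint, since ensuring \(c^\star(q) = N\) coexists with the additive containment requires controlled behaviour of base-\(q\) carries when coefficients sum past \(q\). The clean endpoints are immediate---\(N = 0\) forces \(f\) to be a nonzero scalar and \(a \sqsubseteq b\), so \(a' \coloneqq a\) works; the case \(a \preceq b\) admits \(a' \coloneqq b\) directly---and I expect the general case to follow from an induction on \(N\) obtained by splitting \(f\) into Frobenius-homogeneous components, with compatibility from \parref{qatics-ordering-properties}\ref{qatics-ordering-properties.monoid} combining the inductive witnesses into a global \(a'\).
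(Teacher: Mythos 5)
Your opening paragraph coincides with everything the paper actually records as a proof: the lemma carries a \(\blacksquare\) at the statement because \parref{families-transition-functions} has already shown that \((\psi_{i,j})_{a,b}\) is multiplication by the degree \(b(q)-a(q)\) polynomial \(f_{i,j,a,b}\) carrying \(\mathrm{S}^a(\mathcal{V}^\vee)\) into \(\mathrm{S}^b(\mathcal{V}^\vee)\), so nonvanishing yields a point \(s\) with \(f_s \neq 0\) and \(f_s \cdot \mathrm{S}^a(V^\vee) \subseteq \mathrm{S}^b(V^\vee)\); the remaining combinatorics (``keeping track of profiles'') is left to the reader. Your reduction to the assertion about a nonzero \(f\) is thus exactly the paper's route, and the question is whether your sketch of the combinatorial step can be completed. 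It cannot as written, and the fatal flaw is not the base-\(q\) carry issue you flag at the end, but the two-variable reduction itself.

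The step asserting that \(\mathcal{D}(a') \subseteq \mathcal{D}(b)\) ``expresses \(a' \sqsubseteq b\) in 2 variables and extends to all \(V\) by the formalism of \parref{qatic-injective-mult}'' is false: \parref{qatic-injective-mult} reduces \emph{injectivity} of the multiplication map to two variables, but the \emph{containment} \(\sqsubseteq\) genuinely does not so reduce. For \(q = 2\), the profiles \(a' = 3t\) and \(b = t + t^2\) satisfy \(\mathcal{D}(a') = \mathcal{D}(b) = \{0,2,4,6\}\), hence have equal images in two variables; yet \(x^2y^2z^2 = (xyz)^2 \in \mathrm{S}^{3t}(V^\vee)\), while every monomial of \(\mathrm{S}^{t+t^2}(V^\vee)\), being of the form \(x_j^2x_k^4\), has a variable of exponent at least \(4\), so \(a' \not\sqsubseteq b\). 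Worse, your reduced hypothesis \(E + \mathcal{D}(a) \subseteq \mathcal{D}(b)\) is strictly weaker than the lemma's hypothesis, so no construction of \(c^\star\) from it can succeed: for \(q = 2\), \(a = 3t\), \(b = 1 + t + t^2\), \(N = 1\), the condition holds with \(E = \{0\}\) since \(\mathcal{D}(b) = \{0,\ldots,7\}\)---and indeed your program would happily output \(c^\star = 1\), since \(\mathcal{D}(a) + \{0,1\} \subseteq \mathcal{D}(b)\)---but the only \(a' \in \mathbf{Z}_{\geq 0}[t]\) with \(a \preceq a'\) and \(a'(q) = b(q) = 7\) is \(a' = 1 + 3t\), and \(x^3y^2z^2 = x\cdot(xyz)^2 \in \mathrm{S}^{1+3t}(V^\vee)\) has all exponents below \(4\), whereas every monomial \(x_ix_j^2x_k^4\) of \(\mathrm{S}^{1+t+t^2}(V^\vee)\) has one of exponent at least \(4\); so \(a \not\rightsquigarrow b\). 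A correct completion must therefore exploit the containment \(f \cdot \mathrm{S}^a(V^\vee) \subseteq \mathrm{S}^b(V^\vee)\) with many variables in play: since \(\mathrm{S}^b(V^\vee)\) is spanned by monomials, one may assume \(f\) is a single monomial \(\mu\) and multiply it against monomials \(\prod_j m_j^{q^j}\) of \(\mathrm{S}^a(V^\vee)\) supported on variables disjoint from those of \(\mu\); the resulting factorizations inside \(\mathrm{S}^b(V^\vee)\) then record coefficientwise decompositions of \(b\), not merely the numerical values \(c(q)\)---which is precisely the information your two-variable shadow destroys.
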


As a simple though useful consequence, this provides a \((q;\mathbf{a})\)-tic
tensor \(\boldsymbol{\alpha} \colon \sO_S \to \mathcal{A}\) with a canonical
decomposition into types. Namely, partition the collection \(\mathbf{a}\) of
profiles into
\[
\mathbf{a}_{\mathrm{lin}} \coloneqq \{a \in \mathbf{a} : a(t) = 1\},
\;\;
\mathbf{a}_{\mathrm{pow}} \coloneqq \{a \in \mathbf{a} : a(0) = 0\},\;\;
\mathbf{a}_{\mathrm{nlr}} \coloneqq \{a \in \mathbf{a} : a(t) \neq 1 \;\text{and}\; a(0) = 0\}
\]
those that are linear, those that are nonreduced, and those that are non-linear
and reduced. The linear and nonreduced components of \(\mathcal{A}\) give two
canonical quotients:

\begin{Lemma}\label{families-quotients}
Let \(\mathcal{X}\) be a family of \((q;\mathbf{a})\)-tic schemes in a
projective bundle \(\pi \colon \PP\mathcal{V} \to S\). There is a canonical
short exact sequence of locally free \(\sO_S\)-modules
\[
0 \to
\mathcal{A}_{\mathrm{nlr}} \to
\mathcal{A} \to
\mathcal{A}_{\mathrm{lin}} \oplus
\mathcal{A}_{\mathrm{pow}} \to
0
\]
where, for each \(\mathrm{type} \in \{\mathrm{nlr}, \mathrm{lin}, \mathrm{pow}\}\),
\(\mathcal{A}_{\mathrm{type}}\) is locally on \(S\) isomorphic to
\(\bigoplus_{a \in \mathbf{a}_{\mathrm{type}}} \mathrm{S}^a(\mathcal{V}^\vee)\).
An analogous and compatible exact sequence exists for \(\mathcal{E}\).
\end{Lemma}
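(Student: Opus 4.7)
The statement is local on \(S\), so the plan is to work with the transition functions \(\psi_{i,j}\) described in \parref{families-transition-functions} and identify the required subbundle and splitting via a case analysis on \(\rightsquigarrow\). Concretely, it suffices to show that the component \((\psi_{i,j})_{a,b}\) vanishes in two situations: first, when \(a \in \mathbf{a}_{\mathrm{nlr}}\) and \(b \in \mathbf{a}_{\mathrm{lin}} \cup \mathbf{a}_{\mathrm{pow}}\), which makes \(\mathcal{A}_{\mathrm{nlr}}\) into a subbundle of \(\mathcal{A}\); and second, when \(a \in \mathbf{a}_{\mathrm{lin}}\) and \(b \in \mathbf{a}_{\mathrm{pow}}\), or vice versa, which splits the resulting quotient as \(\mathcal{A}_{\mathrm{lin}} \oplus \mathcal{A}_{\mathrm{pow}}\). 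By Lemma \parref{families-vanishing-transition-functions}, each of these vanishings reduces to showing that \(a \not\rightsquigarrow b\) for the pair in question.

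The main work is then a four-fold case analysis, which I would carry out by unwinding the definition: \(a \rightsquigarrow b\) requires some \(a' \in \mathbf{Z}_{\geq 0}[t]\) with \(a \preceq a' \sqsubseteq b\), so in particular \(a'(q) = b(q)\) and, thanks to \parref{qatics-ordering-properties}\ref{qatics-ordering-properties.reduced}, \(a'\) is nonreduced whenever \(b\) is. If \(b\) is linear, so \(b(q) = 1\), this forces \(a'(q) = 1\) and hence \(a' = 1\), which contradicts either that \(a\) is non-linear reduced (so \(a(q) \geq 2\)) or that \(a\) is nonreduced (so some \(a_j \geq 1\) for \(j \geq 1\)). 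If \(b\) is nonreduced, then \(a'\) is nonreduced, so \(a(0) \leq a'(0) = 0\), contradicting that \(a\) is reduced (whether linear or non-linear). These four elementary checks dispatch all the required cross-type vanishings.

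The remaining assertions for \(\mathcal{A}\) are formal: each \(\mathcal{A}_{\mathrm{type}}\) is locally isomorphic to \(\bigoplus_{a \in \mathbf{a}_{\mathrm{type}}} \mathrm{S}^a(\mathcal{V}^\vee)\) from the local description of \(\mathcal{A}\), and the decomposition is canonical because the partition \(\mathbf{a} = \mathbf{a}_{\mathrm{nlr}} \sqcup \mathbf{a}_{\mathrm{lin}} \sqcup \mathbf{a}_{\mathrm{pow}}\) depends only on \(\mathbf{a}\). For the companion sequence on \(\mathcal{E}\), I would observe that, by \parref{families-transition-functions}, the components \((\varphi_{i,j})_{a,b}\) are multiplication by the same polynomials \(f_{i,j,a,b}\) that induce \((\psi_{i,j})_{a,b}\); since multiplication by a nonzero polynomial is nonzero on each \(\mathrm{S}^a(V^\vee) \neq 0\), the two vanishing patterns coincide, and the induced decomposition on \(\mathcal{E}\) is compatible with that on \(\mathcal{A}\) through the factorization \(\sigma = \varepsilon \circ \pi^*\boldsymbol{\alpha}\). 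I do not expect a serious obstacle: the combinatorial yoga of profiles developed in \S\parref{section-profiles}, particularly the reducedness-monotonicity in \parref{qatics-ordering-properties}\ref{qatics-ordering-properties.reduced}, does essentially all of the heavy lifting.
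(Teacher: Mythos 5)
Your proposal is correct and takes essentially the same route as the paper's proof: both reduce the claim to vanishing of the cross-type components \((\psi_{i,j})_{a,b}\) via \parref{families-vanishing-transition-functions}, ruling out \(a \rightsquigarrow b\) by degree considerations when \(b\) is linear and by the reducedness monotonicity of \parref{qatics-ordering-properties}\ref{qatics-ordering-properties.reduced} when \(b\) is nonreduced. Your explicit unwinding of \(\rightsquigarrow\) and the observation that the same polynomials \(f_{i,j,a,b}\) govern both \(\psi_{i,j}\) and \(\varphi_{i,j}\) (giving the compatible sequence for \(\mathcal{E}\)) merely spell out steps the paper leaves implicit.
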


\begin{proof}
From \parref{families-vanishing-transition-functions} together with
\parref{qatics-ordering-properties}\ref{qatics-ordering-properties.degrees}, it
follows that \((\psi_{i,j})_{a,1} = 0\) for each
\(a \in \mathbf{a} \setminus \mathbf{a}_{\mathrm{lin}}\), meaning that the
local summands of the form \(\mathcal{V}^\vee\) fit together as a quotient
bundle \(\mathcal{A} \to \mathcal{A}_{\mathrm{lin}}\). Similarly, if
\(a \in \mathbf{a} \setminus \mathbf{a}_{\mathrm{pow}}\) and
\(b \in \mathbf{a}_{\mathrm{pow}}\), then \((\psi_{i,j})_{a,b} = 0\) since
profiles preceding a nonreduced profile must also be nonreduced as observed in
\parref{qatics-ordering-properties}\ref{qatics-ordering-properties.reduced}.
Hence the local summands indexed by \(\mathbf{a}_{\mathrm{pow}}\) also fit
together to form a quotient \(\mathcal{A} \to \mathcal{A}_{\mathrm{pow}}\).
The latter argument also implies that all possible transition functions between
\(\mathcal{A}_{\mathrm{lin}}\) and \(\mathcal{A}_{\mathrm{pow}}\) must vanish,
from which the remaining conclusions follow.
\end{proof}

Dually, \parref{families-vanishing-transition-functions} implies that
equations whose profile is maximal for \(\rightsquigarrow\) in \(\mathbf{a}\)
fit together into a subbundle of \(\mathcal{A}\). One such class of maximal
profiles are those \(a \in \mathbf{a}\) with maximal coefficient sum \(a(1)\):
see
\parref{qatics-ordering-properties}\ref{qatics-ordering-properties.coefficient-sums}.
This is most useful when restricted to \(\mathcal{A}_{\mathrm{nlr}} \neq 0\):

\begin{Lemma}\label{families-maximal-subbundle}
Let \(\mathcal{X}\) be a family of \((q;\mathbf{a})\)-tic schemes in a projective
bundle \(\pi \colon \PP\mathcal{V} \to S\). If \(\mathbf{a}_{\mathrm{nlr}} \neq \varnothing\),
then there exists a nonzero subbundle of the form
\[
\mathrm{S}^a(\mathcal{V}^\vee) \otimes \mathcal{M} \subseteq
\mathcal{A}_{\mathrm{nlr}} \subseteq
\mathcal{A}
\]
for some locally free \(\sO_S\)-module \(\mathcal{M}\) and some
\(a \in \mathbf{a}_{\mathrm{nlr}}\) with maximal coefficient sum \(a(1)\).
\qed
\end{Lemma}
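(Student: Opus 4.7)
The plan is to select \(a \in \mathbf{a}_{\mathrm{nlr}}\) with maximal coefficient sum \(a(1)\) and to verify that the \(\mathrm{S}^a(\mathcal{V}^\vee)\)-summand appearing in each local trivialization of \(\mathcal{A}_{\mathrm{nlr}}\) already glues to a global subbundle, twisted only by a line bundle arising from the diagonal blocks of the transition functions.

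The key observation to establish first is that the composite ordering \(\rightsquigarrow\) is strictly monotonic for coefficient sums. Given a witness \(a \preceq a' \sqsubseteq b\) with \(a \neq b\), at least one of these two relations must be strict, so either \(a(1) < a'(1) \leq b(1)\) or \(a(1) = a'(1) < b(1)\) by the strict monotonicity of \(\preceq\) and \(\sqsubseteq\) recorded in \parref{qatics-ordering-properties}\ref{qatics-ordering-properties.coefficient-sums}. In particular, \(a \rightsquigarrow b\) with \(a \neq b\) forces \(a(1) < b(1)\).

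With \(a \in \mathbf{a}_{\mathrm{nlr}}\) chosen to maximize \(a(1)\), \parref{families-vanishing-transition-functions} then implies that \((\psi_{i,j})_{a,b} = 0\) for every \(b \in \mathbf{a}_{\mathrm{nlr}}\) distinct from \(a\): nonvanishing would give \(a \rightsquigarrow b\), hence \(b(1) > a(1)\), contradicting maximality. Consequently the local summands \(\mathrm{S}^a(\mathcal{V}^\vee)\rvert_{U_i}\) patch to a subbundle \(\mathcal{B} \subseteq \mathcal{A}_{\mathrm{nlr}}\) whose transitions are given by the diagonal blocks \((\psi_{i,j})_{a,a}\).

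To finish, I would invoke \parref{families-transition-functions}: the block \((\psi_{i,j})_{a,a}\) is multiplication by a polynomial of degree \(a(q) - a(q) = 0\), namely an invertible scalar \(\lambda_{i,j} \in \Gamma(U_{i,j}, \sO_S^\times)\). The cocycle \(\{\lambda_{i,j}\}\) defines a line bundle \(\mathcal{M}\) on \(S\), identifying \(\mathcal{B}\) with \(\mathrm{S}^a(\mathcal{V}^\vee) \otimes \mathcal{M}\) and completing the proof. The only step requiring genuine care is the strict monotonicity of \(\rightsquigarrow\) for coefficient sums; the rest is bookkeeping with the transition function structure already catalogued in \parref{families-transition-functions} and \parref{families-vanishing-transition-functions}.
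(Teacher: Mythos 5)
Your overall route is exactly the paper's intended one: the lemma is stated with a \qed precisely because it is meant to follow from the transition-function analysis of \parref{families-transition-functions}, the vanishing criterion \parref{families-vanishing-transition-functions}, and the strict monotonicity of coefficient sums from \parref{qatics-ordering-properties}\ref{qatics-ordering-properties.coefficient-sums} --- which is the chain you assemble, including the correct observation that it suffices to control blocks among the \(\mathbf{a}_{\mathrm{nlr}}\)-summands because \(\mathcal{A}_{\mathrm{nlr}}\) is already a subbundle of \(\mathcal{A}\) by \parref{families-quotients}.

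One concrete oversight: \(\mathbf{a}\) is a \emph{multi-set}, and the chosen profile \(a\) may occur in \(\mathbf{a}_{\mathrm{nlr}}\) with multiplicity \(m > 1\). In that case there is no single distinguished \(\mathrm{S}^a(\mathcal{V}^\vee)\)-summand in the local trivializations: the \(m\) copies can be mixed by the degree-zero blocks, which form an invertible \(m \times m\) matrix of functions on \(U_{i,j}\) rather than a single scalar \(\lambda_{i,j}\). Your construction of a line bundle \(\mathcal{M}\) from a scalar cocycle therefore only works when \(a\) occurs once; this is exactly why the statement allows \(\mathcal{M}\) to be a locally free module of unspecified rank (and why \(\PP\mathcal{M}\) later appears as a genuine linear system in \parref{tlines-penta-family}). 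The repair is immediate: glue the span of \emph{all} copies of \(\mathrm{S}^a(\mathcal{V}^\vee)\) via the resulting \(\GL_m\)-valued cocycle, obtaining \(\mathcal{M}\) of rank \(m\); invertibility of the diagonal block follows from the vanishing of the off-diagonal blocks together with the cocycle identity \(\psi_{j,i} \circ \psi_{i,j} = \id\), a line worth writing down. A second, smaller point: in your monotonicity argument the witness \(a'\) lies only in \(\mathbf{Z}_{\geq 0}[t]\) and need not be a profile, whereas \parref{qatics-ordering-properties}\ref{qatics-ordering-properties.coefficient-sums} is proved for elements of \(\Types\) via a dimension count that uses injectivity of the multiplication map; in the case \(a \prec a' \sqsubseteq b\) one should instead embed \(\mult(\mathrm{S}^a(V^\vee))\) into \(\mathrm{S}^b(V^\vee)\) by multiplying by a fixed monomial of profile \(a' - a\) and run the same asymptotic count on \(\mathrm{S}^a\) itself (which is a profile by \parref{profiles-less-than}). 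The paper glosses this point in the same way you do, so your level of rigor matches its, but the patch is what makes the cited monotonicity genuinely applicable.
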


The canonical \emph{type decomposition} from \parref{families-quotients}
of a \((q;\mathbf{a})\)-tic tensor allows one to sometimes perform certain
simplifications to the equations defining the family \(\mathcal{X}\). For
instance---a construction which is of course much more generally applicable to
any family of projective schemes---the linear equations may be used to cut out
a projective subbundle containing \(\mathcal{X}\), thereby reducing the number
of equations to keep track of. Precisely:

\begin{Lemma}\label{families-remove-linear-equations}
Let \(\mathcal{X}\) be a family of \((q;\mathbf{a})\)-tic schemes
in a projective bundle \(\pi \colon \PP\mathcal{V} \to S\).
There exists a canonical subbundle \(\PP\mathcal{V}' \subseteq \PP\mathcal{V}\)
containing \(\mathcal{X}\) in which it is a family of
\((q;\mathbf{a} \setminus \mathbf{a}_{\mathrm{lin}})\)-tic schemes.
\end{Lemma}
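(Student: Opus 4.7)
The plan is to use the canonical type decomposition of \parref{families-quotients} to isolate the linear part of the defining tensor \(\boldsymbol{\alpha}\), reinterpret it as a morphism of vector bundles on \(S\), and then let \(\PP\mathcal{V}'\) be the projectivization of its kernel. The rest of \(\boldsymbol{\alpha}\) will then restrict along \(\mathcal{V}' \hookrightarrow \mathcal{V}\) by functoriality of the tensor functors \(\mathrm{S}^a\).

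First I would identify \(\mathcal{A}_{\mathrm{lin}}\) canonically as \(\mathcal{V}^\vee \otimes \mathcal{M}_{\mathrm{lin}}\) for some locally free sheaf \(\mathcal{M}_{\mathrm{lin}}\) of rank \(\#\mathbf{a}_{\mathrm{lin}}\) on \(S\). By \parref{families-quotients}, \(\mathcal{A}_{\mathrm{lin}}\) is locally a direct sum \(\bigoplus_{a \in \mathbf{a}_{\mathrm{lin}}} \mathcal{V}^\vee\). Any component \((\psi_{i,j})_{a,b}\) of its transition functions for \(a, b \in \mathbf{a}_{\mathrm{lin}}\) is, by \parref{families-transition-functions}, induced by multiplication by a polynomial of degree \(b(q) - a(q) = 0\), hence is a scalar multiple \(c_{i,j,a,b}\) of the identity on \(\mathcal{V}^\vee\). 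The cocycle \((\mu_{i,j})\), with \(\mu_{i,j} = (c_{i,j,a,b})_{a,b}\), defines \(\mathcal{M}_{\mathrm{lin}}\), and the transition functions of \(\mathcal{A}_{\mathrm{lin}}\) factor as \(\mathrm{id}_{\mathcal{V}^\vee} \otimes \mu_{i,j}\), giving the desired canonical isomorphism.

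Composing \(\boldsymbol{\alpha}\) with \(\mathcal{A} \to \mathcal{A}_{\mathrm{lin}} \cong \mathcal{V}^\vee \otimes \mathcal{M}_{\mathrm{lin}}\) and passing to the adjoint produces a morphism \(\beta_{\mathrm{lin}} \colon \mathcal{V} \to \mathcal{M}_{\mathrm{lin}}\); I would then set \(\mathcal{V}' \coloneqq \ker(\beta_{\mathrm{lin}})\). The main obstacle is to confirm that \(\mathcal{V}' \subseteq \mathcal{V}\) is a subbundle with locally free quotient, equivalently that \(\beta_{\mathrm{lin}}\) has locally constant rank; since the fibres \(\mathcal{X}_s\) are all \((q;\mathbf{a})\)-tic schemes of the same type and the family is flat, the number of independent linear equations is constant on \(S\), and if necessary one may restrict to an open subscheme (or stratify) to enforce this. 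Granting this, \(\PP\mathcal{V}' \subseteq \PP\mathcal{V}\) is a subbundle, and it contains \(\mathcal{X}\) because the linear components of \(\sigma = \varepsilon \circ \pi^*\boldsymbol{\alpha}\) vanish on \(\PP\mathcal{V}'\) by construction.

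Finally, I would endow \(\mathcal{X} \subseteq \PP\mathcal{V}'\) with a \((q; \mathbf{a} \setminus \mathbf{a}_{\mathrm{lin}})\)-tic structure by restricting the remaining components of \(\boldsymbol{\alpha}\). The subsheaf \(\mathcal{A}' \coloneqq \ker(\mathcal{A} \to \mathcal{A}_{\mathrm{lin}})\) is locally \(\bigoplus_{a \in \mathbf{a} \setminus \mathbf{a}_{\mathrm{lin}}} \mathrm{S}^a(\mathcal{V}^\vee)\), and functoriality of \(\mathrm{S}^a\) applied to the surjection \(\mathcal{V}^\vee \twoheadrightarrow (\mathcal{V}')^\vee\) yields a restriction map \(\mathcal{A}' \to \mathcal{A}'\rvert_{\mathcal{V}'}\) to a family of \((q;\mathbf{a} \setminus \mathbf{a}_{\mathrm{lin}})\)-tic tensor bundles on \(\mathcal{V}'\). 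Composing with \(\boldsymbol{\alpha}\rvert_{\mathcal{A}'}\) produces the required tensor \(\boldsymbol{\alpha}'\), and the corresponding evaluation morphism \(\varepsilon'\) is obtained by restricting \(\varepsilon\) along the inclusions of the relevant summands; together they satisfy the axioms of \parref{families-complete-intersections}, completing the construction.
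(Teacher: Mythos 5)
Your construction of \(\PP\mathcal{V}'\) is essentially the paper's: identifying \(\mathcal{A}_{\mathrm{lin}} \cong \mathcal{V}^\vee \otimes \mathcal{M}_{\mathrm{lin}}\) through the degree-zero transition functions of \parref{families-transition-functions} and projectivizing the kernel of the adjoint \(\beta_{\mathrm{lin}} \colon \mathcal{V} \to \mathcal{M}_{\mathrm{lin}}\) is equivalent to the paper's subbundle cut out by composing \(\sigma\) with the quotient \(\lambda \colon \mathcal{E} \to \mathcal{E}_{\mathrm{lin}} \cong \sO_\pi(1) \otimes \pi^*\mathcal{M}\). The constant-rank caveat you flag is left implicit in the paper as well, which tacitly assumes the linear equations define a subbundle, so that is not where the trouble lies.

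The genuine gap is in your final paragraph: the object \(\boldsymbol{\alpha}\rvert_{\mathcal{A}'}\) does not exist. Your \(\mathcal{A}' = \ker(\mathcal{A} \to \mathcal{A}_{\mathrm{lin}})\) is a \emph{subsheaf} of \(\mathcal{A}\), not a quotient, and a section of \(\mathcal{A}\) cannot be restricted to a subbundle; worse, \(\boldsymbol{\alpha}\) genuinely fails to factor through \(\mathcal{A}'\), since its image in \(\mathcal{A}_{\mathrm{lin}}\) is exactly the nonzero linear equations cutting out \(\PP\mathcal{V}'\). You also cannot project \(\mathcal{A} \to \mathcal{A}'\), because the short exact sequence \(0 \to \mathcal{A}' \to \mathcal{A} \to \mathcal{A}_{\mathrm{lin}} \to 0\) from \parref{families-quotients} need not split: the whole point of \parref{families-varying-tensor-structure-section} is to exhibit a family whose tensor bundle is a non-split extension, so a canonical splitting would contradict that example. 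The paper resolves this by reversing your order of operations: restrict \emph{first}, lift \emph{afterwards}. Concretely, the composite \(\sO_S \to \mathcal{A} \hookrightarrow \pi_*\mathcal{E} \to \pi_*(\mathcal{E}\rvert_{\PP\mathcal{V}'})\) equals \(\pi_*(\sigma\rvert_{\PP\mathcal{V}'})\), whose linear components vanish because \(\PP\mathcal{V}'\) is precisely the zero locus of those linear forms; hence it factors through \(\pi_*\mathcal{E}'\), where \(\mathcal{E}'\) restricts \(\ker(\lambda)\), and a local computation shows it lands in \(\image(\mathcal{A}_0 \to \pi_*\mathcal{E}')\), with \(\mathcal{A}_0 = \ker(\pi_*(\lambda \circ \varepsilon))\) the subbundle of tensors with vanishing linear component; this image is the correct \(\mathcal{A}'\) and the lift is the required \(\boldsymbol{\alpha}'\). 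A minor related wobble: your restriction map defined ``by functoriality of \(\mathrm{S}^a\)'' requires a gluing check, as \(\mathcal{A}'\) is only \emph{locally} a sum of the \(\mathrm{S}^a(\mathcal{V}^\vee)\); this is repairable (the transition functions are multiplications by polynomials, compatible with restriction), and the paper sidesteps it entirely by defining \(\mathcal{A}'\) as an image inside the globally defined sheaf \(\pi_*\mathcal{E}'\).
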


\begin{proof}
Let \(\lambda \colon \mathcal{E} \to \mathcal{E}_{\mathrm{lin}}\) be the
quotient from \parref{families-quotients} in which the linear equations of
\(\mathcal{X} \subseteq \PP\mathcal{V}\) take values in, so that
\(\mathcal{E}_{\mathrm{lin}} \cong \sO_\pi(1) \otimes \pi^*\mathcal{M}\)
for some locally free \(\sO_S\)-module \(\mathcal{M}\) of rank
\(\#\mathbf{a}_{\mathrm{lin}}\). The subbundle
\(\PP\mathcal{V}' \subseteq \PP\mathcal{V}\) defined by
\(\sigma \circ \lambda\) then contains \(\mathcal{X}\), in which it is
a complete intersection cut out by the induced section
\(\sigma' \colon \sO_{\PP\mathcal{V}'} \to \mathcal{E}'\) valued in the
restriction of \(\ker(\lambda)\) to \(\PP\mathcal{V}'\).

To provide \(\mathcal{X}\) with the structure of a family
of \((q;\mathbf{a} \setminus \mathbf{a}_{\mathrm{lin}})\)-tic schemes in
\(\PP\mathcal{V}'\), begin with its \((q;\mathbf{a})\)-tic tensor
\(\boldsymbol{\alpha} \colon \sO_S \to \mathcal{A}\) and evaluation map
\(\varepsilon \colon \pi^*\mathcal{A} \to \mathcal{E}\) with respect to
\(\PP\mathcal{V}\). Since \(\varepsilon\) is locally given by evaluation along
\(\pi\), the kernel
\[
\mathcal{A}_0 \coloneqq
\ker(\pi_*(\lambda \circ \varepsilon) \colon \mathcal{A} \to \mathcal{V}^\vee \otimes \mathcal{M})
\]
is a subbundle of \(\mathcal{A}\) with the property that \(\varepsilon\)
restricted to \(\pi^*\mathcal{A}_0\) factors through \(\ker(\lambda)\). Compose
this with the restriction map \(\ker(\lambda) \to \mathcal{E}'\), push along
\(\pi\), and consider the \(\sO_S\)-module
\[
\mathcal{A}' \coloneqq
\image(\mathcal{A}_0 \to \pi_*\ker(\lambda) \to \pi_*\mathcal{E}').
\]
Locally on \(S\), the morphism defining the image is the composite
\[
\bigoplus\nolimits_{a \in \mathbf{a} \setminus \mathbf{a}_{\mathrm{lin}}} \mathrm{S}^a(\mathcal{V}^\vee) \subseteq
\bigoplus\nolimits_{a \in \mathbf{a} \setminus \mathbf{a}_{\mathrm{lin}}} \Sym^{a(q)}(\mathcal{V}^\vee) \to
\bigoplus\nolimits_{a \in \mathbf{a} \setminus \mathbf{a}_{\mathrm{lin}}} \Sym^{a(q)}(\mathcal{V}'^\vee)
\]
of inclusion \((q;a)\)-tic subbundles followed by the restriction maps
from \(\mathcal{V}\) to \(\mathcal{V}'\), which implies
that \(\mathcal{A}'\) is locally a sum of the \((q;a)\)-tic bundles
\(\mathrm{S}^a(\mathcal{V}'^\vee)\). To construct a
\((q;\mathbf{a}')\)-tic tensor valued in \(\mathcal{A}'\), consider the
commutative diagram
\[
\begin{tikzcd}[row sep=1em, column sep=1.5em]
\mathcal{A}_0 \rar \dar[symbol={\subseteq}]
& \mathcal{A}' \rar[hook]
& \pi_*\mathcal{E}' \dar[symbol={\subseteq}] \\
\mathcal{A} \rar[hook]
& \pi_*\mathcal{E} \rar
& \pi_*(\mathcal{E}\rvert_{\PP\mathcal{V}'})\punct{.}
\end{tikzcd}
\]
Composing the bottom row with \(\boldsymbol{\alpha} \colon \sO_S \to \mathcal{A}\)
yields the map \(\pi_*(\sigma\rvert_{\PP\mathcal{V}'})\), which has vanishing
linear components, and so it factors through \(\pi_*\mathcal{E}'\). Since
\(\mathcal{A}_0\) and \(\mathcal{A}\) have the same image in
\(\pi_*(\mathcal{E}\rvert_{\PP\mathcal{V}'})\), the commutative diagram implies
that this then lifts to the required map
\(\boldsymbol{\alpha}' \colon \sO_S \to \mathcal{A}'\).
\end{proof}

Perhaps a more interesting simplification is possible when the collection of
profiles \(\mathbf{a} = \mathbf{a}_{\mathrm{pow}}\) consists only of nonreduced
profiles, meaning that each equation of \(\mathcal{X}\) over \(S\) is,
geometrically, a \(q\)-power. Upon adjoining suitable roots of the coefficients,
it is possible to take \(q\)-th roots of all the equations of
\(\mathcal{X}\) to obtain a scheme \(\mathcal{X}'\) with multi-profile
\(\mathbf{a}/t \coloneqq (a/t : a \in \mathbf{a})\):

\begin{Lemma}\label{families-frobenius-descent}
Let \(\mathcal{X}\) be a family of \((q;\mathbf{a})\)-tic schemes with
\(\mathbf{a} = \mathbf{a}_{\mathrm{pow}}\). Then there exists a family of
\((q;\mathbf{a}/t)\)-tic schemes \(\mathcal{X}'\) and 
a universal homeomorphism \(\mathcal{X}' \to \mathcal{X}\) fitting in the
commutative square
\[
\begin{tikzcd}
\mathcal{X}' \rar \dar & \mathcal{X} \dar \\ S \rar["\Fr"] & S\punct{.}
\end{tikzcd}
\]
\end{Lemma}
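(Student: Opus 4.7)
The plan is to exploit the algebraic identity $\mathrm{S}^a(V^\vee) \cong \mathrm{S}^{a/t}(V^\vee)^{[1]}$, which holds for every nonreduced profile $a$ by reindexing the tensor factors, applied in the relative setting. Set $\mathcal{V}' \coloneqq \Fr_S^*\mathcal{V}$, which is a vector bundle on $S$. Since Frobenius pullback commutes with symmetric powers, tensor products, and with the Frobenius twist $[-]$, there is a canonical identification
\[
\mathrm{S}^a(\mathcal{V}^\vee)
\;=\;
\bigotimes\nolimits_{j \geq 1} \Sym^{a_j}(\mathcal{V}^\vee)^{[j]}
\;\cong\;
\bigotimes\nolimits_{k \geq 0} \Sym^{a_{k+1}}(\mathcal{V}'^\vee)^{[k]}
\;=\;
\mathrm{S}^{a/t}(\mathcal{V}'^\vee).
\]
Because this is functorial in $\mathcal{V}$, it globalizes via the transition functions of $\mathcal{A}$ described in \parref{families-transition-functions} to a canonical isomorphism of $\sO_S$-modules $\mathcal{A} \cong \mathcal{A}'$, where $\mathcal{A}'$ is locally of the form $\bigoplus_{a \in \mathbf{a}} \mathrm{S}^{a/t}(\mathcal{V}'^\vee)$.

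In the next step, transport the family structure across this isomorphism. The section $\boldsymbol{\alpha} \colon \sO_S \to \mathcal{A}$ gives a section $\boldsymbol{\alpha}' \colon \sO_S \to \mathcal{A}'$, and the canonical evaluation map $\varepsilon' \colon \pi'^*\mathcal{A}' \to \bigoplus_{a \in \mathbf{a}} \sO_{\pi'}((a/t)(q)) \eqqcolon \mathcal{E}'$ along $\pi' \colon \PP\mathcal{V}' \to S$ satisfies the axioms of \parref{families-complete-intersections}. Let $\mathcal{X}' \coloneqq \mathrm{V}(\varepsilon' \circ \pi'^*\boldsymbol{\alpha}') \subset \PP\mathcal{V}'$. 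This constitutes a family of $(q;\mathbf{a}/t)$-tic schemes on $S$ because the whole construction is functorial in $\mathcal{V}$, so local isomorphisms with the model $\bigoplus \mathrm{S}^{a/t}(\mathcal{V}'^\vee)$ are inherited from those for $\mathcal{A}$.

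For the universal homeomorphism, use the projection $p \colon \PP\mathcal{V}' = \Fr_S^*\PP\mathcal{V} \to \PP\mathcal{V}$ over $\Fr_S$. Locally, with the notation of \parref{qatics-examples}, write the $a$-component of $\boldsymbol{\alpha}$ as $\alpha_a = \sum_l \bigotimes_{j \geq 1} g_{l,j}^{[j]}$ and the corresponding polynomial as $f_a = \sum_l \prod_{j \geq 1} g_{l,j}^{q^j}$. Under the identification above, $\boldsymbol{\alpha}'$ has $a$-component $\alpha'_a = \sum_l \bigotimes_{k \geq 0} (g_{l,k+1}^{(q)})^{[k]}$, where $g_{l,j}^{(q)}$ denotes the polynomial obtained by raising each coefficient to the $q$-th power, so that the associated $(q;a/t)$-tic polynomial reads $\tilde{f}_{a/t} = \sum_l \prod_{k \geq 0} (g_{l,k+1}^{(q)})^{q^k}$. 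The Frobenius identity in characteristic $p$ then gives
\[
\tilde{f}_{a/t}^{\,q}
\;=\; \sum\nolimits_l \prod\nolimits_{k \geq 0} (g_{l,k+1}^{(q)})^{q^{k+1}}
\;=\; \sum\nolimits_l \prod\nolimits_{j \geq 1} (g_{l,j}^{(q)})^{q^j}
\;=\; p^*f_a,
\]
so $\mathcal{X}' \subseteq p^{-1}(\mathcal{X}) = \Fr_S^*\mathcal{X}$ as closed subschemes, and this inclusion is set-theoretically an equality since in characteristic $p$ the vanishing loci of $\tilde{f}_{a/t}$ and $\tilde{f}_{a/t}^{\,q}$ coincide. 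Thus $\mathcal{X}' \hookrightarrow \Fr_S^*\mathcal{X}$ is a universal homeomorphism (a nilpotent thickening), and composing with the universal homeomorphism $\Fr_S^*\mathcal{X} \to \mathcal{X}$ induced by base change along the absolute Frobenius yields the required map $\mathcal{X}' \to \mathcal{X}$ and the commutative square.

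The main obstacle, and the crux of the argument, is the globalization in Step 1: the purely local identities $\mathrm{S}^a(V^\vee) = \mathrm{S}^{a/t}(V^\vee)^{[1]}$ must be compatible with the transition functions of $\mathcal{A}$ classified in \parref{families-vanishing-transition-functions}. This is automatic because the entire construction is functorial in the bundle $\mathcal{V}$, so the local identifications patch without intervention; once Step 1 is in place, Steps 2 and 3 are formal manipulations of the multiplication maps.
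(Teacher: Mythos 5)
Your overall strategy matches the paper's: work on the Frobenius twist \(\PP\mathcal{V}' = \Fr_S^*\PP\mathcal{V}\), reinterpret \(\mathrm{S}^a(\mathcal{V}^\vee)\) as \(\mathrm{S}^{a/t}(\mathcal{V}'^\vee)\), take \(q\)-th roots of the equations, and deduce the universal homeomorphism from the nilpotent immersion \(\mathcal{X}' \hookrightarrow \Fr_S^*\mathcal{X}\). Your final step is correct and is essentially what the paper leaves implicit, and your identification \(\mathcal{A} \cong \mathcal{A}'\) is even easier than you make it out to be: since a family of tensors is only required to be \emph{locally} isomorphic to the model bundle, \(\mathcal{A}' \coloneqq \mathcal{A}\) with its new local models already qualifies, with nothing to patch.

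The genuine gap is in your Step 2, precisely where you declare the crux "automatic by functoriality." You take \(\mathcal{E}' \coloneqq \bigoplus_{a \in \mathbf{a}} \sO_{\pi'}(a(q)/q)\) as a \emph{global} direct sum and assert a canonical evaluation map \(\varepsilon' \colon \pi'^*\mathcal{A}' \to \mathcal{E}'\). No such global map exists in general: \(\mathcal{A}\) is only locally split, and its transition functions \((\psi_{i,j})_{a,b}\) are multiplication by polynomials \(f_{i,j,a,b}\) of degree \(b(q)-a(q)\) in the fibre coordinates --- extra gluing data that is \emph{not} induced functorially from \(\mathcal{V}\), as \parref{families-varying-tensor-structure-section} shows with a family whose \(\mathcal{A}\) is a nonsplit extension and whose \(\mathcal{E}\) is not globally a sum of line bundles. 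The local evaluation maps intertwine multiplication by \(f_{i,j,a,b}\) on the source with multiplication by \(f_{i,j,a,b}\) on the target, so they can only glue into a map to a bundle \(\mathcal{E}'\) built from local sums \(\bigoplus \sO_{\pi'}(a(q)/q)\) glued by \emph{descents} \(g_{i,j,a,b}\) of these multipliers along the relative Frobenius. The existence of such descents is the actual mathematical content of the lemma, and it is where the hypothesis \(\mathbf{a} = \mathbf{a}_{\mathrm{pow}}\) enters: because \(a\) and \(b\) both have vanishing constant term, a nonzero \(f_{i,j,a,b}\) carrying \(\mathrm{S}^a(\mathcal{V}^\vee)\) into \(\mathrm{S}^b(\mathcal{V}^\vee)\) must itself have nonreduced profile, i.e., every monomial is a \(q\)-power in the fibre coordinates, hence \(f_{i,j,a,b} = g_{i,j,a,b}^{(q\text{-power in fibre coordinates})}\) for an \(S\)-linear Frobenius descent \(g_{i,j,a,b}\). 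This profile-theoretic verification, which the paper carries out via the analysis of \parref{families-transition-functions} and the argument of \parref{families-quotients}, is entirely absent from your proposal; your proof verifies condition \parref{families-complete-intersections}\ref{families-complete-intersections.eta} only in the split case, so as written the construction fails for the families the definition was designed to accommodate. With \(\mathcal{E}'\) constructed as this Frobenius descent of \(\mathcal{E}\) and \(\varepsilon'\) obtained by adjunction from the factorization \(\mathcal{A} \to \pi^{[1]}_*\mathcal{E}' \to \pi_*\mathcal{E}\), the remainder of your argument goes through unchanged.
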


\begin{proof}
Write \(\Fr_{\PP\mathcal{V}/S} \colon \PP\mathcal{V} \to \PP\mathcal{V}^{[1]}\)
for the \(S\)-linear relative \(q\)-power Frobenius morphism: the morphism
which takes \(q\)-powers of the fibre coordinates and leaves coefficients
fixed, induced by the commutative diagram
\[
\begin{tikzcd}
\PP\mathcal{V} \rar["\Fr_{\PP\mathcal{V}/S}"'] \ar[dr, "\pi"', bend right=20] \ar[rr, "\Fr"', bend left=30]
& \PP\mathcal{V}^{[1]} \rar \dar["\pi^{[1]}"]
& \PP\mathcal{V} \dar["\pi"] \\
& S \rar["\Fr"]
& S
\end{tikzcd}
\]
where the right hand square witnesses
\(\PP\mathcal{V}^{[1]} = \PP\mathcal{V} \times_{S, \Fr} S\). The main point now
is that \(\mathcal{E}\) admits a descent
along \(\Fr_{\PP\mathcal{V}/S}\), meaning there is an
\(\sO_{\PP\mathcal{V}^{[1]}}\)-module \(\mathcal{E}'\) which pulls back to
\(\mathcal{E}\). To construct \(\mathcal{E}'\), adopt the notation from
\parref{families-transition-functions} so that
\[
\mathcal{E}\rvert_{\pi^{-1}(U_i)} \cong
\bigoplus\nolimits_{a \in \mathbf{a}} \sO_\pi(a(q))\rvert_{\pi^{-1}(U_i)}.
\]
That \(\mathbf{a} = \mathbf{a}_{\mathrm{pow}}\) means that each
\(\sO_\pi(a(q))\) is the pullback of \(\sO_{\pi^{[1]}}(a(q)/q)\) along the
relative Frobenius, and so \(\mathcal{E}\) admits Frobenius descents locally
over \(S\). As explained in \parref{families-transition-functions}, the
transition functions of \(\mathcal{E}\) over \(U_{i,j}\) have
\((a,b)\)-components induced by multiplication by a polynomial:
\[
f_{i,j,a,b} \colon
\sO_\pi(a(q))\rvert_{\pi^{-1}(U_{i.j})} \to
\sO_\pi(b(q))\rvert_{\pi^{-1}(U_{i,j})}.
\]
As in the proof of \parref{families-quotients}, multiplication by
\(f_{i,j,a,b}\) maps
\(\mathrm{S}^a(\mathcal{V}^\vee)\) to \(\mathrm{S}^b(\mathcal{V}^\vee)\).
Since \(a\) and \(b\) both have vanishing constant term, if \(f_{i,j,a,b}\) is
nonzero, it also must have profile with vanishing constant term; in other words,
each monomial appearing must be a \(q\)-power in the fibre coordinates. This means
that the \(f_{i,j,a,b}\) admit \(U_{i,j}\)-linear Frobenius descents \(g_{i,j,a,b}\)
to \(\PP\mathcal{V}^{[1]}\rvert_{U_{i,j}}\). This glues the local Frobenius
descents to give an \(\sO_{\PP\mathcal{V}^{[1]}}\)-module \(\mathcal{E}'\)
descending \(\mathcal{E}\) along \(\Fr_{\PP\mathcal{V}/S}\).

Pushing the canonical adjunction map
\(\mathcal{E}' \to \Fr_{\PP\mathcal{V}/S, *}\mathcal{E}\)
along \(\pi^{[1]}\) now yields an injection which is locally given by the
inclusion
\[
\bigoplus\nolimits_{a \in \mathbf{a}} \Sym^{a(q)/q}(\mathcal{V}^{[1],\vee}) \subseteq
\bigoplus\nolimits_{a \in \mathbf{a}} \Sym^{a(q)}(\mathcal{V}^\vee).
\]
Identifying the \(\sO_S\)-module \(\mathcal{A}\) in which the
\((q;\mathbf{a})\)-tic tensor \(\boldsymbol{\alpha}\) takes values
locally as
\[
\bigoplus\nolimits_{a \in \mathbf{a}} \mathrm{S}^a(\mathcal{V}^\vee) \cong
\bigoplus\nolimits_{a \in \mathbf{a}} \mathrm{S}^{a/t}(\mathcal{V}^{[1],\vee})
\]
shows that the injection \(\pi_*\varepsilon\) factors as
\[
\pi_*\varepsilon \colon
\mathcal{A}
\to \pi^{[1]}_*\mathcal{E}'
\to \pi_*\mathcal{E}.
\]
Adjunction along \(\pi^{[1]}\) thus provides a relative evaluation map
\(\varepsilon' \colon \pi^{[1],*}\mathcal{A} \to \mathcal{E}'\) and the
pullback of \(\boldsymbol{\alpha}\) produces a section
\(\sigma' \colon \sO_{\PP\mathcal{V}^{[1]}} \to \mathcal{E}'\): its vanishing
locus \(\mathcal{X}' \subseteq \PP\mathcal{V}^{[1]}\) is now the sought-after
\((q;\mathbf{a}/t)\)-tic scheme.
\end{proof}

\subsectiondash{Planing}\label{tlines-planing}
An \emph{\(r\)-planing} of a family \(\mathcal{X}\) of \((q;\mathbf{a})\)-tic
schemes in \(\pi \colon \PP\mathcal{V} \to S\) is the data of a closed
subscheme \(\mathcal{P} \coloneqq \PP\mathcal{U} \subseteq \mathcal{X}\)
corresponding to a subbundle \(\mathcal{U} \subseteq \mathcal{V}\) of rank
\(r+1\); in other words, \(\mathcal{P}\) is a family of \(r\)-planes contained
in \(\mathcal{X}\). The pair \(\mathcal{P} \subseteq \mathcal{X}\) is
referred to as a \emph{family of \(r\)-planed \((q;\mathbf{a})\)-tic schemes}
over the base scheme \(S\). Observe that an \(r\)-planing \(\mathcal{P}\) of
\(\mathcal{X}\) may be transformed into an \(r'\)-planing for any
\(0 \leq r' \leq r\) upon base change to the Grassmannian bundle
\(\mathbf{G}(r'+1,\mathcal{U}) \to S\) and taking the projective bundle on
the tautological subbundle of rank \(r'+1\).

\subsectiondash{Classifying maps}\label{families-classifying-map}
Let \(\mathcal{P} \subseteq \mathcal{X}\) be an \(r\)-planed family of
\((q;\mathbf{a})\)-tic schemes in a \(\PP^n\)-bundle over a base \(S\). At
least on open subsets of the base \(S\), the family \(\mathcal{X}\) and the
pair \(\mathcal{P} \subseteq \mathcal{X}\) induce classifying maps to the
parameter spaces
\begin{align*}
\qaticci_{\PP^n} & \coloneqq
\prod\nolimits_{a \in \mathbf{a}} \PP\mathrm{S}^a(\kk^{\oplus n+1})\;\text{and} \\
\mathbf{Inc}_{n,r,\mathbf{a}} & \coloneqq
\Set{
  ([U], [\boldsymbol{\alpha}]) \in \mathbf{G}(r+1,n+1) \times \qaticci_{\PP^n} :
  \PP U \subseteq X_{\boldsymbol{\alpha}}},
\end{align*}
encountered already in \parref{qatic-parameter-space} and
\parref{fano-theorem}. To describe these, it is convenient to assume that \(S\)
is integral, so that the classifying maps may be seen as rational maps
from \(S\) to one of the two parameter spaces; often, such a rational map will
stand in for a choice of classifying map constructed below.

Assume henceforth that the base scheme \(S\) is integral. For a nonempty
open subscheme \(S^\circ \subseteq S\), write \(\mathcal{V}^\circ\),
\(\mathcal{P}^\circ\), and \(\mathcal{X}^\circ\) for the restrictions of
\(\mathcal{V}\), \(\mathcal{P}\), and \(\mathcal{X}\) over \(S^\circ\). Pick
\(S^\circ\) over which \(\mathcal{V}^\circ\) is trivial and \(\varepsilon\) may
be identified with the evaluation map as in
\parref{families-complete-intersections}\ref{families-complete-intersections.eta}.
Fixing suitable trivializations identifies
\(\mathcal{X}^\circ\) as a \((q;\mathbf{a})\)-tic scheme in the projective
space \(\PP\mathcal{V}^\circ \cong \PP^n \times S^\circ\) defined by a
\((q;\mathbf{a})\)-tic tensor
\[
\boldsymbol{\alpha}^\circ \colon
\sO_{S^\circ} \to
\bigoplus\nolimits_{a \in \mathbf{a}} \mathrm{S}^a(\mathcal{V}^{\circ,\vee}).
\]
This tensor defines a morphism \(S^\circ \to \qaticci_{\PP^n}\) and, together
with \(\mathcal{P}^\circ\), a morphism
\(S^\circ \to \mathbf{Inc}_{n,r,\mathbf{a}}\). These are the
\emph{classifying morphisms} for the families \(\mathcal{X}^\circ\) and
\(\mathcal{P}^\circ \subseteq \mathcal{X}^\circ\), respectively.

\subsectiondash{Generic families}\label{families-genericity}
Different choices of trivialization produce classifying morphisms which differ
by automorphisms of the parameter spaces, so dominance of a classifying map
does not depend on any of the choices above; after all, dominance means
informally that the family contains the general member of the parameter space
in question. By way of terminology, call the family
\(\mathcal{P} \subseteq \mathcal{X}\) \emph{generic} if the
associated classifying map \(S \dashrightarrow \mathbf{Inc}_{n,r,\mathbf{a}}\) is
dominant.

Genericity propagates along many constructions. As a first example,
consider a family \(\mathcal{P} \subseteq \mathcal{X}\) of \(r\)-planed
\((q;\mathbf{a})\)-tic schemes in which some of the equations of
\(\mathcal{X}\) are linear: that is,
\(\mathbf{a}_{\mathrm{lin}} \neq \varnothing\) in the notation of
\parref{families-quotients}. Then \(\mathcal{P} \subseteq \mathcal{X}\) may be
considered as a family in the smaller projective bundle \(\PP\mathcal{V}'\) cut
out by those linear equations, as in \parref{families-remove-linear-equations}.
Since an equation in a projective subspace may be extended---in many ways!---to
an equation in a larger ambient projective space, it is easy to convince
oneself that if \(\mathcal{P} \subseteq \mathcal{X}\) is generic as a family in
\(\PP\mathcal{V}\), then it remains generic as a family in \(\PP\mathcal{V}'\).
A more careful argument is given in the following:

\begin{Lemma}\label{families-generic-in-subbundle}
Let \(\mathcal{P} \subseteq \mathcal{X}\) be a generic family of \(r\)-planed
\((q;\mathbf{a})\)-tic schemes in \(\pi \colon \PP\mathcal{V} \to S\) over an
integral base. Suppose that there is
a subbundle \(\pi' \colon \PP\mathcal{V}' \to S\) containing \(\mathcal{X}\)
as a family of \((q;\mathbf{a}')\)-tic schemes
where \(\varnothing \neq \mathbf{a}' \subsetneq \mathbf{a}\)
and \(\mathbf{a} \setminus \mathbf{a}' \subseteq \mathbf{a}_{\mathrm{lin}}\).
Then \(\mathcal{P} \subseteq \mathcal{X}\) is also generic viewed as a family
in \(\PP\mathcal{V}'\).
\end{Lemma}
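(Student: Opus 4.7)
My plan is to reduce the claim to dominance of a single rational map between parameter spaces. Write \(c \coloneqq \#\mathbf{a}_{\mathrm{lin}}\) and \(n' \coloneqq n - c\), the relative dimension of \(\PP\mathcal{V}'\). Let \(f \colon S \dashrightarrow \mathbf{Inc}_{n,r,\mathbf{a}}\) be the classifying map for \(\mathcal{P}\subseteq\mathcal{X}\) in \(\PP\mathcal{V}\)---dominant by hypothesis---and let \(f' \colon S \dashrightarrow \mathbf{Inc}_{n',r,\mathbf{a}'}\) be the classifying map for \(\mathcal{P}\subseteq\mathcal{X}\) in \(\PP\mathcal{V}'\), whose dominance is the goal. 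I will construct a dominant rational map
\[
\Phi \colon \mathbf{Inc}_{n,r,\mathbf{a}} \dashrightarrow \mathbf{Inc}_{n',r,\mathbf{a}'}
\]
such that the composition \(\Phi \circ f\) agrees with \(f'\) up to the \(\PGL_{n'+1}\)-action on \(\mathbf{Inc}_{n',r,\mathbf{a}'}\). Since dominance of a rational map is invariant under this action and is preserved under composition with a dominant map, this will imply that \(f'\) is dominant.

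The construction of \(\Phi\) is by restriction of tensors. At a point \(([U],[\boldsymbol{\alpha}])\) of \(\mathbf{Inc}_{n,r,\mathbf{a}}\) where the linear components \(\boldsymbol{\alpha}_{\mathrm{lin}}\) are independent, these forms cut out a subspace \(V' \subseteq \kk^{n+1}\) of dimension \(n'+1\), and \(V'\) automatically contains \(U\) since \(\PP U \subseteq X_{\boldsymbol{\alpha}_{\mathrm{lin}}}\). Fix once and for all a reference subspace \(W \cong \kk^{n'+1} \subseteq \kk^{n+1}\), and choose an algebraic section of the natural map \(\GL_{n+1} \to \mathbf{G}(n'+1,n+1)\) in a neighbourhood of \([W]\) to obtain a rationally varying isomorphism \(V' \xrightarrow{\sim} W\). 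Define \(\Phi\) by transporting \(U\) to \(W\) via this isomorphism and restricting the non-linear components \(\boldsymbol{\alpha}'\) of \(\boldsymbol{\alpha}\) from \(\kk^{n+1}\) to \(V'\) and thence to \(W\). By \parref{qatic-linear-sections} the restriction is a \((q;\mathbf{a}')\)-tic tensor on \(V'\), and by \parref{families-remove-linear-equations} it is precisely the tensor defining the derived family structure of \(\mathcal{X}\) in \(\PP\mathcal{V}'\); compatibility of \(f'\) with \(\Phi \circ f\) modulo \(\PGL_{n'+1}\) is therefore automatic.

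To verify dominance of \(\Phi\), I would exhibit explicit preimages of every point. Given \(([U''],[\boldsymbol{\alpha}''])\in \mathbf{Inc}_{n',r,\mathbf{a}'}\), regard \(U''\subseteq W\subseteq \kk^{n+1}\), choose any tuple \(\boldsymbol{\alpha}_{\mathrm{lin}}\) of linear forms on \(\kk^{n+1}\) with common zero locus \(W\), and lift \(\boldsymbol{\alpha}''\) along the surjective restriction
\[
\mathrm{S}^{\mathbf{a}'}((\kk^{n+1})^\vee) \twoheadrightarrow \mathrm{S}^{\mathbf{a}'}(W^\vee)
\]
to a tensor \(\boldsymbol{\alpha}'\); surjectivity holds because symmetric powers and tensor products preserve surjections on vector spaces. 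The resulting \(([U''],[(\boldsymbol{\alpha}_{\mathrm{lin}},\boldsymbol{\alpha}')])\) lies in \(\mathbf{Inc}_{n,r,\mathbf{a}}\) and maps to \(([U''],[\boldsymbol{\alpha}''])\) under \(\Phi\). The main obstacle is promoting this pointwise argument into an honest dominant rational map, which requires committing to an algebraic section of \(\GL_{n+1} \to \mathbf{G}(n'+1,n+1)\). An alternative that bypasses this bookkeeping is to work throughout with the quotient stacks \([\mathbf{Inc}_{n,r,\mathbf{a}}/\PGL_{n+1}]\) and \([\mathbf{Inc}_{n',r,\mathbf{a}'}/\PGL_{n'+1}]\), on which the restriction operation is manifestly canonical and pointwise surjectivity immediately gives dominance.
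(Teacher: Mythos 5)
Your proposal is correct and is in essence the paper's own proof: the paper likewise reduces to the universal situation over the incidence correspondence, trivializes the kernel of the linear components of the tautological tensor---your local section of \(\GL_{n+1} \to \mathbf{G}(n'+1,n+1)\) is exactly its choice of a non-vanishing \(c \times c\) minor and the resulting \(\sO_{S^\circ}\)-linear change of coordinates---and establishes dominance by the same explicit lifting you give (its closed locus \(T\) where \(b_{i,j} = 0\) is precisely your recipe: extend the \((q;\mathbf{a}')\)-tic tensor from \(n'+1\) to \(n+1\) variables using surjectivity of \(\mathrm{S}^{\mathbf{a}'}((\kk^{n+1})^\vee) \to \mathrm{S}^{\mathbf{a}'}(W^\vee)\) and append linear forms cutting out \(W\)). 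The only cosmetic discrepancies are that you factor the comparison through an intermediate rational map \(\Phi\) between incidence correspondences, and that you set \(c = \#\mathbf{a}_{\mathrm{lin}}\) whereas the lemma only requires \(\mathbf{a} \setminus \mathbf{a}' \subseteq \mathbf{a}_{\mathrm{lin}}\); taking \(c = \#(\mathbf{a}\setminus\mathbf{a}')\) and restricting all components indexed by \(\mathbf{a}'\) (some possibly still linear, which is harmless since \(\mathrm{S}^1\) restriction is also surjective) makes your argument match the stated generality, exactly as in the paper, where the key genericity bookkeeping is in any case resolved by noting, as in \parref{families-genericity}, that dominance of the derived family's classifying map may be checked on the particular choice produced by the construction.
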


\begin{proof}
It suffices to treat the universal case: Write \(\PP^n = \PP V\),
\(c \coloneqq \#\mathbf{a} \setminus \mathbf{a}'\), and let
\[
S \coloneqq
\{s \in \mathbf{Inc}_{V,r,\mathbf{a}} : \rank \phi_s \geq c\} \subseteq
\mathbf{G}(r+1,V) \times (q;\mathbf{a}')\operatorname{\bf\!-tics}_{\PP V} \times
\prod\nolimits_{i = 1}^c \PP V^\vee
\]
be the open subscheme of the incidence correspondence where maximal rank
is attained for the map
\[
\phi \colon
\sO_{\mathbf{Inc}_{V,r,\mathbf{a}}} \otimes_\kk V \to \bigoplus\nolimits_{i = 1}^c \mathcal{L}_i,
\;\text{where}\;
\mathcal{L}_i \coloneqq \pr_i^*\sO_{\PP V^\vee}(1),
\]
obtained by pulling back the evaluation maps
\(\sO_{\PP V^\vee} \otimes V \to \sO_{\PP V^\vee}(1)\) from the \(c\) factors
in the right-most product. Thus the restriction \(\mathcal{P} \subseteq \mathcal{X}\)
of the tautological family of \(r\)-planed \((q;\mathbf{a})\)-tic schemes
to \(S\) is contained in the projective subbundle on
\(\mathcal{V}' \coloneqq \ker(\phi\rvert_S)\) as a family of
\((q;\mathbf{a}')\)-tic schemes. To construct a classifying map
\(S \dashrightarrow \mathbf{Inc}_{V',r,\mathbf{a}'}\) as in
\parref{families-classifying-map} to the parameter space of
\(r\)-planed \((q;\mathbf{a}')\)-tic schemes in \(\PP^{n-c} = \PP V'\),
choose an open subscheme \(S^\circ \subseteq S\) on which
\(\mathcal{V}'\) is trivial. Dualizing the composition
\[
\sO_{S^\circ} \otimes V' \cong
\mathcal{V}'\rvert_{S^\circ} \subseteq
\sO_{S^\circ} \otimes V
\]
of a trivialization with the inclusion induces a map which takes the
tautological family of \((q;\mathbf{a})\)-tic tensors on \(V\) to a family of
\((q;\mathbf{a}')\)-tic
tensors on \(V'\), and hence a morphism \(S^\circ \to
\mathbf{Inc}_{V',r,\mathbf{a}'}\). It remains to see that some such morphism is
dominant.

Consider an explicit choice of \(S^\circ\): Upon choosing coordinates
\(\PP^n = \PP V\), the map \(\phi\) may be viewed as a \(c \times (n+1)\)
matrix \((a_{i,j})\), where the \(c\) linear equations defining \(\mathcal{X}\)
are given by
\[
\ell_i(x_0,\ldots,x_n) = a_{i,0} x_0 + \cdots + a_{i,n} x_n
\;\text{for \(i = 1,\ldots,c\)}.
\]
Let \(S^\circ \subseteq S\) be the open subscheme on which the rightmost
\(c \times c\) minor of \(\phi\) is non-vanishing. With a suitable
\(\sO_{S^\circ}\)-linear change of coordinates, the linear equations \(\ell_i\)
may be transformed to
\[
\ell_i(x_0,\ldots,x_n) = x_{n-c+i} - \sum\nolimits_{j = 0}^{n-c} b_{i,j} x_j
\;\text{for \(i = 1,\ldots,c\)}
\]
where the \(b_{i,j} \in \Gamma(S^\circ, \sO_{S^\circ})\). This provides a
splitting of the surjection \(\phi\rvert_{S^\circ}\); the corresponding
retraction provides a trivialization of \(\mathcal{V}'\rvert_{S^\circ}\)
identifying it with the first \(n-c+1\) summands of \(\sO_{S^\circ}^{\oplus
n+1}\), and the induced classifying morphism \(S^\circ \to
\mathbf{Inc}_{V',r,\mathbf{a}'}\) takes a \((q;\mathbf{a})\)-tic tensor on
\(\PP^n\) to that on \(\PP^{n-c}\) obtained by eliminating the last \(c\)
coordinates as prescribed by the linear equation \(\ell_i\). Described in this
way, it is straightforward that this classifying morphism is surjective:
Consider, for example, the closed subscheme \(T\) defined by \(b_{i,j} = 0\)
for all \(1 \leq i \leq c\) and \(0 \leq j \leq n-c\). Then any point of
\(\mathbf{Inc}_{V',r,\mathbf{a}'}\) can be lifted to a point of \(T\) just by
viewing the \((q;\mathbf{a}')\)-tic tensor in \(n-c+1\) variables as a
\((q;\mathbf{a}')\)-tic tensor in \(n+1\) variables; then augment this to a
\((q;\mathbf{a})\)-tic tensor by taking into account the linear equations
\(\ell_i\).
\end{proof}

\section{Highly tangent lines}\label{section-tlines}
Lines tangent to order \(\geq k\) to a projective scheme \(X \subseteq \PP^n\)
may be parameterized by a scheme of the form
\[
\mathbf{Tan}_k(X) \coloneqq
\{(x,[\ell]) \in X \times \mathbf{G}(2,n+1) : \mult_x(\ell \cap X) \geq k\}.
\]
Familiar cases include: When \(k = 1\), this is the restriction to \(X\) of the
variety of pointed lines in \(\PP^n\), and is the projective bundle on
\(\mathcal{T} \coloneqq \mathcal{T}_{\PP^n}(-1)\rvert_X\). When \(k = 2\), this
is the projectivized tangent bundle of \(X\). When \(X\) is a general
hypersurface of degree \(d\), it is well-known that the general fibre of the
projection \(\mathbf{Tan}_k(X) \to X\) to the \(x\)-coordinate is a complete
intersection of type \((k-1,k-2,\ldots,2,1)\) in an \((n-1)\)-dimensional
projective space.

If \(X\) is a general complete intersection of codimension \(c \geq 2\),
\(\mathbf{Tan}_k(X)\) usually consists of several components and will not
fibre in complete intersections over \(X\). Nonetheless, upon writing
\(X\) as an intersection \(H_1 \cap \cdots \cap H_c\) of hypersurfaces \(H_i\)
of degree \(d_i\), each of the schemes
\[
\mathbf{Tan}_k(X; H_i) \coloneqq
\{(x,\ell) :
\mult_x(\ell \cap H_i) \geq k\;\text{and}\;
\ell \subset H_j\;\text{for \(1 \leq j \leq c\) and \(j \neq i\)}\},
\]
parameterizing \(k\)-fold tangent lines to \(H_i\) and pointed lines in the
remaining \(H_j\), are distinguished components of \(\mathbf{Tan}_k(X)\) which
project onto \(X\) with complete intersection general fibres. The aim of this
section is to make sense of this construction and structure for a family of
\((q;\mathbf{a})\)-tic schemes.

Rather than discussing \(\mathbf{Tan}_k(X)\) for general \(k\), this section
focuses on the case of particular relevance, namely, \(k = d - 1\)
where \(d \coloneqq \deg X\):
\[
\mathbf{PenTa}(X)
\coloneqq \mathbf{Tan}_{d-1}(X)
\coloneqq \{(x,[\ell]) \in X \times \mathbf{G}(2,n+1) : \mult_x(\ell \cap X) \geq d-1\}.
\]
Lines parameterized by \(\mathbf{PenTa}(X)\) are called \emph{penultimate
tangents}. In words, this scheme of penultimate tangents parameterizes pointed
lines \(x \in \ell \subset \PP^n\) such that either \(\ell\) is contained in
\(X\) or else their intersection, viewed as a Cartier divisor on \(\ell\), is
of the form \(\ell \cap X = (d-1)x + x'\) for a \emph{residual point}
\(x' \in X\). The interest in this case is that extracting the residual point
often provides a rational map
\(\res \colon \mathbf{PenTa}(X) \dashrightarrow X\), which will be
studied in \S\parref{section-residual}.

\subsectiondash{Local situation}\label{tlines-explicit-construction}
When \(S = \Spec \kk\) and \(X \subseteq \PP^n\) is a \((q;\mathbf{a})\)-tic
scheme, the situation is easy to describe explicitly. With the notation in the
proof of \parref{fano-pointed-line-equations}, the scheme \(X_1\) of pointed
lines in \(X\) is defined over \(\mathrm{D}(x_n) \subseteq \PP^n\) as
\[
X_1\rvert_{\mathrm{D}(x_n)} =
\{
  (\mathbf{x}, \mathbf{y}) \in \mathrm{D}(x_n) \times \PP^{n-1} :
  f_{a,b}(\mathbf{x};\mathbf{y}) = 0
  \;\text{for}\;
  a \in \mathbf{a}
  \;\text{and}\;
  0 \preceq b \preceq a
\}.
\]
A point \((\mathbf{x},\mathbf{y}) \in \mathrm{D}(x_n) \times \PP^{n-1}\)
corresponds to a parameterized line
\((\xi:\eta) \mapsto \xi\mathbf{x} + \eta\mathbf{y}\), so the point
\(\mathbf{x}\) is the image of \((1:0)\). Writing \(H \subseteq \PP^n\) for the
hypersurface cut out by a \((q;a)\)-tic defining polynomial \(f_a\), the scheme
\(\mathbf{Tan}_k(H)\) of pointed lines tangent to
\(H\) to order \(\geq k\) is defined by \(f_{a,b}(\mathbf{x};\mathbf{y}) = 0\)
for \(0 \preceq b \preceq a\) satisfying \(b(q) \leq k\). Therefore, over
\(X \cap \mathrm{D}(x_n)\), the scheme \(\mathbf{PenTa}(X;H)\) of penultimate
tangents may be obtained from \(X_1\) by omitting the two equations
\(f_{a,a}(\mathbf{x};\mathbf{y}) = f_{a,a-1}(\mathbf{x};\mathbf{y}) = 0\),
presenting it as a family of \((q;\mathbf{a}')\)-tic where
\(\mathbf{a}' \coloneqq \mathbf{a}_1 \setminus (a,a-1)\); this is most
useful when \(a - 1\) is a nonzero profile, equivalently when the profile \(a\)
is nonlinear and reduced.

The next few paragraphs make sense of these observations in families.

\subsectiondash{Pointed lines}\label{tlines-pointed-lines}
Given a projective bundle \(\pi \colon \PP\mathcal{V} \to S\), write
\(\mathbf{L}\mathcal{V}\) for its space of pointed lines: namely, the
incidence correspondence between it and its relative Grassmannian
\(\mathbf{G}\mathcal{V}\) of lines. Projection to the line exhibits
\(\mathbf{L}\mathcal{V}\) as the universal line over
\(\mathbf{G}\mathcal{V}\), whereas projection
\(\pr_x \colon \mathbf{L}\mathcal{V} \to \PP\mathcal{V}\) to the point
identifies \(\mathbf{L}\mathcal{V}\) as the projective bundle
on the relative tangent bundle \(\mathcal{T}_\pi \otimes \sO_\pi(-1)\) of
\(\pi \colon \PP\mathcal{V} \to S\). These moduli descriptions imply that the
tautological bundles of \(\mathbf{L}\mathcal{V}\) fit into a canonical
short exact sequence
\[
0 \to
\sO_{\pr_x}(1) \to
\mathcal{S}^\vee \to
\pr_x^*\sO_\pi(1) \to
0
\]
where \(\mathcal{S}\) is the subbundle of rank \(2\). Pushing along \(\pr_x\)
yields the relative dual Euler sequence
\[
0 \to
\Omega^1_\pi \otimes \sO_\pi(1) \to
\pi^*\mathcal{V}^\vee \to
\sO_\pi(1) \to
0.
\]

Consider now a family \(\mathcal{X}\) of \((q;\mathbf{a})\)-tic schemes in
\(\pi \colon \PP\mathcal{V} \to S\). Its space of pointed
lines
\[
\mathcal{X}_1 \coloneqq
\{(x, \ell) \in \mathcal{X} \times_S \mathbf{F}_1(\mathcal{X}/S) : x \in \ell\}
\hookrightarrow \mathbf{L}\mathcal{V}\rvert_{\mathcal{X}}
\]
naturally embeds into the projective bundle on
\(\mathcal{T} \coloneqq \mathcal{T}_\pi \otimes \sO_\pi(-1)\rvert_{\mathcal{X}}\)
in which it has a \((q;\mathbf{a}_1)\)-tic structure, globalizing
\parref{fano-pointed-line-equations}:

\begin{Proposition}\label{tlines-pointed-lines-structure}
Let \(\mathcal{X}\) be a family of \((q;\mathbf{a})\)-tic schemes in a
projective bundle \(\pi \colon \PP\mathcal{V} \to S\). Then its space of
pointed lines \(\mathcal{X}_1\) admits a canonical
\((q;\mathbf{a}_1)\)-tic structure
in \(\rho \colon \PP\mathcal{T} \to \mathcal{X}\), where
\[
\mathcal{T} \coloneqq \mathcal{T}_\pi \otimes \sO_\pi(-1)\rvert_{\mathcal{X}},
\;\;
\mathbf{a}_1 \coloneqq
(b \in \Types : 0 \prec b \preceq a\;\text{for}\;a \in \mathbf{a}),
\;\;\text{and}\;\;
\mathcal{A}_1 \coloneqq \ker(\varepsilon \colon \pi^*\mathcal{A} \to \mathcal{E})\rvert_{\mathcal{X}}.
\]
\end{Proposition}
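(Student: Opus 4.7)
The plan is to globalize \parref{fano-pointed-line-equations} by upgrading the local expansion \parref{tlines-explicit-expansion} to a functorial bundle-theoretic construction on $\PP\mathcal{V}$. The key ingredient is a profile-refined filtration of $\pi^*\mathrm{S}^a(\mathcal{V}^\vee)$ coming from the dual relative Euler sequence
\[
0 \to \mathcal{T}^\vee \to \pi^*\mathcal{V}^\vee \to \sO_\pi(1) \to 0,
\]
where $\mathcal{T}$ also denotes the extension $\mathcal{T}_\pi \otimes \sO_\pi(-1)$ from $\mathcal{X}$ to $\PP\mathcal{V}$. Applying the tensor functor $\mathrm{S}^a$ gives, by the bundle analogue of the basis description in the proof of \parref{qatic-injective-mult}, a functorial filtration on $\pi^*\mathrm{S}^a(\mathcal{V}^\vee)$ indexed by profiles $0 \preceq b \preceq a$ with graded pieces locally of the form $\mathrm{S}^{a-b}(\mathcal{T}^\vee) \otimes \sO_\pi(b(q))$. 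Since the evaluation map $\varepsilon|_a$ is itself induced functorially by the Euler quotient, it factors as the projection onto the top graded piece at $b = a$, which is just $\sO_\pi(a(q))$. Hence $\ker(\varepsilon|_a)$ is the filtration subbundle whose associated graded has pieces indexed by $0 \preceq b \prec a$.

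Restricting to $\mathcal{X}$, summing over $a \in \mathbf{a}$, and reindexing $b' \coloneqq a - b$, the graded pieces of $\mathcal{A}_1 = \ker(\varepsilon)|_{\mathcal{X}}$ are indexed exactly by the multi-set $\mathbf{a}_1$, each of the form $\mathrm{S}^{b'}(\mathcal{T}^\vee)$ twisted by a line bundle of the form $\sO_\pi(k)|_{\mathcal{X}}$. On any open cover of $\mathcal{X}$ simultaneously trivializing $\mathcal{T}$ and these line bundles, the filtration splits and the twists disappear, giving the local isomorphism $\mathcal{A}_1 \cong \bigoplus_{b \in \mathbf{a}_1} \mathrm{S}^b(\mathcal{T}^\vee)$ demanded by \parref{families-complete-intersections}. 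The $(q;\mathbf{a}_1)$-tic tensor $\boldsymbol{\alpha}_1 \colon \sO_{\mathcal{X}} \to \mathcal{A}_1$ is then simply the restriction $\pi^*\boldsymbol{\alpha}|_{\mathcal{X}}$ viewed inside $\mathcal{A}_1 \subseteq \pi^*\mathcal{A}|_{\mathcal{X}}$, and this factorization exists because $\varepsilon \circ \pi^*\boldsymbol{\alpha} = \sigma$ vanishes on $\mathcal{X}$ by construction.

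Finally, the evaluation map $\varepsilon_1 \colon \rho^*\mathcal{A}_1 \to \mathcal{E}_1$ and defining section $\sigma_1$ on $\PP\mathcal{T}$ arise by applying an analogous filtration construction along $\rho \colon \PP\mathcal{T} \to \mathcal{X}$, using the short exact sequence $0 \to \sO_{\pr_x}(1) \to \mathcal{S}^\vee \to \pr_x^*\sO_\pi(1) \to 0$ recorded in \parref{tlines-pointed-lines} to decompose the pullback of $\boldsymbol{\alpha}_1$ and extract the non-top profile components as the equations of $\mathcal{X}_1$. The main technical obstacle is verifying that these locally defined data patch coherently into the global family structure in the sense of \parref{families-complete-intersections}, and accommodating the line bundle twists in the filtration without disrupting the identification with the canonical evaluation map. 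The essential input is the injectivity of the multiplication maps from \parref{qatics-profiles}: combined with the transition-function analysis of \parref{families-transition-functions}, this guarantees that the local expansion \parref{tlines-explicit-expansion} is canonical on overlaps, so the explicit polynomials $f_{a,b}(\mathbf{x};\mathbf{y})$ of bi-profile $(a-b,b)$ appearing in \parref{tlines-explicit-construction} glue across different trivializations of $\mathcal{V}$ to give the asserted global $(q;\mathbf{a}_1)$-tic structure on $\mathcal{X}_1 \subseteq \PP\mathcal{T}$.
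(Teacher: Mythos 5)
Your proposal is correct in substance and follows the same skeleton as the paper's proof: you take \(\mathcal{A}_1 = \ker(\varepsilon)\rvert_{\mathcal{X}}\), obtain \(\boldsymbol{\alpha}_1\) from the vanishing of \(\sigma = \varepsilon \circ \pi^*\boldsymbol{\alpha}\) on \(\mathcal{X}\), and identify the local form of \(\mathcal{A}_1\) by applying \(\mathrm{S}^a\) to the dual relative Euler sequence, correctly recognizing the evaluation map as projection onto the top graded piece \(\sO_\pi(a(q))\) so that the kernel has graded pieces \(\mathrm{S}^b(\mathcal{T}^\vee) \otimes \sO_\pi(a(q)-b(q))\) for \(0 \prec b \preceq a\) — this is exactly the computation in the paper's final paragraph, where the filtration is split by choosing local splittings of the Euler sequence. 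Where you genuinely diverge is in globalizing the equations on \(\PP\mathcal{T}\): you propose to build \(\mathcal{E}_1\), \(\sigma_1\), \(\varepsilon_1\) by filtering directly along \(\rho\) and gluing the local expansions of \parref{tlines-explicit-expansion} by hand, flagging the cocycle compatibility as the main obstacle. The paper avoids this obstacle entirely by a canonical push--pull construction through the relative Grassmannian: the Fano equations \(\pr_{\ell,*}\pr_x^*\sigma\) factor through a canonical map \(\varepsilon_0 \colon \gamma^*\mathcal{A} \to \mathcal{E}_0\) (using \(\pr_{\ell,*}\pr_x^*\sO_\pi(d) \cong \Sym^d(\mathcal{S}^\vee)\)), and then \(\mathcal{E}_1\) is defined globally as \(\ker(\xi \colon \pr_\ell^*\mathcal{E}_0 \to \pr_x^*\mathcal{E})\rvert_{\PP\mathcal{T}}\), where \(\xi\) is the evaluation along \(\pr_\ell\); since every map here is produced by adjunction from the given global datum \(\varepsilon\), no gluing argument is ever needed, and the commutative square \(\xi \circ \pr_\ell^*\varepsilon_0 = \pr_x^*\varepsilon\) is what makes the restricted section land in \(\mathcal{E}_1\) and makes \(\varepsilon_1\) well defined on \(\rho^*\mathcal{A}_1\). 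Your hand-gluing can be pushed through — the transition functions of \(\mathcal{A}\) and \(\mathcal{E}\) are multiplication by fixed polynomials by \parref{families-transition-functions}, and restriction to \(\mathcal{S}\) followed by the profile expansion is compatible with such multiplications, with injectivity of the multiplication maps from \parref{qatic-injective-mult} pinning down the components uniquely on overlaps — but note that the multiplication maps mix the \(b\)-indexed components across profiles \(a \rightsquigarrow b\), so the check is genuinely a statement about the whole filtered bundle rather than component-by-component, and spelling it out costs roughly what the paper's adjunction formalism buys for free. In short: same proof at the level of the key objects and the local computation, with your route trading the paper's canonical Grassmannian push--pull for an explicit descent argument that works but is the more delicate of the two.
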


\begin{proof}
The task is to construct equations of \(\mathcal{X}_1\) in \(\PP\mathcal{T}\)
and to provide them with a \((q;\mathbf{a}_1)\)-tic structure over
\(\mathcal{X}\). First, the equations of \(\mathcal{X}_1\) in all of
\(\mathbf{L}\mathcal{V}\) are the pullback of those of the Fano scheme
\(\mathbf{F}_1(\mathcal{X}/S)\) in \(\mathbf{G}\mathcal{V}\), and the
latter are given by \(\pr_{\ell, *}\pr_x^*\sigma\), where \(\pr_x\) and
\(\pr_{\ell}\) are the projections out of
\(\mathbf{L}\mathcal{V}\), and
\(\sigma \colon \sO_{\PP\mathcal{V}} \to \mathcal{E}\) are the equations of
\(\mathcal{X}\) in \(\PP\mathcal{V}\). Writing
\(\sigma = \varepsilon \circ \pi^*\boldsymbol{\alpha}\) as in
\parref{families-complete-intersections}\ref{families-complete-intersections.factorize}
provides a factorization
\[
\pr_{\ell,*}\pr_x^*\sigma \colon
\sO_{\mathbf{G}\mathcal{V}} \stackrel{\gamma^*\boldsymbol{\alpha}}{\longrightarrow}
\gamma^*\mathcal{A} \stackrel{\varepsilon_0}{\longrightarrow}
\mathcal{E}_0 \subseteq
\pr_{\ell,*}\pr_x^*\mathcal{E}
\]
where \(\gamma \colon \mathbf{G}\mathcal{V} \to S\) is the structure map, and
\(\varepsilon_0 \colon \gamma^*\mathcal{A} \to \mathcal{E}_0\) is a canonical
map which is locally isomorphic to a direct sum of the evaluation
maps \(\mathrm{S}^a(\mathcal{V}^\vee) \to \mathrm{S}^a(\mathcal{S}^\vee)\),
globalizing \parref{fano-equations}: this uses the fact that
\(\pr_{\ell,*}\pr_x^*\sO_\pi(d) \cong \Sym^d(\mathcal{S}^\vee)\), the local
form of \(\varepsilon\) from
\parref{families-complete-intersections}\ref{families-complete-intersections.eta},
and the assumption that the multiplication map for \((q;\mathbf{a})\)-tic
tensors is injective. In summary, \(\mathbf{F}_1(\mathcal{X}/S)\) is cut out in
\(\mathbf{G}\mathcal{V}\) by the section
\(\sigma_0 \coloneqq \varepsilon_0 \circ \gamma^*\boldsymbol{\alpha} \colon \sO_{\mathbf{G}\mathcal{V}} \to \mathcal{E}_0\),
and so \(\mathcal{X}_1\) is cut out in
\(\mathbf{L}\mathcal{V}\) by the pullback \(\pr_\ell^*\sigma_0\).

Next, consider the restriction of \(\pr_\ell^*\sigma_0\) to \(\PP\mathcal{T}\):
On the one hand, evaluation along \(\pr_\ell\) provides a canonical map
\(
\xi \colon
\pr_\ell^*\mathcal{E}_0 \subseteq
\pr_\ell^*\pr_{\ell,*}\pr_x^*\mathcal{E} \to
\pr_x^*\mathcal{E}
\)
making the square
\[
\begin{tikzcd}
\pr_\ell^*\gamma^*\mathcal{A} \rar[equal] \dar["\pr_\ell^*\varepsilon_0"'] &
\pr_x^*\pi^*\mathcal{A} \dar["\pr_x^*\varepsilon"] \\
\pr_\ell^*\mathcal{E}_0 \rar["\xi"] &
\pr_x^*\mathcal{E}
\end{tikzcd}
\]
commute; in particular, this implies that
\(\pr_x^*\sigma = \xi \circ \pr_{\ell}^*\sigma_0\). Locally, this is the sum of
the canonical surjections
\(\mathrm{S}^a(\mathcal{S}^\vee) \to \pr_x^*\sO_\pi(a(q))\) induced by applying
\(\mathrm{S}^a\) to the first exact sequence of
\parref{tlines-pointed-lines}. On the other hand,
\(\PP\mathcal{T} = \mathbf{L}\mathcal{V}\rvert_{\mathcal{X}}\)
is cut out of \(\mathbf{L}\mathcal{V}\) by \(\pr_x^*\sigma\).
Together, this means that the restricted section
\(\pr_\ell^*\sigma_0\rvert_{\PP\mathcal{T}}\) induces a map
\[
\sigma_1 \colon
\sO_{\PP\mathcal{T}} \to
\mathcal{E}_1 \coloneqq \ker(\xi \colon \pr_\ell^*\mathcal{E}_0 \to \pr_x^*\mathcal{E})\rvert_{\PP\mathcal{T}}.
\]
which cuts out \(\mathcal{X}_1\) in \(\PP\mathcal{T}\).

Finally, for the \((q;\mathbf{a}_1)\)-tic structure, let
\(\mathcal{A}_1 \coloneqq \ker(\varepsilon \colon \pi^*\mathcal{A} \to \mathcal{E})\rvert_{\mathcal{X}}\)
and observe that, since
\(\sigma = \varepsilon \circ \pi^*\boldsymbol{\alpha}\) vanishes on
\(\mathcal{X}\), the restriction \(\pi^*\boldsymbol{\alpha}\rvert_{\mathcal{X}}\)
induces a map \(\boldsymbol{\alpha}_1 \colon \sO_{\mathcal{X}} \to \mathcal{A}_1\).
Pulling up along \(\rho \colon \PP\mathcal{T} \to \mathcal{X}\), the composite
\[
\rho^*\mathcal{A}_1 \subseteq
\pr_x^*\pi^*\mathcal{A}\rvert_{\PP\mathcal{T}} =
\pr_\ell^*\gamma^*\mathcal{A}\rvert_{\PP\mathcal{T}} \stackrel{\pr_\ell^*\varepsilon_0}{\longrightarrow}
\pr_\ell^*\mathcal{E}_0\rvert_{\PP\mathcal{T}} \stackrel{\xi}{\longrightarrow}
\pr_x^*\mathcal{E}\rvert_{\PP\mathcal{T}}
\]
vanishes since it is the restriction of \(\pr_x^*\varepsilon\), and so it
induces a canonical map
\(\varepsilon_1 \colon \rho^*\mathcal{A}_1 \to \mathcal{E}_1\). Tracing
through the construction and using
\(\sigma_0 = \varepsilon_0 \circ \gamma^*\boldsymbol{\alpha}\) then shows that
\(\sigma_1 = \varepsilon_1 \circ \rho^*\boldsymbol{\alpha}_1\).

It remains to describe the local structure of \(\varepsilon_1\). Begin with the
open cover \(S = \bigcup\nolimits_{i \in I} U_i\) and trivializations of
\(\varepsilon\) provided by
\parref{families-complete-intersections}\ref{families-complete-intersections.eta}.
Refine the open cover given by the \(\pi^{-1}(U_i)\) into an open covering
\(\mathcal{X} = \bigcup\nolimits_{j \in J} V_j\) on
which, additionally, the dual Euler sequence splits:
\[
\pi^*\mathcal{V}^\vee\rvert_{V_j} \cong
\mathcal{T}^\vee\rvert_{V_j} \oplus \sO_\pi(1)\rvert_{V_j}.
\]
Fix such a choice for each \(j \in J\). The restriction of \(\mathcal{A}_1\) to
\(V_j\) is then isomorphic to
\[
\bigoplus\nolimits_{a \in \mathbf{a}}
\ker\Big(
\mathrm{S}^a(\mathcal{T}^\vee \oplus \sO_\pi(1)) \to
\sO_\pi(a(q))
\Big)\rvert_{V_j}
\cong
\bigoplus\nolimits_{a \in \mathbf{a}}
\bigoplus\nolimits_{0 \prec b \preceq a}
\mathrm{S}^b(\mathcal{T}^\vee) \otimes \sO_\pi(a(q) - b(q))\rvert_{V_j}
\]
where the maps on the left are projection onto the
\(\mathrm{S}^a(\sO_\pi(1)) = \sO_\pi(a(q))\) factor. Now choose over \(V_j\)
a splitting of the first exact sequence in \parref{tlines-pointed-lines}
compatible with the evaluation map \(\pr_x^*\pi^*\mathcal{V}^\vee \to \mathcal{S}^\vee\).
Over each \(V_j\), the map
\(\varepsilon_1 \colon \rho^*\mathcal{A}_1 \to \mathcal{E}_1\) is isomorphic to
the sum of the evaluation maps
\[
\mathrm{S}^b(\ev_\rho) \otimes \id \colon
\rho^*\mathrm{S}^b(\mathcal{T}^\vee) \otimes \sO_\pi(a(q) - b(q)) \to
\sO_\rho(b(q)) \otimes \rho^*\sO_\pi(a(q) - b(q))
\]
as \(a\) ranges over \(\mathbf{a}\) and \(0 \prec b \preceq a\). This verifies
that \((\mathcal{X}_1, \boldsymbol{\alpha}_1, \varepsilon_1)\) indeed is a
family of \((q;\mathbf{a}_1)\)-tic schemes in \(\rho \colon \PP\mathcal{T} \to \mathcal{X}\),
as desired.
\end{proof}

In light of the discussion in \parref{tlines-explicit-construction}, it will be
useful to describe how to access, within the bundle
\(\mathcal{E}_1\) of equations for \(\mathcal{X}_1\) in \(\PP\mathcal{T}\)
constructed in \parref{tlines-pointed-lines-structure}, those of degree
\(a(q)\) and \(a(q) - 1\), at least when \(\mathcal{X}\) is a family of
\((q;a)\)-tic hypersurfaces with \(a\) nonlinear and reduced:

\begin{Lemma}\label{tlines-last-two-equations}
Let \(\mathcal{X}\) be a family of \((q;a)\)-tic hypersurfaces where \(a(t)\)
is nonlinear and reduced. The degree \(a(q)\) and \(a(q) - 1\) equations of the
family \(\mathcal{X}_1\) of \((q;\mathbf{a}_1)\)-tic schemes in
\(\rho \colon \PP\mathcal{T} \to \mathcal{X}\) constructed in
\parref{tlines-pointed-lines-structure} lie in a subbundle of the form
\(\mathcal{S}^\vee \otimes \sO_\rho(a(q) - 1) \subseteq \mathcal{E}_1\).
\end{Lemma}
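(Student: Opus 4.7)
The plan is to globally construct a rank-$2$ subbundle of $\mathcal{E}_1$ that is canonically isomorphic to $\mathcal{S}^\vee \otimes \sO_\rho(a(q)-1)$, whose two filtered pieces correspond to the summands of $\mathcal{E}_1$ indexed by the profiles $b = a$ and $b = a-1$ in the local decomposition from the proof of \parref{tlines-pointed-lines-structure}. From that proof, since $\mathbf{a} = (a)$ consists of a single profile, $\mathcal{E}_0$ is globally isomorphic to $\mathrm{S}^a(\mathcal{S}^\vee)$ and $\xi$ is obtained by applying $\mathrm{S}^a(-)$ to the quotient in the tautological short exact sequence
\[
0 \to \sO_{\pr_x}(1) \to \mathcal{S}^\vee \to \pr_x^*\sO_\pi(1) \to 0
\]
on $\mathbf{L}\mathcal{V}$. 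Hence $\mathcal{E}_1$ is the restriction to $\PP\mathcal{T}$ of $\ker(\mathrm{S}^a(\mathcal{S}^\vee) \to \pr_x^*\sO_\pi(a(q)))$.

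The subbundle is built factor by factor in $\mathrm{S}^a(\mathcal{S}^\vee) = \bigotimes_{j \geq 0} \Sym^{a_j}(\mathcal{S}^\vee)^{[j]}$ using the inclusion $\sO_{\pr_x}(1) \hookrightarrow \mathcal{S}^\vee$. At $j = 0$, multiplication provides an inclusion $\mathcal{S}^\vee \otimes \sO_{\pr_x}(a_0 - 1) \hookrightarrow \Sym^{a_0}(\mathcal{S}^\vee)$, valid since $a_0 \geq 1$ by reducedness. At each $j \geq 1$, the subsheaf $\sO_{\pr_x}(a_j) = \Sym^{a_j}(\sO_{\pr_x}(1)) \subseteq \Sym^{a_j}(\mathcal{S}^\vee)$ after the $j$-th Frobenius twist yields $\sO_{\pr_x}(a_j q^j) \hookrightarrow \Sym^{a_j}(\mathcal{S}^\vee)^{[j]}$. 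Tensoring across $j$ and verifying the subbundle property after locally splitting the tautological sequence produces a canonical inclusion
\[
\iota \colon \mathcal{S}^\vee \otimes \sO_{\pr_x}(a(q)-1) \hookrightarrow \mathrm{S}^a(\mathcal{S}^\vee).
\]

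Next I would show that the image of $\iota$ lies in the kernel of the projection to $\pr_x^*\sO_\pi(a(q))$. Writing this projection as a tensor product of surjections, its kernel is the sum over $j$ of the terms with one factor replaced by $\ker(\Sym^{a_j}(\mathcal{S}^\vee)^{[j]} \to \pr_x^*\sO_\pi(a_j q^j)) = (\sO_{\pr_x}(1) \cdot \Sym^{a_j - 1}(\mathcal{S}^\vee))^{[j]}$; hence it suffices to place a single tensor factor of $\iota$ inside such a kernel, which arises since the composition $\sO_{\pr_x}(1) \hookrightarrow \mathcal{S}^\vee \twoheadrightarrow \pr_x^*\sO_\pi(1)$ vanishes by exactness. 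The nonlinear and reduced hypotheses on $a$ guarantee that either some $a_j \geq 1$ for $j \geq 1$, in which case the Frobenius-twisted factor $\sO_{\pr_x}(a_j q^j)$ lies in this kernel, or else $a_0 \geq 2$ and the $j=0$ factor $\mathcal{S}^\vee \otimes \sO_{\pr_x}(a_0 - 1)$ decomposes locally as $\sO_{\pr_x}(a_0) \oplus \sO_{\pr_x}(a_0 - 1) \otimes \pr_x^*\sO_\pi(1)$, with both summands inside $\sO_{\pr_x}(1) \cdot \Sym^{a_0 - 1}(\mathcal{S}^\vee)$. Restricting to $\PP\mathcal{T}$ and identifying $\sO_{\pr_x}(1)\rvert_{\PP\mathcal{T}} \cong \sO_\rho(1)$ situates $\mathcal{S}^\vee \otimes \sO_\rho(a(q)-1)$ as a subbundle of $\mathcal{E}_1$.

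Finally, I would match the two graded pieces with the local decomposition from \parref{tlines-pointed-lines-structure}: after locally splitting the tautological sequence, $\mathcal{S}^\vee \otimes \sO_\rho(a(q) - 1)$ decomposes as $\sO_\rho(a(q)) \oplus \sO_\rho(a(q) - 1) \otimes \rho^*\sO_\pi(1)$, the first summand coming from $\sO_{\pr_x}(1) \subseteq \mathcal{S}^\vee$ (indexed by $b = a$, the degree-$a(q)$ equation) and the second from the quotient $\pr_x^*\sO_\pi(1)$ (indexed by $b = a - 1$, the degree-$(a(q)-1)$ equation). The main subtlety, and the real content of the lemma, is that the local direct-sum decomposition of $\mathcal{E}_1$ depends on a choice of splitting, whereas the rank-$2$ subbundle produced is globally defined by the tautological sequence; the two highest-degree components of the defining section $\sigma_1$ glue, through the nontrivial extension class of the tautological sequence, into a section of this canonical subbundle.
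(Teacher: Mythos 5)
Your proposal is correct and takes essentially the same route as the paper: both identify \(\mathcal{E}_1\) as \(\ker\big(\mathrm{S}^a(\mathcal{S}^\vee) \to \rho^*\sO_\pi(a(q))\big)\) and exhibit the desired subbundle as the deepest rank-\(2\) piece of the filtration induced by \(\sO_\rho(1) \subseteq \mathcal{S}^\vee\), constructed factor-by-factor as \((\mathcal{S}^\vee \otimes \sO_\rho(a_0-1)) \otimes \bigotimes_{j \geq 1} \sO_\rho(a_j q^j) \cong \mathcal{S}^\vee \otimes \sO_\rho(a(q)-1)\). Your explicit case analysis (using nonlinearity and reducedness to verify containment in the kernel) merely spells out what the paper compresses into ``the deepest rank \(2\) subbundle'' and ``general facts about rank \(2\) bundles,'' so the two arguments coincide in substance.
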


\begin{proof}
Since \(\mathcal{X}\) is a family of \((q;a)\)-tic hypersurfaces, up to twisting
by line bundles from \(S\), its structure is given by a
tensor \(\alpha \colon \sO_S \to \mathrm{S}^a(\mathcal{V}^\vee)\) and the
evaluation map
\(\varepsilon \colon \pi^*\mathrm{S}^a(\mathcal{V}^\vee) \to \sO_\pi(a(q))\).
The construction of \parref{tlines-pointed-lines-structure} then shows that the
\((q;\mathbf{a}_1)\)-tic polynomials defining \(\mathcal{X}_1\) in
\(\rho \colon \PP\mathcal{T} \to \mathcal{X}\) take values in the bundle
\[
\mathcal{E}_1 =
\ker\big(\mathrm{S}^a(\mathcal{S}^\vee) \to \rho^*\sO_\pi(a(q))\big)
\]
where \(\mathcal{S}^\vee\) denotes the tautological rank \(2\) subbundle on
\(\PP\mathcal{T}\), and the morphism is the canonical quotient that arises
upon applying \(\mathrm{S}^a\) to the short exact sequence of
\(\sO_{\PP\mathcal{T}}\)-modules
\[
0 \to \sO_\rho(1) \to \mathcal{S}^\vee \to \rho^*\sO_\pi(1) \to 0
\]
as in \parref{tlines-pointed-lines}. The line subbundle \(\sO_\rho(1)\) gives
the fibre coordinates of \(\rho \colon \PP\mathcal{T} \to \mathcal{X}\), so
the degree \(a(q)\) and \(a(q) - 1\) equations of \(\mathcal{X}_1\) lie in the
deepest rank \(2\) subbundle of \(\mathcal{E}_1\) with respect to the filtration
obtained by applying \(\mathrm{S}^a\) to the short exact sequence. Since
\[
\mathrm{S}^a(\mathcal{S}^\vee) = \Sym^{a_0}(\mathcal{S}^\vee) \otimes
\Big(\bigotimes\nolimits_{j \geq 1} \Sym^{a_j}(\mathcal{S}^\vee)^{[j]}\Big),
\]
this is the deepest rank \(2\) subbundle of \(\Sym^{a_0}(\mathcal{S}^\vee)\)
twisted by the deepest line subbundle of each
\(\Sym^{a_j}(\mathcal{S}^\vee)^{[j]}\) for \(j \geq 1\). General facts about
rank \(2\) bundles shows that this is
\[
\Big(\mathcal{S}^\vee \otimes \sO_\rho(a_0 - 1)\Big) \otimes
\Big(\bigotimes\nolimits_{j \geq 1} \sO_\rho(a_j q^j)\Big) \cong
\mathcal{S}^\vee \otimes \sO_\rho(a(q) - 1).
\qedhere
\]
\end{proof}

\subsectiondash{Penultimate tangents}\label{tlines-penta-family}
Let \(\mathcal{X}\) be a family of \((q;\mathbf{a})\)-tic schemes in
\(\pi \colon \PP\mathcal{V} \to S\), and let \(\mathcal{X}_1\) be its
scheme of pointed lines equipped with its structure as a family of
\((q;\mathbf{a}_1)\)-tic schemes in \(\rho \colon \PP\mathcal{T} \to \mathcal{X}\)
as constructed in \parref{tlines-pointed-lines-structure}. Assume that
\(\mathbf{a}_{\mathrm{nlr}} \neq \varnothing\) and choose a subbundle
\(\mathrm{S}^a(\mathcal{V}^\vee) \otimes \mathcal{M} \subseteq \mathcal{A}_{\mathrm{nlr}}\)
as in \parref{families-maximal-subbundle}. To perform the construction
\parref{tlines-explicit-construction} in families, view the projective bundle
\(\mu \colon \PP\mathcal{M} \to S\) as the linear system parameterizing
\((q;a)\)-tic hypersurfaces containing \(\mathcal{X}\) corresponding to the
equations of \(\mathrm{S}^a(\mathcal{V}^\vee) \otimes \mathcal{M}\). On the
product \(\PP\mathcal{T} \times_S \PP\mathcal{M}\), the chosen subbundle of
\(\mathcal{A}\) together with the tautological line subbundle \(\sO_\mu(-1)\),
and the computation of \parref{tlines-last-two-equations}
distinguish subbundles
\[
\pr_1^*\mathcal{E}_1 \supseteq
\ker\big(\mathrm{S}^a(\mathcal{S}^\vee) \to \rho^*\sO_\pi(a(q))\big)
\boxtimes \sO_\mu(-1)
\supseteq
\big(\mathcal{S}^\vee \otimes \sO_\rho(a(q)-1)\big) \boxtimes \sO_\mu(-1)
\]
giving, respectively, the equations of the \((q;a)\)-tic hypersurfaces
parameterized by \(\PP\mathcal{M}\), and the corresponding degree
\(a(q)\) and \(a(q) - 1\) equations of 
\(\mathcal{X}_1 \times_S \PP\mathcal{M}\) over \(\PP\mathcal{M}\). The composition
\[
\widebar{\sigma}_1 \colon
\sO \to
\widebar{\mathcal{E}}_1 \coloneqq
\pr_1^*\mathcal{E}_1/
(\mathcal{S}^\vee \otimes \sO_\rho(a(q)-1)) \boxtimes \sO_\mu(-1)
\]
of \(\pr_1^*\sigma_1\) with the quotient map
\(\pr_1^*\mathcal{E}_1 \to \widebar{\mathcal{E}}_1\) defines the desired
component of penultimate tangents.

To keep notation consistent with what follows, assume additionally that there
is given an \(r\)-planing \(\mathcal{P} \subseteq \mathcal{X}\) as in
\parref{tlines-planing}. Let
\[
S' \coloneqq \mathcal{P} \times_S \PP\mathcal{M}
\;\;\text{and}\;\;
\PP\mathcal{V}' \coloneqq \PP\mathcal{T} \times_{\mathcal{X}} S' =
\PP\mathcal{T}\rvert_{\mathcal{P}} \times_S \PP\mathcal{M}
\]
where the second projection \(\pi' \colon \PP\mathcal{V}' \to S'\) exhibits the
fibre product as the projective bundle on
\(\mathcal{V}' \coloneqq \nu^*(\mathcal{T}\rvert_{\mathcal{P}})\) where
\(\nu \colon S' \to \mathcal{P}\) is the structure map. View
\(\PP\mathcal{V}'\) as a closed subscheme of
\(\PP\mathcal{T} \times_S \PP\mathcal{M}\) and let
\(\sigma' \colon \sO_{\PP\mathcal{V}'} \to \mathcal{E}'\) be the restriction of
the section \(\widebar{\sigma}_1\) above. The \emph{scheme of penultimate tangents}
associated with \(\mathcal{X}\) and the subbundle
\(\mathrm{S}^a(\mathcal{V}^\vee) \otimes \mathcal{M}\) over \(\mathcal{P}\) is
the vanishing locus
\[
\mathcal{X}' \coloneqq
\mathbf{PenTa}(\mathcal{X};\mathrm{S}^a(\mathcal{V}^\vee) \otimes \mathcal{M})\rvert_{\mathcal{P}} \coloneqq
\mathrm{V}(\sigma' \colon \sO_{\PP\mathcal{V}'} \to \mathcal{E}') \subseteq \PP\mathcal{V}'
\]
of the section \(\sigma'\) in \(\PP\mathcal{V}'\). All of this data fits into a
commutative diagram of schemes
\[
\begin{tikzcd}
\mathcal{X}_1\rvert_{\mathcal{P}} \times_{\mathcal{P}} S' \rar[symbol={\subseteq}] \dar &[-2em]
\mathcal{X}' \rar[symbol={\subseteq}] &[-2em]
\PP\mathcal{V}' \rar["\pi'"'] \dar["\pr_1"'] &
S' \dar["\nu"] \\
\mathcal{X}_1\rvert_{\mathcal{P}} \ar[rr,hook] &&
\PP\mathcal{T}\rvert_{\mathcal{P}} \rar["\rho"] &
\mathcal{P} \rar[symbol={\subseteq}] &[-2em]
\mathcal{X} \rar[symbol={\subseteq}] &[-2em]
\PP\mathcal{V} \rar["\pi"] &
S\punct{.}
\end{tikzcd}
\]

The defining equations \(\sigma'\) of \(\mathcal{X}'\) are essentially a subset
of the equations \(\sigma_1\) defining \(\mathcal{X}_1\), whence the
containment relation in the top left. This relationship between
\(\mathcal{X}'\) and \(\mathcal{X}_1\) further means that the
\((q;\mathbf{a}_1)\)-tic structure on \(\mathcal{X}_1\) induces a
\((q;\mathbf{a}')\)-tic structure on \(\mathcal{X}'\):

\begin{Proposition}\label{tlines-lower-degrees}
In the above setting, the scheme \(\mathcal{X}'\) of penultimate tangents
admits the structure of a family of \((q;\mathbf{a}')\)-tic schemes in
\(\pi' \colon \PP\mathcal{V}' \to S'\), where
\(\mathbf{a}' \coloneqq \mathbf{a}_1 \setminus (a, a-1)\).
\end{Proposition}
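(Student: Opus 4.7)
The plan is to inherit the \((q;\mathbf{a}')\)-tic structure on \(\mathcal{X}'\) from the \((q;\mathbf{a}_1)\)-tic structure \((\boldsymbol{\alpha}_1, \varepsilon_1)\) on \(\mathcal{X}_1\) supplied by \parref{tlines-pointed-lines-structure}, by performing at the level of the underlying tensor the same quotient operation which produces \(\sigma'\) from \(\sigma_1\) in \parref{tlines-penta-family}. Since \(\sigma'\) is already constructed there as \(\pr_1^*\sigma_1\rvert_{\PP\mathcal{V}'}\) modulo the bundle of degree \(a(q)\) and \(a(q)-1\) equations identified in \parref{tlines-last-two-equations}, the task reduces to realizing the same quotient on the tensor side and verifying that the result is locally the \((q;\mathbf{a}')\)-tic evaluation datum.

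Concretely, I would first base change \((\boldsymbol{\alpha}_1, \varepsilon_1)\) from \(\mathcal{X}\) along \(\nu \colon S' = \mathcal{P} \times_S \PP\mathcal{M} \to \mathcal{X}\) and twist by \(\sO_\mu(-1)\), obtaining a tensor \(\sO_{S'} \to \nu^*\mathcal{A}_1 \otimes \sO_\mu(-1)\) whose composition with the pulled-back evaluation map recovers \(\pr_1^*\sigma_1\rvert_{\PP\mathcal{V}'}\). Second, I would identify the subbundle \(\mathcal{K} \subseteq \nu^*\mathcal{A}_1 \otimes \sO_\mu(-1)\) whose image under evaluation is the subbundle \((\mathcal{S}^\vee \otimes \sO_\rho(a(q)-1)) \boxtimes \sO_\mu(-1)\) of \parref{tlines-last-two-equations}; in the local decomposition of \(\mathcal{A}_1\) from the end of the proof of \parref{tlines-pointed-lines-structure}, \(\mathcal{K}\) is the sum of the two summands \(\mathrm{S}^a(\mathcal{T}^\vee)\) and \(\mathrm{S}^{a-1}(\mathcal{T}^\vee) \otimes \sO_\pi(1)\) (both twisted by \(\sO_\mu(-1)\)) indexed by the chosen \(a \in \mathbf{a}_{\mathrm{nlr}}\). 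Setting \(\mathcal{A}' \coloneqq (\nu^*\mathcal{A}_1 \otimes \sO_\mu(-1))/\mathcal{K}\) and letting \(\boldsymbol{\alpha}'\) and \(\varepsilon'\) be the induced maps, the factorization \(\sigma' = \varepsilon' \circ \pi'^*\boldsymbol{\alpha}'\) follows by naturality from \(\sigma_1 = \varepsilon_1 \circ \rho^*\boldsymbol{\alpha}_1\). The local description of \(\mathcal{A}'\) as \(\bigoplus_{b \in \mathbf{a}'} \mathrm{S}^b(\mathcal{V}'^\vee)\), and of \(\varepsilon'\) as the corresponding evaluation map, are then immediate upon deleting the two summands of \(\mathcal{K}\) from the local description and restricting from \(\mathcal{T}\) to \(\mathcal{V}'\) as imposed by the \(r\)-planing.

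The main obstacle will be verifying that \(\mathcal{K}\) is a genuine global subbundle rather than just a local direct summand. Since the transition functions of \(\mathcal{A}\), and hence of \(\mathcal{A}_1\), respect the partial order \(\rightsquigarrow\) via \parref{families-vanishing-transition-functions}, and since \(a\) was chosen to have maximal coefficient sum in \(\mathbf{a}_{\mathrm{nlr}}\) as in \parref{families-maximal-subbundle}, the \((q;a)\)-tic summand of \(\mathcal{A}\) corresponding to the chosen subbundle \(\mathrm{S}^a(\mathcal{V}^\vee) \otimes \mathcal{M}\) already forms a global subbundle. Careful bookkeeping of how this subbundle transports through the construction of \(\mathcal{A}_1\) in \parref{tlines-pointed-lines-structure}---in particular, tracking the interaction with the chosen splittings of the dual Euler sequence and with the twist by \(\sO_\mu(-1)\)---yields the coherence of \(\mathcal{K}\) across trivializations, after which the verification of the axioms in \parref{families-complete-intersections} is automatic from the local description.
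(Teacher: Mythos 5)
Your proposal follows the paper's own proof essentially step for step: the paper likewise forms the quotient of \(\mathcal{A}_1\) by a subbundle \(\mathcal{B}\) (your \(\mathcal{K}\)) obtained as the preimage under \(\varepsilon_1\) of the rank-\(2\) subbundle of \parref{tlines-last-two-equations}, an extension of \(\mathrm{S}^{a-1}(\mathcal{T}^\vee)\otimes\sO_\pi(1)\) by \(\mathrm{S}^a(\mathcal{T}^\vee)\) coming from \(\mathrm{S}^a\) of the dual Euler sequence, then defines \(\boldsymbol{\alpha}'\) and \(\varepsilon'\) as the induced maps restricted to \(S'\) and \(\PP\mathcal{V}'\), verifying the local \((q;\mathbf{a}')\)-tic form by adapting the end of \parref{tlines-pointed-lines-structure}. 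The only deviations are cosmetic: the paper first replaces \(S\) by \(\PP\mathcal{M}\) so as to assume the chosen subbundle is \(\mathrm{S}^a(\mathcal{V}^\vee)\subseteq\mathcal{A}\) and no twist need be carried---whereas your \(\sO_\mu(-1)\) is misplaced, as it belongs on the subbundle \(\mathcal{K}\) inside the \emph{untwisted} \(\nu^*\mathcal{A}_1\) via \(\sO_\mu(-1)\subseteq\nu^*\mathcal{M}\), not on the ambient bundle or the tensor---and globality of \(\mathcal{K}\) comes for free from the canonicity of the rank-\(2\) subbundle rather than from re-running the transition-function bookkeeping you sketch.
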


\begin{proof}
In describing the \((q;\mathbf{a}')\)-tic structure of \(\mathcal{X}'\),
we may replace \(S\) by \(\PP\mathcal{M}\) to simplify the setting of
\parref{tlines-penta-family} so as to assume that we have chosen a subbundle
\(\mathrm{S}^a(\mathcal{V}^\vee) \subseteq \mathcal{A}\) corresponding
to a family of \((q;a)\)-tic hypersurfaces containing
\(\mathcal{X}\). Thus \(S' = \mathcal{P} \subseteq \mathcal{X}\) and
\(\PP\mathcal{V}' = \PP\mathcal{T}\rvert_{\mathcal{P}}\).
The preimage under
\(\varepsilon_1 \colon \rho^*\mathcal{A}_1 \to \mathcal{E}_1\) of the
rank \(2\) subbundle from \parref{tlines-last-two-equations} corresponding to
the degree \(a(q)\) and \(a(q) - 1\) equations of the chosen family of
\((q;a)\)-tic equations over \(\mathcal{X}\) is the pullback of a subbundle
\(\mathcal{B} \subseteq \mathcal{A}_1\) which fits in an extension
\[
0 \to
\mathrm{S}^a(\mathcal{T}^\vee) \to
\mathcal{B} \to
\mathrm{S}^{a-1}(\mathcal{T}^\vee) \otimes \sO_\pi(1) \to
0
\]
arising from the functor \(\mathrm{S}^a\) applied to the relative dual Euler
sequence in \parref{tlines-pointed-lines}. Writing
\(\theta \colon \mathcal{A}_1 \to \widebar{\mathcal{A}}_1\) for the
corresponding quotient, there is thus an induced map
\(\bar{\varepsilon}_1 \colon \widebar{\mathcal{A}}_1 \to \widebar{\mathcal{E}}_1\).
The final arguments of \parref{tlines-pointed-lines-structure} may then be
adapted to show that the maps
\[
\boldsymbol{\alpha}' \coloneqq
(\theta \circ \boldsymbol{\alpha}_1)\rvert_{S'} \colon
\sO_{S'} \to \mathcal{A}' \coloneqq \widebar{\mathcal{A}}_1\rvert_{S'}
\;\;\text{and}\;\;
\varepsilon' \coloneqq \bar{\varepsilon}_1\rvert_{\PP\mathcal{V}'} \colon
\pi'^*\mathcal{A}' \to \mathcal{E}'
\]
define the \((q;\mathbf{a}')\)-tic structure of \(\mathcal{X}'\) in
\(\pi' \colon \PP\mathcal{V}' \to S'\).
\end{proof}

\subsectiondash{Induced planings}\label{tlines-induced-planing}
Let \(\mathcal{X}\) be a family of \((q;\mathbf{a})\)-tic schemes in a
projective bundle \(\pi \colon \PP\mathcal{V} \to S\). An \(r\)-planing
\(\mathcal{P} \subseteq \mathcal{X}\) induces a canonical \((r-1)\)-planing of
the scheme \(\mathcal{X}_1\rvert_{\mathcal{P}}\) of pointed lines over
\(\mathcal{P}\) from \parref{tlines-pointed-lines}: The twisted
tangent bundle of \(\mathcal{P}\) provides a rank \(r\) subbundle
\[
\mathcal{T}_{\mathcal{P}} \otimes \sO_\pi(-1) \subseteq
\mathcal{T}_{\PP\mathcal{V}} \otimes \sO_\pi(-1)\rvert_{\mathcal{P}} =
\mathcal{T}\rvert_{\mathcal{P}}
\]
whose associated projective bundle \(\mathcal{P}_1\) is contained in
\(\mathcal{X}_1\rvert_{\mathcal{P}}\); geometrically, \(\mathcal{P}_1\)
parameterizes pointed lines \((x, \ell)\) where \(\ell \subseteq \mathcal{P}\).
Any scheme \(\mathcal{X}'\) of penultimate tangents over \(\mathcal{P}\)
as constructed in \parref{tlines-penta-family} also inherits 
an \((r-1)\)-planing: simply set
\(\mathcal{P}' \coloneqq \mathcal{P}_1 \times_{\mathcal{P}} S'\) and observe
that 
\(\mathcal{X}_1\rvert_{\mathcal{P}} \times_{\mathcal{P}} S' \subseteq \mathcal{X}'\).

\subsectiondash{Genericity of the induced families}\label{tlines-genericity-induced-families}
The remainder of this section is concerned with genericity properties of the
families \(\mathcal{P}_1 \subseteq \mathcal{X}_1\rvert_{\mathcal{P}}\) of
pointed lines over \(\mathcal{P}\), and \(\mathcal{P}' \subseteq \mathcal{X}'\)
of penultimate tangents associated with a generic family, in the sense of
\parref{families-genericity}, of \(r\)-planed \((q;\mathbf{a})\)-tic schemes
\(\mathcal{P} \subseteq \mathcal{X}\) over an integral base \(S\). First, to
see that the families
\(\mathcal{P}_1 \subseteq \mathcal{X}_1\rvert_{\mathcal{P}}\) and
\(\mathcal{P}' \subseteq \mathcal{X}'\) are themselves generic, consider
the tautological situation over the incidence correspondence of \(r\)-planes
and \((q;\mathbf{a})\)-tic schemes:

\begin{Lemma}\label{unirational-classify-pointed-lines}
Let \(\mathcal{P} \subseteq \mathcal{X}\) be the tautological family of
\(r\)-planed \((q;\mathbf{a})\)-tic schemes over the incidence correspondence
\(S \coloneqq \mathbf{Inc}_{n,r,\mathbf{a}}\). The classifying map
\[
[\mathcal{P}_1 \subseteq \mathcal{X}_1\rvert_{\mathcal{P}}] \colon
\mathcal{P} \dashrightarrow \mathbf{Inc}_{n-1,r-1,\mathbf{a}_1}
\]
for the associated family of pointed lines is dominant.
\end{Lemma}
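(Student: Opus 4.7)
The plan is to reduce dominance to a linear-algebraic surjectivity statement in a single fibre and then resolve it by exhibiting explicit preimages. Since \(\mathbf{GL}(V)\) acts on both sides in a compatible way (once an auxiliary identification \(V/\langle x\rangle \cong W\) is chosen), and since \(\mathbf{GL}(V)\) acts transitively on pairs \((x,U)\) with \(x \in \mathbb{P}U\) of fixed dimensions, it suffices to check that the classifying map is surjective after restriction to a single fibre of the natural projection \(\mathcal{P} \to \PP V \times \mathbf{G}(r+1,V)\) over a fixed flag \((x, U)\).

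Concretely, I would choose coordinates \(x = [0{:}\cdots{:}0{:}1]\), \(U = \langle e_0,\ldots,e_{r-1},e_n\rangle\) so that \(W \coloneqq V/\langle x\rangle\) has coordinates \(y_0,\ldots,y_{n-1}\) and \(U' \coloneqq U/\langle x\rangle = \langle \bar e_0,\ldots,\bar e_{r-1}\rangle\). The source fibre is the product of projective spaces on
\[
\mathrm{S}^a(V^\vee)_U \coloneqq \ker(\mathrm{S}^a(V^\vee) \to \mathrm{S}^a(U^\vee)),
\qquad a \in \mathbf{a},
\]
and the target fibre, over \([U']\), is the product of projective spaces on \(\mathrm{S}^b(W^\vee)_{U'}\) for \((a,b) \in \mathbf{a}_1\). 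The classifying map on underlying linear spaces sends \((f_a)_{a \in \mathbf{a}}\) to the tuple \((f_{a,b}(x;\cdot))_{a,b}\) of coefficients from the expansion of \parref{tlines-explicit-expansion}. The key step is to show, for each fixed \(a \in \mathbf{a}\), that the induced linear map
\[
\Psi_a \colon \mathrm{S}^a(V^\vee)_U \to \bigoplus\nolimits_{0 \prec b \preceq a} \mathrm{S}^b(W^\vee)_{U'}
\]
is surjective.

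The core of the argument is then an explicit lift. Given \(g_b \in \mathrm{S}^b(W^\vee)_{U'}\), set
\[
f^{(b)} \coloneqq \tilde g_b(x_0,\ldots,x_{n-1}) \cdot x_n^{(a-b)(q)},
\]
where \(\tilde g_b\) is viewed as a polynomial on \(V\) not involving \(x_n\). Since \(a - b\) is itself a profile by \parref{profiles-less-than} and \(x_n^{(a-b)(q)} = \prod_j (x_n^{a_j-b_j})^{q^j}\) is manifestly \((q;a-b)\)-tic, the additivity of profiles under multiplication (from \parref{qatics-ordering-properties}\ref{qatics-ordering-properties.monoid}) shows that \(f^{(b)}\) is \((q;a)\)-tic; it vanishes on \(U\) because \(g_b\) vanishes on \(U'\); and substituting \((\xi:\eta) \mapsto \xi e_n + \eta(y_0,\ldots,y_{n-1},0)\) gives \(f^{(b)} = g_b(\mathbf{y})\cdot \xi^{a(q)-b(q)}\eta^{b(q)}\), so \(\Psi_a(f^{(b)})\) equals \(g_b\) in slot \(b\) and zero in every other slot. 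Summing such lifts over \(b\) then yields surjectivity of \(\Psi_a\).

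I expect the main potential pitfall to be bookkeeping rather than a deep obstacle: one has to check that the explicit polynomial \(f^{(b)}\) really lies in the \((q;a)\)-tic subspace (which is where \parref{profiles-less-than} and compatibility of \(\sqsubseteq\)/\(\preceq\) with addition come in) and that the expansion of \parref{tlines-explicit-expansion} isolates the desired bi-profile component cleanly. Once this is done, surjectivity of \(\Psi_a\) projectivises to the required surjectivity of the classifying map on the chosen fibre, and \(\mathbf{GL}(V)\)-equivariance propagates this to the remaining \(\mathbf{GL}(V)\)-orbit of \((x,U)\) to give the full dominance statement.
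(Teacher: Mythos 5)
Your proposal is correct and is in substance the paper's own argument: both reduce, via the fibre over a fixed point \(x = (0:\cdots:0:1)\), to the unique expansion \(\alpha = \sum_{0 \prec b \preceq a} \alpha_b \cdot x_n^{a(q)-b(q)}\) furnished by \parref{tlines-explicit-expansion} and \parref{qatic-injective-mult}, and your explicit lifts \(\tilde g_b \cdot x_n^{(a-b)(q)}\) are precisely the inverse of the map \(\alpha \mapsto (\alpha_b)_{0 \prec b \preceq a}\) that the paper observes is an isomorphism. The only cosmetic difference is that you fix the full flag \((x,U)\) by \(\GL(V)\)-homogeneity and verify surjectivity between the \(U\)- and \(U'\)-vanishing subspaces directly, whereas the paper fixes only \(x\) and transports the plane condition through the identification \(\mathbf{F}_{r-1}(X_{\boldsymbol{\alpha}_x}) \cong \mathbf{F}_r(X_{\boldsymbol{\alpha}},x)\) of \parref{fano-planes-through-point}; both routes are sound.
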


\begin{proof}
Consider the fibre of
\(\mathcal{P} \subseteq \mathcal{X} \subseteq \PP^n_S\) over a fixed
closed point \(x \in \PP^n\):
\[
\mathcal{P}_x =
\{([U],[\boldsymbol{\alpha}]) \in \mathbf{Inc}_{n,r,\mathbf{a}} :
x \in \PP U \subseteq X_{\boldsymbol{\alpha}}\}.
\]
There is a choice of classifying morphism whose domain of definition intersects
\(\mathcal{P}_x\), and such that its restriction
\(\Psi \colon \mathcal{P}_x \dashrightarrow \mathbf{Inc}_{n-1,r-1,\mathbf{a}_1}\)
acts as
\(
([U],[\boldsymbol{\alpha}]) \mapsto ([U/L], [\boldsymbol{\alpha}_x])
\),
where \(x = \PP L\) and \(\boldsymbol{\alpha}_x\) is the
\((q;\mathbf{a}_1)\)-tic tensor 
constructed in \parref{fano-pointed-line-equations}
defining \(X_{\boldsymbol{\alpha},1,x}\) in \(\PP^{n-1} = \PP(V/L)\). Since
\(
\mathbf{F}_{r-1}(X_{\boldsymbol{\alpha}_x}) \cong
\mathbf{F}_r(X_{\boldsymbol{\alpha}},x)
\)
as observed in \parref{fano-planes-through-point}, \(\Psi\) is dominant
if and only if the map
\[
\{
  [\boldsymbol{\alpha}] \in \qaticci_{\PP^n} : x \in X_{\boldsymbol{\alpha}}
\}
\dashrightarrow
(q;\mathbf{a}_1)\operatorname{\bf\!-tics}_{\PP^{n-1}} \colon
[\boldsymbol{\alpha}] \mapsto [\boldsymbol{\alpha}_x]
\]
is dominant. This is a product of rational maps determined by the linear maps
\[
\{\alpha \in \mathrm{S}^a(V^\vee) : \alpha\rvert_L = 0\} \to
\bigoplus\nolimits_{0 \prec b \preceq a} \mathrm{S}^b((V/L)^\vee) \colon
\alpha \mapsto (\alpha_b)_{0 \prec b \preceq a}
\]
where \(\alpha_b\) is the \(b\)-homogeneous component of \(\alpha\) upon
expansion at \(x\). It is straightforward to see from the computations of
\parref{fano-pointed-line-equations} and \parref{tlines-explicit-expansion}
that, upon choosing coordinates so that \(x = (0:\cdots:0:1)\),
the \(\alpha_b\) are uniquely determined from \(\alpha\) by the relation
\[
\alpha(x_0,\ldots,x_n) =
\sum\nolimits_{0 \prec b \preceq a} \alpha_b(x_0,\ldots,x_{n-1}) \cdot x_n^{a(q) - b(q)}.
\]
Combined with \parref{qatic-injective-mult}, it follows that the map
\(\alpha \mapsto (\alpha_b)_{0 \prec b \preceq a}\) is an isomorphism, and so
the corresponding map \([\boldsymbol{\alpha}] \mapsto [\boldsymbol{\alpha}_x]\)
on multi-projective spaces is dominant.
\end{proof}

For an analogous statement for penultimate tangents, observe that the
\((q;\mathbf{a})\)-tic tensor
\(\boldsymbol{\alpha} \colon \sO_S \to \mathcal{A}\) defining the tautological
family \(\mathcal{X} \subseteq \PP^n_S\) takes values in a split bundle of
the form
\[
\mathcal{A} = \bigoplus\nolimits_{a \in \mathbf{a}} \mathrm{S}^a(\mathcal{V}^\vee)
\]
where \(\mathcal{V}\) itself is a trivial \(\sO_S\)-module of rank \(n+1\).
Choose a summand \(\mathrm{S}^a(\mathcal{V}^\vee) \subseteq \mathcal{A}\) with
nonlinear and reduced profile \(a\), apply the constructions of
\parref{tlines-penta-family} and \parref{tlines-induced-planing}, and let
\(\mathcal{P}' \subseteq \mathcal{X}'\) be the resulting family of
\((r-1)\)-planed \((q;\mathbf{a}')\)-tic schemes, with
\(\mathbf{a}' = \mathbf{a}_1 \setminus (a,a-1)\), over
\(S' \coloneqq \mathcal{P}\).

\begin{Lemma}\label{unirational-classify-penultimate-tangents}
Let \(\mathcal{P} \subseteq \mathcal{X}\) be the tautological family of
\(r\)-planed \((q;\mathbf{a})\)-tic schemes over the incidence correspondence
\(S \coloneqq \mathbf{Inc}_{n,r,\mathbf{a}}\). For any choice of subbundle
\(\mathrm{S}^a(\mathcal{V}^\vee) \subseteq \mathcal{A}\) above, the classifying
map
\[
[\mathcal{P}' \subseteq \mathcal{X}'] \colon
S' \dashrightarrow \mathbf{Inc}_{n-1,r-1,\mathbf{a}'}
\]
for the associated family of penultimate tangents is dominant.
\end{Lemma}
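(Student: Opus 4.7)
The plan is to factor the classifying map through a natural forgetful morphism and invoke \parref{unirational-classify-pointed-lines}. Since \(\mathbf{a}' = \mathbf{a}_1 \setminus (a, a-1)\) is obtained from \(\mathbf{a}_1\) by deleting two entries, the natural projection of multi-projective parameter spaces restricts to a morphism of incidence correspondences
\[
\psi \colon \mathbf{Inc}_{n-1, r-1, \mathbf{a}_1} \to \mathbf{Inc}_{n-1, r-1, \mathbf{a}'}.
\]
First I would verify that the classifying map for penultimate tangents factors through \(\psi\) as
\[
[\mathcal{P}' \subseteq \mathcal{X}'] \colon S' = \mathcal{P} \dashrightarrow \mathbf{Inc}_{n-1, r-1, \mathbf{a}_1} \xrightarrow{\;\psi\;} \mathbf{Inc}_{n-1, r-1, \mathbf{a}'},
\]
where the first arrow is the classifying map of \parref{unirational-classify-pointed-lines}. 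Once this factorization is established, dominance follows by combining \parref{unirational-classify-pointed-lines} with a verification that \(\psi\) is itself dominant.

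The factorization follows from a comparison of local descriptions. With coordinates chosen so that \(x = (0{:}\cdots{:}0{:}1)\) and the chosen summand \(\mathrm{S}^a(\mathcal{V}^\vee) \subseteq \mathcal{A}\) corresponds to a specific \((q;a)\)-tic component \(\alpha_a\) of \(\boldsymbol{\alpha}\), the \((q;\mathbf{a}_1)\)-tic tensor \(\boldsymbol{\alpha}_x\) defining \(\mathcal{X}_1\) in \(\PP(V/L)\) decomposes, as in the proof of \parref{unirational-classify-pointed-lines}, into one component \(\alpha_{a',b}\) per pair with \(a' \in \mathbf{a}\) and \(0 \prec b \preceq a'\). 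By \parref{tlines-last-two-equations} and the constructions in \parref{tlines-penta-family}--\parref{tlines-lower-degrees}, the \((q;\mathbf{a}')\)-tic tensor defining \(\mathcal{X}'\) over \(x\) is obtained from \(\boldsymbol{\alpha}_x\) by deleting exactly the two components \(\alpha_{a,a}\) and \(\alpha_{a,a-1}\) of degrees \(a(q)\) and \(a(q)-1\), which is precisely the operation defining \(\psi\) on the fibre over \(([U/L], [\boldsymbol{\alpha}_x])\).

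To verify dominance of \(\psi\), let \(([U'], [\boldsymbol{\beta}]) \in \mathbf{Inc}_{n-1, r-1, \mathbf{a}'}\). A preimage is obtained by choosing nonzero tensors \(\gamma \in \mathrm{S}^a((V/L)^\vee)\) and \(\delta \in \mathrm{S}^{a-1}((V/L)^\vee)\) each vanishing on \(U'\); the resulting point \(([U'], [(\gamma, \delta, \boldsymbol{\beta})])\) lies in \(\mathbf{Inc}_{n-1, r-1, \mathbf{a}_1}\) precisely because all three components vanish on \(U'\). The subspaces of such \(\gamma\) and \(\delta\) are the kernels of the restriction maps, and a dimension count using \(\dim \mathrm{S}^c(W^\vee) = \prod_j \binom{c_j + \dim W - 1}{c_j}\) shows they are of positive dimension whenever \(n > r\) and the profiles \(a\) and \(a-1\) are nonzero. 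Both hold: \(n > r\) is needed for \(\mathbf{Inc}_{n, r, \mathbf{a}}\) itself to be nontrivial, and \(a - 1 \in \Types\) is nonzero by \parref{profiles-less-than} and the hypothesis \(a \in \mathbf{a}_{\mathrm{nlr}}\) that \(a\) is nonlinear. Hence \(\psi\) is surjective and the composition is dominant.

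The main potential obstacle is the bookkeeping in the factorization step: one must carefully match the abstract construction of \(\mathcal{X}'\) via quotient bundles in \parref{tlines-lower-degrees} with the explicit component-wise description of \(\boldsymbol{\alpha}_x\) extracted in the proof of \parref{unirational-classify-pointed-lines}, identifying the rank \(2\) subbundle of \parref{tlines-last-two-equations} with the span of \(\alpha_{a,a}\) and \(\alpha_{a,a-1}\) in the expansion of \(\alpha_a\) at \(x\). Once this dictionary is in place, both dominance claims reduce to elementary linear algebra.
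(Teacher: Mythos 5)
Your proposal is correct and follows essentially the same route as the paper's proof: factor the classifying map through the dominant map \(S' \dashrightarrow \mathbf{Inc}_{n-1,r-1,\mathbf{a}_1}\) of \parref{unirational-classify-pointed-lines}, then observe that the forgetful projection to \(\mathbf{Inc}_{n-1,r-1,\mathbf{a}'}\) is surjective because its fibres are products of projective spaces on the kernels of the restriction maps \(\mathrm{S}^a(\widebar{V}^\vee) \to \mathrm{S}^a(\widebar{U}^\vee)\) and \(\mathrm{S}^{a-1}(\widebar{V}^\vee) \to \mathrm{S}^{a-1}(\widebar{U}^\vee)\). Your explicit check that these kernels are nonzero (using \(n > r\) and \(a - 1 \neq 0\) for \(a\) nonlinear and reduced) is a detail the paper leaves implicit, and is a welcome addition.
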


\begin{proof}
The classifying map factors as the dominant classifying map
\(S' \dashrightarrow \mathbf{Inc}_{n-1,r-1,\mathbf{a}_1}\) from
\parref{unirational-classify-pointed-lines}, followed by the morphism
\(\pi \colon
\mathbf{Inc}_{n-1,r-1,\mathbf{a}_1} \to
\mathbf{Inc}_{n-1,r-1,\mathbf{a}'}
\)
which projects out the profile \(a\) and \(a-1\) components corresponding to
the tensor of the chosen \((q;a)\)-tic subbundle
\(\mathrm{S}^a(\mathcal{V}^\vee) \subseteq \mathcal{A}\). Writing
\(\PP^{n-1} = \PP\widebar{V}\), the fibres of \(\pi\) are isomorphic to
bi-projective spaces of the form
\[
\pi^{-1}([\widebar{U}], [\boldsymbol{\alpha}']) \cong
\PP(
\ker(\mathrm{S}^a(\widebar{V}^\vee) \to \mathrm{S}^a(\widebar{U}^\vee))
\times
\PP(
\ker(\mathrm{S}^{a-1}(\widebar{V}^\vee) \to \mathrm{S}^{a-1}(\widebar{U}^\vee)
)
\]
parameterizing the missing components of \(\boldsymbol{\alpha}'\).
Thus \(\pi\) is surjective, and the result follows.
\end{proof}

Consider an arbitrary family \(\mathcal{P} \subseteq \mathcal{X}\) of
\(r\)-planed \((q;\mathbf{a})\)-tic schemes over an integral base \(S\).
If the family \(\mathcal{P} \subseteq \mathcal{X}\) is generic in the sense
of \parref{families-genericity}, then so too are the families
\(\mathcal{P}_1 \subseteq \mathcal{X}_1\rvert_{\mathcal{P}}\) of pointed lines
over \(\mathcal{P}\) with its structure from
\parref{tlines-pointed-lines-structure}, and---whenever
\(\mathbf{a}_{\mathrm{nlr}} \neq \varnothing\)---any family
\(\mathcal{P}' \subseteq \mathcal{X}'\) of penultimate tangents as constructed
in \parref{tlines-penta-family} and \parref{tlines-lower-degrees}:

\begin{Proposition}\label{unirationality-generic-family}
Let \(\mathcal{P} \subseteq \mathcal{X}\) be a generic family of \(r\)-planed
\((q;\mathbf{a})\)-tic schemes.
\begin{enumerate}
\item\label{unirationality-generic-family.pointed-lines}
\(\mathcal{P}_1 \subseteq \mathcal{X}_1\rvert_{\mathcal{P}}\) 
is generic as a family of \((r-1)\)-planed \((q;\mathbf{a}_1)\)-tic schemes.
\item\label{unirationality-generic-family.penta}
If \(\mathbf{a}_{\mathrm{nlr}} \neq \varnothing\), then any
\(\mathcal{P}' \subseteq \mathcal{X}'\) of is a generic family of
\((r-1)\)-planed \((q;\mathbf{a}')\)-tic schemes.
\end{enumerate}
\end{Proposition}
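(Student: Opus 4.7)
The plan is to reduce both statements to the tautological classifying-map calculations already carried out in \parref{unirational-classify-pointed-lines} and \parref{unirational-classify-penultimate-tangents}, exploiting the fact that the constructions of \parref{tlines-pointed-lines-structure} and \parref{tlines-penta-family}--\parref{tlines-lower-degrees} are expressed entirely in terms of the defining data \((\mathcal{V}, \mathcal{A}, \boldsymbol{\alpha}, \varepsilon)\) (and, in the second case, the chosen subbundle \(\mathrm{S}^a(\mathcal{V}^\vee) \otimes \mathcal{M}\)), hence commute with base change along a classifying morphism. The key observation is that the composition of dominant rational maps is dominant.

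For \ref{unirationality-generic-family.pointed-lines}, let \(\varphi \colon S \dashrightarrow \mathbf{Inc}_{n,r,\mathbf{a}}\) be the classifying morphism of \(\mathcal{P} \subseteq \mathcal{X}\), which is dominant by hypothesis. After shrinking \(S\) to a suitable dense open \(S^\circ\) over which \(\mathcal{V}\) and \(\varepsilon\) admit the trivializations provided by \parref{families-complete-intersections}\ref{families-complete-intersections.eta}, the family \(\mathcal{P}^\circ \subseteq \mathcal{X}^\circ\) is identified with the pullback along \(\varphi\) of the tautological family \(\mathcal{P}_{\mathrm{taut}} \subseteq \mathcal{X}_{\mathrm{taut}}\). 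By construction, \parref{tlines-pointed-lines-structure} then produces \(\mathcal{P}_1 \subseteq \mathcal{X}_1\rvert_{\mathcal{P}}\) over \(\mathcal{P}^\circ\) as the pullback of the corresponding tautological family along the induced morphism \(\widetilde{\varphi} \colon \mathcal{P}^\circ \to \mathcal{P}_{\mathrm{taut}}\). This \(\widetilde{\varphi}\) is dominant because it is the base change of \(\varphi\) by the projection \(\mathcal{P}_{\mathrm{taut}} \to \mathbf{Inc}_{n,r,\mathbf{a}}\), whose generic fibre is nonempty so that the generic point of \(\mathcal{P}_{\mathrm{taut}}\) lies over that of \(\mathbf{Inc}_{n,r,\mathbf{a}}\). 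Therefore the classifying morphism of \(\mathcal{P}_1 \subseteq \mathcal{X}_1\rvert_{\mathcal{P}}\) factors up to restriction as the composition of \(\widetilde{\varphi}\) with the tautological classifying morphism \(\mathcal{P}_{\mathrm{taut}} \dashrightarrow \mathbf{Inc}_{n-1,r-1,\mathbf{a}_1}\), which is dominant by \parref{unirational-classify-pointed-lines}, so the composition is dominant.

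For \ref{unirationality-generic-family.penta}, the argument is formally the same, with \parref{tlines-penta-family} and \parref{tlines-lower-degrees} replacing \parref{tlines-pointed-lines-structure}. After further shrinking \(S^\circ\) so that \(\mathcal{M}\) is also trivialized and the chosen subbundle \(\mathrm{S}^a(\mathcal{V}^\vee) \otimes \mathcal{M} \subseteq \mathcal{A}_{\mathrm{nlr}}\) is pulled back from a corresponding tautological subbundle on \(\mathbf{Inc}_{n,r,\mathbf{a}}\)---which exists because the tautological \(\mathcal{A}\) there is split by \parref{unirational-classify-penultimate-tangents}---the family \(\mathcal{P}' \subseteq \mathcal{X}'\) is the pullback of the tautological family of penultimate tangents along the induced dominant morphism of base schemes \(S' \to S'_{\mathrm{taut}}\). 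Its classifying map thus factors through the tautological one \(S'_{\mathrm{taut}} \dashrightarrow \mathbf{Inc}_{n-1,r-1,\mathbf{a}'}\), which is dominant by \parref{unirational-classify-penultimate-tangents}.

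The main technical point is to verify that all the pullback identifications really do hold on a common dense open \(S^\circ\); this amounts to choosing trivializations of \(\mathcal{V}\), the splitting of \(\mathcal{A}\) into its local \(\mathrm{S}^a(\mathcal{V}^\vee)\)-summands, the evaluation map \(\varepsilon\), and (for (ii)) the subbundle \(\mathcal{M}\) compatibly, which is possible because each piece can individually be trivialized on a dense open and a finite intersection of such opens remains dense. Once this is done, everything is formal from the functoriality of the constructions in \S\parref{section-tlines}.
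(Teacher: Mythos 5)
Your proposal follows essentially the same route as the paper's proof: identify the family, locally on \(S\), as the pullback of the tautological family along the dominant classifying map, observe that the constructions of \parref{tlines-pointed-lines-structure}, \parref{tlines-induced-planing}, and \parref{tlines-penta-family} are invariant and hence commute with this pullback, and conclude by composing with the dominant tautological classifying maps of \parref{unirational-classify-pointed-lines} and \parref{unirational-classify-penultimate-tangents}. Part \ref{unirationality-generic-family.pointed-lines} is complete as written; your generic-point argument for the dominance of \(\widetilde{\varphi}\) makes explicit a step the paper leaves implicit in the phrase ``invariant construction''.

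In part \ref{unirationality-generic-family.penta}, however, there is a small but genuine gap in your handling of \(\mathcal{M}\). When \(\mathcal{M}\) has rank \(m > 1\) (which happens when the profile \(a\) occurs with multiplicity in \(\mathbf{a}\)), trivializing \(\mathcal{M}\) over \(S^\circ\) does \emph{not} put you in the situation of \parref{unirational-classify-penultimate-tangents}: that lemma is proved only for the choice of a single summand \(\mathrm{S}^a(\mathcal{V}^\vee) \subseteq \mathcal{A}\), i.e.\ \(\mathcal{M} \cong \sO\) and \(S'_{\mathrm{taut}} = \mathcal{P}_{\mathrm{taut}}\), whereas your base \(S'\) retains the factor \(\PP\mathcal{M} \cong \PP^{m-1}\). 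The ``induced dominant morphism \(S' \to S'_{\mathrm{taut}}\)'' you invoke is not yet constructed: a point \([\lambda] \in \PP\mathcal{M}\) corresponds to the hypersurface cut out by \(\sum_i \lambda_i \alpha_i\), so the distinguished \((q;a)\)-tic slot of the target tensor depends on \([\lambda]\), and one would need either a \([\lambda]\)-dependent re-trivialization of the \(a\)-isotypic block or an extension of \parref{unirational-classify-penultimate-tangents} to sub-sums \(\mathrm{S}^a(\mathcal{V}^\vee)^{\oplus m}\). The paper circumvents this in one line: it first base changes the whole family along the surjection \(\PP\mathcal{M} \to S\)---genericity of \(\mathcal{P} \subseteq \mathcal{X}\) persists since this projection is surjective---replacing \(S\) by \(\PP\mathcal{M}\), after which the tautological line \(\sO_\mu(-1) \subseteq \mu^*\mathcal{M}\) reduces one to \(\mathcal{M}\) a line bundle and \(S' = \mathcal{P}\), exactly matching the tautological situation. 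Inserting this reduction at the start of your argument for \ref{unirationality-generic-family.penta}, before trivializing, closes the gap; the rest of your proof then goes through as written.
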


\begin{proof}
The family \(\mathcal{P} \subseteq \mathcal{X}\) is locally on \(S\) the pull
back via a classifying map of the tautological family over
\(\mathbf{Inc}_{n,r,\mathbf{a}}\). The invariant construction of
\(\mathcal{P}_1 \subseteq \mathcal{X}_1\rvert_{\mathcal{P}}\) from
\parref{tlines-pointed-lines-structure} and \parref{tlines-induced-planing}
means that it, too, is pulled back from the corresponding construction over
the tautological family, and so \ref{unirationality-generic-family.pointed-lines}
follows from \parref{unirational-classify-pointed-lines}.

Assume now that \(\mathbf{a}_{\mathrm{nlr}} \neq \varnothing\), choose a
subbundle
\(\mathrm{S}^a(\mathcal{V}^\vee) \otimes \mathcal{M} \subseteq \mathcal{A}\)
for some nonlinear and reduced profile \(a\), and let \(\mathcal{P}' \subseteq \mathcal{X}'\)
be the corresponding family of penultimate tangents over \(S' = \mathcal{P} \times_S \PP\mathcal{M}\)
as constructed in \parref{tlines-penta-family}. The base change of the original
family \(\mathcal{P} \subseteq \mathcal{X}\) to \(\PP\mathcal{M}\) will remain
generic, so as in the proof of \parref{tlines-lower-degrees}, replace
\(S\) by \(\PP\mathcal{M}\) to assume that \(\mathcal{M} \cong \sO_S\) and
\(S' = \mathcal{P}\). Then, once again, locally over \(S\), this family
\(\mathcal{P}' \subseteq \mathcal{X}'\) is pulled back from the corresponding
construction over the incidence correspondence, and so
\ref{unirationality-generic-family.penta} follows from
\parref{unirational-classify-penultimate-tangents}.
\end{proof}

Combined with numerical assumptions on the ambient projective bundle dimension
\(n\) and the integer \(r\), genericity of the family
\(\mathcal{P}' \subseteq \mathcal{X}'\) yields geometric genericity properties
of the families \(\mathcal{X}_1\rvert_{\mathcal{P}} \to \mathcal{P}\) and 
\(\mathcal{X}' \to S\). The most useful for what follows is that whenever \(n\)
is sufficiently large depending on \(r\) and \(\mathbf{a}\), the general fibre
of either family has its expected dimension:

\begin{Proposition}\label{tlines-generic-complete-intersection}
Let \(\mathcal{P} \subseteq \mathcal{X}\) be a generic family of \(r\)-planed
\((q;\mathbf{a})\)-tic schemes in a \(\PP^n\)-bundle over an integral base \(S\).
If
\[
n \geq \max\big\{2r-1 + \#\mathbf{a}_1, r +
\frac{1}{r}
\sum\nolimits_{a \in \mathbf{a}}
\prod\nolimits_{j \geq 0} \binom{a_j + r}{r} -
\frac{1}{r}\#\mathbf{a}
\big\},
\]
then the general fibre of \(\mathcal{X}_1\rvert_{\mathcal{P}} \to \mathcal{P}\)
is a \((q;\mathbf{a}_1)\)-tic complete intersection in \(\PP^{n-1}\). Similarly,
the general fibre of any family \(\mathcal{X}' \to S'\) of penultimate tangents
is a \((q;\mathbf{a}')\)-tic complete intersection in \(\PP^{n-1}\).
\end{Proposition}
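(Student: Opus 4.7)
The plan is to use the genericity-preservation of Proposition \parref{unirationality-generic-family} to reduce to the universal situation, and then combine the nonemptiness part of \parref{fano-theorem} with a Fermat-type construction to identify the generic fibre as a general \((q;\mathbf{a}_1)\)-tic complete intersection.

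I would begin by invoking \parref{unirationality-generic-family}\ref{unirationality-generic-family.pointed-lines}: the induced family \(\mathcal{P}_1 \subseteq \mathcal{X}_1\rvert_{\mathcal{P}}\) is itself a generic family of \((r-1)\)-planed \((q;\mathbf{a}_1)\)-tic schemes, so its classifying rational map \(\mathcal{P} \dashrightarrow \mathbf{Inc}_{n-1,r-1,\mathbf{a}_1}\) is dominant. Composing with the forgetful projection \(\mathbf{Inc}_{n-1,r-1,\mathbf{a}_1} \to (q;\mathbf{a}_1)\text{-tics}_{\PP^{n-1}}\), I would next show that the composed map is still dominant. By the binomial identity employed in the proof of \parref{fano-covered-in-planes}, the numerical hypothesis on \(n\) unwinds to precisely \(\delta_-(n-1,\mathbf{a}_1,r-1) \geq 0\); then \parref{fano-theorem}\ref{fano-theorem.smooth} supplies the nonemptiness of \(\mathbf{F}_{r-1}(X)\) for a general \((q;\mathbf{a}_1)\)-tic scheme \(X \subseteq \PP^{n-1}\), which is precisely dominance of the forgetful projection. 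Hence, via \parref{tlines-pointed-lines-structure}, the general fibre of \(\mathcal{X}_1\rvert_{\mathcal{P}} \to \mathcal{P}\) is identified with the general \((q;\mathbf{a}_1)\)-tic scheme in \(\PP^{n-1}\).

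It then remains to check that the general \((q;\mathbf{a}_1)\)-tic scheme in \(\PP^{n-1}\) is a complete intersection. When \(\mathbf{a}_1\) is reduced this follows immediately from \parref{qatic-general-is-smooth}, using that the numerical bound forces \(\#\mathbf{a}_1 \leq n-1\). In general, since being a complete intersection is an open condition on the irreducible parameter space \((q;\mathbf{a}_1)\text{-tics}_{\PP^{n-1}}\), it suffices to exhibit one example: writing \(\mathbf{a}_1 = (a_1,\ldots,a_c)\) and \(d_i \coloneqq a_i(q)\), the monomial scheme \(V(x_0^{d_1}, x_1^{d_2}, \ldots, x_{c-1}^{d_c}) \subseteq \PP^{n-1}\) is a complete intersection whose defining equations \(x_{i-1}^{d_i} = \prod\nolimits_{j \geq 0}(x_{i-1}^{a_{i,j}})^{q^j}\) are manifestly \((q;a_i)\)-tic, compatibly with the Fermat example \parref{qatics-examples}\ref{qatics-examples.fermat}. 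The statement for the family of penultimate tangents is proven identically, using \parref{unirationality-generic-family}\ref{unirationality-generic-family.penta} in place of \ref{unirationality-generic-family.pointed-lines}, and observing that \(\mathbf{a}' \subseteq \mathbf{a}_1\) yields \(\delta_-(n-1,\mathbf{a}',r-1) \geq \delta_-(n-1,\mathbf{a}_1,r-1) \geq 0\).

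The main obstacle is really the bookkeeping of dominance across the successive parameter spaces \(\mathcal{P} \dashrightarrow \mathbf{Inc}_{n-1,r-1,\mathbf{a}_1} \to (q;\mathbf{a}_1)\text{-tics}_{\PP^{n-1}}\); however, this is exactly what \parref{unirationality-generic-family}, together with the nonemptiness half of \parref{fano-theorem}, is tailored to provide, so the proof really amounts to assembling these inputs together with the elementary Fermat example.
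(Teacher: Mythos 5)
Your proposal is correct and follows essentially the same route as the paper: genericity propagation via \parref{unirationality-generic-family} (the paper cites the underlying Lemma \parref{unirational-classify-pointed-lines}), dominance of the projection \(\mathbf{Inc}_{n-1,r-1,\mathbf{a}_1} \to (q;\mathbf{a}_1)\operatorname{\bf\!-tics}_{\PP^{n-1}}\) from \parref{fano-theorem}\ref{fano-theorem.smooth} with the numerical hypothesis unwound through \parref{fano-covered-in-planes}, and the same monotonicity \(\delta_-(n-1,\mathbf{a}',r-1) \geq \delta_-(n-1,\mathbf{a}_1,r-1)\) for penultimate tangents. Your only addition is to justify explicitly, via the monomial example \(\mathrm{V}(x_0^{d_1},\ldots,x_{c-1}^{d_c})\) and openness, that the tautological family is generically a complete intersection when \(\#\mathbf{a}_1 \leq n-1\), a step the paper treats as standard.
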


\begin{proof}
The hypothesis on \(n\) implies that \(n - 1 - \#\mathbf{a}_1 \geq 0\), so the
tautological family over
\((q;\mathbf{a}_1)\operatorname{\bf\!-tics}_{\PP^{n-1}}\) is generically a
complete intersection. Since the classifying map \(\mathcal{P} \dashrightarrow
\mathbf{Inc}_{n-1,r-1,\mathbf{a}_1}\)
is dominant by \parref{unirational-classify-pointed-lines}, the result would
follow if the projection
\(\mathbf{Inc}_{n-1,r-1,\mathbf{a}_1} \to (q;\mathbf{a}_1)\operatorname{\bf\!-tics}_{\PP^{n-1}}\)
were surjective. By \parref{fano-theorem}\ref{fano-theorem.smooth}, this
is the case whenever \(\delta_-(n-1,\mathbf{a}_1,r-1) \geq 0\),
and this inequality is equivalent to the hypothesis on \(n\), as seen in
\parref{fano-covered-in-planes}. The same argument applies for penultimate tangents,
using the facts that
\(S' \dashrightarrow \mathbf{Inc}_{n-1,r-1,\mathbf{a}'}\) is dominant by
\parref{unirationality-generic-family}\ref{unirationality-generic-family.penta}
and that \(\delta_-(n-1,\mathbf{a}',r-1) \geq \delta_-(n-1,\mathbf{a}_1,r-1)\).
\end{proof}

\section{Residual point map}\label{section-residual}
As mentioned at the beginning of \S\parref{section-tlines}, extracting the
residual point of intersection provides a rational map from a scheme of
penultimate lines back to the original projective scheme. The aim of this
section is to construct this rational map in the relative setting, for the
family \(\mathcal{X}'\) of penultimate lines constructed in
\parref{tlines-penta-family}, and to study when the resulting map
\(\res \colon \mathcal{X}' \dashrightarrow \mathcal{X}\) is dominant.

To begin, let \(\mathcal{P} \subseteq \mathcal{X}\) be a family of
\(r\)-planed \((q;\mathbf{a})\)-tic schemes in a projective bundle
\(\pi \colon \PP\mathcal{V} \to S\). Assume that
\(\mathcal{A}_{\mathrm{nlr}} \neq 0\), choose a subbundle
\(\mathrm{S}^a(\mathcal{V}^\vee) \otimes \mathcal{M} \subseteq \mathcal{A}_{\mathrm{nlr}}\)
as in \parref{families-maximal-subbundle}, and let
\(\mathcal{X}' \to S'\) be the associated family of penultimate tangents as in
\parref{tlines-penta-family}. To give an explicit description of the residual
point map, represent geometric points of \(\mathcal{X}'\) as quintuples
\((s, x, \ell, H, Y)\) where
\begin{itemize}
\item \(s\) is a geometric point of \(S\);
\item \(x \in \ell\) is a pointed line in \(\PP\mathcal{V}_s\) with \(x \in \mathcal{P}_s\);
\item \(H\) is a \((q;a)\)-tic hypersurface parameterized by the linear system
\(\PP\mathcal{M}_s\) as in \parref{tlines-last-two-equations}; and
\item \(Y\) is the vanishing locus of the remaining equations so that
\(\mathcal{X}_s = Y \cap H\) is a presentation of \(\mathcal{X}_s\) as
a \((q;\mathbf{a})\)-tic scheme.
\end{itemize}
This data is subject to the conditions that \(\ell \subseteq Y\) and
\[
\mult_x(\ell \cap H) \geq a(q) - 1.
\]
Note \(H\) is not uniquely determined in this representation. However, any
other \(H'\) representing the same point of \(\mathcal{X}'\) has equation
differing from that of \(H\) by a \((q;a)\)-tic polynomial in the ideal of
\(Y\). In particular, since \(\ell \subseteq Y\), the multiplicity condition is
well-posed. The residual point map is now described, and constructed, as
follows:

\begin{Proposition}\label{tlines-residual-point}
In the above setting, there exists a morphism over \(S\)
\[
\res \colon
\mathcal{X}' \setminus \pr_1^{-1}(\mathcal{X}_1\rvert_{\mathcal{P}}) \longrightarrow
\mathcal{X}
\]
acting on geometric points as
\(\res(s,x,\ell,H,Y) = \ell \cap H - (a(q) - 1)x \in \mathcal{X}_s\).
\end{Proposition}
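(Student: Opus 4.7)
The plan is to distill from the structure of $\mathcal{X}'$ a canonical section $\tau$ on $\mathcal{X}'$ valued in a rank-$2$ bundle, and to read off $\res$ from the line subbundle cut out by $\tau$. Recall from \parref{tlines-penta-family} that $\mathcal{X}'$ is the zero locus of $\widebar{\sigma}_1$, which is the composition of $\pr_1^*\sigma_1 \colon \sO \to \pr_1^*\mathcal{E}_1$ with the quotient by the rank-$2$ subbundle
\[
\mathcal{K} \coloneqq (\mathcal{S}^\vee \otimes \sO_\rho(a(q)-1)) \boxtimes \sO_\mu(-1) \subseteq \pr_1^*\mathcal{E}_1
\]
which, by \parref{tlines-last-two-equations}, houses the degree $a(q)$ and $a(q)-1$ equations of the chosen $(q;a)$-tic family. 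On $\mathcal{X}'$, vanishing of $\widebar{\sigma}_1$ means that $\pr_1^*\sigma_1$ lifts through $\mathcal{K}$, yielding a canonical section
\[
\tau \colon \sO_{\mathcal{X}'} \to \mathcal{K}|_{\mathcal{X}'}.
\]
Fiberwise, $\tau$ packages the two coefficients $f_{a,a-1}$ and $f_{a,a}$ from the expansion in \parref{tlines-explicit-expansion} into the residual linear form $f_{a,a-1}\xi + f_{a,a}\eta$ on $\ell$, the factor remaining in $f_a|_\ell$ after dividing out $\eta^{a(q)-1}$, with $\eta$ the linear form on $\ell$ cutting out the marked point $x$.

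Next I identify the zero locus of $\tau$ with $\pr_1^{-1}(\mathcal{X}_1|_{\mathcal{P}})$. On $\mathcal{X}'$, all coefficients $f_{a,b}$ of the chosen $(q;a)$-family with $b \preceq a$ and $b \notin \{a-1,a\}$ vanish by the penultimate-tangency condition, while $\ell \subseteq Y$ holds by the defining conditions on geometric points of $\mathcal{X}'$. Hence $\tau$ vanishes at a geometric point if and only if $f_a|_\ell \equiv 0$, equivalently $\ell \subseteq H$; combined with $\ell \subseteq Y$ this reads $\ell \subseteq Y \cap H = \mathcal{X}_s$, which is the defining condition of $\mathcal{X}_1|_{\mathcal{P}}$.

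On the complementary open subscheme $\mathcal{U} \coloneqq \mathcal{X}' \setminus \pr_1^{-1}(\mathcal{X}_1|_{\mathcal{P}})$, the section $\tau$ is everywhere nonvanishing. Under the tensor-hom adjunction it corresponds to an $\sO_\mathcal{U}$-linear surjection
\[
\mathcal{S}|_{\mathcal{U}} \twoheadrightarrow \sO_\rho(a(q)-1) \boxtimes \sO_\mu(-1)|_{\mathcal{U}},
\]
whose kernel $\mathcal{L} \subseteq \mathcal{S}|_{\mathcal{U}}$ is a line subbundle. Composing with the tautological inclusion $\mathcal{S} \subseteq \gamma^*\mathcal{V}$ on $\mathbf{G}\mathcal{V}$ (with $\gamma$ the structure map) realises $\mathcal{L}$ as a line subbundle of the pullback of $\mathcal{V}$ to $\mathcal{U}$, and this data classifies a unique $S$-morphism $\res \colon \mathcal{U} \to \PP\mathcal{V}$. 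On geometric fibers, $\res$ sends $(s,x,\ell,H,Y)$ to the unique point $x' \in \ell$ satisfying $\ell \cap H = (a(q)-1)x + x'$ as Cartier divisors on $\ell$; then $x' \in H$ by construction and $x' \in \ell \subseteq Y$, so $x' \in Y \cap H = \mathcal{X}_s$. Thus $\res$ factors through the closed subscheme $\mathcal{X} \subseteq \PP\mathcal{V}$.

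The principal subtlety is that the description of a geometric point of $\mathcal{X}'$ as $(s,x,\ell,H,Y)$ involves the non-unique choice of $(q;a)$-tic hypersurface $H$ within $\PP\mathcal{M}_s$. Because any two valid choices $H, H'$ differ by a $(q;a)$-tic polynomial vanishing on $Y$, hence on $\ell$, the residual divisor $\ell \cap H - (a(q)-1)x$ is independent of the choice; the global construction sidesteps the issue entirely by building $\tau$ functorially from the canonical quotient $\pr_1^*\mathcal{E}_1 \to \widebar{\mathcal{E}}_1$ with no choice of splitting. The remaining work is routine bookkeeping with line bundle twists and the one-variable identity $f_a|_\ell \equiv \eta^{a(q)-1}(f_{a,a-1}\xi + f_{a,a}\eta)$, valid modulo the penultimate-tangency constraints.
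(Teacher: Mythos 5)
Your proposal is correct and follows essentially the same route as the paper's proof: both extract the canonical section \(\tau\) of \((\mathcal{S}^\vee \otimes \sO_\rho(a(q)-1)) \boxtimes \sO_\mu(-1)\) from the factorization of \(\pr_1^*\sigma_1\) on \(\mathcal{X}'\), identify its vanishing locus with \(\pr_1^{-1}(\mathcal{X}_1\rvert_{\mathcal{P}})\) (i.e.\ the locus \(\ell \subseteq H\)), and convert the nonvanishing \(\tau\) into a line subbundle of \(\pr_1^*\mathcal{S} \subseteq\) the pullback of \(\mathcal{V}\) classifying the residual point. The only cosmetic difference is that you realize this line subbundle as the kernel of the surjection \(\mathcal{S} \to \sO_\rho(a(q)-1) \boxtimes \sO_\mu(-1)\) via tensor-hom adjunction, whereas the paper twists \(\tau\) through the wedge-product isomorphism \(\mathcal{S}^\vee \cong \mathcal{S} \otimes \sO_\rho(1) \otimes \rho^*\sO_\pi(1)\) to produce the section \(\tau'\) of \(\pr_1^*\mathcal{S}\) directly---these are the same construction.
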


\begin{proof}
To construct this map globally, continue with the notation in
\parref{tlines-penta-family}, and observe that the section
\(\pr_1^*\sigma_1 \colon \sO_{\PP\mathcal{V}'} \to \pr_1^*\mathcal{E}_1\)
defining \(\mathcal{X}_1\) in \(\PP\mathcal{T}\) pulled back to
\(\mathcal{X}'\) factors through a section
\[
\tau \colon
\sO_{\mathcal{X}'} \to
\big(\mathcal{S}^\vee \otimes \sO_\rho(a(q) - 1)\big) \boxtimes \sO_\mu(-1).
\]
Its value on a point \((s,x,\ell,H,Y)\) may be identified as the degree
\(a(q)\) polynomial defining \(\ell \cap H\). Twisting down by
\(\sO_\rho(1 - a(q)) \boxtimes \sO_\mu(1)\) factors out the \((a(q)-1)\)-fold
zero at \(x \in \ell \cap H\), at which point \(\tau\) may be viewed as a
family of linear forms on lines in \(\mathcal{V}\). Composing \(\tau\) with the
wedge product isomorphism
\(\mathcal{S}^\vee \cong \mathcal{S} \otimes \sO_\rho(1) \otimes \rho^*\sO_\pi(1)\),
which sends a linear form to its zero locus, yields a section
\[
\tau' \colon
\big(\sO_\rho(-a(q)) \otimes \rho^*\sO_\pi(1)\big) \boxtimes \sO_\mu(1) \longrightarrow
\pr_1^*\mathcal{S}
\]
whose value at \((s, x, \ell, H, Y)\) is the zero locus of the linear form
given by \(\tau\); in other words, this is the residual point of intersection
between \(\ell\) and \(H\). Including \(\mathcal{S}\) into the pullback of
\(\mathcal{V}\) then provides the map to
\(\mathcal{X} \subseteq \PP\mathcal{V}\). It is defined at points where
\(\tau'\) does not vanish which, from the description, are points where
\(\ell \subseteq H\): this is the preimage of
\(\mathcal{X}_1\rvert_{\mathcal{P}}\) in \(\mathcal{X}'\).
\end{proof}

When the families \(\mathcal{X}_1\rvert_{\mathcal{P}} \to \mathcal{P}\) and
\(\mathcal{X}' \to S'\) of pointed lines and penultimate tangents associated
with an \(r\)-planed filtered family \(\mathcal{P} \subseteq \mathcal{X}\) of
\((q;\mathbf{a})\)-tic complete intersections are themselves
generically complete intersections, \parref{tlines-residual-point} provides
a rational map \(\res \colon \mathcal{X}' \dashrightarrow \mathcal{X}\) which
takes a penultimate tangent to its residual point of intersection. The next
goal is to show that \(\res\) is dominant whenever \(r\) is sufficiently large
depending only on \(\mathbf{a}\). In the following statement, a property is
said to hold \emph{fibrewise} in a family over \(S\) if the property holds
upon restriction to each closed point of \(S\).

\begin{Proposition}\label{unirational-res-dominant}
Let \(\mathcal{P} \subseteq \mathcal{X}\) be a family of \(r\)-planed
\((q;\mathbf{a})\)-tic complete intersections over a scheme \(S\). If
the family \(\mathcal{X}_1\) of pointed lines is of expected relative dimension
over \(\mathcal{P}\) fibrewise over \(S\) and
\[
r \geq r_0(\mathbf{a}) \coloneqq
\Big(\sum\nolimits_{a \in \mathbf{a}} \prod\nolimits_{j \geq 0} (a_j + 1)\Big)
- 2 \cdot \#\mathbf{a} - 1,
\]
then, for any associated family \(\mathcal{X}'\) of penultimate tangents, the
residual point map \(\res \colon \mathcal{X}' \dashrightarrow \mathcal{X}\)
exists and is dominant fibrewise over \(S\).
\end{Proposition}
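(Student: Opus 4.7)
The plan is to verify the statement fibrewise over $S$, reducing immediately to the case where $S = \Spec \kk$ is a geometric point. Write $X$, $P$, $X'$ for the respective fibres and set $m \coloneqq \dim \PP\mathcal{M}$. The first move is a dimension count: the hypothesis that $\mathcal{X}_1$ has expected relative dimension over $\mathcal{P}$ ensures, via the construction in \parref{tlines-lower-degrees}, that the subscheme $X' \subseteq \PP\mathcal{V}'$---cut out over $P \times \PP\mathcal{M}$ by $\#\mathbf{a}' = \#\mathbf{a}_1 - 2$ equations in a $\PP^{n-1}$-bundle---has the expected dimension $r + m + (n-1) - (\#\mathbf{a}_1 - 2)$. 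Combining this with $\dim X = n - \#\mathbf{a}$ and the identity $\#\mathbf{a}_1 = \sum_{a \in \mathbf{a}}\prod_{j}(a_j + 1) - \#\mathbf{a}$ yields
\[
\dim X' - \dim X = (r - r_0(\mathbf{a})) + m,
\]
which is nonnegative exactly when $r \geq r_0(\mathbf{a})$; in particular, $\dim X' \geq \dim X$ and $X'$ is not contained in the indeterminacy locus $\pr_1^{-1}(\mathcal{X}_1\rvert_{\mathcal{P}})$, so $\res$ exists as a rational map.

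To upgrade the dimension inequality to dominance of $\res$, I would analyze a generic fibre via an inverse construction. For a general $y \in X$, seek $(x, \ell, H) \in X'$ with $\res(x, \ell, H) = y$: choose $x \in P$ so that the line $\ell \coloneqq \overline{xy}$ satisfies the $(q;\mathbf{a}')$-tic defining conditions of $X'$ (i.e., the equations other than those of profile $a$ and $a-1$ from the chosen $(q;a)$-tic subbundle vanish on $(x, \ell)$), then solve for $H \in \PP\mathcal{M}$ with $H\rvert_\ell$ proportional to the degree-$a(q)$ polynomial on $\ell$ with zeros $(a(q)-1)x + y$. If such triples can be parameterized by a family of dimension $(r - r_0(\mathbf{a})) + m$, then a standard dimension-of-image argument gives dominance.

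The hard part will be verifying this fibre dimension: two genericity statements must hold. For the first step, the locus of valid $x \in P$ is a linear section of a Fano-type subscheme parameterizing lines through $y$ satisfying certain $(q;\mathbf{a}')$-tic constraints, whose codimension in $P$ may be estimated by a deformation-theoretic argument analogous to those of \S\parref{section-fano}, using the fact that $\mathcal{X}_1\rvert_\mathcal{P}$ already has expected relative dimension. For the second step, the space of valid $H$ is controlled by the rank of the evaluation map $\mathcal{M} \to \Sym^{a(q)}(\ell^\vee)$, whose expected rank holds on a nonempty open of lines. The numerical threshold $r \geq r_0(\mathbf{a})$ is arranged to be exactly what makes the cumulative estimate nonnegative.

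As an alternative route to dominance, a differential argument at a well-chosen smooth point $(x_0, \ell_0, H_0) \in X'$ where $\res$ is defined would compute the tangent map $\mathrm{d}\res \colon T_{(x_0, \ell_0, H_0)} X' \to T_{\res(x_0,\ell_0,H_0)} X$ using the wedge-product description of $\res$ from \parref{tlines-residual-point} and verify its surjectivity. This would be analogous in spirit to the tangent-map analysis of the forgetful morphism in \parref{fano-nonsmooth-locus}, but carried out at a point where $\ell_0$ is a genuine penultimate tangent rather than a line contained in $X$, and surjectivity would follow from the same codimension input as in the fibre analysis above.
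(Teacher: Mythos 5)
Your skeleton---the fibrewise reduction to a geometric point, existence of \(\res\) from the expected dimension of \(X'\), and parameterizing preimages of a general \(y \in X\) by points of \(P\)---matches the paper's strategy, but both decisive steps are missing or would fail as sketched. First, the ``solve for \(H\)'' step has no content: after the paper's initial reduction one replaces \(S\) by \(\PP\mathcal{M}\), so \(\mathcal{M} \cong \sO_S\) has rank one and the hypersurface \(H = H_1\) is \emph{fixed}; and in general, the evaluation map \(\mathcal{M} \to \Sym^{a(q)}(\ell^\vee)\) having expected rank does not let you realize a prescribed divisor such as \((a(q)-1)x + y\) unless the map is surjective, which is not among the hypotheses. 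Fortunately this step is also unnecessary---the surviving equations of \(X'\) already force \(\mult_x(\ell \cap H_1) \geq a_1(q) - 1\), and since \(y \in H_1\) the residual point is automatically \(y\) \emph{unless} \(\ell \subseteq H_1\). What actually needs estimating is the locus \(Z_y \subseteq P\) of points \(z\) with \(\mult_z(\ell_{y,z} \cap H_1) \geq a_1(q)-1\) and \(\ell_{y,z} \subseteq H_i\) for \(i \geq 2\); the paper proves the unconditional lower bound \(\dim Z_y \geq r - r_0(\mathbf{a})\) for \emph{every} \(y \in X \setminus P\) by an elementary coefficient count on the blowup of \(\langle y, P\rangle\) at \(y\) (sections of \(\mathrm{S}^{a_i}(\mathcal{E}^\vee)\) with \(\mathcal{E} \cong \sO_P \oplus \sO_P(-1)\), where two coefficients vanish a priori for each \(i\) and a third is dropped for \(i = 1\), which is exactly where \(r_0(\mathbf{a}) = \sum_i \prod_j (a_{i,j}+1) - 2c - 1\) comes from), not by a deformation-theoretic argument; your sketch defers precisely this numerology to an unproven ``genericity statement.''

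Second, and more seriously: even granting \(Z_y \neq \varnothing\), a point \(z \in Z_y\) produces \(\res = y\) only when the multiplicity at \(z\) is \emph{exactly} \(a_1(q)-1\); if it jumps, then \(\ell_{y,z}\) is forced into \(X\) and \(\res\) is undefined there, so your ``standard dimension-of-image argument'' presupposes nonemptiness of the open locus \(Z_y^\circ\), which is the real content of the proposition. The paper excludes the degenerate case by contradiction, and this is the one place where the hypothesis that \(\mathcal{X}_1\) has expected relative dimension does genuine work (your proposal uses it only for existence of \(\res\)): if \(Z_y^\circ = \varnothing\) for general \(y\), then every \(z \in Z_y\) has \(\ell_{y,z} \subseteq X\), so \((y,z) \mapsto (z, [\ell_{y,z}])\) maps the incidence set \(\{(y,z) : z \in Z_y\}\), of dimension at least \(\dim X + r - r_0(\mathbf{a})\), into \(X_1\rvert_P\) with fibres of dimension at most \(1\), exceeding the expected dimension \(\dim X_1\rvert_P = n + c + r - 1 - \sum_i \prod_j (a_{i,j}+1)\) by one---contradiction. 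Your fallback differential argument cannot substitute for this: \(\res\) extracts the ratio of the top two coefficients of a \((q;a)\)-tic form restricted to \(\ell\), and in characteristic \(p\) such maps are frequently inseparable, so surjectivity of \(\mathrm{d}\res\) can fail even where \(\res\) is dominant, and you offer no computation establishing it. (A small slip besides: with the factor \(\PP\mathcal{M}\) present, your difference \(\dim X' - \dim X = (r - r_0(\mathbf{a})) + m\) is not nonnegative ``exactly when'' \(r \geq r_0(\mathbf{a})\), and in any case \(\dim X' \geq \dim X\) alone neither locates \(X'\) outside the indeterminacy locus nor implies dominance.)
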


Begin with a few reductions: If \(\mathbf{a}_{\mathrm{nlr}} = \varnothing\),
then the statement is empty and there is nothing to prove. So assume otherwise
and fix any family \(\mathcal{X}'\) of penultimate tangents as constructed in
\parref{tlines-penta-family}. Using the fact that the first projection
\(\mathcal{X} \times_S \PP\mathcal{M} \to \mathcal{X}\) is surjective,
\(S\) may be replaced by the linear system \(\PP\mathcal{M}\) of hypersurfaces
relative to which \(\mathcal{X}'\) is constructed so as to assume
\(\mathcal{M} = \sO_S\) and \(\mathcal{X}'\) is a family over
\(S' = \mathcal{P}\). Since, fibrewise over \(S\), \(\mathcal{X}'\) is obtained
from \(\mathcal{X}_1\) by omitting two relatively ample divisors over their
common base \(\mathcal{P}\), the hypothesis that \(\mathcal{X}_1\) is of
expected dimension implies that \(\mathcal{X}'\) is also of expected dimension.
This guarantees, via \parref{tlines-residual-point}, that the residual point
map \(\res \colon \mathcal{X}' \dashrightarrow \mathcal{X}\) exists, and even
that its indeterminacy locus does not contain any fibre over \(S\). Given this,
since the statement is fibrewise over \(S\), it suffices to consider the
absolute case over \(\Spec \kk\).

\medskip
For the remainder of the proof, then, let \(P \subseteq X\) be an \(r\)-planed
\((q;\mathbf{a})\)-tic complete intersection in \(\PP^n\) over \(\kk\). Write
\(\mathbf{a} = (a_1,\ldots,a_c)\) and fix a presentation of the form
\[
X = H_1 \cap \cdots \cap H_c
\;\;\text{where}\; H_i\;\text{is a \((q;a_i)\)-tic hypersurface}.
\]
Perhaps after reordering, assume that \(a_1 \in \Types\) is nonlinear and
reduced, and let \(X'\) be the scheme of penultimate tangents with respect to
\(H_1\).

For each point \(y \in X \setminus P\), consider the closed subscheme of \(P\)
given by
\[
Z_y \coloneqq
\{z \in P :
\mult_z(\ell_{y,z} \cap H_1) \geq a_1(q) - 1\;\text{and}\;
\ell_{y,z} \subset H_i \;\text{for}\; 2 \leq i \leq c
\}
\]
where \(\ell_{y,z}\) is the line between \(y\) and \(z\). The task is to show
that the open subscheme \(Z_y^\circ \subseteq Z_y\) parameterizing lines
intersecting \(H_1\) at \(z\) with multiplicity exactly \(a_1(q) - 1\) is
nonempty for general \(y\). Toward this, observe that:

\begin{Lemma}\label{unirational-res-dominant-absolute.empty}
If \(Z_y^\circ = \varnothing\), then \(\ell_{y,z} \subseteq X\) for every
\(z \in Z_y\).
\end{Lemma}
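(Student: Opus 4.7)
The plan is to prove the contrapositive on a per-point basis: for any \(z \in Z_y\) such that \(\ell_{y,z} \not\subseteq H_1\), I claim \(z \in Z_y^\circ\). Combined with the defining condition of \(Z_y\) that \(\ell_{y,z} \subseteq H_i\) for \(2 \leq i \leq c\), this yields the statement, for if \(Z_y^\circ = \varnothing\) then every \(z \in Z_y\) must satisfy \(\ell_{y,z} \subseteq H_1\), and hence \(\ell_{y,z} \subseteq H_1 \cap \cdots \cap H_c = X\).

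The key observation is that both \(y\) and \(z\) lie on \(H_1\): indeed \(y \in X \subseteq H_1\) by hypothesis, and \(z \in P \subseteq X \subseteq H_1\); moreover \(y \neq z\) since \(y \in X \setminus P\) while \(z \in P\). Parametrize \(\ell \coloneqq \ell_{y,z}\) by \((\xi : \eta) \mapsto \xi y + \eta z\), so that \(y\) and \(z\) correspond to the two distinct points \((1:0)\) and \((0:1)\) of this \(\mathbf{P}^1\). Pulling back the defining \((q;a_1)\)-tic polynomial of \(H_1\) then yields a binary form \(F(\xi,\eta)\) of degree \(a_1(q)\) which vanishes at both \((1:0)\) and \((0:1)\).

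If \(\ell \not\subseteq H_1\), then \(F\) is a nonzero binary form, and its total order of vanishing on \(\mathbf{P}^1\) equals its degree \(a_1(q)\). The condition \(\mult_z(\ell \cap H_1) \geq a_1(q) - 1\) coming from \(z \in Z_y\) contributes at least \(a_1(q) - 1\) to this total at \((0:1)\), and the vanishing of \(F\) at \((1:0)\) contributes at least \(1\) at \(y\). These lower bounds already sum to \(a_1(q)\), so both must be equalities and there are no further zeros. In particular \(\mult_z(\ell \cap H_1) = a_1(q) - 1\), which by definition places \(z\) in \(Z_y^\circ\).

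I do not anticipate any real obstacle; the argument is just a degree count on \(\ell \cong \mathbf{P}^1\) that exploits \(y\) as an automatic extra intersection point of \(\ell\) with \(H_1\) beyond \(z\). The only thing to be slightly careful about is the opening reduction to a per-point statement, which is immediate since \(Z_y^\circ\) is the open subscheme of \(Z_y\) cut out by \(\mult_z = a_1(q) - 1\) exactly.
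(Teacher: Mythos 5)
Your proof is correct and is essentially the paper's argument in contrapositive form: the paper also deduces from \(Z_y^\circ = \varnothing\) that \(\mult_z(\ell_{y,z}\cap H_1) \geq a_1(q)\), and then uses the extra intersection point \(y \in H_1\) to force \(\ell_{y,z} \subseteq H_1\) by the same degree count on the line, concluding \(\ell_{y,z} \subseteq X\) via the conditions \(\ell_{y,z} \subseteq H_i\) for \(i \geq 2\). Your explicit parametrization and binary-form bookkeeping, including the checks \(y \neq z\) and \(y, z \in H_1\), just spell out what the paper leaves implicit.
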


\begin{proof}
Emptiness of \(Z_y^\circ\) means \(\ell_{y,z}\) intersects \(H_1\) with
multiplicity at least \(a_1(q)\) at \(z\). Since \(\ell_{y,z}\) also intersects
\(H_1\) at \(y\), \(\ell_{y,z}\) must be contained in \(H_1\), whence also
\(X\).
\end{proof}

Equations for \(Z_y\) are simple to describe, and yield the following dimension
estimate:

\begin{Lemma}\label{unirational-res-dominant-absolute.estimate}
\(\displaystyle
\dim Z_y
\geq
r - r_0(\mathbf{a})
\) for all \(y \in X \setminus P\).
\end{Lemma}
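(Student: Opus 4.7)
The plan is to present \(Z_y\) as a closed subscheme of \(P \cong \PP^r\) cut out by an explicit system of equations and then count how many of those equations are not automatically satisfied. The main tool is the expansion of \parref{tlines-explicit-expansion}, applied to the parametrization \((\xi:\eta) \mapsto \xi z + \eta y\) of \(\ell_{y,z}\), which places \(z\) at \((1:0)\) and \(y\) at \((0:1)\). In these coordinates, for each \(a_i \in \mathbf{a}\), the restriction \(f_{a_i}(\xi z + \eta y) = \sum_{0 \preceq b \preceq a_i} f_{a_i,b}(z;y) \xi^{a_i(q)-b(q)} \eta^{b(q)}\) expresses the defining conditions in purely algebraic form: the tangency \(\mult_z(\ell_{y,z} \cap H_1) \geq a_1(q) - 1\) is equivalent to the vanishing of \(f_{a_1,b}(z;y)\) for every profile \(0 \preceq b \preceq a_1\) other than \(b = a_1\) and \(b = a_1 - 1\), and the containment \(\ell_{y,z} \subseteq H_i\) for \(i \geq 2\) requires \(f_{a_i,b}(z;y) = 0\) for every \(0 \preceq b \preceq a_i\).

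Next I would eliminate the equations that are identically zero on \(P\) because of the membership \(y,z \in X\). Specifically, the two extreme coefficients reduce to
\[
f_{a_i,0}(z;y) = f_{a_i}(z) \qquad \text{and} \qquad f_{a_i,a_i}(z;y) = f_{a_i}(y),
\]
both of which vanish on \(X\). For each \(i \geq 2\) this removes two equations from the count; for \(i = 1\) only the \(b = 0\) equation is new, since \(b = a_1\) was already omitted by the tangency list. This gives a total of at most
\[
\Bigl(\prod\nolimits_{j \geq 0}(a_{1,j} + 1) - 3\Bigr) + \sum\nolimits_{i = 2}^c \Bigl(\prod\nolimits_{j \geq 0}(a_{i,j} + 1) - 2\Bigr)
= \sum\nolimits_{a \in \mathbf{a}} \prod\nolimits_{j \geq 0}(a_j + 1) - 2 \cdot \#\mathbf{a} - 1 = r_0(\mathbf{a})
\]
nontrivial equations defining \(Z_y\) inside \(P\).

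Since \(Z_y\) is cut out of the projective \(r\)-plane \(P\) by at most \(r_0(\mathbf{a})\) equations, Krull's height theorem, or equivalently the fact that an effective Cartier divisor on \(\PP^r\) drops dimension by at most one, immediately yields \(\dim Z_y \geq r - r_0(\mathbf{a})\). There is no real obstacle to this argument; the only step that requires care is the bookkeeping of automatic vanishings, which is precisely where the \(-2 \cdot \#\mathbf{a} - 1\) correction in the definition of \(r_0(\mathbf{a})\) comes from. Everything else follows formally from the profile expansion and standard projective dimension theory.
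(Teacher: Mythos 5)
Your proof is correct and is essentially the paper's argument: the paper performs the identical count, with your coefficient polynomials \(f_{a_i,b}(z;y)\) from \parref{tlines-explicit-expansion} appearing there as the components of sections \(\sigma_i \colon \sO_P \to \mathrm{S}^{a_i}(\mathcal{E}^\vee)\) over the blowup of \(\langle y, P\rangle\) at \(y\), the same three a priori vanishings (\(b = 0\), \(b = a_1\), \(b = a_1 - 1\) for \(H_1\); \(b = 0\), \(b = a_i\) for \(i \geq 2\)) accounting for the \(-2\#\mathbf{a} - 1\) correction, and the same Krull-type codimension bound concluding. The only difference is presentational: you work in explicit bihomogeneous coordinates while the paper globalizes the equations as bundle sections over \(P\), consistent with its families formalism.
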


\begin{proof}
Write \(P_y \coloneqq \langle y, P \rangle\) for the \((r+1)\)-plane spanned by
\(y\) and the \(r\)-plane \(P\), and view linear projection in \(P_y\) centred
at \(y\) as a rational map \(P_y \dashrightarrow P\) to identify \(P\) with the
space of lines in \(P_y\) through \(y\). This is
resolved into a morphism \(a \colon \widetilde{P}_y \to P\) on the blowup of
\(P_y\) at \(y\). As is standard, \(a\) exhibits \(\widetilde{P}_y\) as the
projective bundle on
\[
\mathcal{E} \cong \sO_P \oplus \sO_P(-1) \subseteq
\sO_P \otimes \mathrm{H}^0(P_y, \sO_{P_y}(1))^\vee
\]
where \(\sO_P\) corresponds to the point \(y \in P_y\) and \(\sO_P(-1)\) is the
tautological line subbundle in the subspace corresponding to \(P \subset P_y\).

Each of the linear sections \(H_{i,y} \coloneqq H_i \cap P_y\) is a
\((q;a_i)\)-tic hypersurface in \(P_y = \PP^{r+1}\) by
\parref{qatic-linear-sections}. As in \parref{families-blowups}, the total
transforms \(b^{-1}(H_{i,y})\) are then families of \((q;a_i)\)-tic
hypersurfaces over \(P\) whose equations in \(\widetilde{P}_y\) correspond to a
section
\[
\sigma_i \colon \sO_P \to
\mathrm{S}^{a_i}(\mathcal{E}^\vee) \cong
\bigotimes\nolimits_{j = 0}^{m_i} \Sym^{a_{i,j}}(\sO_P \oplus \sO_P(q^j)).
\]
Each line bundle summand corresponds to a coefficient of the equation of
\(H_{i,y}\) restricted to the line \(\ell_{y,z} = \PP\mathcal{E}_z\) as a
function of \(z \in P\); thus \(Z_{i,y} \coloneqq \mathrm{V}(\sigma_i)\)
parameterizes points \(z \in P\) for which \(\ell_{y.z} \subset H_{i,y}\).
Observe that some components of \(\sigma_i\) vanish for \emph{a priori} reasons:
Write \(\xi\) and \(\eta\) for local fibre coordinates of \(\PP\mathcal{E}\) so that
\(\xi = 0\) and \(\eta = 0\) define the points \(z\) and \(y\) on
\(\ell_{y,z} = \PP\mathcal{E}_z\). Since \(\ell_{y,z}\) intersects \(H_{i,y}\)
at both \(y\) and \(z\), the coefficients of \(\xi^{a_i(q)}\) and \(\eta^{a_i(q)}\)
vanish, and so
\[
\codim(Z_{i,y} \subseteq P)
\leq \rank\mathrm{S}^{a_i}(\mathcal{E}^\vee) - 2
= \prod\nolimits_{j = 0}^{m_i} (a_{i,j} + 1) - 2.
\]
The condition on \(H_{1,y}\) requires only that \(\ell_{y,z}\) intersect it at
\(z\) with multiplicity \(a_1(q) - 1\), meaning that the scheme of interest is,
rather than \(Z_{1,y}\), the potentially larger locus
\[
Z_{1,y}' \coloneqq \{z \in P : \mult_z(\ell_{y,z} \cap H_{1,y}) \geq a_1(q) - 1\}.
\]
This is cut out by the vanishing of all components of \(\sigma_1\) other than
that corresponding to the coefficient of \(\xi^{a_1(q) - 1}\eta\). Combined with
the above, this shows that the codimension of \(Z_{1,y}'\) in \(P\) is at most
\(\rank\mathrm{S}^{a_1}(\mathcal{E}^\vee) - 3\).
Since \(Z_y = Z_{1,y}' \cap Z_{2,y} \cap \cdots \cap Z_{c,y}\), the
codimension estimates give
\begin{align*}
\dim Z_y
& = \dim P - \codim(Z_y \subseteq P) \\
& \geq \dim P - \codim(Z_{1,y}' \subseteq P) - \sum\nolimits_{i = 2}^c \codim(Z_{i,y} \subseteq P) \\
& \geq r + 2c + 1 - \sum\nolimits_{i = 1}^c \prod\nolimits_{j = 0}^{m_i} (a_{i,j} + 1)
= r - r_0(\mathbf{a}).
\qedhere
\end{align*}
\end{proof}

\begin{proof}[Proof of \parref{unirational-res-dominant}]
Comparing the numerical hypothesis with
\parref{unirational-res-dominant-absolute.estimate} shows that \(Z_y\) is
nonempty for every \(y \in X \setminus P\). Suppose, however, that
\(Z_y^\circ = \varnothing\) for general \(y \in X \setminus P\). Derive a
contradiction by estimating the dimension of \(X_1\rvert_P\) in two ways: On
the one hand, it has its expected dimension by assumption; viewing \(X_1\) as
the universal line over the Fano scheme \(\mathbf{F}_1(X)\) and using
\parref{fano-equations}, this is
\[
\dim X_1\rvert_P
= (\dim \mathbf{F}_1(X) + 1) - \dim X + \dim P
= n + c + r - 1 - \sum\nolimits_{i = 1}^c \prod\nolimits_{j = 0}^{m_i} (a_{i,j} + 1).
\]
On the other hand, emptiness of \(Z_y^\circ\) together with
\parref{unirational-res-dominant-absolute.empty} gives a morphism
\[
\{(y,z) \in (X \setminus P) \times P : z \in Z_y\} \to X_1\rvert_P \colon
(y,z) \mapsto (z,[\ell_{y,z}]).
\]
Fibres of this map are contained in the points of the lines \(\ell_{y,z}\) and
so have dimension at most \(1\). Combined with the dimension estimate
\parref{unirational-res-dominant-absolute.estimate}, this gives
\[
\dim X_1\rvert_P \geq
\dim X + \dim Z_y - 1 \geq
n+c+r - \sum\nolimits_{i = 1}^c \prod\nolimits_{j = 0}^{m_i} (a_{i,j} + 1).
\]
Comparing the two quantities yields a contradiction. Therefore
\(Z_y^\circ \neq \varnothing\) for general \(y \in X\), and this means that
the residual point map \(\res \colon X' \dashrightarrow X\) is dominant.
\end{proof}

\section{Unirationality}\label{section-unirational}
The goal of this section is to establish Theorem \parref{intro-unirationality},
showing that general \((q;\mathbf{a})\)-tic complete intersection is
unirational once its dimension is sufficiently large, depending only on the
multi-profile \(\mathbf{a}\). The proof proceeds by inductively simplifying the
equations of the tautological family of \((q;\mathbf{a})\)-tic complete
intersections via the constructions of
\S\S\parref{section-families}--\parref{section-residual}. Induction takes
place over the set \(\Pi\) of all multi-profiles equipped with a somewhat
complicated partial ordering; the ordering is designed to keep track of
the multi-profiles that appear after passing to one of the following three
constructions: Frobenius descent as in \parref{families-frobenius-descent};
removing linear equations as in \parref{families-remove-linear-equations}; and
passing to the family of penultimate tangents as in
\parref{tlines-lower-degrees}.

\subsectiondash{Ordering multi-profiles}\label{unirational-poset}
Let \(\Pi\) be the set of all multi-profiles, and consider the relation
\(\preceq^\Pi\) defined as follows: The cover relations
\(\mathbf{a}' \precdot^\Pi \mathbf{a}\) for this relation come in three flavours,
depending on which parts of the canonical type decomposition from
\parref{families-quotients} are present:
\begin{enumerate}
\item\label{unirational-poset.pow}
If \(\mathbf{a} = \mathbf{a}_{\mathrm{pow}}\), then set
\(\mathbf{a}' \coloneqq \mathbf{a}/t \coloneqq (a/t : a \in \mathbf{a})\).
\item\label{unirational-poset.lin}
If \(\mathbf{a} = \mathbf{a}_{\mathrm{lin}} \sqcup \mathbf{a}_{\mathrm{pow}}\)
and \(\mathbf{a}_{\mathrm{lin}} \neq \varnothing\), then set
\(\mathbf{a}' \coloneqq \mathbf{a} \setminus \mathbf{a}_{\mathrm{lin}}\).
\item\label{unirational-poset.nlr}
If \(\mathbf{a}_{\mathrm{nlr}} \neq \varnothing\), then let
\(a_0 \in \mathbf{a}_{\mathrm{nlr}}\) be any element with maximal
coefficient sum \(a_0(1)\), and set
\[
\mathbf{a}' \coloneqq
(b \in \Types : 0 \prec b \preceq a \;\;\text{for}\; a \in \mathbf{a}) \setminus (a_0,a_0 -1).
\]
\end{enumerate}
In general, two multi-profiles satisfy \(\mathbf{a}' \preceq^\Pi \mathbf{a}\) if and only if
\(\mathbf{a}' = \mathbf{a}\) or else they are connected by a finite sequence
of the above cover relations:
\[
\mathbf{a}' =
\mathbf{a}_n \precdot^\Pi
\mathbf{a}_{n-1} \precdot^\Pi
\cdots \precdot^\Pi
\mathbf{a}_1 \precdot^\Pi
\mathbf{a}_0 = \mathbf{a}.
\]

\subsectiondash{Examples}\label{unirational-poset-examples}
The following are a few examples illustrating properties of the poset
\((\Pi,\preceq^\Pi)\):
\begin{enumerate}
\item\label{unirational-poset-examples.constant}
If \(a(t) = d\) is a constant, then the interval \([\varnothing, d]^\Pi\)
between \(\varnothing\) and \(d\) is totally ordered. Explicitly, the
Hasse diagrams for \(3 \leq d \leq 5\) take the form:
\[
(3) - (1) - \varnothing,
\quad
(4) - (2, 1) - (1) - \varnothing, \quad
(5) - (3, 2, 1) - (2, 1,1,1) - (1,1,1) - \varnothing.
\]
To depict the Hasse diagram for \(d = 6\), write \(k^m\)
for the profile \(k\) appearing \(m\) times:
\[
(6)
- (4,3,2,1)
- (3,2^3,1^4)
- (2^3,1^8)
- (2^2,1^{10})
- (2,1^{11})
- (1^{11})
- \varnothing.
\]
\item\label{unirational-poset-examples.qbic}
The Hasse diagram for the interval \([\varnothing, 1+t]^\Pi\) is: \((1+t) - (1)
- \varnothing\).
\item\label{unirational-poset-examples.cubic}
The cover relation \parref{unirational-poset}\ref{unirational-poset.pow}
appears in the Hasse diagram for \([\varnothing, 1+2t]^\Pi\):
\[
(1+2t) - (1+t, t, 1) - (t,1,1) - (t) - (1) - \varnothing.
\]
\item\label{unirational-poset-examples.incomparable}
Consider the multi-profile \(\mathbf{a} \coloneqq ((q+1)t + 1, t^2+(q+1))\) of
\(\sqsubseteq\)-incomparable profiles of numerical degree \(q^2+q+1\) from
\parref{qatics-ordering-properties}\ref{qatics-ordering-properties.not-total}.
Specializing to \(q = 2\), the two multi-profiles covered by \(\mathbf{a}\) via
the relation \parref{unirational-poset}\ref{unirational-poset.nlr} are obtained
from
\[
\mathbf{a}_1 =
(3t+1, 3t, 2t+1, 2t, t+1, t, 1) \cup (t^2+3, t^2+2, t^2+1, t^2, 3, 2, 1)
\]
by omitting either \((3t+1, 3t)\) or else \((t^2+3, t^2+2)\).
\item\label{unirational-poset-examples.catenary}
By considering a slight variant of
\ref{unirational-poset-examples.incomparable}, it is possible to see that
lengths of paths from \(\mathbf{a}\) to \(\varnothing\) need not be the same.
For instance, this can be seen with the pair
\(\mathbf{a} = ((q+2)t + 1, t^2 + (2q+1))\) of \(\sqsubseteq\)-incomparable
profiles of numerical degree \(q^2+2q+1\).
\item\label{unirational-poset-examples.degree-drop}
Maximal coefficient sums in a multi-profile does not necessarily drop along
a cover relation of the form
\parref{unirational-poset}\ref{unirational-poset.nlr}. This is because the
maximum may be achieved by a member of \(\mathbf{a}_{\mathrm{pow}}\). For
example, consider the Hasse diagram for \(\mathbf{a} = (2,3t)\):
\[
(2,3t) - (3t) - (3) - (1) - \varnothing.
\]
\end{enumerate}

\subsectiondash{An invariant}\label{unirational-poset-degrees}
Basic properties of \((\Pi,\preceq^\Pi)\) require some effort to establish
since, for example, multi-profiles may grow bigger along the cover relations
defined in \parref{unirational-poset} and maximal coefficient sums of profiles
does not necessarily decrease along \(\preceq^\Pi\) as in
\parref{unirational-poset-examples}\ref{unirational-poset-examples.degree-drop}.
To address these difficulties, consider the three integer-valued functions
\[
\delta(\mathbf{a}) \coloneqq \max\{\deg_t a(t) : a \in \mathbf{a}\},
\;\;
\sigma(\mathbf{a}) \coloneqq \max\{a(1) : a \in \mathbf{a}_{\mathrm{nlr}}\},
\;\;
\mu(\mathbf{a}) \coloneqq \#\{a \in \mathbf{a}_{\mathrm{nlr}} : a(1) = \mu_{\mathrm{nlr}}(\mathbf{a})\},
\]
where the maximum of an empty set is \(0\). Define
\(\phi \colon \Pi \to \mathbf{Z}_{\geq 0}^4\) by
\(\phi(\mathbf{a}) \coloneqq (\delta(\mathbf{a}), \sigma(\mathbf{a}), \mu(\mathbf{a}), \#\mathbf{a}_{\mathrm{nlr}})\).
A case analysis shows that
\[
\text{if \(\mathbf{a}' \precdot^\Pi \mathbf{a}\) is a cover relation in
\(\Pi\), then \(\phi(\mathbf{a}') <^{\mathrm{lex}} \phi(\mathbf{a})\)},
\]
where \(<^{\mathrm{lex}}\) is the lexicographical ordering on
\(\mathbf{Z}_{\geq 0}^4\). Thus this invariant provides a relation preserving
function
\(\phi \colon (\Pi, \preceq^\Pi) \to (\mathbf{Z}_{\geq 0}^4, \leq^{\mathrm{lex}})\).
This makes it possible to establish the basic properties of \(\preceq^\Pi\):

\begin{Lemma}\label{unirational-poset-properties}
\(\preceq^\Pi\) defines a partial ordering on \(\Pi\) with a unique bottom
element \(\varnothing \in \Pi\).
\end{Lemma}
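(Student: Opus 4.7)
Reflexivity of $\preceq^\Pi$ holds by the trivial one-element chain, and transitivity holds by concatenation of two chains of cover relations, so only antisymmetry and the existence of a unique bottom element require substantive argument. Both will be deduced from the strict monotonicity of the invariant $\phi \colon \Pi \to \mathbf{Z}_{\geq 0}^4$ already asserted in \parref{unirational-poset-degrees}: every cover relation $\mathbf{a}' \precdot^\Pi \mathbf{a}$ satisfies $\phi(\mathbf{a}') <^{\mathrm{lex}} \phi(\mathbf{a})$.

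For antisymmetry, I iterate this strict inequality along any chain witnessing $\mathbf{a}' \preceq^\Pi \mathbf{a}$ with $\mathbf{a}' \neq \mathbf{a}$ to obtain $\phi(\mathbf{a}') <^{\mathrm{lex}} \phi(\mathbf{a})$; since $<^{\mathrm{lex}}$ on $\mathbf{Z}_{\geq 0}^4$ is asymmetric, the simultaneous relations $\mathbf{a} \preceq^\Pi \mathbf{a}'$ and $\mathbf{a}' \preceq^\Pi \mathbf{a}$ force $\mathbf{a} = \mathbf{a}'$. For the bottom-element property, I proceed by Noetherian induction on $\phi(\mathbf{a})$, exploiting that $\leq^{\mathrm{lex}}$ is well-founded on $\mathbf{Z}_{\geq 0}^4$. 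The inductive step requires that every nonempty $\mathbf{a} \in \Pi$ sit at the top of some cover relation, and this follows from a case analysis using the canonical type partition $\mathbf{a} = \mathbf{a}_{\mathrm{lin}} \sqcup \mathbf{a}_{\mathrm{pow}} \sqcup \mathbf{a}_{\mathrm{nlr}}$ of \parref{families-quotients}: if $\mathbf{a}_{\mathrm{nlr}} \neq \varnothing$ apply \parref{unirational-poset}\ref{unirational-poset.nlr}; otherwise $\mathbf{a} = \mathbf{a}_{\mathrm{lin}} \sqcup \mathbf{a}_{\mathrm{pow}}$, and \parref{unirational-poset}\ref{unirational-poset.lin} applies when $\mathbf{a}_{\mathrm{lin}} \neq \varnothing$, else $\mathbf{a} = \mathbf{a}_{\mathrm{pow}} \neq \varnothing$ and \parref{unirational-poset}\ref{unirational-poset.pow} applies. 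Iterating yields a finite descending chain from $\mathbf{a}$ to $\varnothing$, giving $\varnothing \preceq^\Pi \mathbf{a}$, and uniqueness as a bottom element then follows from antisymmetry.

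The delicate point, which is really the content of \parref{unirational-poset-degrees} and which I anticipate as the main obstacle, is the verification that $\phi$ strictly drops in $<^{\mathrm{lex}}$ under each type of cover relation. Cases \parref{unirational-poset}\ref{unirational-poset.pow} and \parref{unirational-poset}\ref{unirational-poset.lin} are fairly direct: the former lowers $\delta$ by one, while the latter strictly reduces the multiset. The subtlety lies in case \parref{unirational-poset}\ref{unirational-poset.nlr}, where the multiset may actually enlarge. I would argue there using \parref{qatics-ordering-properties}\ref{qatics-ordering-properties.coefficient-sums}, which shows $b(1) < a(1)$ whenever $b \prec a$, together with the observation that subprofiles of members of $\mathbf{a}_{\mathrm{lin}} \sqcup \mathbf{a}_{\mathrm{pow}}$ cannot land in $\mathbf{a}'_{\mathrm{nlr}}$: removing the pair $(a_0, a_0-1)$, whose first member realizes $\sigma(\mathbf{a})$, then forces at least one of the coordinates of $\phi$ to strictly drop, unless $\delta$ itself has already fallen because $a_0$ was the sole profile achieving the top $t$-degree.
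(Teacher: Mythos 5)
Your proposal is correct and takes essentially the same route as the paper's proof: both antisymmetry and the reachability of \(\varnothing\) are deduced from the lexicographic strict monotonicity of the invariant \(\phi\) asserted in \parref{unirational-poset-degrees}, with your final paragraph merely sketching the case analysis that the paper itself relegates to that earlier subsection. The only substantive difference is that you make explicit, via the type decomposition of \parref{families-quotients}, that every nonempty multi-profile sits atop some cover relation—a point the paper's terser descent argument (``\(\phi\) must eventually reach \((0,0,0,0)\)'' together with the claim \(\phi^{-1}(0,0,0,0) = \{\varnothing\}\)) leaves implicit, and which your formulation in fact handles more cleanly, since purely linear multi-profiles also satisfy \(\phi(\mathbf{a}) = (0,0,0,0)\) and require one further cover of type \parref{unirational-poset}\ref{unirational-poset.lin} to reach \(\varnothing\).
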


\begin{proof}
To see that \(\preceq^\Pi\) is a partial ordering, it remains to establish
antisymmetry. Assume \(\mathbf{a} \preceq^\Pi \mathbf{b}\) and
\(\mathbf{b} \preceq^\Pi \mathbf{a}\). Since
\((\mathbf{Z}_{\geq 0}^4, \leq^{\mathrm{lex}})\) is itself a poset,
\parref{unirational-poset-degrees} implies
\(\phi(\mathbf{a}) = \phi(\mathbf{b})\). But then the sequence of cover
relations witnessing \(\mathbf{a} \preceq^\Pi \mathbf{b}\) must be empty, since
would \(\phi\) strictly decrease along each step, so \(\mathbf{a} = \mathbf{b}\).

Since nothing can precede \(\varnothing\) in \(\preceq^\Pi\), the remaining
statement is that \(\varnothing\) can be reached from any multi-profile
\(\mathbf{a}\) through a finite sequence of cover relations. Since the
invariant \(\phi(\mathbf{a})\) is a sequence of nonnegative integers and
lexicographically strictly decreases along each cover relation, it must
eventually reach \((0,0,0,0)\) after finitely many steps. But
\(\phi^{-1}(0,0,0,0) = \{\varnothing\}\), so
\(\varnothing \preceq^\Pi \mathbf{a}\).
\end{proof}

\begin{Lemma}\label{unirational-poset-finiteness}
For any \(\mathbf{a} \in \Pi\), the interval
\(
[\varnothing, \mathbf{a}]^\Pi \coloneqq
\{ \mathbf{a}' \in \Pi : \varnothing \preceq^\Pi \mathbf{a}' \preceq^\Pi \mathbf{a} \}
\)
is finite.
\end{Lemma}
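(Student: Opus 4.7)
The plan is to prove this by well-founded induction on $\phi(\mathbf{a}) \in \mathbf{Z}_{\geq 0}^4$ using the invariant from \parref{unirational-poset-degrees}. Since the lexicographic order $\leq^{\mathrm{lex}}$ is a well-order on $\mathbf{Z}_{\geq 0}^4$, and \parref{unirational-poset-degrees} guarantees that $\phi$ strictly decreases along every cover relation, this induction is well-founded on $\Pi$. The base case is $\phi(\mathbf{a}) = (0,0,0,0)$, which forces $\mathbf{a} = \varnothing$ (as was already noted in the proof of \parref{unirational-poset-properties}) and makes the interval $\{\varnothing\}$ trivially finite.

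The one substantive observation I need is that each $\mathbf{a} \in \Pi$ admits only finitely many downward covers under $\precdot^\Pi$. Inspecting \parref{unirational-poset}, the three flavours of cover relations are mutually exclusive based on the type decomposition of $\mathbf{a}$ from \parref{families-quotients}: flavour \ref{unirational-poset.pow} applies if and only if $\mathbf{a} = \mathbf{a}_{\mathrm{pow}}$, flavour \ref{unirational-poset.lin} requires $\mathbf{a}_{\mathrm{lin}} \neq \varnothing$ with $\mathbf{a}_{\mathrm{nlr}} = \varnothing$, and flavour \ref{unirational-poset.nlr} applies precisely when $\mathbf{a}_{\mathrm{nlr}} \neq \varnothing$. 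Flavours \ref{unirational-poset.pow} and \ref{unirational-poset.lin} each give a uniquely determined cover, while flavour \ref{unirational-poset.nlr} yields a cover for each admissible choice of $a_0$, of which there are at most $\#\mathbf{a}_{\mathrm{nlr}}$. In all cases, the number of downward covers is finite.

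The induction then proceeds via the decomposition
\[
[\varnothing, \mathbf{a}]^\Pi = \{\mathbf{a}\} \cup \bigcup_{\mathbf{a}' \precdot^\Pi \mathbf{a}} [\varnothing, \mathbf{a}']^\Pi,
\]
where the index set is finite by the previous step, and each $[\varnothing, \mathbf{a}']^\Pi$ is finite by the inductive hypothesis applied to $\mathbf{a}'$, which is licit because $\phi(\mathbf{a}') <^{\mathrm{lex}} \phi(\mathbf{a})$. Hence $[\varnothing, \mathbf{a}]^\Pi$ is a finite union of finite sets. I do not anticipate any real obstacle, since the essential combinatorial content has already been packaged into the strict monotonicity of $\phi$ in \parref{unirational-poset-degrees}; the finite-cover count is an immediate read-off from the definitions in \parref{unirational-poset}.
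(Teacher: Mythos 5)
Your architecture is sound and, in packaging, a little cleaner than the paper's: where the paper combines local finiteness of \(\preceq^\Pi\) with a uniform bound \(L(\mathbf{a})\) on the length of any chain of covers from \(\mathbf{a}\) down to \(\varnothing\), you induct directly on the interval via the decomposition
\[
[\varnothing, \mathbf{a}]^\Pi = \{\mathbf{a}\} \cup \bigcup\nolimits_{\mathbf{a}' \precdot^\Pi \mathbf{a}} [\varnothing, \mathbf{a}']^\Pi,
\]
which is a correct consequence of \(\preceq^\Pi\) being the reflexive--transitive closure of the cover relations, and your count of downward covers (unique for types \parref{unirational-poset}\ref{unirational-poset.pow} and \ref{unirational-poset.lin}, at most \(\#\mathbf{a}_{\mathrm{nlr}}\) for type \ref{unirational-poset.nlr}, the three flavours being mutually exclusive) is also right.

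The gap is in the two places where you quote the invariant \(\phi\). It is \emph{not} true that \(\phi\) strictly decreases along every cover, nor that \(\phi^{-1}(0,0,0,0) = \{\varnothing\}\): all four coordinates of \(\phi\)---top \(t\)-degree, maximal coefficient sum over \(\mathbf{a}_{\mathrm{nlr}}\), its multiplicity, and \(\#\mathbf{a}_{\mathrm{nlr}}\)---are blind to the linear profiles that a type-\ref{unirational-poset.lin} cover deletes. Concretely, \((t) \precdot^\Pi (1,t)\) with \(\phi((t)) = \phi((1,t)) = (1,0,0,0)\), so at this step your appeal to the inductive hypothesis is illegitimate; likewise \(\phi((1)) = (0,0,0,0)\) although \((1) \neq \varnothing\), so your base case misidentifies \(\phi^{-1}(0,0,0,0)\). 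You inherited this from the displayed claim in \parref{unirational-poset-degrees}, which is indeed too strong as stated for type-\ref{unirational-poset.lin} covers; note that the paper's own proof of the lemma does not invoke that claim wholesale but case-splits on \(\sigma\) and \(\delta\), disposing of the all-linear case \(\delta = \sigma = 0\) by a single explicit type-\ref{unirational-poset.lin} step to \(\varnothing\) rather than by \(\phi\)-induction (and even there it is slightly careless when \(\sigma = 0 < \delta\) and \(\mathbf{a}_{\mathrm{lin}} \neq \varnothing\), where a type-\ref{unirational-poset.lin} step precedes the type-\ref{unirational-poset.pow} step it names). The repair is cheap and rescues your proof verbatim: replace \(\phi(\mathbf{a})\) by \((\phi(\mathbf{a}), \#\mathbf{a}) \in \mathbf{Z}_{\geq 0}^5\), again ordered lexicographically. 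Covers of type \ref{unirational-poset.pow} strictly decrease the first coordinate, covers of type \ref{unirational-poset.nlr} strictly decrease \((\delta, \sigma, \mu)\) lexicographically---this is the part of the case analysis in \parref{unirational-poset-degrees} that is correct---and covers of type \ref{unirational-poset.lin} preserve \(\phi\) while strictly decreasing \(\#\mathbf{a}\); moreover the augmented invariant vanishes only at \(\varnothing\), so both your inductive step and your base case become valid.
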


\begin{proof}
Since \(\preceq^\Pi\) is locally finite by construction, it suffices to see
that the length of any path from \(\mathbf{a}\) down to \(\varnothing\) is
bounded. Proceed by induction on
\(\phi(\mathbf{a}) \in \mathbf{Z}_{\geq 0}^4\) ordered lexicographically.
The base case is \(\phi(\varnothing) = (0,0,0,0)\), in which there is nothing
to prove. Given a nonzero \((\delta, \sigma, \mu, \lambda) \in \mathbf{Z}_{\geq 0}^4\),
inductively assume that for every multi-profile \(\mathbf{a}'\) with
\(\phi(\mathbf{a}') <^{\mathrm{lex}} (\delta, \sigma, \mu, \lambda)\),
there exists an integer \(L = L(\mathbf{a}')\) such that any sequence
of cover relations between \(\mathbf{a}'\) and \(\varnothing\) has length at
most \(L\). Consider a multi-profile
\(\mathbf{a} \in \phi^{-1}(\delta, \sigma, \mu, \lambda)\) and consider three
cases:

Suppose that \(\sigma = 0\), so that \(\mathbf{a}_{\mathrm{nlr}} = \varnothing\).
In the case \(\delta = 0\), then \(\mathbf{a} = \mathbf{a}_{\mathrm{nlr}}\)
and a single cover relation of type
\parref{unirational-poset}\ref{unirational-poset.lin} brings \(\mathbf{a}\)
to \(\varnothing\). If \(\delta > 0\),
then any path from \(\mathbf{a}\) to \(\varnothing\) begins with a step of
the relation \parref{unirational-poset}\ref{unirational-poset.pow}. This
produces a new multi-profile \(\mathbf{a}'\) with
\(\delta(\mathbf{a}') = \delta - 1 < \delta\) and so induction applies.
Therefore, in this case, any path from \(\mathbf{a}\) to \(\varnothing\) has
length at most \(L(\mathbf{a}) \coloneqq L(\mathbf{a}') + 1\).

If \(\sigma \neq 0\), then any sequence of cover relations from \(\mathbf{a}\)
down to \(\varnothing\) begins with the relation
\parref{unirational-poset}\ref{unirational-poset.nlr}. Each of the \(\mu\)
choices for applying this relation leads to a multi-profile \(\mathbf{a}'\)
with a lexicographically smaller \(\phi(\mathbf{a}')\). Taking \(L'\) to be the
maximum over these \(\mathbf{a}'\) of the bounds \(L(\mathbf{a}')\) provided by
induction, any path from \(\mathbf{a}\) to \(\varnothing\) is bounded by
\(L(\mathbf{a}) \coloneqq L' + 1\) cover relations. This concludes the induction.
\end{proof}

\subsectiondash{Numbers}\label{unirational-numbers}
Inductively define numerical functions on the poset \((\Pi, \preceq^\Pi)\) as
follows: Let \(r \geq 0\) be an integer and \(\mathbf{a}\) be a multi-profile.
Assuming that \(\mathbf{a}_{\mathrm{nlr}} \neq \varnothing\), define
\begin{align*}
r(\mathbf{a}) & \coloneqq
\max\{r_0(\mathbf{a})\} \cup
\{r(\mathbf{a}') + 1 :
  \mathbf{a}' \precdot^\Pi \mathbf{a}
\}, \\
n_1(\mathbf{a},r) & \coloneqq
\max\{2r-1+ \#\mathbf{a}_1\} \cup
\{n_1(\mathbf{a}', r-1) + 1 : \mathbf{a}' \precdot^\Pi \mathbf{a}\}, \;\text{and} \\
n_2(\mathbf{a},r) & \coloneqq
\max
\Big\{
\big\lceil
r+
\frac{1}{r}
\sum\nolimits_{a \in \mathbf{a}}
\prod\nolimits_{j \geq 0} \binom{a_j + r}{r}
-\frac{1}{r}\#\mathbf{a}\big\rceil\Big\}
\cup \{n_2(\mathbf{a}', r-1) + 1 : \mathbf{a}' \precdot^\Pi \mathbf{a}\},
\end{align*}
where \(r_0(\mathbf{a})\) is defined in \parref{unirational-res-dominant},
\(\mathbf{a}'\) in the second set ranges over elements of \(\Pi\)
covered by \(\mathbf{a}\) as in \parref{unirational-poset}, and
\(\lceil \cdot \rceil\) denotes the ceiling function. One may verify that
\(n_1(\mathbf{a},r) \leq n_2(\mathbf{a},r)\) unless \(\mathbf{a} = (2t^k) \cup
\mathbf{a}'\) where \(\mathbf{a}'\) consist of profiles of the form \(t^m\).
In the case that \(\mathbf{a}_{\mathrm{nlr}} = \varnothing\), define
\begin{align*}
r(\mathbf{a}) & \coloneqq
\max\{r_0(\mathbf{a})\} \cup \{r(\mathbf{a}') : \mathbf{a}' \precdot^\Pi \mathbf{a}\}, \\
n_1(\mathbf{a},r) & \coloneqq
\max\{2r-1+ \#\mathbf{a}_1\} \cup \{n_1(\mathbf{a}', r) : \mathbf{a}' \precdot^\Pi \mathbf{a}\},\;\text{and} \\
n_2(\mathbf{a},r) & \coloneqq
\max
\Big\{
\big\lceil
r+
\frac{1}{r}
\sum\nolimits_{a \in \mathbf{a}}
\prod\nolimits_{j \geq 0} \binom{a_j + r}{r}
-\frac{1}{r}\#\mathbf{a}\big\rceil\Big\}
\cup \{n_2(\mathbf{a}', r) : \mathbf{a}' \precdot^\Pi \mathbf{a}\},
\end{align*}
where the notation is as before. Here, there is precisely one multi-profile
\(\mathbf{a}'\) covered by \(\mathbf{a}\), and a direct
computation shows that \(r_0(\mathbf{a}) = r_0(\mathbf{a}')\), and hence
\(r(\mathbf{a}) = r(\mathbf{a}')\), and also
\(n_i(\mathbf{a}',r) = n_i(\mathbf{a},r) - \#\mathbf{a}_{\mathrm{lin}}\)
for \(i = 1,2\). Finally, for all \(\mathbf{a} \in \Pi\) and \(r \geq 0\), define
\[
n_0(\mathbf{a},r) \coloneqq \max\{n_1(\mathbf{a},r), n_2(\mathbf{a},r)\}
\]

With these, it is possible to formulate the precise unirationality result:

\begin{Proposition}\label{unirational-in-families}
Let \(\mathcal{P} \subseteq \mathcal{X}\) be a generic family of \(r\)-planed 
\((q;\mathbf{a})\)-tic complete intersections in a \(\PP^n\)-bundle over an
integral base scheme \(S\) with \(r \geq r(\mathbf{a})\). If
\(n \geq n_0(\mathbf{a}, r)\), then the general fibre of \(\mathcal{X}\) over
\(S\) is unirational.
\end{Proposition}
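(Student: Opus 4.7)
The plan is to prove \parref{unirational-in-families} by induction on the multi-profile \(\mathbf{a} \in \Pi\) ordered by the partial order \(\preceq^\Pi\) of \parref{unirational-poset}. This induction is well-founded: every interval \([\varnothing, \mathbf{a}]^\Pi\) is finite by \parref{unirational-poset-finiteness}, and \(\varnothing\) is the unique minimum by \parref{unirational-poset-properties}. The base case \(\mathbf{a} = \varnothing\) is trivial, since then \(\mathcal{X} = \PP\mathcal{V}\) is a projective bundle and its general fibre is \(\PP^n\), which is rational.

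For the inductive step, split into the three flavors of cover relations from \parref{unirational-poset}. If \(\mathbf{a} = \mathbf{a}_{\mathrm{pow}}\), apply \parref{families-frobenius-descent} to produce a family \(\mathcal{X}'\) of \((q;\mathbf{a}/t)\)-tic complete intersections equipped with a universal homeomorphism \(\mathcal{X}' \to \mathcal{X}\); unirationality transfers across universal homeomorphisms, and the recursive definitions in \parref{unirational-numbers} give \(r(\mathbf{a}) = r(\mathbf{a}/t)\) and \(n_0(\mathbf{a},r) = n_0(\mathbf{a}/t,r)\), so the inductive hypothesis applies to \(\mathcal{X}'\) after checking that genericity is preserved under base change through the Frobenius on \(S\). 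If instead \(\mathbf{a}_{\mathrm{nlr}} = \varnothing\) but \(\mathbf{a}_{\mathrm{lin}} \neq \varnothing\), apply \parref{families-remove-linear-equations} to view \(\mathcal{X}\) as a family of \((q;\mathbf{a} \setminus \mathbf{a}_{\mathrm{lin}})\)-tic complete intersections in a projective subbundle \(\PP\mathcal{V}' \subseteq \PP\mathcal{V}\) of relative dimension \(n - \#\mathbf{a}_{\mathrm{lin}}\); genericity persists by \parref{families-generic-in-subbundle}, and the bookkeeping in \parref{unirational-numbers} gives \(n_0(\mathbf{a},r) = n_0(\mathbf{a} \setminus \mathbf{a}_{\mathrm{lin}}, r) + \#\mathbf{a}_{\mathrm{lin}}\), allowing the induction to proceed.

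The substantive case is \(\mathbf{a}_{\mathrm{nlr}} \neq \varnothing\). Choose \(a_0 \in \mathbf{a}_{\mathrm{nlr}}\) of maximal coefficient sum and a subbundle \(\mathrm{S}^{a_0}(\mathcal{V}^\vee) \otimes \mathcal{M} \subseteq \mathcal{A}_{\mathrm{nlr}}\) as in \parref{families-maximal-subbundle}, and consider the associated family of penultimate tangents \(\mathcal{P}' \subseteq \mathcal{X}'\) over \(S' \coloneqq \mathcal{P} \times_S \PP\mathcal{M}\) constructed in \parref{tlines-penta-family}. By \parref{tlines-lower-degrees}, this is a family of \((r-1)\)-planed \((q;\mathbf{a}')\)-tic schemes in a projective bundle of relative dimension \(n-1\) with \(\mathbf{a}' \prec^\Pi \mathbf{a}\), and by \parref{unirationality-generic-family}\ref{unirationality-generic-family.penta} it is generic. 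The numerical hypotheses \(n \geq n_1(\mathbf{a},r), n_2(\mathbf{a},r)\) plus \parref{tlines-generic-complete-intersection} guarantee that the general fibres of \(\mathcal{X}_1\rvert_{\mathcal{P}}\) and of \(\mathcal{X}'\) are complete intersections of expected dimension. By the recursive definitions \(r(\mathbf{a}) \geq r(\mathbf{a}') + 1\) and \(n_0(\mathbf{a},r) \geq n_0(\mathbf{a}',r-1) + 1\), the inductive hypothesis applies to \(\mathcal{P}' \subseteq \mathcal{X}'\), so the general fibre of \(\mathcal{X}' \to S'\) is unirational. Finally, \(r \geq r(\mathbf{a}) \geq r_0(\mathbf{a})\) together with the expected-dimension statement activates \parref{unirational-res-dominant}, yielding a residual point map \(\mathrm{res} \colon \mathcal{X}' \dashrightarrow \mathcal{X}\) dominant fibrewise over \(S\); combined with rationality of the fibres of \(S' \to S\) (each a product of two projective spaces), this propagates unirationality from the general fibre of \(\mathcal{X}'\) over \(S\) back to that of \(\mathcal{X}\) over \(S\).

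The main obstacle is the numerical bookkeeping: one must check that the recursively defined functions \(r(\mathbf{a}), n_1(\mathbf{a},r), n_2(\mathbf{a},r)\) of \parref{unirational-numbers} actually close the induction by delivering the bounds \(r - 1 \geq r(\mathbf{a}')\) and \(n - 1 \geq n_0(\mathbf{a}',r-1)\) that the penultimate-tangent construction demands in Case C, and the simpler inheritances in Cases A and B. A secondary point requiring attention is that the intermediate base \(S' = \mathcal{P} \times_S \PP\mathcal{M}\) remains integral, which is automatic once \(S\) is integral since both factors are projective bundles, so the inductive hypothesis applies verbatim to the family of penultimate tangents.
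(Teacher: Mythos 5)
Your proposal is correct and follows essentially the same route as the paper's proof: induction along \((\Pi,\preceq^\Pi)\) with base case \(\mathbf{a}=\varnothing\), the three cover-relation cases handled respectively by Frobenius descent \parref{families-frobenius-descent}, removal of linear equations \parref{families-remove-linear-equations} with \parref{families-generic-in-subbundle}, and the penultimate-tangent construction \parref{tlines-penta-family}--\parref{tlines-lower-degrees} with genericity from \parref{unirationality-generic-family}, complete-intersection fibres from \parref{tlines-generic-complete-intersection}, dominance of \(\res\) from \parref{unirational-res-dominant}, and the same numerical bookkeeping from \parref{unirational-numbers}. If anything, you are slightly more explicit than the paper in flagging why unirationality descends from \(\mathcal{X}'\) to the fibres of \(\mathcal{X}\) (rationality of the fibres of \(S' \to S\), integrality of \(S'\)), points the paper leaves implicit in the phrase ``induction would then give the result.''
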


\begin{proof}
Proceed by induction on \(\mathbf{a}\) along the poset \((\Pi,\preceq^\Pi)\).
The base case is when \(\mathbf{a} = \varnothing\), in which case
\(\mathcal{X} = \PP\mathcal{V}\) and each fibre over \(S\) is even rational.
Let \(\mathbf{a} \neq \varnothing\) and inductively assume that the conclusion
holds for all multi-profiles preceding it in \(\Pi\). The task is to construct
a new generic \(r'\)-planed
family \(\mathcal{P}' \subseteq \mathcal{X}'\) of \((q;\mathbf{a}')\)-tic
complete intersections in a \(\PP^{n'}\)-bundle over an integral base \(S'\)
with \(\mathbf{a}' \prec^\Pi \mathbf{a}\), \(r' \geq r(\mathbf{a}')\), \(n'
\geq n_0(\mathbf{a}',r')\), and all fitting into a commutative diagram of the
form
\[
\begin{tikzcd}
\mathcal{X}' \rar[dashed] \dar & \mathcal{X} \dar \\
S' \rar & S
\end{tikzcd}
\]
where \(S' \to S\) is dominant and \(\mathcal{X}' \dashrightarrow
\mathcal{X}\) is fibrewise dominant. Induction would then give the result
since \(\varnothing\) is the unique bottom element by
\parref{unirational-poset-properties} and the interval
\([\varnothing, \mathbf{a}]^\Pi\) is finite by
\parref{unirational-poset-finiteness}. Decompose the multi-profile as
\(\mathbf{a} = \mathbf{a}_{\mathrm{lin}} \sqcup \mathbf{a}_{\mathrm{pow}} \sqcup \mathbf{a}_{\mathrm{nlr}}\)
into types as in \parref{families-quotients}, and proceed in three cases:

If \(\mathbf{a}_{\mathrm{nlr}} \neq \varnothing\), apply
\parref{families-maximal-subbundle} to choose a nonzero subbundle
\(\mathrm{S}^{a_0}(\mathcal{V}^\vee) \otimes \mathcal{M} \subseteq \mathcal{A}_{\mathrm{nlr}}\)
where \(a_0 \in \mathbf{a}_{\mathrm{nlr}}\) has maximal coefficient sum \(a_0(1)\).
Applying the penultimate line constructions of \parref{tlines-penta-family},
\parref{tlines-lower-degrees}, and \parref{tlines-induced-planing} provides
a family of \((r-1)\)-planed \((q;\mathbf{a}')\)-tic schemes in a 
\(\PP^{n-1}\)-bundle over \(\mathcal{P} \times_S \PP\mathcal{M}\), where
\[
\mathbf{a}' =
(a' \in \Types : 0 \prec a' \preceq a\;\text{for \(a \in \mathbf{a}\)})
\setminus (a_0, a_0-1).
\]
This family is generic by \parref{unirationality-generic-family}. The choice of
\(n\) together with \parref{tlines-generic-complete-intersection} provide a
nonempty open subscheme \(S' \subseteq \mathcal{P} \times_S \PP\mathcal{M}\)
over which the family \(\mathcal{P}' \subseteq \mathcal{X}'\) is a
\((q;\mathbf{a}')\)-tic complete intersection. The residual point map
\(\res \colon \mathcal{X}' \dashrightarrow \mathcal{X}\) from
\parref{tlines-residual-point} exists and is fibrewise dominant by
\parref{unirational-res-dominant} and the choice of \(r\), providing the
sought after rational map over the dominant map \(S' \to S\). The choice of
numbers in \parref{unirational-numbers} implies
\(r' = r - 1 \geq r(\mathbf{a}')\) and \(n' = n - 1 \geq n_0(\mathbf{a}',r')\),
completing the induction in this case.

If \(\mathbf{a}_{\mathrm{nlr}} = \varnothing\) and
\(\mathbf{a}_{\mathrm{lin}} \neq \varnothing\), then apply
\parref{families-remove-linear-equations} to view
\(\mathcal{P} \subseteq \mathcal{X}\)
as a family of \(r\)-planed \((q;\mathbf{a}')\)-tic complete intersections
in a projective subbundle \(\PP\mathcal{V}' \subseteq \PP\mathcal{V}\) where
\(\mathbf{a}' \coloneqq \mathbf{a} \setminus \mathbf{a}_{\mathrm{lin}}\).
Thus \(\mathcal{X}' = \mathcal{X}\) and \(S' = S\), but \(\mathcal{X}'\) is
equipped with the structure of a \((q;\mathbf{a}')\)-tic complete intersection
in a different projective bundle. By \parref{families-generic-in-subbundle},
the family \(\mathcal{P} \subseteq \mathcal{X}\) is also generic with this
structure. Now
\[
r' = r \geq r(\mathbf{a}) = r(\mathbf{a}')
\;\;\text{and}\;\;
n' = n - \#\mathbf{a}_{\mathrm{lin}}
\geq n_0(\mathbf{a},r) - \#\mathbf{a}_{\mathrm{lin}}
= n_0(\mathbf{a}',r'),
\]
completing the inductive step here.

Finally, if \(\mathbf{a} = \mathbf{a}_{\mathrm{pow}}\), apply
\parref{families-frobenius-descent} to obtain a family \(\mathcal{X}'\) of
\((q;\mathbf{a}')\)-tic complete intersections in a \(\PP^n\)-bundle
over \(S' = S\) with \(\mathbf{a}' \coloneqq \mathbf{a}/t\) and fitting into a
diagram
\[
\begin{tikzcd}
\mathcal{X}' \rar \dar & \mathcal{X} \dar \\ S \rar["\Fr"] & S
\end{tikzcd}
\]
where the horizontal map up top is a universal homeomorphism. Pulling back
the family \(\mathcal{P}\) of \(r\)-planes equips \(\mathcal{X}'\) with a family
of \(r\)-planes. It is straightforward to check that genericity of the
family \(\mathcal{P} \subseteq \mathcal{X}\) propagates to genericity of
\(\mathcal{P}' \subseteq \mathcal{X}'\). Since \(r' \coloneqq r\) and
\(n' \coloneqq n\), but also \(r(\mathbf{a}) = r(\mathbf{a}/t)\) and
\(n_0(\mathbf{a},r) = n_0(\mathbf{a}/t,r)\), the inductive step is settled
in this case too.
\end{proof}

\begin{proof}[Proof of Theorem \parref{intro-unirationality}]
Given a multi-profile \(\mathbf{a}\), set
\[
r \coloneqq r(\mathbf{a})
\;\;\text{and}\;\;
n \coloneqq n_0(\mathbf{a}) \coloneqq n_0(\mathbf{a}, r)
\]
where \(r(\mathbf{a})\) and \(n_0(\mathbf{a},r)\) are as in
\parref{unirational-numbers}. Consider the incidence correspondence
\[
\mathbf{Inc}_{n,r,\mathbf{a}} \coloneqq
\{
([U], [\boldsymbol{\alpha}]) \in
\mathbf{G}(r+1,n+1) \times \qaticci_{\PP^n} :
\PP U \subseteq X_{\boldsymbol{\alpha}}
\}
\]
between \(r\)-planes and \((q;\mathbf{a})\)-tic schemes in \(\PP^n\). This
is a projective bundle over the Grassmannian via the first projection, and so
it is integral.
Let \(S \subseteq \mathbf{Inc}_{n,r,\mathbf{a}}\) be the open subscheme
parameterizing \(r\)-planed \((q;\mathbf{a})\)-tic complete intersections
\(P \subseteq X\). Restricting the tautological family to \(S\) therefore
provides a family \(\mathcal{P} \subseteq \mathcal{X}\) of \(r\)-planed
\((q;\mathbf{a})\)-tic complete intersections in \(\PP^n_S\) satisfying the
hypotheses of \parref{unirational-in-families}. Therefore each fibre of
\(\mathcal{X} \to S\)
is unirational. Since the choice of \(n\) and \(r\) imply, via
\parref{fano-covered-in-planes}, that the projection \(S \to \qaticci_{\PP^n}\)
is dominant and the result follows.
\end{proof}

\subsectiondash{Example}\label{unirationality-estimates}
The bound \(n_0(\mathbf{a})\) may be computed for small multi-profiles
\(\mathbf{a}\). Some examples:
\[
n_0(t+1) = 4, \quad
n_0(2,t+1) = 9, \quad
n_0(t+1,t+1) = 13, \quad
n_0(t^2 + t + 1) = 48.
\]
The bounds obtained for constant profiles \(a(t) = d\), corresponding to
degree \(d\) hypersurfaces as in \parref{qatics-examples}\ref{qatics-examples.constant},
are surprisingly small:
\begin{gather*}
n_0(3) = 4, \quad
n_0(4) = 9, \quad
n_0(5) = 22, \quad
n_0(6) = 160, \quad
n_0(7) = 20376, \\
n_0(8) = 11914188890, \quad
n_0(9) = 8616199237736295920955120,
\end{gather*}
and
\(n_0(10) = 192884152577980851363553858004926940342106493833715693762179 \approx 10^{59}\).
In comparison, the best bounds \(n'_0(d)\) for the classical unirationality
construction are from \cite{Ramero}, and they give
\begin{gather*}
n'_0(3) = 3, \quad
n'_0(4) = 20, \quad
n'_0(5) = 8855, \quad
n'_0(6) = 454205040715033146, \\
n'_0(7) \approx 10^{103}, \quad
n'_0(8) \approx 10^{717}, \quad
n'_0(9) \approx 10^{5738}, \quad
n'_0(10) \approx 10^{51641}.
\end{gather*}
In the companion paper \cite{bounds}, it is shown that the integer \(n_0(d)\)
is on the order of \(2^{d2^d}\), growing significantly slower than \(n_0'(d) \sim 2^{d!}\).

\bibliographystyle{amsalpha}
\bibliography{main}

\newcommand{\etalchar}[1]{$^{#1}$}
\providecommand{\bysame}{\leavevmode\hbox to3em{\hrulefill}\thinspace}
\providecommand{\MR}{\relax\ifhmode\unskip\space\fi MR }
\providecommand{\MRhref}[2]{%
  \href{http://www.ams.org/mathscinet-getitem?mr=#1}{#2}
}
\providecommand{\href}[2]{#2}
\begin{thebibliography}{KKP{\etalchar{+}}22}

\bibitem[BC66]{BC:Hermitian}
Raj~C. Bose and Indra~M. Chakravarti, \emph{Hermitian varieties in a finite
  projective space {${\rm PG}(N,\,q^{2})$}}, Canadian J. Math. \textbf{18}
  (1966), 1161--1182.

\bibitem[Bea90]{Beauville}
Arnaud Beauville, \emph{Sur les hypersurfaces dont les sections hyperplanes
  sont {\`a} module constant. ({On} hypersurfaces the hyperplane sections of
  which are of constant module)}, The {Grothendieck} {Festschrift}, {Collect}.
  {Artic}. in {Honor} of the 60th {Birthday} of {A}. {Grothendieck}. {Vol}.
  {I}, {Prog}. {Math}. 86, 121-133 (1990)., 1990.

\bibitem[Ben12]{Benoist:CI-I}
Olivier Benoist, \emph{Degr\'es d'homog\'en\'eit\'e{} de l'ensemble des
  intersections compl\`etes singuli\`eres}, Ann. Inst. Fourier (Grenoble)
  \textbf{62} (2012), no.~3, 1189--1214.

\bibitem[BLRT23]{BLRT}
Roya Beheshti, Brian Lehmann, Eric Riedl, and Sho Tanimoto, \emph{Rational
  curves on del {Pezzo} surfaces in positive characteristic}, Trans. Am. Math.
  Soc., Ser. B \textbf{10} (2023), 407--451.

\bibitem[BR21]{BR}
Roya Beheshti and Eric Riedl, \emph{Linear subspaces of hypersurfaces}, Duke
  Math. J. \textbf{170} (2021), no.~10, 2263--2288.

\bibitem[CG72]{CG}
C.~Herbert Clemens and Phillip~A. Griffiths, \emph{The intermediate {Jacobian}
  of the cubic threefold}, Ann. Math. (2) \textbf{95} (1972), 281--356.

\bibitem[Che22]{thesis}
Raymond Cheng, \emph{Geometry of q-bic {H}ypersurfaces}, ProQuest LLC, Ann
  Arbor, MI, 2022, Thesis (Ph.D.)--Columbia University.

\bibitem[Che25a]{qbic-forms}
\bysame, \emph{{$q$}-bic forms}, J. Algebra \textbf{675} (2025), 196--236.

\bibitem[Che25b]{fano-schemes}
\bysame, \emph{{$q$}-bic hypersurfaces and their {F}ano schemes}, Pure Appl.
  Math. Q. \textbf{21} (2025), no.~4, 1721--1773.

\bibitem[Che25c]{bounds}
\bysame, \emph{Unirationality of hypersurfaces via highly tangent lines},
  preprint available at
  \href{https://arxiv.org/abs/2511.07545}{arXiv:2511.07545} (2025).

\bibitem[Che26]{qbic-threefolds}
\bysame, \emph{{$q$}-bic threefolds and their surface of lines}, Nagoya Math.
  J. \textbf{261} (2026), Paper No. e21, 50.

\bibitem[DL76]{DL}
Pierre Deligne and George Lusztig, \emph{Representations of reductive groups
  over finite fields}, Ann. Math. (2) \textbf{103} (1976), 103--161.

\bibitem[DM98]{DM}
Olivier Debarre and Laurent Manivel, \emph{Sur la vari\'et\'e{} des espaces
  lin\'eaires contenus dans une intersection compl\`ete}, Math. Ann.
  \textbf{312} (1998), no.~3, 549--574.

\bibitem[Ein88]{Ein}
Lawrence Ein, \emph{Subvarieties of generic complete intersections}, Invent.
  Math. \textbf{94} (1988), no.~1, 163--169.

\bibitem[Hef85]{Hefez}
Abramo Hefez, \emph{Duality for projective varieties}, Ph.D. thesis,
  Massachusetts Institute of Technology, 1985.

\bibitem[HMP98]{HMP}
Joe Harris, Barry Mazur, and Rahul Pandharipande, \emph{Hypersurfaces of low
  degree}, Duke Math. J. \textbf{95} (1998), no.~1, 125--160.

\bibitem[HRS04]{HRS}
Joe Harris, Mike Roth, and Jason Starr, \emph{Rational curves on hypersurfaces
  of low degree}, J. Reine Angew. Math. \textbf{571} (2004), 73--106.

\bibitem[KKP{\etalchar{+}}21]{KKPSSVW:Cubics}
Zhibek Kadyrsizova, Jennifer Kenkel, Janet Page, Jyoti Singh, Karen~E. Smith,
  Adela Vraciu, and Emily~E. Witt, \emph{Cubic surfaces of characteristic two},
  Trans. Am. Math. Soc. \textbf{374} (2021), no.~9, 6251--6267.

\bibitem[KKP{\etalchar{+}}22]{KKPSSVW}
\bysame, \emph{Lower bounds on the {$F$}-pure threshold and extremal
  singularities}, Trans. Amer. Math. Soc. Ser. B \textbf{9} (2022), 977--1005.

\bibitem[Kol96]{Kollar:Book}
J{\'a}nos Koll{\'a}r, \emph{Rational curves on algebraic varieties}, Ergebnisse
  der Mathematik und ihrer Grenzgebiete. 3. Folge. A Series of Modern Surveys
  in Mathematics [Results in Mathematics and Related Areas. 3rd Series. A
  Series of Modern Surveys in Mathematics], vol.~32, Springer-Verlag, Berlin,
  1996.

\bibitem[Kol01]{Kollar:Simplest}
\bysame, \emph{Which are the simplest algebraic varieties?}, Bull. Am. Math.
  Soc., New Ser. \textbf{38} (2001), no.~4, 409--433.

\bibitem[KP91]{KP:Gauss}
Steven Kleiman and Ragni Piene, \emph{On the inseparability of the {G}auss
  map}, Enumerative algebraic geometry ({C}openhagen, 1989), Contemp. Math.,
  vol. 123, Amer. Math. Soc., Providence, RI, 1991, pp.~107--129.

\bibitem[Mor42]{Morin}
Ugo Morin, \emph{Sull'unirazionalit\`a{} dell'ipersuperficie algebrica di
  qualunque ordine e dimensione sufficientemente alta}, Atti {S}econdo
  {C}ongresso {U}n. {M}at. {I}tal., {B}ologna, 1940, Ed. Cremonese, Rome, 1942,
  pp.~298--302.

\bibitem[Mur72]{Murre}
Jacob~P. Murre, \emph{Algebraic equivalence modulo rational equivalence on a
  cubic threefold}, Compos. Math. \textbf{25} (1972), 161--206.

\bibitem[MW25]{MW}
Gebhard Martin and R{\'e}ka Wagener, \emph{Classification of non-{F}-split del
  {P}ezzo surfaces of degree $1$}, Arch. Math. (2025).

\bibitem[Pac04]{Pacienza}
Gianluca Pacienza, \emph{Subvarieties of general type on a general projective
  hypersurface}, Trans. Am. Math. Soc. \textbf{356} (2004), no.~7, 2649--2661.

\bibitem[Pre49]{Predonzan}
Arno Predonzan, \emph{Sull'unirazionalit\`a{} della variet\`a{} intersezione
  completa di pi\`u{} forme}, Rend. Sem. Mat. Univ. Padova \textbf{18} (1949),
  163--176.

\bibitem[PS92]{PS}
Kapil~H. Paranjape and V.~Srinivas, \emph{Unirationality of the general
  complete intersection of small multidegree}, Flips and abundance for
  algebraic threefolds, Ast\'erisque, vol. 211, Soci\'et\'e{} Math\'ematique de
  France, Paris, 1992, pp.~241–--248.

\bibitem[Ram90]{Ramero}
Lorenzo Ramero, \emph{Effective estimates for unirationality}, Manuscripta
  Math. \textbf{68} (1990), no.~4, 435--445.

\bibitem[Rot55]{Roth}
Leonard Roth, \emph{Algebraic threefolds, with special regard to problems of
  rationality}, Ergebnisse der Mathematik und ihrer Grenzgebiete, (N.F.), vol.
  Heft 6, Springer-Verlag, Berlin-G\"ottingen-Heidelberg, 1955.

\bibitem[RY20]{RY:Clemens}
Eric Riedl and David Yang, \emph{Rational curves on general type
  hypersurfaces}, J. Differ. Geom. \textbf{116} (2020), no.~2, 393--403.

\bibitem[Sai17]{Saito}
Natsuo Saito, \emph{Characterization of non-{F}-split del {Pezzo} surfaces of
  degree 2}, Tokyo J. Math. \textbf{40} (2017), no.~1, 247--253.

\bibitem[Shi74]{Shioda:Unirationality}
Tetsuji Shioda, \emph{An example of unirational surfaces in characteristic
  {$p$}}, Math. Ann. \textbf{211} (1974), 233--236.

\bibitem[Shi92]{Shimada:CI}
Ichiro Shimada, \emph{Unirationality of certain complete intersections in
  positive characteristics}, T{\^o}hoku Math. J. (2) \textbf{44} (1992), no.~3,
  379--393.

\bibitem[Shi95]{Shimada:Unirationality}
\bysame, \emph{A generalization of {Morin}-{Predonzan}'s theorem on the
  unirationality of complete intersections}, J. Algebr. Geom. \textbf{4}
  (1995), no.~4, 597--638.

\bibitem[Shi01]{Shimada:Lattices}
\bysame, \emph{Lattices of algebraic cycles on {F}ermat varieties in positive
  characteristics}, Proc. London Math. Soc. (3) \textbf{82} (2001), no.~1,
  131--172.

\bibitem[Shi12]{Shimada:Incidence}
\bysame, \emph{On {Frobenius} incidence varieties of linear subspaces over
  finite fields}, Finite Fields Appl. \textbf{18} (2012), no.~2, 337--361.

\bibitem[{Stacks}]{stacks-project}
The {Stacks Project Authors}, \emph{\textit{Stacks Project}},
  {\scriptsize\url{https://stacks.math.columbia.edu}}.

\bibitem[Voi96]{Voisin}
Claire Voisin, \emph{On a conjecture of {Clemens} on rational curves on
  hypersurfaces}, J. Differ. Geom. \textbf{44} (1996), no.~1, 200--213.

\bibitem[Wal56]{Wallace}
Andrew~H. Wallace, \emph{Tangency and duality over arbitrary fields}, Proc.
  London Math. Soc. (3) \textbf{6} (1956), 321--342.

\end{thebibliography}
\end{document}